\theoremstyle{definition}
\newtheorem{definition}{Definition}[section]
\newtheorem{example}[definition]{Example}
\newtheorem{fact}[definition]{Fact}
\theoremstyle{corollary}
\newtheorem{corollary}[definition]{Corollary}
\newtheorem{lemma}[definition]{Lemma} 
\newtheorem{proposition}[definition]{Proposition}
\newtheorem{theorem}[definition]{Theorem}
\newcommand{\R}{\boldsymbol{\mathrm{R}}}
\newcommand{\N}{\boldsymbol{\mathrm{N}}}
\newcommand{\I}{\boldsymbol{\mathrm{I}}}
\newcommand{\GL}{\boldsymbol{\mathrm{GL}}}
\newcommand{\id}{\mathrm{id}}
\newcommand{\M}{\mathrm{M}}
\newcommand{\kk}{,}
\title{Geometric Aspects to Diophantine Equations of the Form $ x^2 + zxy + y^2 = M $ and $ z $-Rings}
\date{2024, November}
\author{Chris Busenhart \\ Departement of Mathematics, ETH Zürich}
\begin{document}

\maketitle

\begin{abstract}
\noindent 
In the following we consider Diophantine equations of the form $ x^2+ zxy + y^2 = M $ for given $ M,z \in \mathbb{Z} $ and discuss the number of its (primitive) solutions as well as the construction of them. To reach this goal we introduce $ z $-rings which turn out to be a useful tool to investigate these Diophantine equations. Moreover, we will extend these rings and study the algebraic curves defined by them on a plane by methods inspired by the complex plane. Then we define the so called subbranches which are bounded and connected parts of the algebraic curves containing a representative of each solution of the Diophantine equations with respect to association in $ z $-rings. With the help of them we can easily prove the existence or non-existence of solutions to the above Diophantine equations. Then we divide the integer primes with respect to the different $ z $-rings into two main categories, i.e. the regular and irregular elements. We show that the irregular elements are prime in the corresponding $ z $-rings and we identify that most of the $ z $-rings cannot be unique factorization domains. We determine the number of positive, primitive solutions of the above Diophantine equation if $ M \in \mathbb{N} $ is a product of irregular elements in the corresponding $ z $-ring for $ z \in \mathbb{N} $. We also give an overview how many primitive and non-primitive solutions in a given quadrant we can find for arbitrary $ M,z \in \mathbb{Z} $, especially, if $ M $ is a power of any irregular element. Furthermore, we consider the case $ z = 3 $, determine the regular and irregular elements as well as the number of positive, primitive solutions of the Diophantine equation $ x^2 + 3xy + y^2 = M $ depending on $ M \in \mathbb{N} $.
\end{abstract}

\begin{section}{Introduction and motivation}

The name Diophantine equations goes back to the Greek mathematician Diophantus of Alexandria. He was living in the third century and probably one of the first who examined equations with integer solutions using an advanced algebraic notation for that time. However, he was not the first one who studied Diophantine equations as there exist Babylonian clay tables containing Pythagorean triples which are from around $ 1800 $ BC. Phythagorean triples are integer solutions for the Diophantine equation $ x^2 + y^2 = z^2 $. A more general form of this equation is then the equation $ x^2 + y^2 = M $ where Albert Girard \cite{Girard} was the first who proved that every prime of the form $ 4n+1 $ is the sum of two squares following by a lot of other proofs from Euler \cite{Euler1, Euler2}, Dedekind \cite[p.\,145]{Dedekind_Stillwell} and many others \cite{John_Ewell, Heath-Brown, Zagier, David_Christopher}. Another approach goes back to Minkowski, see \cite{Chapman}[p.\,139-143] or more recent, \cite{Kotschick}. He came up with a theorem named after him which is a useful tool for proving number theoretic statements. In fact, the arguments for its proof are based on purely geometric observations on a lattice in $ \mathbb{R}^n $. This approach is called geometry of numbers \cite{Minkowski, Minkowski2} and it was developed  further, see \cite{Lekerkerker}.

In \cite{Miniatur} we showed how to construct the positive, primitive solutions of the Diophantine equation $ x^2 + y^2 = M $ where $ M $ is a product of primes of the form $ 4n+1 $. Furthermore, if $ M = p_1^{k_1}p_2^{k_2} \dots p_l^{k_l} $ is such a product, then we concluded that there are $ 2^{l-1} $ positive, primitive solutions what $ 4 $ centuries before was deduced by Bernard Frénicle de Bessy \cite{Frenicle} by experimental mathematics, i.e. the study of numerical examples where he recognized that there are exactly $ 2^{l-1} $ primitive right triangles with hypotenuse of length $ M $. Our approach to understand the solutions of the above described equation was to use the Gaussian integers and the fact that they are a unique factorization domain as well as a lot of other tools we know from the complex numbers.

At some point the question came up whether this approach can also be used for other types of Diophantine equations. Indeed, for Diophantine equations of the form $ x^2 + zxy + y^2 = M $ for $ z, M \in \mathbb{Z} $ we can proceed similarly (compare also the more general case \cite[p.\,408-412]{Dickson} and \cite[p.\,387-389]{Lemmermeyer} where Gauss used quadratic forms). In fact, for each $ z \in \mathbb{Z} $ we will define the so called $ z $-ring which have similar properties as the Gaussian integers.
In particular, the geometric model helps to understand the structure of the set of solutions to the above Diophantine equations. Moreover, we will see that there is a strong connection between the geometric and algebraic properties of these $ z $-rings.




\end{section}	
	
\begin{section}{Construction of $ z $-rings}
	For the whole section, let $ \left( a_1, a_2 \right), \left( b_1, b_2 \right), \left( c_1, c_2 \right) \in \mathbb{Z} \times \mathbb{Z} $. Consider the group $ \left( \mathbb{Z} \times \mathbb{Z}, + \right) $ where the addition is defined component wise. We would like to define a product $ * $ which turns $ \R_{z} \coloneqq \left( \mathbb{Z} \times \mathbb{Z}, +, * \right) $ into a ring for all $ z \in \mathbb{Z} $.
	
\begin{definition}
	The {\em $ z $-product} is defined in the following way:
	\begin{displaymath}
		\left( a_1, a_2 \right) * \left( b_1, b_2 \right) \coloneqq \left( a_1 b_1 - a_2 b_2, a_1 b_2 + a_2 b_1 + z a_2 b_2 \right).
	\end{displaymath}
\end{definition}	
	
Note that the $ z $-product depends on $ z $, whereas this is not the case for addition in $ z $-rings. By identifying $ \left( a_1,a_2 \right) $ with $ a_1 + a_2 i $ where $ i $ is the complex unit with $ i^2 = -1 $ we clearly see that $ \R_{0} $ is isomorphic to the Gaussian integers $ \mathbb{Z}[i] $. In fact, $ \R_{z} $ is a commutative ring for all $ z \in \mathbb{Z} $.
	
\begin{proposition} \label{prop_ring}
	$ \R_{z} $ is a commutative and unitary ring for all $ z \in \mathbb{Z} $.
\end{proposition}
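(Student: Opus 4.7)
The plan is to verify the ring axioms directly from the definition of the $ z $-product. Since the additive structure of $ \R_{z} $ is the componentwise one on $ \mathbb{Z} \times \mathbb{Z} $, the pair $ (\mathbb{Z} \times \mathbb{Z}, +) $ is already an abelian group with neutral element $ (0,0) $ and additive inverses $ (-a_1, -a_2) $. It therefore suffices to check the multiplicative axioms: commutativity and associativity of $ * $, the existence of a two-sided unit, and distributivity of $ * $ over $ + $.

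I would begin with the cheap observations. For commutativity, the first coordinate $ a_1 b_1 - a_2 b_2 $ is manifestly symmetric in $ (a_1,a_2) $ and $ (b_1,b_2) $, and so is the second coordinate $ a_1 b_2 + a_2 b_1 + z a_2 b_2 $, since swapping the two pairs exchanges the first two summands and fixes the third. For the unit, a direct substitution shows $ (1, 0) * (b_1, b_2) = (b_1, b_2) $, and by commutativity this is a two-sided identity. Distributivity follows immediately from the fact that each coordinate of $ (a_1, a_2) * (b_1, b_2) $ is $ \mathbb{Z} $-bilinear in its two arguments, so expanding $ (a_1,a_2) * ((b_1,b_2) + (c_1,c_2)) $ reproduces $ (a_1,a_2)*(b_1,b_2) + (a_1,a_2)*(c_1,c_2) $ term by term.

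The only laborious axiom is associativity, which is where the bookkeeping needs to be done carefully. The approach is to expand $ ((a_1,a_2)*(b_1,b_2))*(c_1,c_2) $ and $ (a_1,a_2)*((b_1,b_2)*(c_1,c_2)) $ as polynomials in the six variables $ a_1, a_2, b_1, b_2, c_1, c_2 $ with coefficients in $ \mathbb{Z}[z] $, and then compare, coordinate by coordinate, the coefficients of each monomial $ a_i b_j c_k $. Both coordinates come out of degree three in the $ a_i, b_j, c_k $ variables, and in the second coordinate one encounters a single $ z^2 a_2 b_2 c_2 $ term; the symmetry of the defining formula is exactly what forces the two expansions to agree. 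This expansion is the main technical obstacle, but it is entirely mechanical and can be checked monomial type by monomial type. As an alternative shortcut one could identify $ \R_{z} $ with $ \mathbb{Z}[X]/(X^2 - zX + 1) $ via $ (a_1, a_2) \mapsto a_1 + a_2 X $ and deduce the ring axioms from those of a polynomial quotient ring; however, I would prefer the direct verification here, since the explicit formulas for $ * $ will be needed throughout the remainder of the paper.
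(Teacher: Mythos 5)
Your proposal is correct and follows essentially the same route as the paper: direct verification of the ring axioms, with commutativity and distributivity read off from the symmetry and bilinearity of the defining formula, and associativity settled by expanding the triple product. The only difference is cosmetic: where you expand both bracketings of $\left( a_1,a_2 \right) * \left( b_1,b_2 \right) * \left( c_1,c_2 \right)$ and compare monomials, the paper expands only one and invokes commutativity to reduce associativity to checking that the expansion is invariant under exchanging $a_j$ with $c_j$; and you explicitly exhibit $\left( 1,0 \right)$ as the unit, which the paper leaves implicit.
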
			
	
\begin{proof}
	Fix $ z \in \mathbb{Z} $. Then $ \left( \mathbb{Z} \times \mathbb{Z}, + \right) $ is an abelian group with neutral element $ \left( 0,0 \right) \in \mathbb{Z} \times \mathbb{Z} $ and for $ \left( a_1,a_2 \right) \in \mathbb{Z} \times \mathbb{Z} $ we have that $ \left( -a_1,-a_2 \right) \in \mathbb{Z} \times \mathbb{Z} $ is the inverse of it.
The $ z $-product is commutative because of its symmetry: If we exchange $ a_j $ by $ b_j $ for $ j = 1,2 $, respectively, then the value of the above product does not change.
Since
	\begin{gather*}
		\left( a_1, a_2 \right) * \big( \left( b_1, b_2 \right)	+ \left( c_1, c_2 \right) \big) = \left( a_1, a_2 \right) *  \left( b_1+c_1, b_2+c_2 \right) \\
		= \big( a_1 \left( b_1 + c_1 \right) - a_2 \left( b_2+c_2 \right),a_1 \left( b_2 + c_2 \right) + a_2 \left( b_1+c_1 \right)+ za_2 \left( b_2+c_2 \right) \big) \\ 
		= \left( a_1b_1-a_2b_2,a_1b_2+a_2b_1+za_2b_2 \right) + \left( a_1c_1-a_2c_2,a_1c_2+a_2c_1+za_2c_2 \right) \\
		= \left( a_1, a_2 \right) * \left( b_1, b_2 \right) + \left( a_1, a_2 \right) * \left( c_1, c_2 \right)
	\end{gather*}
holds, distributivity is satisfied. It remains to show associativity of the $ z $-product. For this we calculate
	\begin{gather*}
		\big( \left( a_1, a_2 \right) * \left( b_1, b_2 \right) \big) * \left( c_1, c_2 \right)
		= \left( a_1 b_1 - a_2 b_2, a_1 b_2 + a_2 b_1 + z a_2 b_2 \right) * \left( c_1, c_2 \right) \\
		= \Big( a_1b_1c_1-a_2b_2c_1-a_1b_2c_2-a_2b_1c_2-za_2b_2c_2, 
		 a_1b_1c_2 + a_1b_2c_1+a_2b_1c_1 \\  za_2b_2c_1+za_1b_2c_2+za_2b_1c_2 + \left( z^2-1 \right) a_2b_2c_2 \Big)
	\end{gather*}
and by commutativity of the $ z $-product, associativity holds if and only if 
	\begin{align*}
		\big( \left( a_1, a_2 \right) * \left( b_1, b_2 \right)	\big) * \left( c_1, c_2 \right)
		&= \big( \left( c_1, c_2 \right) * \left( b_1, b_2 \right)	\big) * \left( a_1, a_2 \right).
	\end{align*} 	
I.e. if we can exchange $ a_j $ and $ c_j $ for $ j = 1,2 $, respectively,  in $ \big( \left( a_1, a_2 \right) * \left( b_1, b_2 \right) \big) * \left( c_1, c_2 \right) $ such that the value of the product does not change, then associativity holds. This symmetry can easily be checked.
\end{proof}	
	
From now on we will call $ \R_{z} $ $ z $-ring for all $ z \in \mathbb{Z} $ and we will identify $ \mathbb{Z} $ with $ \mathbb{Z} \times \{ 0 \} $. This turns $ \mathbb{Z} $ to a subring of $ \R_{z} $. Moreover, if $ k \in \mathbb{Z} $ and $ \alpha \in \mathbb{Z} \times \mathbb{Z} $, then we will interpret
	$$ k \alpha = \left\{ 
\begin{array}{ll}
	\underbrace{\alpha + \dots + \alpha}_{\left\vert k \right\vert \ \textrm{times}} & k \geq 0 \\ [5ex]
	-\underbrace{\left(\alpha + \dots + \alpha\right)}_{\left\vert k \right\vert \ \textrm{times}} & k < 0
\end{array}	
\right. . $$


In the next section we will see that $ \R_{z} $ has similar properties as the Gaussian integers. We will introduce the real and imaginary part, the (mirror) conjugate and the norm. All these definitions are related to what we know from the complex numbers. Moreover, we will prove that $ \R_{z} $ is an integral domain if and only if $ z \notin \left\{ -2,2\right\} $.
\end{section}

\begin{section}{Conjugate, norm, real and imaginary parts}

\begin{definition}
	Let $ \alpha = \left( a_1, a_2 \right) \in \R_{z} $. Then we define the {\em conjugate of $ \alpha $} as
	$$ \overline{\alpha} \coloneqq \left( a_1+za_2,-a_2 \right) .$$
\end{definition}	
	
Observe that the conjugation depends on $ z $, i.e. on the ring we apply it. As for the complex numbers we can define the imaginary and the real part for elements in the $ z $-ring:

\begin{definition}
	Let $ \alpha = \left(a_1,a_2 \right) \in \R_{z} $, then we call $ \mathrm{Re}\left( \alpha \right) \coloneqq a_1 $ the {\em real} and $ \mathrm{Im}\left( \alpha \right) \coloneqq a_2 $ the {\em imaginary part of $ \alpha $}.
\end{definition}	
	
\begin{lemma}
Let $ \alpha, \beta \in \R_{z} $ be arbitrary.
	The conjugation has the following properties:
	\begin{itemize}
		\item[i)] $ \overline{\alpha+\beta} =  \overline{\alpha} +\overline{\beta}$ 
		\item[ii)] $ \overline{\alpha * \beta} = \overline{\alpha} *\overline{\beta}$ 
		\item[iii)] $ \overline{\overline{\alpha}} = \alpha $
		\item[iv)] $ \alpha = \overline{\alpha} $ iff $ \alpha \in \mathbb{Z} $
	\end{itemize}
\end{lemma}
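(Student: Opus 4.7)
The plan is that all four properties are direct verifications from the definitions of $*$ and conjugation; I would just set $\alpha = (a_1,a_2)$ and $\beta = (b_1,b_2)$ and compute. Nothing here requires any structural idea: the content is that the extra $z$-term in the product and the extra $za_2$ in the conjugation are precisely arranged to be compatible.

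For (i), addition is componentwise, so I would just expand $\overline{(a_1+b_1,a_2+b_2)} = (a_1+b_1+z(a_2+b_2),\,-(a_2+b_2))$ and observe this is exactly $(a_1+za_2,-a_2) + (b_1+zb_2,-b_2)$. For (iii), I would compute $\overline{\overline{(a_1,a_2)}} = \overline{(a_1+za_2,-a_2)} = (a_1+za_2+z(-a_2),\,-(-a_2)) = (a_1,a_2)$. For (iv), the equation $(a_1,a_2)=(a_1+za_2,-a_2)$ forces $a_2=-a_2$, hence $a_2=0$ (and then the first coordinate equation is automatic); conversely, if $a_2=0$ then $\overline{(a_1,0)}=(a_1,0)$. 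Under our identification $\mathbb{Z}\cong\mathbb{Z}\times\{0\}\subset\R_z$ this is exactly the claim.

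The slightly more involved one is (ii), because one has to check that a quadratic expression in $z$ on each side agrees. I would compute the left-hand side first: $\alpha*\beta = (a_1b_1-a_2b_2,\,a_1b_2+a_2b_1+za_2b_2)$, so
\begin{align*}
\overline{\alpha*\beta} = \bigl(a_1b_1-a_2b_2 + z(a_1b_2+a_2b_1+za_2b_2),\ -(a_1b_2+a_2b_1+za_2b_2)\bigr),
\end{align*}
which simplifies to $(a_1b_1-a_2b_2+za_1b_2+za_2b_1+z^2a_2b_2,\ -a_1b_2-a_2b_1-za_2b_2)$. Then I would compute $\overline{\alpha}*\overline{\beta} = (a_1+za_2,-a_2)*(b_1+zb_2,-b_2)$ by plugging into the definition of $*$; the first coordinate is $(a_1+za_2)(b_1+zb_2)-(-a_2)(-b_2)$ and the second is $(a_1+za_2)(-b_2)+(-a_2)(b_1+zb_2)+z(-a_2)(-b_2)$. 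Expanding these gives the same two expressions, which proves (ii).

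The only mild obstacle is bookkeeping in (ii) — in particular, making sure the $+z(-a_2)(-b_2) = +za_2b_2$ term in the second coordinate cancels one of the $-za_2b_2$ terms to leave exactly $-a_1b_2-a_2b_1-za_2b_2$. I would do that cancellation explicitly to make the match transparent, and then items (i), (iii), (iv) can be dispatched in a few lines each.
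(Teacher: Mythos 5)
Your proposal is correct and matches the paper's proof: both are direct coordinate computations from the definitions, and your verification of (ii) — expanding $\overline{\alpha}*\overline{\beta}=(a_1+za_2,-a_2)*(b_1+zb_2,-b_2)$ and matching it against $\overline{\alpha*\beta}$ — is exactly the identity the paper checks (the paper merely writes the intermediate step in factored form). No gaps; the bookkeeping you flag in (ii) works out as you describe.
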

	
\begin{proof}
	Let $ \alpha =  \left( a_1, a_2 \right) \in \R_z $ and $ \beta = \left( b_1, b_2 \right) \in \R_z $, then we have
	\begin{align*}
		\overline{\alpha+\beta} &= \overline{\left(a_1+b_1,a_2+b_2 \right)} \\
		&= \big( a_1+b_1 + z \left( a_2+b_2 \right),-\left(a_2+b_2\right) \big) \\
		&= \big( a_1 + za_2,-a_2 \big) + \big( b_1 + zb_2,-b_2 \big) \\
		&= \overline{\alpha} + \overline{\beta} \\ \\
		\overline{\alpha*\beta} &= \overline{\left( a_1 b_1 - a_2 b_2, a_1 b_2 + a_2 b_1 + z a_2 b_2 \right)}	 \\
		&= \big( a_1 b_1 - a_2 b_2 + z \left( a_1 b_2 + a_2 b_1 + z a_2 b_2 \right),-\left( a_1 b_2 + a_2 b_1 + z a_2 b_2 \right) \big) \\
		&= \big( \left( a_1+za_2 \right) \left( b_1+zb_2 \right) - a_2 b_2, -\left( a_1+za_2 \right)b_2 -a_2\left(b_1+zb_2 \right) + z a_2 b_2  \big) \\	
		&= \left( a_1+za_2,-a_2 \right) * \left( b_1+zb_2,-b_2 \right) \\
		&= \overline{\alpha}*\overline{\beta} \\ \\
\overline{\overline{\alpha}}	&= \overline{\left( a_1+za_2,-a_2 \right)}	\\
		&= \left( a_1+za_2-za_2,a_2 \right)\\
		&=\alpha
	\end{align*}
If $ \alpha = \overline{\alpha} $, i.e. $ \left(a_1,a_2 \right) = \left( a_1+za_2,-a_2 \right) $, then $ a_2 = 0 $ and vice versa.
\end{proof}	
	
\begin{definition}
	The {\em norm of $ \alpha = \left( a_1, a_2 \right) \in \R_{z} $} is defined as
	\begin{displaymath}
		\N\left(\alpha\right) \coloneqq a_1^2 + z a_1 a_2 + a_2^2.
	\end{displaymath}
\end{definition}

Observe that our norm is not a proper norm in a strictly mathematical sense. For example, $ \R_{z} $ contains elements which have negative norm if and only if $ \lvert z \rvert \geq 3 $. If $ z = 0 $, then the norm coincides with the squared standard norm of the complex numbers.
	
\begin{lemma} \label{Nz_lemma}
Let $ \alpha, \beta \in \R_{z} $ be arbitrary. The following holds true:
	\begin{itemize}
		\item[i)] $ \N(\alpha) = 0 $ iff $ \alpha = \left( 0,0 \right) $ or $ z = \pm 2 $ and $ \alpha \in \{ \left( \mp\lambda,  \lambda \right) \mid \lambda \in \mathbb{Z} \} $.
		\item[ii)] $ \N(\alpha * \beta) = \N(\alpha) \N(\beta) $	
		\item[iii)] $ \alpha*\overline{\alpha} =  \N\left(\alpha \right) =  \N\left(\overline{\alpha} \right) $
		\item[iv)] $ \N(\alpha) = \pm 1 $ iff $ \alpha $ is a unit. Moreover, if $ \N(\alpha) = \pm 1 $, then $ \pm \overline{\alpha} $ is the inverse of $ \alpha $.
	\end{itemize}
\end{lemma}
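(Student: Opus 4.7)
The plan is to prove the four items in the order (iii), (ii), (i), (iv), since each later item either invokes the identity $\alpha * \overline{\alpha} = \N(\alpha)$ or the multiplicativity of $\N$. I expect (i) to be the only delicate step; (ii), (iii), and (iv) reduce to routine computations combined with results from the previous conjugation lemma.

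For (iii), I would substitute $\overline{\alpha} = (a_1 + z a_2, -a_2)$ directly into the definition of the $z$-product. The first coordinate of $(a_1, a_2) * (a_1 + z a_2, -a_2)$ becomes $a_1(a_1+za_2) + a_2^2 = a_1^2 + z a_1 a_2 + a_2^2 = \N(\alpha)$, while the second collapses telescopically to $-a_1 a_2 + a_1 a_2 + z a_2^2 - z a_2^2 = 0$. Hence $\alpha * \overline{\alpha} = (\N(\alpha), 0)$, which is $\N(\alpha)$ under the identification of $\mathbb{Z}$ with $\mathbb{Z} \times \{0\}$. The equality $\N(\overline{\alpha}) = \N(\alpha)$ follows from the analogous expansion $(a_1+za_2)^2 - z(a_1 + z a_2) a_2 + a_2^2 = a_1^2 + z a_1 a_2 + a_2^2$. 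For (ii) I would then combine (iii) with $\overline{\alpha * \beta} = \overline{\alpha} * \overline{\beta}$ from the previous lemma and the commutativity and associativity of $*$ guaranteed by Proposition~\ref{prop_ring} to obtain
\[ \N(\alpha * \beta) \;=\; (\alpha * \beta) * \overline{\alpha * \beta} \;=\; (\alpha * \overline{\alpha}) * (\beta * \overline{\beta}) \;=\; \N(\alpha)\, \N(\beta), \]
an equality taking place in $\R_z$ but reducing to an equality of integers.

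The main obstacle is (i), which I would treat by viewing $\N(\alpha) = a_1^2 + z a_1 a_2 + a_2^2 = 0$ as a vanishing binary quadratic form. If $a_2 = 0$, this forces $a_1 = 0$. Otherwise, dividing by $a_2^2$ reduces the problem to the rational quadratic $t^2 + z t + 1 = 0$ with $t = a_1/a_2$. For a rational root to exist, its discriminant $z^2 - 4$ must be a nonnegative perfect square; writing $z^2 - k^2 = 4$ and factoring as $(z-k)(z+k) = 4$ over $\mathbb{Z}$, the only integer solutions are $k = 0$, $z = \pm 2$, giving $t = -z/2 = \mp 1$ and hence $\alpha = (\mp \lambda, \lambda)$, matching the exceptional set. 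Finally, for (iv): if $\N(\alpha) = \pm 1$, then by (iii) $\alpha * (\pm \overline{\alpha}) = \pm \N(\alpha) = 1$, so $\pm \overline{\alpha}$ is an inverse of $\alpha$. Conversely, if $\alpha * \beta = (1,0)$, then (ii) yields $\N(\alpha)\N(\beta) = \N(1,0) = 1$ in $\mathbb{Z}$, forcing $\N(\alpha) \in \{+1, -1\}$.
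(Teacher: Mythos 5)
Your proof is correct, and for two of the four items it takes a genuinely different route from the paper. Parts (iii) and (iv) match the paper's argument (direct expansion of $\alpha * \overline{\alpha}$, then the unit criterion via $\alpha * (\pm\overline{\alpha}) = 1$ and $\N(\alpha)\N(\beta) = \N(1) = 1$ for the converse). For (ii), the paper simply expands $\N(\alpha*\beta)$ by brute force and factors the result; you instead derive multiplicativity structurally from (iii), the identity $\overline{\alpha*\beta} = \overline{\alpha}*\overline{\beta}$, and commutativity/associativity of $*$, writing $\N(\alpha*\beta) = (\alpha*\beta)*(\overline{\alpha}*\overline{\beta}) = (\alpha*\overline{\alpha})*(\beta*\overline{\beta})$. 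This is cleaner and free of the long polynomial identity, at the cost of leaning on the conjugation lemma; the paper's computation is self-contained (and it separately sketches a third route via the determinant of the matrix embedding). For (i), the paper reduces to the primitive case by dividing out $\gcd(a_1,a_2)$ and uses the divisibilities $a_1' \mid {a_2'}^2$ and $a_2' \mid {a_1'}^2$ to force $a_1', a_2' \in \{-1,1\}$ and then $z = \pm 2$; you instead divide by $a_2^2$ and ask when $t^2 + zt + 1 = 0$ has a rational root, concluding from $(z-k)(z+k) = 4$ that $k = 0$ and $z = \pm 2$. Both are sound; your discriminant argument makes the special role of $z = \pm 2$ (the degenerate conic) more transparent, while the paper's divisibility argument avoids any appeal to the quadratic formula. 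One small point worth making explicit in your factorization step: $z-k$ and $z+k$ have the same parity, which is what rules out the factorizations $4 = 1 \cdot 4$ and $4 = (-1)\cdot(-4)$ and leaves only $\{2,2\}$ and $\{-2,-2\}$.
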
	
	
\begin{proof}
	\begin{enumerate}
		\item[i)] Let $ \alpha = \left( a_1,a_2 \right) \in \R_z $ and $ \beta = \left( b_1,b_2 \right) \in \R_z $. Assume $ \N\left(\alpha\right) = 0 $, or equivalently, $ a_1^2 + z a_1 a_2 + a_2^2 = 0 $. If $ a_1 \neq 0 \neq a_2 $ we can write $ a_j = \lambda a_j' $ for $ j = 1,2 $ where $ \lambda > 0 $ is the greatest common divisor of $ a_1, a_2 $. Then $ {a_1'}^2 + z a_1' a_2' + {a_2'}^2 = 0 $ holds true which gives us $ a_1' \mid {a_2'}^2 $ and $ a_2' \mid {a_1'}^2 $. Since $ a_1', a_2' $ are relatively prime and different from zero, we see that $ a_1', a_2' \in \left\{ -1,1 \right\} $ and $ z \in \left\{ -2,2 \right\} $ and so the statement follows. The reverse direction is clear.
		\item[ii)] By calculation we see
	\begin{align*}
		\N(\alpha * \beta) &= \N\big(\left(   a_1 b_1 - a_2 b_2, a_1 b_2 + a_2 b_1 + z a_2 b_2  \right) \big) \\
		&= \left( a_1 b_1 - a_2 b_2 \right)^2 + z \left( a_1 b_1 - a_2 b_2 \right) \left( a_1 b_2 + a_2 b_1 + z a_2 b_2 \right) \\
		&+ \left( a_1 b_2 + a_2 b_1 + z a_2 b_2 \right)^2 \\	
		&= \left( a_1^2+za_1a_2+a_2^2 \right) \left( b_1^2+zb_1b_2+b_2^2 \right) \\
		&= \N(\alpha) \N(\beta) 
\end{align*}
		\item[iii)] Moreover,
		$$\alpha * \overline{\alpha} = \left( a_1^2+za_1a_2+a_2^2 \right)	
		= \N(\alpha) .$$
With this we also deduce
$$ \N(\overline{\alpha}) = \overline{\alpha}* \overline{\overline{\alpha}} = \alpha \overline{\alpha} .$$
		\item[iv)] Assume that $ \N \left( \alpha \right) = \pm 1 $, then 
$ \alpha * \left(\pm \overline{\alpha}\right) = \pm \N\left( \alpha \right) = 1 $ and so $ \alpha $ is a unit with inverse $ \pm \overline{\alpha} $. Conversely, if $ \alpha $ is a unit, its norm must be a unit in $ \mathbb{Z} $ because 
$$ \N \left( \alpha \right) \N \left( \overline{\alpha} \right) = \N \left( \alpha \overline{\alpha} \right) = \N \left( \pm 1 \right) = 1 $$ and so we conclude.
	\end{enumerate}
\end{proof}

That $ \N $ is multiplicative can also be proven in a more “creative” way. We can define $ \iota: \R_{z} \hookrightarrow \GL_2 \left( \mathbb{R} \right) $ by mapping $ \left( a,b\right) $ to $ \begin{pmatrix}
 a & -b  \\
 b & a+zb \\
\end{pmatrix} $
and show that $ \iota $ is an embedding as well as that the following diagram commutes:
	
\begin{center}
\begin{tikzcd}
  \R_z \arrow[r, hook, "\iota"] \arrow["\N",d]
    & \GL_2 \left( \mathbb{R} \right) \arrow[d, "\det"] \\
  \mathbb{Z}  \arrow[r, "\id"]
& \mathbb{Z} 
\end{tikzcd}		
\end{center}			

Let $ \alpha, \beta \in \R_z $, then we have
$$ \N \left( \alpha \beta \right) = \det \left( \iota \left( \alpha \beta \right) \right) = \det \left( \iota \left( \alpha \right) \right) \det \left( \iota \left(\beta \right) \right) = \N \left( \alpha \right)\N \left( \beta \right).$$
Hence, $ \N $ inherits its multiplicativity from $ \iota $ and the determinant defined for $ 2 \times 2 $-matrices.

\begin{example}
$ z $-conjugation and $ z $-norm can also be interpreted geometrically: Let elements of $ z $-rings be points on the $ \mathbb{Z}\times\mathbb{Z} $-grid as in \Cref{z_conjugation}.
Consider $ \left( 1,4 \right) \in \R_1 $. Its norm is $ 1^2+4+4^2= 21 $ and so it is contained on the ellipse defined by the equation $ x^2+xy+y^2 = 21 $ over $ \mathbb{R} \times \mathbb{R} $. We know that the conjugate of $ \left( 1,4 \right) $ has the same norm and so it must also lie on the same ellipse and be a point on the grid. To construct $ \overline{\left( 1,4 \right)} $ we can just reflect $ \left( 1,4 \right) $ on the origin and then find another point with the same imaginary part on the ellipse as the reflected point. Hence, we get that $ \overline{\left( 1,4 \right)} = \left( 5,-4\right) $. Analogously, we can show that $ \left( 3,1 \right) \in \R_3 $ and $ \left( -1,2 \right) \in \R_4 $ have norm $ 19 $ and $ -3 $, respectively. Thus $ \overline{\left( 3,1 \right)} = \left( 6,-1 \right) $ and $ \overline{\left( -1,2 \right)} = \left( 7,-2 \right) $ with respect to the corresponding $ z $-rings.
\end{example}

\vspace{5mm}
\begin{figure}[h]  
\begin{center}
\pagestyle{empty}
\definecolor{ttffqq}{rgb}{0.2,1.,0.}
\definecolor{qqzzqq}{rgb}{0.,0.6,0.}
\definecolor{ffxfqq}{rgb}{1.,0.4980392156862745,0.}
\definecolor{qqzzff}{rgb}{0.,0.6,1.}
\definecolor{qqqqff}{rgb}{0.,0.,1.}
\definecolor{ffqqtt}{rgb}{1.,0.,0.2}
\definecolor{ffqqqq}{rgb}{1.,0.,0.}
\definecolor{cqcqcq}{rgb}{0.7529411764705882,0.7529411764705882,0.7529411764705882}
\begin{tikzpicture}[line cap=round,line join=round,>=triangle 45,x=0.8cm,y=0.8cm]
\draw [color=cqcqcq,, xstep=0.8cm,ystep=0.8cm] (-5.6647697463879565,-5.5681579844159845) grid (7.854440470161663,5.604743020996957);
\draw[->,color=black] (-5.6647697463879565,0.) -- (7.854440470161663,0.);
\foreach \x in {-5.,-4.,-3.,-2.,-1.,1.,2.,3.,4.,5.,6.,7.}
\draw[shift={(\x,0)},color=black] (0pt,2pt) -- (0pt,-2pt) node[below] {\footnotesize $\x$};
\draw[->,color=black] (0.,-5.5681579844159845) -- (0.,5.604743020996957);
\foreach \y in {-5.,-4.,-3.,-2.,-1.,1.,2.,3.,4.,5.}
\draw[shift={(0,\y)},color=black] (2pt,0pt) -- (-2pt,0pt) node[left] {\footnotesize $\y$};
\draw[color=black] (0pt,-10pt) node[right] {\footnotesize $0$};
\clip(-5.6647697463879565,-5.5681579844159845) rectangle (7.854440470161663,5.604743020996957);
\draw [rotate around={-45.:(0.,0.)},line width=2.pt,color=ffqqqq] (0.,0.) ellipse (5.184592558726289cm and 2.9933259094191533cm);
\draw [samples=50,domain=-0.99:0.99,rotate around={-135.:(0.,0.)},xshift=0.cm,yshift=0.cm,line width=2.pt,color=qqqqff] plot ({2.7568097504180447*(1+(\x)^2)/(1-(\x)^2)},{6.164414002968976*2*(\x)/(1-(\x)^2)});
\draw [samples=50,domain=-0.99:0.99,rotate around={-135.:(0.,0.)},xshift=0.cm,yshift=0.cm,line width=2.pt,color=qqqqff] plot ({2.7568097504180447*(-1-(\x)^2)/(1-(\x)^2)},{6.164414002968976*(-2)*(\x)/(1-(\x)^2)});
\draw [line width=2.pt,color=qqzzff,domain=-5.6647697463879565:7.854440470161663] plot(\x,{(-0.-1.*\x)/-3.});
\draw [line width=2.pt,color=ffxfqq,domain=-5.6647697463879565:7.854440470161663] plot(\x,{(-0.-4.*\x)/-1.});
\draw [samples=50,domain=-0.99:0.99,rotate around={-45.:(0.,0.)},xshift=0.cm,yshift=0.cm,line width=2.pt,color=qqzzqq] plot ({1.7320508075688772*(1+(\x)^2)/(1-(\x)^2)},{1.*2*(\x)/(1-(\x)^2)});
\draw [samples=50,domain=-0.99:0.99,rotate around={-45.:(0.,0.)},xshift=0.cm,yshift=0.cm,line width=2.pt,color=qqzzqq] plot ({1.7320508075688772*(-1-(\x)^2)/(1-(\x)^2)},{1.*(-2)*(\x)/(1-(\x)^2)});
\draw [line width=2.pt, color=ttffqq,domain=-5.6647697463879565:7.854440470161663] plot(\x,{(-0.-2.*\x)/1.});
\draw [line width=2.pt, dash pattern=on 4pt,domain=-5.6647697463879565:7.854440470161663] plot(\x,{(-12.-0.*\x)/6.});
\draw [line width=2.pt, dash pattern=on 4pt, domain=-5.6647697463879565:7.854440470161663] plot(\x,{(-24.-0.*\x)/6.});
\draw [line width=2.pt, dash pattern=on 4pt, domain=-5.6647697463879565:7.854440470161663] plot(\x,{(-9.-0.*\x)/9.});
\begin{scriptsize}
\draw[color=ffqqqq] (4.98,4.8) node {\fontsize{10}{0} $x^2+xy+y^2 = 21$};
\draw[color=qqqqff] (4.87,4.2) node {\fontsize{10}{0} $x^2+3xy+y^2 = 19$};
\draw[color=qqzzqq] (4.94,3.6) node {\fontsize{10}{0} $x^2+4xy+y^2 = -3$};
\draw [fill=ffqqtt] (1.,4.) circle (2.5pt);
\draw [fill=qqqqff] (3.,1.) circle (2.5pt);
\draw [fill=black] (0.,0.) circle (2.5pt);
\draw [fill=ffqqqq] (-1.,-4.) circle (2.5pt);
\draw [fill=qqqqff] (-3.,-1.) circle (2.5pt);
\draw [fill=qqqqff] (10.,-3.) circle (2.5pt);
\draw [fill=qqzzqq] (-1.,2.) circle (2.5pt);
\draw [fill=qqzzqq] (1.,-2.) circle (2.5pt);
\draw [fill=qqzzqq] (7.,-2.) circle (2.5pt);
\draw [fill=ffqqqq] (5.,-4.) circle (2.5pt);
\draw [fill=qqqqff] (6.,-1.) circle (2.5pt);


\end{scriptsize}
\end{tikzpicture}
\caption{Geometric interpretation of conjugation}
\label{z_conjugation}
\end{center}
\end{figure}
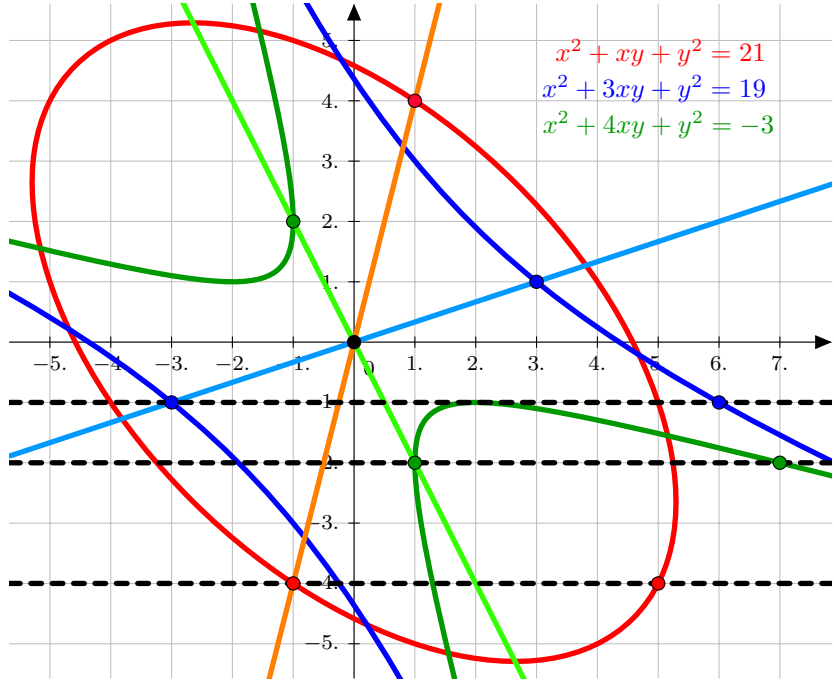
\vspace{5mm}

\begin{corollary}
	$ \textbf{R}_{z} $ is an integral domain iff $ z \notin \left\{ -2,2 \right\} $.
\end{corollary}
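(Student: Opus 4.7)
The statement is an "iff" on the zero-divisor property (commutativity with unit is already in \Cref{prop_ring}), and essentially everything needed has been packaged into \Cref{Nz_lemma}. My plan is to prove each direction by a one-line reduction to that lemma.

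For the direction $z\notin\{-2,2\}\Rightarrow\R_z$ is an integral domain, I would argue by contrapositive using multiplicativity of the norm. Suppose $\alpha*\beta=(0,0)$ in $\R_z$. Then $\N(\alpha)\N(\beta)=\N(\alpha*\beta)=\N(0,0)=0$ by \Cref{Nz_lemma}(ii). Since this is an equality in $\mathbb{Z}$, which is an integral domain, one of $\N(\alpha),\N(\beta)$ vanishes. Now invoke \Cref{Nz_lemma}(i): under the hypothesis $z\notin\{-2,2\}$, the only element of $\R_z$ with vanishing norm is $(0,0)$, so $\alpha=(0,0)$ or $\beta=(0,0)$. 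This is exactly the integral-domain property.

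For the reverse direction $z\in\{-2,2\}\Rightarrow\R_z$ has zero divisors, I would exhibit explicit nilpotents. Take $\alpha=(-1,1)\in\R_2$ and $\alpha=(1,1)\in\R_{-2}$; both are nonzero and both satisfy $\N(\alpha)=0$ by \Cref{Nz_lemma}(i) (or by direct computation of $a_1^2+za_1a_2+a_2^2$). Then \Cref{Nz_lemma}(iii) gives $\alpha*\overline{\alpha}=\N(\alpha)=0$, and it only remains to note that $\overline{\alpha}$ is also nonzero. In both of these special cases one checks $\overline{\alpha}=-\alpha\neq(0,0)$, so $\alpha*(-\alpha)=0$ with $\alpha\neq 0$, witnessing a zero divisor (in fact $\alpha^2=0$, a nilpotent of order two).

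There is no real obstacle here: the lemma is doing all the heavy lifting, and the only mildly delicate point is confirming that the candidate zero divisor and its conjugate are both nonzero, which is immediate from the explicit form $(\mp\lambda,\lambda)$ in \Cref{Nz_lemma}(i) with $\lambda\neq 0$. The write-up is therefore just two short paragraphs, one per implication.
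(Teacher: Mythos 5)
Your proof is correct and follows essentially the same route as the paper: the paper likewise exhibits the explicit zero divisors $\left( 1, \pm 1 \right) \in \R_{\mp 2}$ (your $\left( -1,1 \right)$ and $\left( 1,1 \right)$ are the same elements up to sign) and handles the converse by combining \Cref{Nz_lemma} with the fact that $\mathbb{Z}$ is an integral domain. Your write-up merely spells out the norm-multiplicativity step that the paper leaves as "the rest follows immediately," which is a fair expansion rather than a different argument.
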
	
	
\begin{proof}
Let $ \left( 1, \pm 1 \right) \in \R_{\mp 2} $, then
	$$ \left( 1, \pm 1 \right)*\left( 1, \pm 1 \right) = \left( 1-1, \pm 1 \pm 1 \mp 2 \right) = \left( 0,0 \right) .$$	
The rest follows immediately from \Cref{Nz_lemma} and the fact that $ \left( \mathbb{Z},+, \cdot \right) $ is also an integral domain.
\end{proof}

Another useful definition similar to the conjugate is the mirror conjugate which exchanges the real and imaginary parts of an element:

\begin{definition}
	Let 	$ \alpha \in \R_{z} $. Then we call 
$$ \widetilde{\alpha} \coloneqq \left( \mathrm{Im} \left( \alpha \right), \mathrm{Re}\left( \alpha \right) \right) $$
the {\em mirror conjugate of $ \alpha $}.	
\end{definition}	

\begin{lemma} \label{lemma_mirror}
	If 	$ \alpha \in \R_{z} $, then the following identity for the mirror conjugate of $ \alpha $ holds true:
	$$ \widetilde{\alpha} = \left( 0,1 \right) * \overline{ \alpha } $$
\end{lemma}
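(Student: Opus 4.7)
The plan is to prove this by direct computation, expanding both sides using the definitions of the $z$-product and the conjugate given earlier, and then checking that the components match coordinate by coordinate.

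Concretely, I would write $\alpha = (a_1, a_2)$ and first compute $\overline{\alpha} = (a_1 + z a_2, -a_2)$ straight from the definition of the conjugate. Then I would apply the $z$-product formula
$$
(a_1, a_2) * (b_1, b_2) = (a_1 b_1 - a_2 b_2,\ a_1 b_2 + a_2 b_1 + z a_2 b_2)
$$
with left factor $(0,1)$ and right factor $\overline{\alpha} = (a_1 + z a_2, -a_2)$. The first coordinate collapses to $0 \cdot (a_1 + z a_2) - 1 \cdot (-a_2) = a_2$, and in the second coordinate the $z a_2$ contribution from the conjugate cancels exactly against the $z \cdot 1 \cdot (-a_2)$ term coming from the $z$-product, leaving $a_1$. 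Thus $(0,1) * \overline{\alpha} = (a_2, a_1)$, which is precisely $\widetilde{\alpha}$ by definition.

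Since the identity reduces to a short, purely mechanical verification, there is no real obstacle; the only thing one has to be slightly careful with is to notice the cancellation of $z a_2$ in the second coordinate, which is the whole reason the definition of the conjugate is tailored the way it is. No auxiliary lemma beyond the definitions of $*$, $\overline{\,\cdot\,}$, and $\widetilde{\,\cdot\,}$ is needed.
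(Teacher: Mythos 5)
Your computation is correct and is exactly the proof the paper gives: expand $\overline{\alpha}=(a_1+za_2,-a_2)$, apply the $z$-product with left factor $(0,1)$, and observe that the $za_2$ terms cancel in the second coordinate to leave $(a_2,a_1)=\widetilde{\alpha}$. Nothing to add.
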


\begin{proof}
	Let $ \alpha = \left( a_1,a_2 \right) $, then we have
$$ \left( 0,1 \right) * \overline{ \alpha } = \left( 0,1 \right) *  \left( a_1+ za_2,-a_2 \right) = \left(a_2,a_1+za_2-za_2 \right) = \widetilde{\alpha}. $$
\end{proof}

If we consider the elements of $ \textbf{R}_{z} $ as vectors in the $ \mathbb{Z} \times \mathbb{Z} $-plane, then we can calculate the oriented area of the parallelogram which is defined by two such vectors. We will see that this oriented area will play an important role for many results in the following sections.

\begin{definition}
	Consider $ \alpha = \left( a_1, a_2 \right) \in \mathbb{Z} \times \mathbb{Z} $ and $ \beta = \left( b_1,b_2 \right) \in \mathbb{Z} \times \mathbb{Z} $. Then we define
	\begin{align*}
		\big< \alpha , \beta \big> \coloneqq a_1 b_2 - a_2 b_1
	\end{align*}	
and call it the {\em oriented area of $ \alpha, \beta $}.
\end{definition}

Let $ \alpha, \beta \in \mathbb{Z} \times \mathbb{Z} $ be defined as above. 
Then by using “$ \times $” exceptionally as the sign for the cross product we get
\begin{displaymath}
	\begin{pmatrix}
 a_1  \\
 a_2  \\
 0  \\
\end{pmatrix} 
\times
\begin{pmatrix}
 b_1  \\
 b_2  \\
 0  \\
\end{pmatrix}=
\begin{pmatrix}
 0  \\
 0  \\
 a_1 b_2-a_2 b_1  \\
\end{pmatrix} =
\begin{pmatrix}
 0  \\
 0  \\
 \big< \alpha , \beta \big>  \\
\end{pmatrix}.
\end{displaymath}
Hence, the absolute value of $ \big< \alpha , \beta \big> $ is equal to the positive area defined by the parallelogram generated by $ \alpha, \beta $ where $ \alpha, \beta $ are interpreted as vectors in $ \mathbb{Z} \times \mathbb{Z} $. The sign of the oriented area defines the orientation which depends on the order of $  \alpha, \beta $. Therefore the oriented area is anti-commutative and bilinear. 


\begin{lemma} \label{lemma_oriented_area}
	Let $ \alpha, \beta \in \textbf{R}_{z} $, then the following holds true:
	\begin{itemize}
		\item[i)] $ \big< \alpha ,\widetilde{\overline{\alpha}} \big> = \N \left( \alpha \right) $
		\item[ii)] $ \big< \overline{\widetilde{\alpha}}, \alpha \big> = \N \left( \alpha \right) $
		\item[iii)] $ \big< \widetilde{\beta}, \widetilde{\alpha} \big> = \big< \alpha , \beta \big>   $ 
		\item[iv)] $ \big< \overline{\beta}, \overline{\alpha} \big> = \big< \alpha , \beta \big>$ 
	\end{itemize}
\end{lemma}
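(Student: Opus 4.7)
The plan is to prove all four identities by direct unfolding of the definitions. Write $\alpha = (a_1, a_2)$ and $\beta = (b_1, b_2)$. Recall that $\overline{\alpha} = (a_1 + za_2, -a_2)$ and $\widetilde{\alpha} = (a_2, a_1)$, and similarly for $\beta$. The bilinear form $\langle (x_1,x_2),(y_1,y_2)\rangle = x_1 y_2 - x_2 y_1$ is easy to evaluate once both arguments are written out in components.

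For (i) I first compute $\widetilde{\overline{\alpha}} = (-a_2,\, a_1 + za_2)$ by applying the mirror conjugate to $\overline{\alpha}$. Then
$\langle \alpha, \widetilde{\overline{\alpha}} \rangle = a_1(a_1+za_2) - a_2(-a_2) = a_1^2 + za_1 a_2 + a_2^2 = \N(\alpha).$
For (ii) I compute $\overline{\widetilde{\alpha}} = (a_2 + za_1,\, -a_1)$ by applying the $z$-conjugate to $\widetilde{\alpha}$, and
$\langle \overline{\widetilde{\alpha}}, \alpha\rangle = (a_2+za_1)a_2 - (-a_1)a_1 = a_1^2 + za_1 a_2 + a_2^2 = \N(\alpha).$

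For (iii) the computation is immediate:
$\langle \widetilde{\beta}, \widetilde{\alpha}\rangle = b_2 \cdot a_1 - b_1 \cdot a_2 = a_1 b_2 - a_2 b_1 = \langle \alpha, \beta\rangle,$
so this identity is actually independent of $z$. For (iv) I expand
$\langle \overline{\beta}, \overline{\alpha}\rangle = (b_1 + zb_2)(-a_2) - (-b_2)(a_1 + za_2) = -a_2 b_1 - z a_2 b_2 + a_1 b_2 + z a_2 b_2 = a_1 b_2 - a_2 b_1,$
where the $z$-contributions cancel and one recovers $\langle \alpha,\beta\rangle$.

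There is essentially no obstacle here: the only bookkeeping points are remembering the orientation (i.e.\ the order of arguments in $\langle \cdot,\cdot\rangle$) in (i) versus (ii), and observing the cancellation of the two $z a_2 b_2$ terms in (iv). If one wanted a less computational presentation, (i) could be read off from \Cref{Nz_lemma}(iii) together with \Cref{lemma_mirror} (the second coordinate of $\alpha * \overline{\alpha}$ vanishes, and $\widetilde{\overline{\alpha}}$ is $\overline{\alpha}$ rotated so that the product pairing becomes the oriented area), and (ii), (iii), (iv) would then follow by applying $\overline{\,\cdot\,}$ and $\widetilde{\,\cdot\,}$ to (i); but the direct verification above is shorter and self-contained.
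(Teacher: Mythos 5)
Your proof is correct and is essentially the same as the paper's: both verify all four identities by writing $\alpha,\beta$ in components and expanding the oriented area directly. The computations match the paper's line for line (the paper merely lists (iii) and (iv) in the opposite order inside its proof), so there is nothing to add.
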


\begin{proof}
Let $ \alpha = \left( a_1,a_2 \right) $ and $ \beta = \left( b_1,b_2 \right) $, then we have:
	\begin{enumerate}
		\item[i)] \begin{align*}
		\big< \alpha ,\widetilde{\overline{\alpha}} \big>
		&= \big< \left( a_1, a_2 \right) ,\left( -a_2,a_1+za_2 \right) \big> \\
		&= a_1 \left( a_1+za_2 \right) + a_2^2 \\
		&= \N \left( \alpha \right)
		\end{align*}
		\item[ii)] \begin{align*}
		\big< \overline{\widetilde{\alpha}}, \alpha \big>
		&= \big< \left( a_2+za_1,-a_1 \right), \left( a_1, a_2 \right) \big> \\
		&= \left( a_2+za_1 \right)a_2 + a_1^2 \\
		&= \N \left( \alpha \right)
		\end{align*}
		\item[iii)] \begin{align*}
\big< \overline{\beta}, \overline{\alpha} \big> &= \big< \left( b_1+zb_2, -b_2 \right), \left( a_1+za_2, -a_2 \right) \big> \\ 
		&= -b_1a_2 -zb_2a_2 + zb_2a_2 + a_1b_2 \\
		&= \big< \alpha , \beta \big>
	\end{align*}
		\item[iv)] \begin{align*}
\big< \widetilde{\beta}, \widetilde{\alpha} \big> &= \big< \left( b_2,b_1 \right), \left( a_2,a_1 \right) \big> \\ 
		&= b_2a_1 - b_1a_2  \\
		&= \big< \alpha , \beta \big>
	\end{align*}
	\end{enumerate}	
\end{proof}

In the next lemma we would like to find out about the isomorphy classes of these  $ z $-rings. 
	
\begin{lemma} \label{lem_isomorphy}
	Let $ z_1,z_2 \in \mathbb{Z} $. Then $ \R_{z_1} $ and $ \R_{z_2} $ are isomorphic if and only if $ z_1 = z_2 $ or $ z_1 = -z_2 $. Moreover, $ \R_z $ is isomorphic to $ \mathbb{Z}[x]/(x^2\pm zx+1) $.
\end{lemma}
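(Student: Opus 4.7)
I would prove the two claims together, leveraging the polynomial presentation to drive both directions.

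\textbf{Step 1 (Polynomial presentation).} Set $i \coloneqq (0,1) \in \R_z$. A direct computation with the $z$-product gives
\[
 i * i = (0\cdot 0 - 1\cdot 1,\ 0\cdot 1 + 1\cdot 0 + z\cdot 1\cdot 1) = (-1, z) = -1 + z\cdot i,
\]
so $i$ is a root of $x^2 - zx + 1$. Using the identification of $\mathbb{Z}$ with $\mathbb{Z}\times\{0\}$, the universal property of $\mathbb{Z}[x]$ yields a ring homomorphism $\mathbb{Z}[x] \to \R_z$ with $x \mapsto i$; this factors through $\varphi: \mathbb{Z}[x]/(x^2 - zx + 1) \to \R_z$. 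Since $(a_1, a_2) = a_1 + a_2 i$, the map $\varphi$ is surjective, and because both source and target are free of rank $2$ with compatible bases $\{1,\overline{x}\}$ and $\{1,i\}$, it is injective as well. The substitution $x \mapsto -x$ induces an isomorphism $\mathbb{Z}[x]/(x^2 - zx + 1) \cong \mathbb{Z}[x]/(x^2 + zx + 1)$, which gives the $\pm$ version.

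\textbf{Step 2 (Sufficiency of $z_1 = \pm z_2$).} If $z_1 = z_2$ there is nothing to show. If $z_1 = -z_2$, then Step 1 presents both $\R_{z_1}$ and $\R_{z_2}$ as $\mathbb{Z}[x]/(x^2 - z_1 x + 1)$ (using the minus presentation for $\R_{z_1}$ and the plus presentation for $\R_{z_2}$). Equivalently, one can check directly that $(a,b) \mapsto (a,-b)$ is a ring isomorphism $\R_{z_1} \to \R_{-z_1}$, which I would verify by a single computation of $\phi(\alpha * \beta)$ against $\phi(\alpha)*\phi(\beta)$, noting that the sign change on the second coordinate combines with the sign change $z_1 \mapsto -z_1$ to leave the imaginary part invariant.

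\textbf{Step 3 (Necessity of $z_1 = \pm z_2$).} Suppose $\phi: \R_{z_1} \to \R_{z_2}$ is a ring isomorphism. As any unital ring map is the identity on the prime subring, $\phi$ fixes $\mathbb{Z} \times \{0\}$. Write $\phi(0,1) = (a,b)$. Since $\{(1,0),(0,1)\}$ is a $\mathbb{Z}$-basis of $\R_{z_1}$ and $\phi$ is a $\mathbb{Z}$-module isomorphism, $\{(1,0),(a,b)\}$ must be a basis of $\R_{z_2}$, forcing $b = \pm 1$. Applying $\phi$ to the relation $(0,1)^2 = z_1 (0,1) - 1$ and expanding in $\R_{z_2}$ yields the two scalar equations $a^2 - b^2 + 1 = z_1 a$ and $2ab + z_2 b^2 = z_1 b$. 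With $b \in \{\pm 1\}$ the first simplifies to $a(a - z_1) = 0$, and the second to $z_1 = 2a + z_2 b$. Running through the four sign/value cases gives $z_2 = \pm z_1$ in each.

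\textbf{Expected difficulty.} The only nontrivial point is the necessity direction, and even there the content is contained in the observation that the bijectivity of $\phi$ as a $\mathbb{Z}$-module map pins $b$ to $\pm 1$; once this is in hand the two relations obtained from $\phi(i^2) = \phi(i)^2$ leave only finitely many possibilities, all of which are covered by $z_1 = \pm z_2$. Everything else is routine.
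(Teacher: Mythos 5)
Your proposal is correct and follows essentially the same route as the paper's proof: the quadratic relation satisfied by $(0,1)$ drives the polynomial presentation, the map $(a,b)\mapsto(a,-b)$ gives sufficiency, and necessity is obtained by applying $\phi$ to $(0,1)^2 = z_1(0,1)-(1,0)$ after pinning the second coordinate of $\phi(0,1)$ to $\pm 1$ (you do this via a basis/determinant argument, the paper via $\phi^{-1}$ — an immaterial difference). Your explicit four-case check is in fact slightly more complete than the paper's, which passes directly to $a_1 = z_1$ without noting the $a_1 = 0$ branch, though the conclusion is unaffected either way.
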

	
\begin{proof}	
	Let $ \R_{z_1} $ and $ \R_{z_2} $ be isomorphic. Hence, we find a ring isomorphism $ \phi: \R_{z_1} \to \R_{z_2} $. Observe that the inverse of $ \phi $, denoted by $ \phi^{-1} $, is also a ring homomorphism. Since $ \phi $ and $ \phi^{-1} $ are ring homomorphisms, they preserve the neutral elements with respect to addition and multiplication. Moreover, $ \phi $ and $ \phi^{-1} $ must be $ \mathbb{Z} $-linear. Define $ \left(a_1,a_2\right) \coloneqq \phi \left( 0,1 \right) $ and $ \left(b_1,b_2\right) \coloneqq \phi^{-1}\left( 0,1 \right) $. Hence, we get
	\begin{align*}
		\phi \big( \left( 0,1 \right) \big) = a_1 \left( 1,0 \right) + a_2 \left( 0,1 \right)
	\end{align*}
and if we apply $ \phi^{-1} $, then
	\begin{align*}
		\left( 0,1 \right) = a_1 \left( 1,0 \right) + a_2 \left( b_1,b_2 \right).
	\end{align*}
So we deduce $ a_2b_2 = 1 $, i.e. $ a_2 = b_2 \in \left\{ -1,1 \right\} $. By definition of the $ z_1 $-product we have
\begin{align*}
	\left( 0,1 \right) * \left( 0,1 \right) = - \left( 1,0 \right) + z_1 \left( 0,1 \right)
\end{align*}
If we apply $ \phi $, then we get 
\begin{align*}
	\left( a_1,a_2 \right) * \left( a_1,a_2 \right) = \left( a_1^2-a_2^2,2a_1a_2+z_2a_2 \right) = - \left( 1,0 \right) + z_1 \left( a_1,a_2 \right)
\end{align*}	
and so we get the equations
	\begin{align*}
		a_1^2-a_2^2 &= -1+z_1a_1 \\
		2a_1+z_2a_2 &= z_1.
	\end{align*}
Hence, $ a_1 = z_1 $ and therefore $ z_1 = z_2 $ or $ z_1 = -z_2 $ by the second equation.


On the other hand, if $ z_1=z_2 $ then the statement holds clearly true. We would like to define an isomorphism for the case $ z_1 = -z_2 $. Define $ \phi: \textbf{R}_{z_1} \to \textbf{R}_{z_2} $ by $ \phi \left(a,b \right) = \left(a,-b \right) $. This is clearly a bijective ring homomorphism which maps the neutral elements onto each other. Thus, $ \textbf{R}_{z_1} $ and  $\textbf{R}_{z_2} $ are isomorphic iff $ z_1 = z_2 $ or $ z_1 = -z_2 $.

Consider $ \mathbb{Z}[x] $ as a ring endowed with its natural addition and multiplication. Then we can define a $ \mathbb{Z} $-linear and surjective ring homomorphism $ \mathbb{Z}[x] \twoheadrightarrow \R_z $ where $ 1 \in \mathbb{Z}[x] $ is mapped to $ \left( 1,0 \right) \in \R_z $ and $ x \in \mathbb{Z}[x] $ (or $ -x \in \mathbb{Z}[x] $) to $ \left( 0,1 \right) \in \R_z $. The kernel of this ring homomorphism is the $ \mathbb{Z} $-ideal generated by $ x^2-zx+1 $ (or $ x^2+zx+1 )$. By the fundamental theorem on homomorphisms we conclude that $ \mathbb{Z}[x]/(x^2\pm zx+1) $ and $ \R_z $ are isomorphic.
\end{proof}

In fact, the isomorphims defined above also respect the norm and the conjugation. Indeed, if $ \phi $ is defined as above for $ z_1 = - z_2 $ and $ \left(a_1, a_2\right) \in \R_{z_1} $, we have
	\begin{align*}
		\phi \left( \overline{\left( a_1, a_2 \right)} \right) &= \phi \big( \left( a_1+z_1a_2,-a_2 \right) \big) \\
		&= \left( a_1 - z_2a_2,a_2 \right) \\
		&= \overline{\left( a_1, -a_2 \right)} \\
		&= \overline{\phi \left( a_1, a_2 \right)}
	\end{align*}
and
	\begin{align*}
		\N \big( \phi \left( a_1, a_2 \right) \big) 
		&= \N \big( \left( a_1, -a_2 \right) \big) \\
		&= a_1^2 -z_2a_1a_2 + a_2^2 \\
		&= a_1^2 +z_1a_1a_2 + a_2^2 \\
		&= \N\big( \left( a_1, a_2 \right) \big) \\
	\end{align*}
In case $ z_1 = 0 = z_2 $, then the conjugation and $ \phi $ are equal. 



\end{section}

\begin{section}{$ z $-rings and their application}

\subsection{Extension of $ z $-rings}

The aim of this section is to investigate properties of the $ z $-rings such that we can finally deal with the question about the number of positive, primitive solutions to the Diophantine equation $ x^2 + zxy + y^2 = M $ and how we can construct these solutions for a given $ z \in \mathbb{N} $ and some $ M \in \mathbb{N} $. First of all we simplify the notation, then extend the $ z $-rings in a similar way as we can extend the Gaussian integers to the complex numbers.

By \Cref{lem_isomorphy} we can interpret $ \R_{z} $ as the ring $ \mathbb{Z}[i_{z}] $ where $ i_{z} $ is the element which satisfy the equation $ i_{z}^2 - zi_{z} + 1 = 0 $. Then the definition of addition and multiplication (we will often omit the sign for the multiplication) in $ \mathbb{Z}[i_z] $ of $ a_1+a_2i_z,b_1+b_2i_z $ is the following:
	\begin{align*}
		\left(a_1 + a_2i_{z} \right) + \left(b_1 + b_2i_{z}\right) &\coloneqq \left( a_1 + b_1 \right) + \left( a_2 + b_2 \right)i_{z} \\
		\left(a_1 + a_2i_{z} \right) \cdot \left(b_1 + b_2i_{z}\right) &\coloneqq \left( a_1b_1-a_2b_2 \right) + \left( a_1b_2 + a_2b_1 + za_2b_2 \right)i_{z}
	\end{align*}

We will also use the tools we developed in the last section for $ \mathbb{Z}[i_z] $: If $ \alpha \coloneqq a_1 + a_2i_{z} \in \mathbb{Z}[i_{z}] $, we call $ a_1 $ its real and $ a_2 $ its imaginary part, similarly, $ \overline{\alpha} = a_1+za_2-a_2i_{z} $ its conjugate and $ \widetilde{\alpha} = a_2 + a_1i_z $ its mirror conjugate. We write $ \N\left(\alpha\right) = {a_1}^2 + za_1a_2 + a_2^2 $ for the norm of $ \alpha $. Sometimes we write $ \left( a_1, a_2 \right)_{\N\left(\alpha\right)} $ for an element $ \alpha $ to indicate also the value of its norm. Moreover, let $ \beta \coloneqq b_1+b_2i_{z} \in \mathbb{Z}[i_{z}] $. We say that $ \alpha $ and $ \beta $ are associated if there is a unit $ \varepsilon \in \mathbb{Z}[i_{z}] $ such that $ \alpha = \varepsilon \beta $. We write $ \big< \alpha, \beta \big> = a_1b_2-a_2b_1 $ for the oriented area of $ \alpha $ and $ \beta $. The advantage of interpreting $ z $-rings in this way is that computation with elements of these rings is simpler. If $ z=0 $, we will write the complex unit $ i_{0} $ just as $ i $.

As mentioned in the last section we can interpret $ \mathbb{Z} \subset \mathbb{Z}[i_z] $ with the above addition and multiplication as a subring. We will see that prime numbers in this subring $ \mathbb{Z} $ are very important for discussing solutions of Diophantine equations in the form $ x^2+zxy+ y^2 = M $. In case we have a prime number $ p \in \mathbb{Z} $, then we allow $ p $ also to be negative. Otherwise we say $ p \in \mathbb{N} $ is prime. 

In this chapter we will consider the extension ring $ \mathbb{Z}[i_z] \subset \mathbb{R}[i_z] $ and the above notions as norm, conjugate etc. are defined on $ \mathbb{R}[i_z] $ analogously. Then we can consider the plane $ \mathbb{R} \times \mathbb{R}i_z $ which we call complex plane as we know it from the complex numbers. Furthermore, the isomorphism $ \phi_{z,-z}: \mathbb{Z}[i_z] \to \mathbb{Z}[i_{-z}] $ defined by $ \phi_{z,-z} \left( a_1+a_2i_z \right) = a_1-a_2i_{-z} $ can be extended to an isomorphism $ \Phi_{z,-z}:\mathbb{R}[i_z] \to \mathbb{R}[i_{-z}] $ in a natural way by the assignment $ r_1+r_2i_z \mapsto r_1-r_2i_{-z} $ for all $ r_1+r_2i_z \in \mathbb{R}[i_z] $. Then $ \Phi_{z,-z} $ still preserves $ \mathbb{Z} $ and respects the corresponding norm and conjugation functions. To simplify the notation we just write $ \Phi $ if there is no ambiguity.

Let $ a_1 < a_2 $ and $ b_1 < b_2 $ be reel numbers, then we consider $ [a_1,a_2] \times [b_1,b_2]i_z \subset \mathbb{R} \times \mathbb{R}i_z $ as the set containing the elements of $ \mathbb{R}[i_z] $ having their real and imaginary part in the intervals $ [a_1,a_2] $ and $ [b_1,b_2] $, respectively. Similarly, we can extend this definition for open and half open intervals. We numerate the quadrants of the complex plane anti-clockwise starting with the first quadrant being equal to $ [0,\infty) \times [0,\infty)i_z $ and so on until the fourth quadrant $ [0,\infty) \times (-\infty,0]i_z $. 




The following definitions and examples will be important for the coming sections.

\begin{definition}
	Let  $ z,M \in \mathbb{Z} $, then we say that $ \alpha \in \mathbb{Z}[i_z] $ {\em solves/satisfies the Diophantine equation} or {\em is a solution to the Diophantine equation $ x^2 + zxy+ y^2 = M $} if $ \left\{ \mathrm{Re}\left( \alpha \right), \mathrm{Im}\left( \alpha \right) \right\} $ is a solution of $ x^2 + zxy+ y^2 = M $ for $ M = \N \left( \alpha \right) $. We call this solution {\em positive} if $ \mathrm{Re}\left( \alpha \right) \geq 0 $ and $ \mathrm{Im}\left( \alpha \right) \geq 0 $. We also say that {\em $ M $ is represented (or representable) by $ x^2 + zxy+ y^2 $} if we can find a solution to the Diophantine equation $ x^2 + zxy+ y^2 = M $.
\end{definition}

\begin{definition}
	Let $ z,M \in \mathbb{Z} $. We call 
	$$ S_M = \{ a + b i_z \in \mathbb{R}[i_z] \mid \N \left( a+bi_z \right) = M \} $$ 
the {\em ($ M $-)level set} and its connected components in the complex plane {\em branches} (connected in the sense of path-connected with respect to the standard topology we have on $ \mathbb{R} \times \mathbb{R} $). 
\end{definition}

\begin{example} \label{ex_non_neg_sol}
The Diophantine equation 
	$$ x^2 + zxy + y^2 = M $$
is not solvable for $ \vert z \vert \leq 2 $ and $ M < 0 $ because we have 
	$$ x^2 + zxy + y^2 \geq x^2 -2 \vert xy \vert + y^2 = \left( \vert x \vert - \vert y \vert \right)^2 \geq 0 $$
which is a contradiction. Observe that the above arguments also hold true for $ x,y \in \mathbb{R} $. This shows that we also have $ S_M = \emptyset $ in this case.
\end{example}

\vspace{5mm}
\begin{figure}[h]  
\begin{center}
\pagestyle{empty}

\definecolor{zzqqtt}{rgb}{0.6,0.,0.2}
\definecolor{yqqqyq}{rgb}{0.5019607843137255,0.,0.5019607843137255}
\definecolor{zzwwff}{rgb}{0.6,0.4,1.}
\definecolor{wwccff}{rgb}{0.4,0.8,1.}
\definecolor{ttffqq}{rgb}{0.2,1.,0.}
\definecolor{ffzztt}{rgb}{1.,0.6,0.2}
\definecolor{ffdxqq}{rgb}{1.,0.8431372549019608,0.}
\definecolor{qqzzff}{rgb}{0.,0.6,1.}
\definecolor{wwccqq}{rgb}{0.4,0.8,0.}
\definecolor{qqqqff}{rgb}{0.,0.,1.}
\definecolor{ffwwqq}{rgb}{1.,0.4,0.}
\definecolor{ffqqqq}{rgb}{1.,0.,0.}
\definecolor{cqcqcq}{rgb}{0.7529411764705882,0.7529411764705882,0.7529411764705882}
\begin{tikzpicture}[line cap=round,line join=round,>=triangle 45,x=0.535cm,y=0.535cm]
\draw [color=cqcqcq,, xstep=0.535cm,ystep=0.535cm] (-8.852120759792612,-6.849976970818427) grid (11.43030182280011,5.765416053070059);
\draw[->,color=black] (-8.852120759792612,0.) -- (11.43030182280011,0.);
\foreach \x in {-8.,-7.,-6.,-5.,-4.,-3.,-2.,-1.,1.,2.,3.,4.,5.,6.,7.,8.,9.,10.,11.}
\draw[shift={(\x,0)},color=black] (0pt,2pt) -- (0pt,-2pt) node[below] {\footnotesize $\x$};
\draw[->,color=black] (0.,-6.849976970818427) -- (0.,5.765416053070059);
\foreach \y in {-6.,-5.,-4.,-3.,-2.,-1.,1.,2.,3.,4.,5.}
\draw[shift={(0,\y)},color=black] (2pt,0pt) -- (-2pt,0pt) node[left] {\footnotesize $\y$};
\draw[color=black] (0pt,-10pt) node[right] {\footnotesize $0$};
\clip(-8.852120759792612,-6.849976970818427) rectangle (11.43030182280011,5.765416053070059);
\draw [samples=50,domain=-0.99:0.99,rotate around={-135.:(0.,0.)},xshift=0.cm,yshift=0.cm,line width=2.pt,color=ffqqqq] plot ({0.5773502691896257*(1+(\x)^2)/(1-(\x)^2)},{1.*2*(\x)/(1-(\x)^2)});
\draw [samples=50,domain=-0.99:0.99,rotate around={-135.:(0.,0.)},xshift=0.cm,yshift=0.cm,line width=2.pt,color=ffqqqq] plot ({0.5773502691896257*(-1-(\x)^2)/(1-(\x)^2)},{1.*(-2)*(\x)/(1-(\x)^2)});
\draw [samples=50,domain=-0.99:0.99,rotate around={-135.:(0.,0.)},xshift=0.cm,yshift=0.cm,line width=2.pt,color=ffwwqq] plot ({1.8257418583505538*(1+(\x)^2)/(1-(\x)^2)},{3.1622776601683795*2*(\x)/(1-(\x)^2)});
\draw [samples=50,domain=-0.99:0.99,rotate around={-135.:(0.,0.)},xshift=0.cm,yshift=0.cm,line width=2.pt,color=ffwwqq] plot ({1.8257418583505538*(-1-(\x)^2)/(1-(\x)^2)},{3.1622776601683795*(-2)*(\x)/(1-(\x)^2)});
\draw [samples=50,domain=-0.99:0.99,rotate around={-45.:(0.,0.)},xshift=0.cm,yshift=0.cm,line width=2.pt,color=qqqqff] plot ({1.*(1+(\x)^2)/(1-(\x)^2)},{0.5773502691896257*2*(\x)/(1-(\x)^2)});
\draw [samples=50,domain=-0.99:0.99,rotate around={-45.:(0.,0.)},xshift=0.cm,yshift=0.cm,line width=2.pt,color=qqqqff] plot ({1.*(-1-(\x)^2)/(1-(\x)^2)},{0.5773502691896257*(-2)*(\x)/(1-(\x)^2)});
\draw [samples=50,domain=-0.99:0.99,rotate around={-135.:(0.,0.)},xshift=0.cm,yshift=0.cm,line width=2.pt,color=wwccqq] plot ({5.597618541248889*(1+(\x)^2)/(1-(\x)^2)},{9.695359714832659*2*(\x)/(1-(\x)^2)});
\draw [samples=50,domain=-0.99:0.99,rotate around={-135.:(0.,0.)},xshift=0.cm,yshift=0.cm,line width=2.pt,color=wwccqq] plot ({5.597618541248889*(-1-(\x)^2)/(1-(\x)^2)},{9.695359714832659*(-2)*(\x)/(1-(\x)^2)});
\draw [samples=50,domain=-0.99:0.99,rotate around={-45.:(0.,0.)},xshift=0.cm,yshift=0.cm,line width=2.pt,color=qqzzff] plot ({1.7320508075688772*(1+(\x)^2)/(1-(\x)^2)},{1.*2*(\x)/(1-(\x)^2)});
\draw [samples=50,domain=-0.99:0.99,rotate around={-45.:(0.,0.)},xshift=0.cm,yshift=0.cm,line width=2.pt,color=qqzzff] plot ({1.7320508075688772*(-1-(\x)^2)/(1-(\x)^2)},{1.*(-2)*(\x)/(1-(\x)^2)});
\draw [samples=50,domain=-0.99:0.99,rotate around={-135.:(0.,0.)},xshift=0.cm,yshift=0.cm,line width=2.pt,color=ffdxqq] plot ({4.08248290463863*(1+(\x)^2)/(1-(\x)^2)},{7.0710678118654755*2*(\x)/(1-(\x)^2)});
\draw [samples=50,domain=-0.99:0.99,rotate around={-135.:(0.,0.)},xshift=0.cm,yshift=0.cm,line width=2.pt,color=ffdxqq] plot ({4.08248290463863*(-1-(\x)^2)/(1-(\x)^2)},{7.0710678118654755*(-2)*(\x)/(1-(\x)^2)});
\draw [samples=50,domain=-0.99:0.99,rotate around={-135.:(0.,0.)},xshift=0.cm,yshift=0.cm,line width=2.pt,color=ffzztt] plot ({2.8284271247461903*(1+(\x)^2)/(1-(\x)^2)},{4.898979485566356*2*(\x)/(1-(\x)^2)});
\draw [samples=50,domain=-0.99:0.99,rotate around={-135.:(0.,0.)},xshift=0.cm,yshift=0.cm,line width=2.pt,color=ffzztt] plot ({2.8284271247461903*(-1-(\x)^2)/(1-(\x)^2)},{4.898979485566356*(-2)*(\x)/(1-(\x)^2)});
\draw [samples=50,domain=-0.99:0.99,rotate around={-45.:(0.,0.)},xshift=0.cm,yshift=0.cm,line width=2.pt,color=wwccff] plot ({3.1622776601683795*(1+(\x)^2)/(1-(\x)^2)},{1.8257418583505538*2*(\x)/(1-(\x)^2)});
\draw [samples=50,domain=-0.99:0.99,rotate around={-45.:(0.,0.)},xshift=0.cm,yshift=0.cm,line width=2.pt,color=wwccff] plot ({3.1622776601683795*(-1-(\x)^2)/(1-(\x)^2)},{1.8257418583505538*(-2)*(\x)/(1-(\x)^2)});
\draw [samples=50,domain=-0.99:0.99,rotate around={-45.:(0.,0.)},xshift=0.cm,yshift=0.cm,line width=2.pt,color=zzwwff] plot ({4.795831523312719*(1+(\x)^2)/(1-(\x)^2)},{2.7688746209726918*2*(\x)/(1-(\x)^2)});
\draw [samples=50,domain=-0.99:0.99,rotate around={-45.:(0.,0.)},xshift=0.cm,yshift=0.cm,line width=2.pt,color=zzwwff] plot ({4.795831523312719*(-1-(\x)^2)/(1-(\x)^2)},{2.7688746209726918*(-2)*(\x)/(1-(\x)^2)});
\draw [samples=50,domain=-0.99:0.99,rotate around={-45.:(0.,0.)},xshift=0.cm,yshift=0.cm,line width=2.pt,color=yqqqyq] plot ({5.656854249492381*(1+(\x)^2)/(1-(\x)^2)},{3.265986323710904*2*(\x)/(1-(\x)^2)});
\draw [samples=50,domain=-0.99:0.99,rotate around={-45.:(0.,0.)},xshift=0.cm,yshift=0.cm,line width=2.pt,color=yqqqyq] plot ({5.656854249492381*(-1-(\x)^2)/(1-(\x)^2)},{3.265986323710904*(-2)*(\x)/(1-(\x)^2)});
\draw [samples=50,domain=-0.99:0.99,rotate around={-45.:(0.,0.)},xshift=0.cm,yshift=0.cm,line width=2.pt,color=zzqqtt] plot ({7.0710678118654755*(1+(\x)^2)/(1-(\x)^2)},{4.08248290463863*2*(\x)/(1-(\x)^2)});
\draw [samples=50,domain=-0.99:0.99,rotate around={-45.:(0.,0.)},xshift=0.cm,yshift=0.cm,line width=2.pt,color=zzqqtt] plot ({7.0710678118654755*(-1-(\x)^2)/(1-(\x)^2)},{4.08248290463863*(-2)*(\x)/(1-(\x)^2)});
\begin{scriptsize}
\draw[color=ffdxqq] (3.2,3.2) node {\fontsize{10}{0} $50$};
\draw[color=wwccff] (-3,2.5) node {\fontsize{10}{0} $-10$};
\draw[color=zzqqtt] (-6.1,5) node {\fontsize{10}{0} $-50$};
\draw [fill=zzqqtt] (-5.,5.) circle (2.0pt);
\draw [fill=zzqqtt] (5.,-5.) circle (2.0pt);
\draw [fill=ffqqqq] (0.,1.) circle (2.0pt);
\draw [fill=ffqqqq] (1.,0) circle (2.0pt);
\draw [fill=ffqqqq] (-1.,0.) circle (2.0pt);
\draw[color=ffqqqq] (0.68,0.68) node {\fontsize{10}{0} $1$};
\draw[color=ffwwqq] (1.55,1.55) node {\fontsize{10}{0} $10$};
\draw [fill=ffqqqq] (-1.,4.) circle (2.0pt);
\draw [fill=ffqqqq] (4.,-1.) circle (2.0pt);
\draw [fill=ffqqqq] (0.,-1.) circle (2.0pt);
\draw [fill=ffqqqq] (1.,-4.) circle (2.0pt);
\draw [fill=ffqqqq] (-4.,1.) circle (2.0pt);
\draw [fill=ffzztt] (2.,2.) circle (2.0pt);
\draw[color=ffzztt] (2.35,2.35) node {\fontsize{10}{0} $24$};
\draw [fill=ffzztt] (-2.,-2.) circle (2.0pt);
\draw [fill=ffzztt] (10.,-2.) circle (2.0pt);
\draw [fill=wwccqq] (5.,3.) circle (2.0pt);
\draw[color=wwccqq] (4.35,4.35) node {\fontsize{10}{0} $94$};
\draw [fill=wwccqq] (3.,5.) circle (2.0pt);
\draw [fill=wwccqq] (-3.,-5.) circle (2.0pt);
\draw [fill=wwccqq] (-5.,-3.) circle (2.0pt);
\draw [fill=qqzzff] (-7.,2.) circle (2.0pt);
\draw [fill=qqzzff] (-2.,1.) circle (2.0pt);
\draw [fill=qqzzff] (-1.,2.) circle (2.0pt);
\draw[color=qqzzff] (-1.88,1.58) node {\fontsize{10}{0} $-3$};
\draw [fill=qqzzff] (2.,-1.) circle (2.0pt);
\draw [fill=qqzzff] (1.,-2.) circle (2.0pt);
\draw[color=qqqqff] (-0.6,0.4) node {\fontsize{10}{0} $-1$};
\draw [fill=qqzzff] (7.,-2.) circle (2.0pt);
\draw [fill=zzwwff] (-4.,3.) circle (2.0pt);
\draw[color=zzwwff] (-4.15,3.55) node {\fontsize{10}{0} $-23$};
\draw [fill=zzwwff] (-3.,4.) circle (2.0pt);
\draw [fill=zzwwff] (4.,-3.) circle (2.0pt);
\draw [fill=zzwwff] (3.,-4.) circle (2.0pt);
\draw [fill=zzwwff] (8.,-3.) circle (2.0pt);
\draw [fill=yqqqyq] (-4.,4.) circle (2.0pt);
\draw[color=yqqqyq] (-5,4.1) node {\fontsize{10}{0} $-32$};
\draw [fill=yqqqyq] (4.,-4.) circle (2.0pt);

\end{scriptsize}
\end{tikzpicture}
\caption{Some level sets in $ \mathbb{R} \times \mathbb{R}i_4 $}
\label{level_sets}
\end{center}
\end{figure}
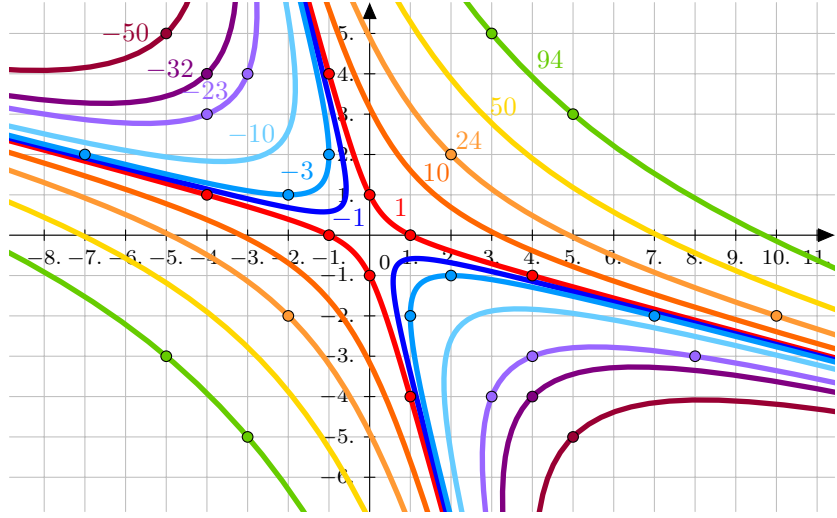
\vspace{5mm}

\begin{example} \label{ex_level_set}
	In \Cref{level_sets} you can see different level sets with respect to the ring $ \mathbb{R}[i_4] $ where each level set consists of two branches (they are in the same color). Some of them intersect the $ \left( \mathbb{Z} \times \mathbb{Z}i_z \right) $-grid (then the points are indicated) and some of them do not. For example, we see that $ -1+4i_4,i_4,1,4-i_4 $ are contained in the $ 1 $-level set, whereas the $ -1 $-level set does not seem to intersect the considered part of the $ \left( \mathbb{Z} \times \mathbb{Z}i_z \right) $-grid. We will see later that from such local considerations we can indeed conclude the non-solvability of the Diophantine equation $ x^2 + 4xy + y^2 = -1 $.
\end{example}

\begin{example} \label{ex_unit_ellipse}
	Let $ z = -1 $, then $ i_{-1}^2+i_{-1}+1=0 $ and so $ \mathbb{Z}[i_{-1}] $ is isomorphic to the Eisenstein (or sometimes also called Eulerian) integers (see \cite[p.\,67f]{Cox}). We would like to determine all units of $ \mathbb{Z}[i_{-1}] $. By \Cref{Nz_lemma} we know that the units in $ \mathbb{Z}[i_{-1}] $ are the elements with norm equal to $ \pm 1 $. Hence, the units of $ \mathbb{Z}[i_{-1}] $ are exactly the points on the $ 1 $-level set intersecting the $ \left(\mathbb{Z} \times \mathbb{Z}i_z\right) $-grid because the $ \left(-1\right) $-level set is empty by \Cref{ex_non_neg_sol}. By multiplying the imaginary parts of these units by $ -1 $ we get the units of $ \mathbb{Z}[i_{1}] $. These units are all generated by $ i_1 $, i.e.
	\begin{align*}
		i_1^0 &= 1 \\
		i_1^1 &= i_1 \\
		i_1^2 &= i_1-1 \\
		i_1^3 &= -1 \\
		i_1^4 &= -i_1 \\
		i_1^5 &= -i_1+1
	\end{align*}
where the multiplicative order of $ i_1 $ is $ 6 $. Moreover,
	$$ i_{-1}^3 = i_{-1}\left(-i_{-1}-1\right) = -i_{-1}^2 -i_{-1} = 1 $$
which shows that the multiplicative order of $ i_{-1} $ is $ 3 $. Is this a contradiction to the isomorphy of $ \mathbb{Z}[i_1] $ and $ \mathbb{Z}[i_{-1}] $? Not at all as $ \Phi \left( i_1 \right) = -i_{-1} $. Therefore $ -i_{-1} $ is a generator of the units in $ \mathbb{Z}[i_{-1}] $. Indeed, it is easy to see that $ -i_{-1} $ generates all the units indicated in \Cref{eisenstein_integers} anti-clockwise starting with $ 1 $.
\end{example}

\vspace{5mm}
\begin{figure}[h]  
\begin{center}
\pagestyle{empty}

\definecolor{ffqqqq}{rgb}{1.,0.,0.}
\definecolor{qqqqff}{rgb}{0.,0.,1.}
\definecolor{cqcqcq}{rgb}{0.7529411764705882,0.7529411764705882,0.7529411764705882}
\begin{tikzpicture}[line cap=round,line join=round,>=triangle 45,x=2.0cm,y=2.0cm]
\draw [color=cqcqcq,, xstep=1.0cm,ystep=1.0cm] (-1.8526010569812594,-1.610569591375662) grid (1.9,1.630766345311326);
\draw[->,color=black] (-1.8526010569812594,0.) -- (1.9,0.);
\foreach \x in {-1.5,-1.,-0.5,0.5,1.,1.5}
\draw[shift={(\x,0)},color=black] (0pt,2pt) -- (0pt,-2pt) node[below] {\footnotesize $\x$};
\draw[->,color=black] (0.,-1.610569591375662) -- (0.,1.630766345311326);
\foreach \y in {-1.5,-1.,-0.5,0.5,1.,1.5}
\draw[shift={(0,\y)},color=black] (2pt,0pt) -- (-2pt,0pt) node[left] {\footnotesize $\y$};
\draw[color=black] (0pt,-10pt) node[right] {\footnotesize $0$};
\clip(-2.2,-1.610569591375662) rectangle (2.3,1.630766345311326);
\draw [rotate around={45.:(0.,0.)},line width=2.pt,color=qqqqff] (0.,0.) ellipse (2.8284271247461903cm and 1.632993161855452cm);
\begin{scriptsize}
\draw [fill=ffqqqq] (0.,1.) circle (2.5pt);
\draw [fill=ffqqqq] (1.,1.) circle (2.5pt);
\draw [fill=ffqqqq] (1.,0.) circle (2.5pt);
\draw [fill=ffqqqq] (0.,-1.) circle (2.5pt);
\draw [fill=ffqqqq] (-1.,-1.) circle (2.5pt);
\draw [fill=ffqqqq] (-1.,0.) circle (2.5pt);

\end{scriptsize}
\end{tikzpicture}
\caption{Units of the Eisenstein integers}
\label{eisenstein_integers}
\end{center}
\end{figure}
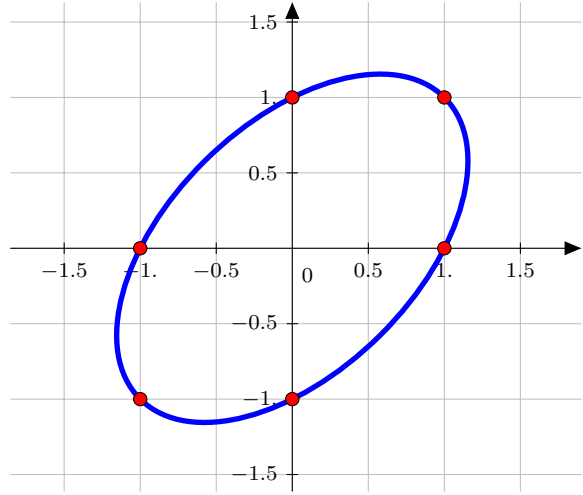
\vspace{5mm}

If we have two elements of a $ z $-ring on a given level set with oriented area equal to zero, then the following statement about their location will be useful.

\begin{lemma} \label{lem_orient_prod_zero}
Let $ z \in \mathbb{Z} $, $ M \in \mathbb{Z} \setminus \left\{ 0 \right\} $ and $ \alpha, \beta \in S_M \subset \mathbb{R}[i_z] $. If $ \big< \alpha, \beta \big> = 0 $, then $ \beta \in \left\{ -\alpha, \alpha \right\} $.
\end{lemma}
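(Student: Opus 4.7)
The plan is to translate the vanishing of the oriented area into linear dependence over $\mathbb{R}$ and then exploit the quadratic homogeneity of the norm. Write $\alpha = a_1 + a_2 i_z$ and $\beta = b_1 + b_2 i_z$ with $a_j, b_j \in \mathbb{R}$. First I would observe that since $\N(\alpha) = M \neq 0$, the element $\alpha$ is not the zero element of $\mathbb{R}[i_z]$, so that viewed as a vector $(a_1,a_2) \in \mathbb{R}^2$ it is nonzero.

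Next, the hypothesis $\langle \alpha, \beta \rangle = a_1 b_2 - a_2 b_1 = 0$ is precisely the statement that the $2 \times 2$ determinant with columns $(a_1,a_2)^T$ and $(b_1,b_2)^T$ vanishes; together with $(a_1,a_2) \neq (0,0)$ this forces $(b_1,b_2) = \lambda (a_1,a_2)$ for some $\lambda \in \mathbb{R}$, i.e. $\beta = \lambda \alpha$ in $\mathbb{R}[i_z]$.

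Finally I would apply the norm. Since $\N$ is a homogeneous quadratic form in the real and imaginary parts, one checks directly from the definition that $\N(\lambda \alpha) = (\lambda a_1)^2 + z(\lambda a_1)(\lambda a_2) + (\lambda a_2)^2 = \lambda^2 \N(\alpha)$. Therefore
\begin{equation*}
M = \N(\beta) = \N(\lambda \alpha) = \lambda^2 \N(\alpha) = \lambda^2 M,
\end{equation*}
and dividing by $M \neq 0$ gives $\lambda^2 = 1$, so $\lambda \in \{-1,1\}$ and $\beta \in \{-\alpha,\alpha\}$, as required.

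There is no real obstacle here; the only subtlety is to remember to exclude the degenerate case $\alpha = 0$ (handled by $M \neq 0$) before passing from $\langle \alpha, \beta\rangle = 0$ to the existence of a scalar $\lambda$, and to use that $\N$ is homogeneous of degree two over $\mathbb{R}$ even though the interesting case of the lemma is when $\alpha, \beta$ lie on the integer grid.
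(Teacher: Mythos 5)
Your proof is correct and follows essentially the same route as the paper: deduce $\beta = \lambda\alpha$ from the vanishing of the oriented area (the paper does this by a short case analysis on whether $b_1 = 0$, you do it via the determinant criterion for linear dependence), and then use the quadratic homogeneity of $\N$ together with $M \neq 0$ to force $\lambda^2 = 1$. No gaps.
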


\begin{proof}
	Let $ \alpha = a_1 + a_2i_z \in \mathbb{Z}[i_z] $ and $ \beta = b_1 + b_2i_z \in \mathbb{Z}[i_z] $ with
	$$ \big< \alpha, \beta \big> = a_1b_2 - a_2b_1 = 0 .$$
Then $ b_1,b_2 $ cannot be zero at the same time because $ \beta \in S_M $ and $ M \neq 0 $. Therefore if $ b_1 = 0 $, then also $ a_1 = 0 $ and we can define $ \lambda \coloneqq \tfrac{b_2}{a_2} $. Otherwise if $ b_1 \neq 0 $, then $ a_2 = \tfrac{a_1b_2}{b_1} $ and and we set $ \lambda \coloneqq \tfrac{b_1}{a_1} $. In both cases we see that we find $ \lambda \in \mathbb{R} $ such that $ \beta = \lambda \alpha $.

Since $ \alpha, \beta \in S_M $, we have that 
	$$ a_1^2+za_1a_2+a_2^2 = M = \lambda^2 \left( a_1^2+za_1a_2+a_2^2 \right) $$  
and so we get $ \lambda \in \left\{ -1,1 \right\} $.
\end{proof}

Observe that a level set can contain at most two different branches because the level sets are defined by a quadratic equation. If $ \vert z \vert \leq 1 $, then each level set is one branch (compare with $ S_{1} \subset \mathbb{R}[i_{-1}] $ in \Cref{eisenstein_integers}). Branches can also consist of just one element, e.g. $ S_{0} \subset \mathbb{R}[i_z] $ if $ z \notin \left\{ -2,2 \right\} $. However, if $ \vert z \vert > 1 $, then all level sets $ S_M $ for $ M \in \mathbb{Z} \setminus \{ 0 \} $ consists of two branches. In this case we would like to distinguish them which we can do by “separation”.

\begin{definition}
The set 
$$ l_{\lambda_1,\lambda_2} \coloneqq \{ b_1 + b_2i_z \in \mathbb{R}[i_z] \mid \lambda_1 b_1 + \lambda_2 b_2 = 0 \} $$
with $ \lambda_1,\lambda_2 \in \mathbb{Z} $ not both zero is called {\em line in the complex plane (through the origin)}. If $ \lambda_1 \in \mathbb{Z} $ and $ \lambda_2 \in \mathbb{N} \setminus \{ 0 \} $ we say that $ \alpha \coloneqq a_1 + a_2i_z \in \mathbb{R}[i_z] $ {\em is/lies above $ l_{\lambda_1,\lambda_2} $} if $ \lambda_1 a_1 + \lambda_2 a_2 > 0 $, {\em below $ l_{\lambda_1,\lambda_2} $} if $ \lambda_1 a_1 + \lambda_2 a_2 < 0 $ and {\em on $ l_{\lambda_1,\lambda_2} $} if $ a_1+a_2i_z \in l_{\lambda_1,\lambda_2} $. If $ M,z \in \mathbb{Z} $, then we say that $ l_{\lambda_1,\lambda_2} $ {\em separates a level set $ S_M \subset \mathbb{R}[i_z] $} if and only if $ l_{\lambda_1,\lambda_2} \cap S_M = \emptyset $ and there exist $ \gamma_1,\gamma_2 \in S_M $ such that one of the elements lies above and the other one below $ l_{\lambda_1,\lambda_2} $.
\end{definition}

\begin{lemma} \label{lem_at_most_two_solutions}
	Let $ z \in \mathbb{Z} $, $ M \in \mathbb{Z} \setminus \{ 0 \} $ and $ \lambda_1,\lambda_2 \in \mathbb{Z} $ be not both zero. Then the set $ l_{\lambda_1,\lambda_2} \cap S_M $ is either empty or contains exactly two solutions. Moreover, if $ \gamma_1, \gamma_2 \in l_{\lambda_1,\lambda_2} \cap S_M $ and $ \gamma_1 \neq \gamma_2 $, then $ \gamma_1 = -\gamma_2 $.
\end{lemma}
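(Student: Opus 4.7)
The plan is to reduce this to Lemma~\ref{lem_orient_prod_zero} by observing that any two points on a common line through the origin have oriented area zero. I would proceed in three short steps.

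First, I would show the key geometric fact: if $\gamma_1, \gamma_2 \in l_{\lambda_1,\lambda_2}$, then $\big<\gamma_1,\gamma_2\big> = 0$. Writing $\gamma_j = b_{j,1} + b_{j,2} i_z$, the defining relations $\lambda_1 b_{j,1} + \lambda_2 b_{j,2} = 0$ for $j=1,2$ allow one to substitute (split into the cases $\lambda_2 \neq 0$ and $\lambda_2 = 0$) and verify directly that $b_{1,1}b_{2,2} - b_{1,2}b_{2,1} = 0$. This is a short computation with no obstacle.

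Next, I would invoke Lemma~\ref{lem_orient_prod_zero}: since $\gamma_1, \gamma_2 \in S_M$ with $M \neq 0$ and $\big<\gamma_1,\gamma_2\big>=0$, we conclude $\gamma_2 \in \{-\gamma_1, \gamma_1\}$. This immediately gives that $l_{\lambda_1,\lambda_2} \cap S_M$ has at most two elements, and that if $\gamma_1 \neq \gamma_2$ then $\gamma_2 = -\gamma_1$.

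Finally, I would check that if the intersection is nonempty, containing some $\gamma_1$, then it in fact contains exactly two elements $\{\gamma_1, -\gamma_1\}$. For this I note that $-\gamma_1 \in S_M$ because $\N(-\gamma_1) = \N(\gamma_1) = M$ (the norm is quadratic in the coordinates), and $-\gamma_1 \in l_{\lambda_1,\lambda_2}$ because the line through the origin is closed under negation. Moreover $\gamma_1 \neq -\gamma_1$, for otherwise $\gamma_1 = 0$ and $\N(\gamma_1) = 0 \neq M$. I do not expect any real obstacle here; the only point that requires a touch of care is handling the degenerate sub-case $\lambda_2 = 0$ in the oriented-area computation, so that the claim does not secretly depend on dividing by $\lambda_2$.
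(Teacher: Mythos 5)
Your proof is correct, and it takes a genuinely different route from the paper's. The paper disposes of the ``at most two'' part by appealing to the general fact that a line meets a conic in at most two points, and then gets ``exactly two'' and the antipodal structure from the observation that $-\gamma$ is a second, distinct solution on the same line; the $\gamma_1=-\gamma_2$ claim is then only implicit. You instead first observe that any two points of $l_{\lambda_1,\lambda_2}$ are linearly dependent (both orthogonal to $(\lambda_1,\lambda_2)\neq(0,0)$), hence have vanishing oriented area, and then feed this into \Cref{lem_orient_prod_zero} to get $\gamma_2\in\{-\gamma_1,\gamma_1\}$ in one stroke; this yields the cardinality bound and the ``moreover'' clause simultaneously. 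Your closing step (that $-\gamma_1$ lies in both $S_M$ and $l_{\lambda_1,\lambda_2}$ and differs from $\gamma_1$ because $M\neq 0$) matches the paper's. What your approach buys is self-containment and uniformity: it reuses a lemma already established in the paper and avoids having to worry about the degenerate conics arising for $z=\pm 2$ (where $S_M$ is a pair of parallel lines), a case the paper's blanket appeal to conic--line intersection quietly glosses over; the paper's version is shorter but leans on an external geometric fact. Your concern about the sub-case $\lambda_2=0$ is easily handled and is in fact unnecessary if you phrase the first step as linear dependence of two vectors orthogonal to a fixed nonzero vector rather than as a substitution.
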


\begin{proof}
	That there are no more solutions than two is clear since a conic and a line can intersect in two points at most. Moreover, if there is a solution $ \gamma \in l_{\lambda_1,\lambda_2} \cap S_M $, then $ -\gamma $ is different from $ \gamma $ (as $ \gamma \neq 0 $) and both of them have the same norm and they lie on the same line through the origin.
\end{proof}

\subsection{The functions $ \I_{+},\I_{-} $ and their properties}

In this section we will introduce the functions $ \I_{+},\I_{-} $ i.e. multiplication with the imaginary units $ \pm i_z $. Especially for subbranches and closed branches as well as for characterizing the unit groups of the $ z $-rings these functions will be important.




\begin{definition} \label{def_algebraic}
	Let $ z \in \mathbb{Z} $. Define
	$ \I_{+}: \mathbb{R}[i_z] \to \mathbb{R}[i_z] $ by $ \I_{+}(\alpha) = i_z \alpha $ and $ \I_{-}: \mathbb{R}[i_z] \to \mathbb{R}[i_z] $ by $ \I_{-}(\alpha) = -i_z \alpha $, then we call $ \I_{+},\I_{-} $ {\em positive} and {\em negative imaginary unit multiplication function}, respectively. For $ n \in \mathbb{Z} $ we also write $ \I_{+}^n $ or $ \I_{-}^n $ for applying $ \I_{+} $, $ \I_{-} $, or, their inverses, $ \I_{+}^{-1} $, $ \I_{-}^{-1} $ $ \left\vert n \right\vert $ times  depending on the sign of $ n $. $ \I_{+}^{0} $ and $ \I_{-}^{0} $ denote the identity functions. 
\end{definition}




To prove a statement about properties of $ \I_{+} $, we will use the fact:


\begin{fact} \label{fact_area}
	Let $ A \in \mathbb{R}^{2 \times 2} $ and $ b,c \in \mathbb{R}^2 $. Then the area (could also be negative depending on the orientation of the vectors) of the parallelogram defined by the vectors $ Ab,Ac $ is equal to the area of the parallelogram defined by $ b,c $ times $ \det(A) $.
\end{fact}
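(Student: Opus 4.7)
The plan is to reduce the oriented area to a single determinant and then exploit multiplicativity of the determinant. Concretely, I would identify the oriented area of a parallelogram spanned by two vectors $u,v \in \mathbb{R}^2$ with $\det([u\,v])$, where $[u\,v]$ denotes the $2\times 2$ matrix whose columns are $u$ and $v$. This identification is exactly the oriented area defined on page 2 via $\langle u,v\rangle = u_1v_2-u_2v_1$, since that expression is the $2\times 2$ determinant $\det([u\,v])$.

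With this observation the claim becomes essentially a one-line calculation: the matrix with columns $Ab$ and $Ac$ is exactly $A[b\,c]$, because matrix-vector multiplication distributes column-wise. Hence by the Cauchy--Binet / multiplicativity property of the determinant,
\begin{equation*}
\det\bigl([Ab\ Ac]\bigr) \;=\; \det\bigl(A \cdot [b\ c]\bigr) \;=\; \det(A)\cdot\det\bigl([b\ c]\bigr),
\end{equation*}
which is precisely the statement of the Fact.

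The only thing worth spelling out carefully is the geometric interpretation of the signed quantity $\det([b\,c])$ as the oriented area, since the statement allows negative values. I would justify this by the usual argument: the standard basis $(e_1,e_2)$ spans the unit square of oriented area $+1$, and the linear map sending $e_1 \mapsto b$, $e_2 \mapsto c$ is represented by the matrix $[b\,c]$; its determinant therefore scales oriented areas by $\det([b\,c])$, which gives the oriented area of the image parallelogram. No obstacle arises beyond this bookkeeping; the multiplicativity of $\det$ for $2\times 2$ real matrices does the real work, and everything else is matching conventions between the geometric ``oriented parallelogram area'' and the algebraic $2\times 2$ determinant.
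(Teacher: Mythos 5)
Your proof is correct. The paper states this result as a \emph{Fact} without providing any proof, so there is no argument of the author's to compare against; your route --- identifying the oriented area of the parallelogram spanned by $u,v$ with $\det([u\ v])$, observing that $[Ab\ Ac]=A[b\ c]$, and invoking multiplicativity of the determinant --- is the standard justification and supplies exactly the missing argument, including the correct handling of the sign/orientation convention.
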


\begin{proposition}[Multiplication with the imaginary unit] \label{proposition_prop_iz}
	Let $ z,w \in \mathbb{Z} $, $ \alpha, \beta \in \mathbb{R}[i_z]$ and $ S_M \subset \mathbb{R}[i_z] $ be arbitrary, then the following holds true:
	\begin{enumerate} \label{prop_units}
		\item[i)] $ \I_{+} \left( S_M \right) = S_M $ and hence $ \I_{+} $ preserves the norm.
		\item[ii)] If $ z \geq 0 $, then $ \I_{+} $ preserves the branches of $ S_M $ for any $ M \in \mathbb{Z} $.
		\item[iii)] $ \I_{+} $ preserves areas and orientation, i.e. if $ P \subset \mathbb{R}[i_z] $ defines a polygon, then the size of the areas in the complex plane of $ P $ and $ \I_{+}\left( P \right) $ are the same. In particular, we have that $ \big< \I_{+}\left( \alpha \right), \I_{+} \left( \beta \right) \big> = \big< \alpha,  \beta  \big> $.
		\item[iv)] $ \big< \alpha, \I_{+} \left( \alpha \right) \big> = \N \left( \alpha \right) $.
		\item[v)] $ \I_{+} \left( \mathbb{Z}[i_z] \right) \subseteq \mathbb{Z}[i_z] $ and $ \I_{+} \left( \mathbb{R}[i_z]\setminus \mathbb{Z}[i_z] \right) \subseteq \mathbb{R}[i_z]\setminus \mathbb{Z}[i_z] $.
		\item[vi)] $ \I_{+} $ preserves divisors of real and imaginary parts, i.e. $ d \in \mathbb{Z} $ is a common divisor of $ \mathrm{Re}\left(\alpha\right), \mathrm{Im}\left(\alpha\right)$ if and only if $ d $ is a common divisor of $ \mathrm{Re}\left(\I_{+} \left( \alpha \right) \right), \mathrm{Im}\left(\I_{+} \left( \alpha \right)\right) $.
	\end{enumerate}
\end{proposition}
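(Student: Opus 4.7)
The plan is to deduce everything from one structural fact: $ i_z $ itself has $ \N(i_z) = 0 + z \cdot 0 \cdot 1 + 1 = 1 $, so by the multiplicativity of $ \N $ and the characterization of units it is a unit with $ i_z^{-1} = \overline{i_z} = z - i_z \in \mathbb{Z}[i_z] $, and $ \I_{+} $ is the $ \mathbb{Z} $-linear bijection of $ \mathbb{R}[i_z] $ (and of $ \mathbb{Z}[i_z] $) whose matrix in the basis $ (1, i_z) $ is $ \begin{pmatrix} 0 & -1 \\ 1 & z \end{pmatrix} $, which has determinant $ +1 $. Once this is in place, five of the six parts reduce to either one-line computations or direct applications of earlier results.

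For i), multiplicativity of $ \N $ gives $ \I_{+}(S_M) \subseteq S_M $, and bijectivity follows from the existence of $ i_z^{-1} \in \mathbb{Z}[i_z] $. Part v) is then automatic: $ \I_{+} $ and its inverse both preserve $ \mathbb{Z}[i_z] $, hence also its complement. Part iii) follows from the Fact relating determinants to oriented parallelogram areas: since the matrix of $ \I_{+} $ has determinant $ 1 $, $ \I_{+} $ preserves both area and orientation, and the oriented-area identity is just this statement applied to the parallelogram generated by $ \alpha $ and $ \beta $. Part iv) is a one-line computation: writing $ \I_{+}(\alpha) = -a_2 + (a_1 + z a_2)i_z $ gives $ \big< \alpha, \I_{+}(\alpha) \big> = a_1(a_1 + za_2) - a_2(-a_2) = \N(\alpha) $. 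Part vi) falls out of the same formulas for $ \mathrm{Re}(\I_{+}(\alpha)) $ and $ \mathrm{Im}(\I_{+}(\alpha)) $ together with the integrality of the inverse matrix of $ \I_{+} $.

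The real work is in ii). For $ M = 0 $ and $ z \neq \pm 2 $ the claim is vacuous because $ S_0 = \{0\} $, and for $ z = 2 $ one checks directly that $ S_0 $ is the line $ a_2 = -a_1 $ which $ \I_{+} $ fixes pointwise. For $ M \neq 0 $, the strategy is first to show that $ 1 $ and $ i_z $ lie on the same branch of $ S_1 \subset \mathbb{R}[i_z] $ when $ z \geq 0 $, using the explicit continuous path
\[
\gamma(t) \coloneqq \frac{(1-t) + t\, i_z}{\sqrt{(1-t)^2 + z(1-t)t + t^2}}, \qquad t \in [0,1],
\]
whose denominator is strictly positive on $ [0,1] $ exactly because each term is non-negative when $ z \geq 0 $. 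Then for an arbitrary branch $ B $ of $ S_M $ and any $ \alpha \in B $, I transfer this path by left-multiplication: the map $ \mu_\alpha : \mathbb{R}[i_z] \to \mathbb{R}[i_z] $, $ \gamma \mapsto \alpha\gamma $, is a continuous bijection that restricts to a bijection $ S_1 \to S_M $ (its continuous inverse is multiplication by $ \overline{\alpha}/M $), and it sends the path $ \gamma $ to a path in $ S_M $ joining $ \alpha $ to $ \I_{+}(\alpha) $. Hence $ \alpha $ and $ \I_{+}(\alpha) $ lie in the same connected component, giving $ \I_{+}(B) \subseteq B $.

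The main obstacle is ii), and in particular pinning down where the hypothesis $ z \geq 0 $ enters: it is exactly what guarantees the straight-line segment from $ 1 $ to $ i_z $ avoids the null cone of $ \N $. The hypothesis cannot be dropped in general, since for sufficiently negative $ z $ small examples (already at $ z = -3 $, where $ \I_{+}((1,0)) = (0,1) $ ends up on the opposite branch of $ S_1 $ from $ (1,0) $) show that $ \I_{+} $ can swap the two branches of $ S_M $.
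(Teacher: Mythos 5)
Your proposal is correct. Parts i), iii), iv), v) and vi) follow essentially the same route as the paper: multiplicativity of $ \N $ plus invertibility of $ i_z $ for i) and v), the determinant-one matrix of $ \I_{+} $ in the basis $ (1,i_z) $ together with \Cref{fact_area} for iii), the identity $ \I_{+}(\alpha)=\widetilde{\overline{\alpha}} $ (or the equivalent direct computation) for iv), and elementary divisibility for vi). The genuine difference is in ii). The paper argues geometrically with separating lines: for $ M>0 $ it shows $ l_{z,2} $ and $ l_{2,z} $ both miss $ S_M $ and separate its two branches, then checks by a sign computation that $ \I_{+} $ maps the set above (resp.\ below) $ l_{z,2} $ into itself; the case $ M<0 $ is handled separately by a quadrant argument. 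You instead reduce everything to the single fact that $ 1 $ and $ i_z $ are joined by the explicit path $ \gamma(t)=\bigl((1-t)+t\,i_z\bigr)/\sqrt{(1-t)^2+z(1-t)t+t^2} $ inside $ S_1 $ (the radicand is positive precisely because $ z\geq 0 $ makes the cross term non-negative while $ (1-t)^2+t^2>0 $), and then transport this path to $ S_M $ by the continuous map $ \beta\mapsto\alpha\beta $, which carries $ S_1 $ into $ S_M $ by multiplicativity of the norm and joins $ \alpha $ to $ \I_{+}(\alpha) $. This is correct, treats $ M>0 $ and $ M<0 $ uniformly, and matches the paper's definition of branch as a path-component; combined with the bijectivity of $ \I_{+} $ on $ S_M $ from i), the inclusion $ \I_{+}(B)\subseteq B $ upgrades to equality. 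What your argument does not produce, and the paper's does, is the explicit pair of separating lines $ l_{z,2} $, $ l_{2,z} $, which the paper leans on informally in later discussions (e.g.\ \Cref{ex_mult_with_imaginary_unit} and the cone argument in \Cref{lem_sub_closed_branch}); but that is a presentational by-product rather than a logical ingredient of this proposition, so nothing is lost for the statement at hand. Your closing remark that the hypothesis $ z\geq 0 $ is sharp (with the $ z=-3 $ example where $ \I_{+}(1)=i_{-3} $ lands on the opposite branch of $ S_1 $) is also correct and consistent with the paper's \Cref{ex_properties_In}.
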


\begin{proof}
	\begin{enumerate}
		\item[i)] That $ \I_{+} $ preserves the value of the norm follows directly by \Cref{Nz_lemma}. Moreover, multiplication with $ i_z $ is reversible because $ i_z $ is a unit which shows  $ \I_{+} \left( S_M \right) = S_M $.
		\item[ii)] We need to show that each element on an arbitrary branch will be mapped to an element on the same branch. By i) this is already clear if $ S_M $ consists of just one branch. Hence, we do not need to consider the cases $ z=0,1 $ (for $ M > 0 $, $ S_M $ is a circle or an ellipse, for $ M<0 $ $ S_M $ is empty and $ S_{0} $ contains just the origin).

Assume now that $ z > 1 $ and $ M \geq 0 $. In case $ M = 0 $, then $ S_M $ is connected (if $ z=2 $, then $ S_M $ is a line and otherwise it is just the origin again by \Cref{Nz_lemma}). We consider now the case that $ M > 0 $. Then we clearly have two branches (compare with \Cref{level_sets} and \Cref{Separating_lines_positive}). These branches are either lines if $ z = 2 $ or they define a hyperbola if $ z > 2  $. We will show now that $ l_{z,2} $ and $ l_{2,z} $ separate the branches. If $ l_{z,2} \cap S_M $ would not be empty, then we find $ x,y \in \mathbb{R} $ such that both equations are satisfied:
	\begin{align*}
		x^2 + zxy + y^2 &= M \\
		zx+2y &= 0
	\end{align*}
However, this is not possible since we can multiply the first equation by $ 4 $ and replace $ 2y = -zx $ and $ 4y^2 = -z^2x^2 $ and then we have
	$$ 4x^2-2z^2x^2+z^2x^2 = \left( 4-z^2 \right)x^2 = 4M $$
where $ M > 0 $ and $ z \geq 2 $. This is a contradiction and hence $ l_{z,2} \cap S_M = \emptyset $. For symmetry reasons the same holds true for $ l_{2,z} $ (the calculation is the same, just $ x $ and $ y $ are exchanged).

Moreover, we have that $ \sqrt{M},-\sqrt{M} \in S_M $ where $ \sqrt{M} $ lies above and $ \sqrt{M} $ below for both $ l_{z,2},l_{2,z} $. Thus, $ l_{z,2} $ and $ l_{2,z} $ separates the two branches in $ S_M $ and both lines have the same elements of $ S_M $ above or below, respectively.




Now let $ a + bi_z \in S_M $ and assume that it is either above or below $ l_{z,2} $. Hence, we have either $ za + 2b > 0 $ or $ za + 2b < 0 $ what we will denote by $ za + 2b \gtrless 0 $ to discuss both cases at the same time. Then 
	$$ \I_{+} \left( a + bi_z \right) = ai_z + bi_z^2 = -b + \left( a + zb \right)i_z .$$ 
Since $ -zb+2 \left( a + zb \right) = 2a + zb \gtrless 0 $ because $ a + bi_z $ lies also above or below $ l_{2,z} $, we get that $ \I_{+} \left( a + bi_z \right) $ is also above or below $ l_{z,2} $, respectively.

We consider the case $ M < 0 $ and $ z \geq 2 $. Then $ S_M $ is empty if $ z = 2 $. Assume now that $ z > 2 $. Then $ S_M $ is a hyperbola with two branches being in the second and fourth quadrant not intersecting the reel and the complex axes (compare again with \Cref{Separating_lines_positive}). Take $ a + bi_z \in S_M $ with $ a \gtrless 0 $ and $ b \lessgtr 0 $ (if $ a > 0 $ and $ b < 0 $ then $ a+bi_z $ lies in the fourth quadrant and otherwise in the second quadrant). Then we have $ \I_{+} \left( a + bi_z \right) = -b + \left( a + zb \right)i_z $, i.e. $ -b \gtrless 0 $. Since $ \I_{+} $ preserves the norm and $ S_M $ has only elements in the second and fourth quadrant, we deduce that $ a + bz \lessgtr 0 $ and so $ \I_{+} \left( a + bi_z \right) $ lies on the same branch as $ a + bi_z $.
	\item[iii)] Define $ \M_{+}: \mathbb{R}^2 \to \mathbb{R}^2 $ by matrix multiplication from the left-hand side of the matrix 
\begin{displaymath}
	{M}^{+} \coloneqq \begin{pmatrix}
 0 & -1 \\
 1 & z \\
\end{pmatrix} 
	\end{displaymath}
and the 	isomorphism $ \Psi: \mathbb{R}[i_z] \to \mathbb{R}^2 $ by $ \Psi \left( a+bi_z \right) = \left(a \ b \right)^T $ (where $ T $ denotes the transpose). Then the following diagram commutes

\begin{center}
\begin{tikzcd}
  \mathbb{R}[i_z] \arrow[r, "\I_{+}"] \arrow["\Psi",d]
    & \mathbb{R}[i_z] \arrow[d, "\Psi"] \\
  \mathbb{R}^2  \arrow[r, "\M_{+}"]
& \mathbb{R}^2 
\end{tikzcd}		
\end{center}		

because
\begin{align*}
	\Psi \left( \I_{+} \left( a+bi_z \right) \right) &= \Psi \left( -b + \left( a + zb \right)i_z \right) \\
	&= 
\begin{pmatrix}
 -b  \\
 a + zb \\
\end{pmatrix} =	
\begin{pmatrix}
 0 & -1  \\
 1 & z  \\
\end{pmatrix} \circ
\begin{pmatrix}
 a  \\
 b  \\
\end{pmatrix} =
\begin{pmatrix}
 0 & -1  \\
 1 & z  \\
\end{pmatrix} \circ \Psi \left(  a+bi_z \right)
\end{align*}
Since $ \det \left( {M}^{+} \right) = 1 $, the area (and the orientation by the sign of the area) of polygons is preserved by $ \M_{+} $ by \Cref{fact_area}. Thus, the same holds true for $ \I_{+} $.

	\item[iv)] Let $ \alpha = a_1 + a_2i_z \in \mathbb{R}[i_z] $, then 
	$$ \I_{+} \left( \alpha \right) = -a_2 + \left( a_1 + za_2 \right)i_z = \widetilde{\overline{\alpha}} $$ and so 
$$ \big< \alpha, \I_{+} \left( \alpha \right)\big> = \big< \alpha, \widetilde{\overline{\alpha}}\big> = \N \left( \alpha \right) $$ 
by \Cref{lemma_oriented_area}. 

	\item[v)] Since $ \mathbb{Z}[i_z] $ is closed as a ring, we get that the multiplication of two elements in  $ \mathbb{Z}[i_z] $ is again in the ring. On the other hand, if there is $ \alpha \in \mathbb{R}[i_z] \setminus \mathbb{Z}[i_z] $ and $ \I_{+} \left( \alpha \right) \in \mathbb{Z}[i_z] $, then the multiplication with the inverse of $ i_z $, namely $ z-i_z $, and $ \I_{+} \left( \alpha \right) $ is $ \alpha $ and so we would have $ \alpha \in \mathbb{Z}[i_z] $ because $ \mathbb{Z}[i_z] $ is closed. Hence, we conclude that also $ \I_{+} \left( \alpha \right) \in \mathbb{R}[i_z] \setminus \mathbb{Z}[i_z] $ if $ \alpha \in \mathbb{R}[i_z] \setminus \mathbb{Z}[i_z] $.
	
	\item[vi)] Finally, let $ d \in \mathbb{Z}$, $ a + bi_z \in \mathbb{Z}[i_z] $ with $ d \mid a $, $ d \mid b $. Then clearly $ d \mid -b $ and $ d \mid a + zb $. Conversely, if $ d \mid -b $ and $ d \mid a + zb $, then $ d \mid b $ and $ d \mid a + bz -bz = a $ which shows the last statement. 	
	\end{enumerate}
\end{proof}	
	

\begin{example} \label{ex_mult_with_imaginary_unit}
	Consider the ring $ \mathbb{Z}[i_3] $, then $ S_{19} $ consists of two branches separated by $ l_{3,2} $ (one above and one below as in \Cref{Separating_lines_positive}). We see that $ \alpha_j $ lies on the same branch as $ \I_{+} \left( \alpha_j \right) $ for $ j = 1,2 $. Similarly, $ S_{-1} $ consists of two branches, one in the second and one in the fourth quadrant of the complex plane. We also have that $ \alpha_j $ and $ \I_{+} \left( \alpha_j \right) $ lies on the same branch for $ j = 3,4 $.
\end{example}

\vspace{5mm}
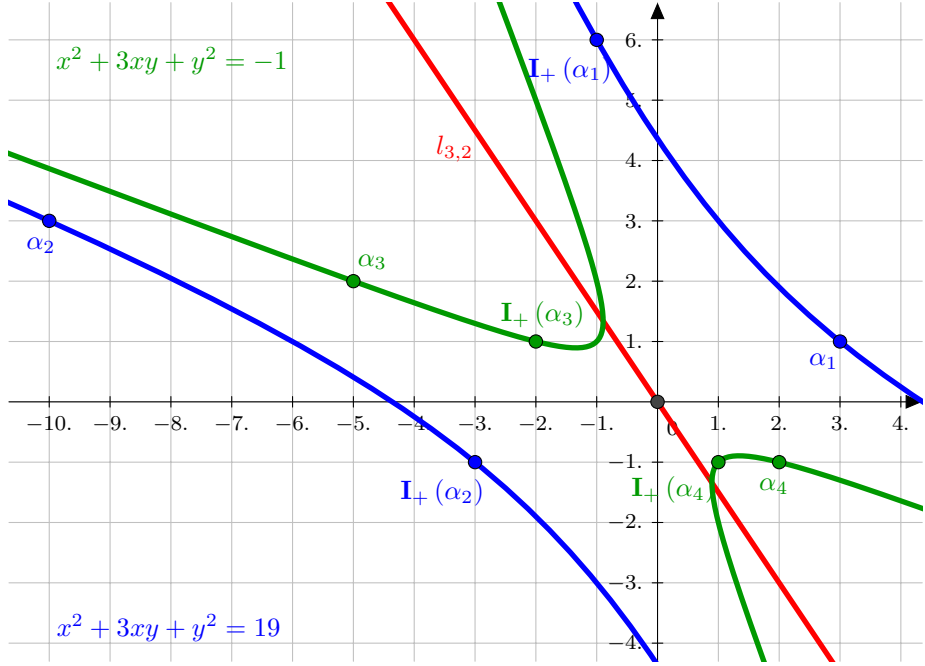
\begin{figure}[h]  
\begin{center}
\pagestyle{empty}

\definecolor{qqzzqq}{rgb}{0.,0.6,0.}
\definecolor{uuuuuu}{rgb}{0.26666666666666666,0.26666666666666666,0.26666666666666666}
\definecolor{ffqqqq}{rgb}{1.,0.,0.}
\definecolor{qqqqff}{rgb}{0.,0.,1.}
\definecolor{cqcqcq}{rgb}{0.7529411764705882,0.7529411764705882,0.7529411764705882}
\begin{tikzpicture}[line cap=round,line join=round,>=triangle 45,x=0.8cm,y=0.8cm]
\draw [color=cqcqcq,, xstep=0.8cm,ystep=0.8cm] (-10.662210136168579,-4.300507372135353) grid (4.35221465831521,6.617389175952259);
\draw[->,color=black] (-10.662210136168579,0.) -- (4.35221465831521,0.);
\foreach \x in {-10.,-9.,-8.,-7.,-6.,-5.,-4.,-3.,-2.,-1.,1.,2.,3.,4.}
\draw[shift={(\x,0)},color=black] (0pt,2pt) -- (0pt,-2pt) node[below] {\footnotesize $\x$};
\draw[->,color=black] (0.,-4.300507372135353) -- (0.,6.617389175952259);
\foreach \y in {-4.,-3.,-2.,-1.,1.,2.,3.,4.,5.,6.}
\draw[shift={(0,\y)},color=black] (2pt,0pt) -- (-2pt,0pt) node[left] {\footnotesize $\y$};
\draw[color=black] (0pt,-10pt) node[right] {\footnotesize $0$};
\clip(-10.662210136168579,-4.300507372135353) rectangle (4.35221465831521,6.617389175952259);
\draw [samples=50,domain=-0.99:0.99,rotate around={-135.:(0.,0.)},xshift=0.cm,yshift=0.cm,line width=2.pt,color=qqqqff] plot ({2.7568097504180447*(1+(\x)^2)/(1-(\x)^2)},{6.164414002968976*2*(\x)/(1-(\x)^2)});
\draw [samples=50,domain=-0.99:0.99,rotate around={-135.:(0.,0.)},xshift=0.cm,yshift=0.cm,line width=2.pt,color=qqqqff] plot ({2.7568097504180447*(-1-(\x)^2)/(1-(\x)^2)},{6.164414002968976*(-2)*(\x)/(1-(\x)^2)});
\draw [line width=2.pt,color=ffqqqq,domain=-10.662210136168579:4.35221465831521] plot(\x,{(-0.-1.5*\x)/1.});
\draw [samples=50,domain=-0.99:0.99,rotate around={-45.:(0.,0.)},xshift=0.cm,yshift=0.cm,line width=2.pt,color=qqzzqq] plot ({1.4142135623730951*(1+(\x)^2)/(1-(\x)^2)},{0.6324555320336759*2*(\x)/(1-(\x)^2)});
\draw [samples=50,domain=-0.99:0.99,rotate around={-45.:(0.,0.)},xshift=0.cm,yshift=0.cm,line width=2.pt,color=qqzzqq] plot ({1.4142135623730951*(-1-(\x)^2)/(1-(\x)^2)},{0.6324555320336759*(-2)*(\x)/(1-(\x)^2)});
\begin{scriptsize}
\draw[color=ffqqqq] (-3.4,4.2) node {\fontsize{10}{0} $ l_{3,2} $};
\draw [fill=qqqqff] (3.,1.) circle (2.5pt);
\draw[color=qqqqff] (2.67,0.67) node {\fontsize{10}{0} $\alpha_1 $};
\draw [fill=qqqqff] (-1.,6.) circle (2.5pt);
\draw[color=qqqqff] (-1.5,5.5) node {\fontsize{10}{0} $ \I_{+} \left( \alpha_1 \right)$};
\draw [fill=uuuuuu] (0.,0.) circle (2.5pt);
\draw [fill=qqqqff] (-10.,3.) circle (2.5pt);
\draw[color=qqqqff] (-10.2,2.6) node {\fontsize{10}{0} $ \alpha_2 $};
\draw [fill=qqqqff] (-3.,-1.) circle (2.5pt);
\draw[color=qqqqff] (-3.6,-1.5) node {\fontsize{10}{0} $ \I_{+} \left( \alpha_2 \right)$};
\draw [fill=qqzzqq] (-5.,2.) circle (2.5pt);
\draw[color=qqzzqq] (-4.75,2.32) node {\fontsize{10}{0} $ \alpha_3 $};
\draw [fill=qqzzqq] (-2.,1.) circle (2.5pt);
\draw[color=qqzzqq] (-1.95,1.45) node {\fontsize{10}{0} $ \I_{+} \left( \alpha_3 \right)$};
\draw [fill=qqzzqq] (1.,-1.) circle (2.5pt);
\draw[color=qqzzqq] (0.2,-1.45) node {\fontsize{10}{0} $ \I_{+} \left( \alpha_4 \right) $};
\draw [fill=qqzzqq] (2.,-1.) circle (2.5pt);
\draw[color=qqzzqq] (1.85,-1.4) node {\fontsize{10}{0} $ \alpha_4 $};
\draw[color=qqqqff] (-8.1,-3.75) node {\fontsize{10}{0} $x^2+3xy+y^2 = 19$};
\draw[color=qqzzqq] (-8.05,5.65) node {\fontsize{10}{0} $x^2+3xy+y^2 = -1$};

\end{scriptsize}
\end{tikzpicture}
\caption{Multiplication with the imaginary unit}
\label{Separating_lines_positive}
\end{center}
\end{figure}
\vspace{5mm}



In fact, ii) in \Cref{proposition_prop_iz} does not hold true for $ z = -4 $ what we will see in the next example.

\begin{example} \label{ex_properties_In}
	Consider $ S_1 \subset \mathbb{R}[i_{-4}] $ and define $ g \coloneqq -i_{-4} $. Then we clearly have $ \I_{+} \left( S_1 \right) = S_1 = \I_{-} \left( S_1 \right) $ because multiplication with units is reversible and does not change the norm as long as the multiplied element has norm equal to $ 1 $ (which is the case, i.e. $ \N \left(i_{-4} \right) = 1 = \N \left(-i_{-4} \right) $). However, we will see that the branches of $ S_1 $ are not preserved by $ \I_{+} $. Since $ i_{-4} $ satisfies the equation $ i_{-4}^2 + 4i_{-4}+1 = 0 $, we can easily deduce that $ g^0 = 1,g^1,g^2,g^{-1} $ are on the same branch of $ S_1 $. Whereas multiplication of $ i_{-4} $ will let a unit change the branch. For example, $ 1 \in S_1 $ is on a different branch than $ \I_{+}\left( 1\right) = i_{-4} \in S_1 $ and the branch containing $ i_{-4} $ seems to be preserved by $ \I_{-} $ as $ \I_{-}^n \left( i_{-4} \right)  $ lies on the same branch for $ n = -2,-1,0,1 $, see \Cref{example_In}. It is therefore plausible that if $ z \in \mathbb{N} $, then $ \I_{-} $ satisfies similar properties for $ \mathbb{R}[i_{-z}] $ as $ \I_{+} $ for $ \mathbb{R}[i_z] $.
\end{example}

\vspace{5mm}
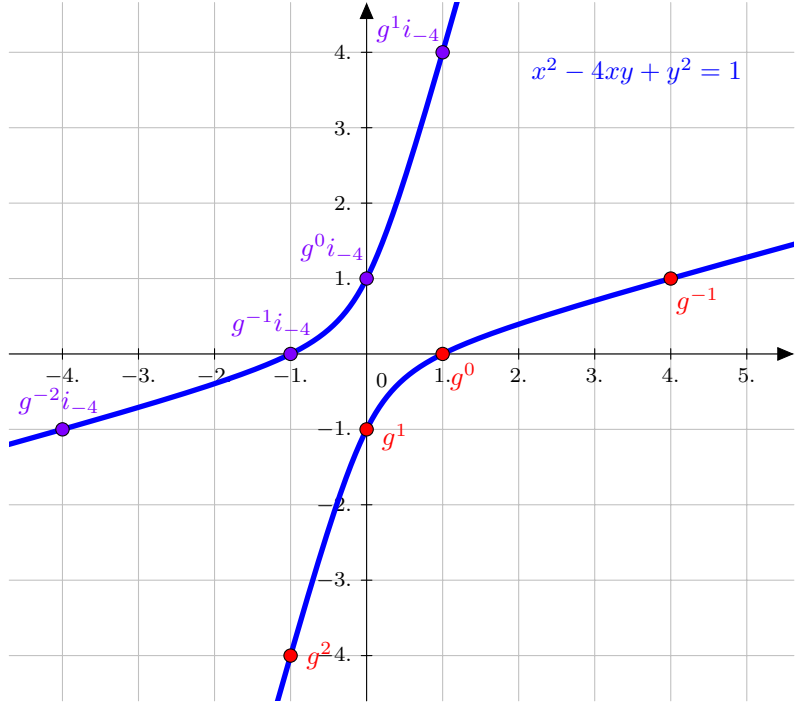
\begin{figure}[h]  
\begin{center}
\pagestyle{empty}

\definecolor{xfqqff}{rgb}{0.4980392156862745,0.,1.}
\definecolor{ffqqqq}{rgb}{1.,0.,0.}
\definecolor{qqqqff}{rgb}{0.,0.,1.}
\definecolor{cqcqcq}{rgb}{0.7529411764705882,0.7529411764705882,0.7529411764705882}
\begin{tikzpicture}[line cap=round,line join=round,>=triangle 45,x=1.0cm,y=1.0cm]
\draw [color=cqcqcq,, xstep=1.0cm,ystep=1.0cm] (-4.7,-4.6) grid (5.62,4.66);
\draw[->,color=black] (-4.7,0.) -- (5.62,0.);
\foreach \x in {-4.,-3.,-2.,-1.,1.,2.,3.,4.,5.}
\draw[shift={(\x,0)},color=black] (0pt,2pt) -- (0pt,-2pt) node[below] {\footnotesize $\x$};
\draw[->,color=black] (0.,-4.6) -- (0.,4.66);
\foreach \y in {-4.,-3.,-2.,-1.,1.,2.,3.,4.}
\draw[shift={(0,\y)},color=black] (2pt,0pt) -- (-2pt,0pt) node[left] {\footnotesize $\y$};
\draw[color=black] (0pt,-10pt) node[right] {\footnotesize $0$};
\clip(-4.7,-4.6) rectangle (5.62,4.66);
\draw [samples=50,domain=-0.99:0.99,rotate around={-45.:(0.,0.)},xshift=0.cm,yshift=0.cm,line width=2.pt,color=qqqqff] plot ({0.5773502691896257*(1+(\x)^2)/(1-(\x)^2)},{1.*2*(\x)/(1-(\x)^2)});
\draw [samples=50,domain=-0.99:0.99,rotate around={-45.:(0.,0.)},xshift=0.cm,yshift=0.cm,line width=2.pt,color=qqqqff] plot ({0.5773502691896257*(-1-(\x)^2)/(1-(\x)^2)},{1.*(-2)*(\x)/(1-(\x)^2)});
\begin{scriptsize}
\draw [fill=ffqqqq] (1.,0.) circle (2.5pt);
\draw[color=ffqqqq] (1.22,-0.3) node {\fontsize{10}{0} $g^0$};
\draw [fill=ffqqqq] (0.,-1.) circle (2.5pt);
\draw[color=ffqqqq] (0.32,-1.1) node {\fontsize{10}{0} $g^1$};
\draw [fill=ffqqqq] (-1.,-4.) circle (2.5pt);
\draw[color=ffqqqq] (-0.67,-4) node {\fontsize{10}{0} $g^2$};
\draw [fill=ffqqqq] (4.,1.) circle (2.5pt);
\draw[color=ffqqqq] (4.3,0.7) node {\fontsize{10}{0} $g^{-1}$};
\draw [fill=xfqqff] (0.,1.) circle (2.5pt);
\draw[color=xfqqff] (-0.5,1.4) node {\fontsize{10}{0} $g^0i_{-4}$};
\draw [fill=xfqqff] (1.,4.) circle (2.5pt);
\draw[color=xfqqff] (0.5,4.3) node {\fontsize{10}{0} $ g^1i_{-4}$};
\draw [fill=xfqqff] (-1.,0.) circle (2.5pt);
\draw[color=xfqqff] (-1.3,0.4) node {\fontsize{10}{0} $ g^{-1}i_{-4}$};
\draw [fill=xfqqff] (-4.,-1.) circle (2.5pt);
\draw[color=xfqqff] (-4.1,-0.63) node {\fontsize{10}{0} $ g^{-2}i_{-4}$};
\draw[color=qqqqff] (3.5,3.75) node {\fontsize{10}{0} $ x^2 -4xy + y^2 = 1 $};

\end{scriptsize}
\end{tikzpicture}
\caption{Units of $ \mathbb{Z}[i_{-4}] $ on $ S_{1} \subseteq \mathbb{R}[i_{-4}] $}
\label{example_In}
\end{center}
\end{figure}
\vspace{5mm}

Since ii) of \Cref{proposition_prop_iz} is generally not true for negative integers $ z $, we also need to work with $ \I_{-} $, the counterpart of $ \I_{+} $. Moreover, we will see that iv) of \Cref{proposition_prop_iz} needs some small adjustment if we want to replace $ \I_{+} $ by $ \I_{-} $.


\begin{corollary} \label{coro_In}
	Let $ z,M \in \mathbb{Z} $, $ \alpha, \beta \in \mathbb{R}[i_z]$ and $ S_M \subset \mathbb{R}[i_z] $ be arbitrary, then the following holds true:	
\begin{enumerate} 
		\item[i)] $ \I_{-} \left( S_M \right) = S_M $ and hence $ \I_{-} $ preserves the norm.
		\item[ii)] If $ z \leq 0 $, then $ \I_{-} $ preserves the branches of $ S_M $ for any $ M \in \mathbb{Z} $.
		\item[iii)] $ \I_{-} $ preserves areas and orientation, i.e. if $ P \subseteq \mathbb{R}[i_z] $ defines a polygon, then the size of the areas in the complex plane of $ P $ and $ \I_{-}\left( P \right) $ are the same. In particular, we have that $ \big< \I_{-} \left( \alpha \right), \I_{-} \left( \beta \right) \big> = \big< \alpha,  \beta  \big> $.
		\item[iv)] $ \big< \I_{-} \left( \alpha \right),\alpha \big> = \N \left( \alpha \right) $.
		\item[v)] $ \I_{-} \left( \mathbb{Z}[i_{z}] \right) \subseteq \mathbb{Z}[i_{z}] $ and $ \I_{-} \left( \mathbb{R}[i_{z}]\setminus \mathbb{Z}[i_{z}] \right) \subseteq \mathbb{R}[i_{z}]\setminus \mathbb{Z}[i_{z}] $.
		\item[vi)] $ \I_{-} $ preserves prime divisors of real and imaginary parts, i.e. $ d \in \mathbb{Z} $ is a common divisor of $ \mathrm{Re}\left(\alpha\right), \mathrm{Im}\left(\alpha\right)$ if and only if $ d $ is a common divisor of $ \mathrm{Re}\left(\I_{-} \left( \alpha \right) \right), \mathrm{Im}\left(\I_{-} \left( \alpha \right)\right) $.
	\end{enumerate}	
\end{corollary}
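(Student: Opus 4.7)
The plan is to reduce the whole statement to the corresponding assertions already proven for $\I_{+}$ in \Cref{proposition_prop_iz}. Two observations make this efficient. First, $\I_{-}(\alpha)=-i_z\alpha=-\I_{+}(\alpha)$, so $\I_{-}$ differs from $\I_{+}$ only by the involution $\gamma\mapsto-\gamma$, which preserves the norm, the oriented area (because $\langle-\alpha,-\beta\rangle=\langle\alpha,\beta\rangle$), lies inside $\mathbb{Z}[i_z]$ iff $\gamma$ does, and preserves common divisors of real and imaginary parts. Second, the $\mathbb{Z}$-linear extension $\Phi=\Phi_{z,-z}:\mathbb{R}[i_z]\to\mathbb{R}[i_{-z}]$, $r_1+r_2i_z\mapsto r_1-r_2i_{-z}$, is a norm-preserving isomorphism and a topological reflection in the real axis; in particular it satisfies the intertwining relation $\Phi\circ\I_{-}=\I_{+}\circ\Phi$ (since $\Phi(-i_z)=i_{-z}$) and maps branches to branches.

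For the easy items, I would just chain the two observations above. For i), $\N(\I_{-}(\alpha))=\N(-\I_{+}(\alpha))=\N(\I_{+}(\alpha))=\N(\alpha)$ by i) of \Cref{proposition_prop_iz}, hence $\I_{-}(S_M)\subseteq S_M$ and equality holds because $\I_{-}$ is invertible (multiplication by the unit $-i_z$). For iii), $\langle\I_{-}(\alpha),\I_{-}(\beta)\rangle=\langle-\I_{+}(\alpha),-\I_{+}(\beta)\rangle=\langle\I_{+}(\alpha),\I_{+}(\beta)\rangle=\langle\alpha,\beta\rangle$ by iii) of the proposition; area preservation then follows from the matrix picture, as the matrix of $\I_{-}$ is $\bigl(\begin{smallmatrix}0&1\\-1&-z\end{smallmatrix}\bigr)$ with determinant $1$. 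For iv), by bilinearity and anti-commutativity of $\langle\cdot,\cdot\rangle$ together with iv) of \Cref{proposition_prop_iz}, $\langle\I_{-}(\alpha),\alpha\rangle=-\langle\I_{+}(\alpha),\alpha\rangle=\langle\alpha,\I_{+}(\alpha)\rangle=\N(\alpha)$. For v), closure under ring multiplication gives $\I_{-}(\mathbb{Z}[i_z])\subseteq\mathbb{Z}[i_z]$, and for the contrapositive one uses that $-i_z$ has the inverse $-(z-i_z)\in\mathbb{Z}[i_z]$, so if $\I_{-}(\alpha)$ lay in $\mathbb{Z}[i_z]$ then so would $\alpha$. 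For vi), a one-line computation: $\I_{-}(a+bi_z)=b-(a+bz)i_z$, and $d\mid a,b\Leftrightarrow d\mid b,a+bz$.

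The only delicate point is ii), which is the analogue of the genuinely non-trivial branch-preservation statement. Here the hypothesis flips to $z\leq 0$, i.e.\ $-z\geq 0$, precisely so that ii) of \Cref{proposition_prop_iz} applies to the ring $\mathbb{R}[i_{-z}]$. My plan is to push the question through $\Phi$: given $\alpha$ on a branch $B$ of $S_M\subset\mathbb{R}[i_z]$, the image $\Phi(B)$ is a branch of $S_M\subset\mathbb{R}[i_{-z}]$ because $\Phi$ is a homeomorphism preserving the norm; by ii) of the previous proposition applied in $\mathbb{R}[i_{-z}]$, the element $\I_{+}(\Phi(\alpha))$ lies on $\Phi(B)$; the intertwining identity $\I_{+}\circ\Phi=\Phi\circ\I_{-}$ then shows $\Phi(\I_{-}(\alpha))\in\Phi(B)$, and pulling back by $\Phi^{-1}$ places $\I_{-}(\alpha)$ on $B$. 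This step is the main obstacle only in the sense that it is the one where the sign hypothesis on $z$ actually enters; all other items are formal consequences of the earlier work.
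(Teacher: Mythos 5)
Your proposal is correct and follows essentially the same route as the paper: everything is reduced to \Cref{proposition_prop_iz}, with the crucial branch-preservation statement ii) transported through the intertwining relation $\Phi\circ\I_{-}=\I_{+}\circ\Phi$ exactly as in the paper's proof. The only cosmetic difference is that you dispatch the routine items i), iii), iv), v), vi) via the identity $\I_{-}=-\I_{+}$ together with the invariance of norm and oriented area under negation, where the paper instead pushes most of them through $\Phi$ as well (and handles iii) and vi) by the same determinant and sign computations you give); both are valid and equivalent in substance.
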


\begin{proof}	
Let $ \Phi: \mathbb{R}[i_z] \to \mathbb{R}[i_{-z}] $ be the isomorphism defined before and $ \alpha = a_1 + a_2 i_z \in \mathbb{R}[i_z] $, $ \beta = b_1+b_2i_z \in \mathbb{R}[i_z] $. We would like to show that the function $ \I_{-} $ on $ \mathbb{R}[i_{-z}] $ is the equivalent to $ \I_{+} $ on $ \mathbb{R}[i_{z}] $. Indeed, we have $$ \Phi \left( \I_{+} \left( \alpha \right) \right) = \Phi \left( i_z \alpha \right) = \Phi \left( i_z  \right)\Phi \left( \alpha \right) = -i_{-z}\Phi \left( \alpha \right) = \I_{-} \left( \Phi \left( \alpha \right) \right) $$ and so the following diagram commutes:
	 
\begin{center}
\begin{tikzcd}
  \mathbb{R}[i_z] \arrow[r, "\I_{+}"] \arrow["\Phi",d]
    & \mathbb{R}[i_z] \arrow[d, "\Phi"] \\
  \mathbb{R}[i_{-z}] \arrow[r, "\I_{-}"]
& \mathbb{R}[i_{-z}]
\end{tikzcd}		
\end{center}			 
	 
Moreover, let $ \Phi \times \Phi: \mathbb{R}[i_z] \times \mathbb{R}[i_z] \to \mathbb{R}[i_{-z}] \times \mathbb{R}[i_{-z}] $ be the product isomorphism defined by $ \left( \Phi  \times \Phi \right) \left( \alpha, \beta \right) = \left( \Phi \left( \alpha \right), \Phi \left( \beta \right)\right) $. Then we have 
$$ \big< \alpha , \beta \big> = a_1b_2-a_2b_1 = b_1 \left(-a_2 \right) - \left( -b_2\right)a_1 = \big< \Phi \left( \beta \right), \Phi\left( \alpha \right)\big> $$
and therefore the following diagram also commutes because the oriented area is anti-commutative:

\begin{center}
\begin{tikzcd}
  \mathbb{R}[i_z] \times \mathbb{R}[i_z] \arrow[r, "\langle \ \kk \ \rangle"] \arrow["\Phi \times \Phi",d]
    & \mathbb{Z} \arrow[d, "\id"] \\
  \mathbb{R}[i_{-z}] \times \mathbb{R}[i_{-z}] \arrow[r, "-\langle \ \kk \ \rangle"]
& \mathbb{Z}
\end{tikzcd}		
\end{center}	
	 
	 
Hence, i), ii), iv) and v) follow directly from the isomorphy between $ \mathbb{R}[i_z] $, $ \mathbb{R}[i_{-z}] $ and \Cref{proposition_prop_iz}.

iii) is a consequence of \Cref{fact_area} and the following commuting diagram

\begin{center}
\begin{tikzcd}
  \mathbb{R}[i_z] \arrow[r, "\I_{-}"] \arrow["\Psi",d]
    & \mathbb{R}[i_z] \arrow[d, "\Psi"] \\
  \mathbb{R}^2  \arrow[r, "\M_{-}"]
& \mathbb{R}^2 
\end{tikzcd}		
\end{center}			
	
where $ \M_{-}: \mathbb{R}^2 \to \mathbb{R}^2 $ is the function defined by matrix multiplication of 
	\begin{displaymath}
	{M}^{-} \coloneqq \begin{pmatrix}
 0 & 1 \\
 -1 & -z \\
\end{pmatrix} 
	\end{displaymath}
from the left-hand side and $ \Psi: \mathbb{R}[i_z] \mapsto \mathbb{R}^2 $ is defined as in \Cref{proposition_prop_iz}. Then $ \det({M}^{-}) = 1 $.

vi) is a consequence of the \Cref{proposition_prop_iz} and the fact that 
\begin{align*}
	\mathrm{Re}\left( \I_{+} \left( \alpha \right) \right) &= -\mathrm{Re}\left( \I_{-} \left( \alpha \right) \right) \\ 
	\mathrm{Im}\left( \I_{+} \left( \alpha \right) \right) &= -\mathrm{Im}\left( \I_{-} \left( \alpha \right) \right).
\end{align*}  
\end{proof}

\subsection{Partition and local solution theorems}

In this section we will develop a simple criterion to prove or disprove the existence of a solution to the Diophantine equation $ x^2 + zxy + y^2 = M $ for given $ M,z \in \mathbb{Z} $ in general (recall that we already discussed the case if $ M = 0 $, see \Cref{Nz_lemma} and we already know that there is no solution if $ M < 0 $ and $ \vert z \vert \leq 2 $). In case $ \vert z \vert \leq 1 $ and $ M > 0 $ the solutions to the equation above must be in $ [-\sqrt{2M},\sqrt{2M}] \times [-\sqrt{2M},\sqrt{2M}]i_z $ (as $ \sqrt{2M} $ is the smallest radius of a circle such that it entirely contains an ellipse defined by $ x^2 \pm xy+y^2= M $ for both signs) and so the possible solution range is bounded. Therefore if $ \vert z \vert \leq 1 $ we could find at most finitely many solutions in $ \mathbb{Z}[i_z] $. This theoretically means we could prove or disprove the existence of a solution of $ x^2 + zxy + y^2 = M $ by plugging in all elements of $ \left(\left[-\sqrt{2M},\sqrt{2M}\right] \times \left[-\sqrt{2M},\sqrt{2M}\right] \right) \cap \mathbb{Z}[i_z] $ to the Diophantine equation and see whether the equation is satisfied or not. However, this attempt is time-consuming if $ \vert M \vert $ is large. Moreover, if $ \vert z \vert > 1 $, then our solution range is not bounded any more. We will see that it is still possible to deduce the existence or non-existence of solutions to $ x^2 + zxy + y^2 = M $ for given $ z,M $ by local considerations on a bounded and connected part of a branch.

At first we will introduce the so called subbranches. As mentioned before they will be the useful tool to study the solvability of the above Diophantine equations.


\begin{definition}
	Let $ z,M \in \mathbb{Z} $, $ M \neq 0$ and $ \alpha \in S_M $. If $ M > 0 $, then we call
	$$ B_{\alpha} \coloneqq \left\{ 
\begin{array}{ll}
	\left\{ \beta \in S_M \mid \big< \alpha , \beta \big> \geq 0 \wedge \big< \I_{+} \left( \alpha \right), \beta \big> < 0 \right\} & z \geq 0, \ M > 0 \\  [1ex]
	\left\{ \beta \in S_M \mid \big< \alpha , \beta \big> \leq 0 \wedge \big< \I_{+} \left( \alpha \right), \beta \big> > 0 \right\} & z \geq 0, \ M < 0 \\ [1ex]
	\left\{ \beta \in S_M \mid \big< \alpha , \beta \big> \leq 0 \wedge \big< \I_{-} \left( \alpha \right), \beta \big> > 0 \right\} & z < 0, \ M > 0 \\ [1ex]
	\left\{ \beta \in S_M \mid \big< \alpha , \beta \big> \geq 0 \wedge \big< \I_{-} \left( \alpha \right), \beta \big> < 0 \right\} & z < 0, \ M < 0
\end{array}	
\right.  $$
the {\em subbranch} with respect to $ \alpha $. 
\end{definition}

The definition of the subbranch seems to be involved. However, if we consider the complex plane it is much more simple to interpret. Consider the case if $ z \geq 0 $ and $ M > 0 $. By \Cref{proposition_prop_iz} we know that $ \big< \alpha, \I_{+} \left( \alpha \right) \big> = \N \left( \alpha \right) $ and that $ \alpha, \I_{+}\left( \alpha \right) $ are both on the same branch, i.e. there are points on the branch between $ \alpha $ and $ \I_{+}\left( \alpha \right) $. Now we explain why all these elements on the same branch “between” $ \alpha $ and $ \I_{+}\left( \alpha \right) $ including $ \alpha $ and excluding $ \I_{+}\left( \alpha \right) $ are contained in $ B_{\alpha} $. Observe that these elements $ \gamma \in S_M $ satisfy the definition $ \big< \alpha , \gamma \big> \geq 0 \wedge \big< \I_{+} \left( \alpha \right), \gamma \big> < 0 $ even if $ \gamma = \alpha $, but not if $ \gamma = \I_{+} \left( \alpha \right) $. Hence, we only need to show why all the other elements in $ S_M $ do not satisfy the definition. Remark that all the elements “between” $ -\alpha $ and $ -\I_{+}\left( \alpha \right) $ do not satisfy them because the sign is not correct, i.e. for an element $ \gamma \in S_M $ “between” $ -\alpha $ and $ -\I_{+}\left( \alpha \right) $ the sign of the oriented area is swapped. Moreover, for all the other elements in $ S_M $ which are neither between $ \alpha,\I_{+}\left( \alpha \right) $ not $ -\alpha,-\I_{+}\left( \alpha \right) $ we have that the sign of both oriented areas are the same and so they cannot belong to the set $ B_{\alpha} $. 

In the case $ z \geq 0 $ and $ M < 0 $ we have that the orientation changes (compare with \Cref{Separating_lines_positive}), so the signs of the oriented areas have to switch. If $ z < 0 $ and $ M > 0 $, then the orientation compared to the case $ z \geq 0 $ also changes because the isomorphism $ \Phi $ is like a mirror on the real axis and the function $ \I_{+} $ will be replaced by $ \I_{-} $ as $ \alpha $ and $ \I_{+} \left(\alpha\right) $ are not on the same branch if $ z < -1 $. From $ z < 0 $ and $ M > 0 $ to $ z < 0 $ and $ M < 0 $ the orientation changes and so the signs of the oriented areas change again.

\begin{example}
Let $ \alpha = \sqrt{6}-2i \in \mathbb{R}[i] $, then $ M = \N \left( \alpha \right) = 10 $ and $ B_{\alpha} $ consists of the elements between $ \alpha $ and $ \I_{+} \left( \alpha \right) = i\left(\sqrt{6}-2i\right) = 2-\sqrt{6}i$ including $ \alpha $ and excluding $ \I_{+} \left( \alpha \right) $, see \Cref{units_z3}.
\end{example}

\vspace{5mm}
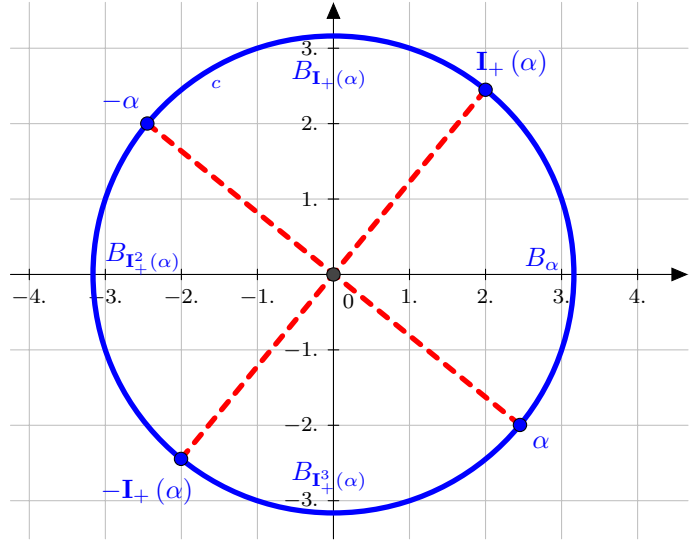
\begin{figure}[h]  
\begin{center}
\pagestyle{empty}

\definecolor{ffqqqq}{rgb}{1.,0.,0.}
\definecolor{uuuuuu}{rgb}{0.26666666666666666,0.26666666666666666,0.26666666666666666}
\definecolor{qqqqff}{rgb}{0.,0.,1.}
\definecolor{cqcqcq}{rgb}{0.7529411764705882,0.7529411764705882,0.7529411764705882}
\begin{tikzpicture}[line cap=round,line join=round,>=triangle 45,x=1.0cm,y=1.0cm]
\draw [color=cqcqcq,, xstep=1.0cm,ystep=1.0cm] (-4.247776103378497,-3.5056020036309796) grid (4.663648613392571,3.6081732444121273);
\draw[->,color=black] (-4.247776103378497,0.) -- (4.663648613392571,0.);
\foreach \x in {-4.,-3.,-2.,-1.,1.,2.,3.,4.}
\draw[shift={(\x,0)},color=black] (0pt,2pt) -- (0pt,-2pt) node[below] {\footnotesize $\x$};
\draw[->,color=black] (0.,-3.5056020036309796) -- (0.,3.6081732444121273);
\foreach \y in {-3.,-2.,-1.,1.,2.,3.}
\draw[shift={(0,\y)},color=black] (2pt,0pt) -- (-2pt,0pt) node[left] {\footnotesize $\y$};
\draw[color=black] (0pt,-10pt) node[right] {\footnotesize $0$};
\clip(-4.247776103378497,-3.5056020036309796) rectangle (4.663648613392571,3.6081732444121273);
\draw [line width=2.pt,color=qqqqff] (0.,0.) circle (3.1622776601683795cm);
\draw [line width=2.pt,dash pattern=on 4pt off 4pt,color=ffqqqq] (2.000242814908519,2.4492914651806643)-- (-2.0047090816328916,-2.445637237616938);
\draw [line width=2.pt,dash pattern=on 4pt off 4pt,color=ffqqqq] (2.4530217761611834,-1.9956663462811195)-- (-2.4481264048285176,2.0016685804551657);
\begin{scriptsize}
\draw[color=qqqqff] (-1.5436196375996891,2.5249742055632307) node {$c$};
\draw [fill=qqqqff] (2.4530217761611834,-1.9956663462811195) circle (2.5pt);
\draw[color=qqqqff] (2.68,-2.23) node {\fontsize{10}{0} $ \alpha $};
\draw [fill=qqqqff] (2.000242814908519,2.4492914651806643) circle (2.5pt);
\draw[color=qqqqff] (2.3,2.8) node {\fontsize{10}{0} $ \I_{+} \left( \alpha \right) $};
\draw [fill=uuuuuu] (0.,0.) circle (2.5pt);
\draw [fill=qqqqff] (-2.0047090816328916,-2.445637237616938) circle (2.5pt);
\draw[color=qqqqff] (-2.5,-2.87) node {\fontsize{10}{0} $-\I_{+} \left( \alpha \right)$};
\draw [fill=qqqqff] (-2.4481264048285176,2.0016685804551657) circle (2.5pt);
\draw[color=qqqqff] (-2.85,2.3) node {\fontsize{10}{0} $-\alpha $};
\draw[color=qqqqff]
(2.7,0.2) node {\fontsize{10}{0} $B_{\alpha}$};
\draw[color=qqqqff]
(-0.1,2.65) node {\fontsize{10}{0} $B_{\I_{+} \left( \alpha \right)}$};
\draw[color=qqqqff]
(-2.55,0.2) node {\fontsize{10}{0} $B_{\I_{+}^2 \left( \alpha \right)}$};
\draw[color=qqqqff]
(-0.1,-2.7) node {\fontsize{10}{0} $B_{\I_{+}^{3} \left( \alpha \right)}$};

\end{scriptsize}
\end{tikzpicture}
\caption{Partition of subbranches in $ \mathbb{R}[i] $}
\label{units_z0}
\end{center}
\end{figure}
\vspace{5mm}

Now we are ready to define the so called closed branch which is the part of a branch “between” two elements on the same branch.

\begin{definition}
	Let $ z \in \mathbb{Z} \setminus \{ -1,0,1 \} $, $ M \in \mathbb{N} \setminus \{ 0 \} $, $ B \subseteq S_M \subseteq \mathbb{R}[i_z] $ be a branch and $ \alpha_1,\alpha_2 \in B $. Then we call
	$$ B_{\alpha_1, \alpha_2} \coloneqq \{ \beta \in B \mid \big< \alpha_1 , \beta \big> \big< \alpha_2, \beta \big> \leq 0 \} $$
the {\em closed branch bounded by $ \alpha_1 $ and $ \alpha_2 $}.	
\end{definition}

Observe, that $ B_{\alpha_1, \alpha_2} $ really contains all the elements of a branch “between” $ \alpha_1 $ and $ \alpha_2 $ inclusively $ \alpha_1,\alpha_2 $ (in case $ \beta \in \{ \alpha_1,\alpha_2 \} $, then the product $ \big< \alpha_1 , \beta \big> \big< \alpha_2, \beta \big> $ is zero by \Cref{lem_orient_prod_zero}). Since only the elements $ \beta \in S_M $ between $ \alpha_1,\alpha_2 $ and $ -\alpha_1,-\alpha_2 $ satisfy the condition that the signs of $ \big< \alpha_1 , \beta \big> $ and $ \big< \alpha_2 , \beta \big> $ are different from each other (or one is zero and the other positive or negative) and we require $ \beta $ to be on the same branch and $ -\alpha_1,-\alpha_2 \notin B $ (because $ \vert z \vert > 1 $) we are sure that $ B_{\alpha_1, \alpha_2} $ contains exactly the elements on $ B $ “between” $ \alpha_1 $ and $ \alpha_2 $ if we consider the complex plane.

The following fact states an inequality which is very useful if we work with closed branches. But before we come to it observe that the branches (or branch if $ \vert z \vert \leq 1 $) of $ \mathbb{R}[i_z] $ separate the complex plane. In case $ \vert z \vert \leq 1 $ we have that the part containing the origin is strictly convex since all lines between two points in this part are entirely in the same part where as this does not hold true if $ \vert z \vert \geq 3 $. Therefore we will call the branch in $ \mathbb{R}[i_z] $ convex if $ z \in \left\{ -1,0,1 \right\} $ and we call the branches concave otherwise. 
In case the branches are concave we get a useful inequality for the oriented areas of three elements on the same branch:


\begin{fact} \label{fact_ineq_concave}
Let $ z \in \mathbb{Z} \setminus \{-1,0,1\} $, $ M \in \mathbb{Z} \setminus \{ 0 \} $ and $ B \subseteq S_M $ be a branch with $ \alpha_1, \alpha_2, \delta \in B $ where $ \delta \in B_{\alpha_1,\alpha_2} $. Then all the curves $ x^2+zxy+y^2 = M $ consist of two different branches. 
Moreover, the following inequality holds true:
	$$ \vert \big< \alpha_1,\alpha_2 \big> \vert \geq \vert \big< \alpha_1,\delta \big> \vert + \vert \big< \delta,\alpha_2 \big> \vert.$$
\end{fact}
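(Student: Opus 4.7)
The plan is to split the claim into two parts: first verify that $ S_M $ really consists of two branches, then establish the oriented-area inequality by a triangle-decomposition argument in the plane.

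For the branch count, I would diagonalize the quadratic form via the substitution $ u = x+y $, $ v = x-y $, which rewrites the equation as
\begin{equation*}
\tfrac{2+z}{4}u^2 + \tfrac{2-z}{4}v^2 = M.
\end{equation*}
When $ |z| = 2 $ one coefficient vanishes and the locus is either empty or a pair of parallel lines; when $ |z| \geq 3 $ the two coefficients have opposite signs and the locus is a non-degenerate hyperbola. In both cases $ S_M $ has exactly two connected components whenever it is non-empty, which is guaranteed here by the existence of $ B $.

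For the inequality, the starting point is the identity
\begin{equation*}
\langle \alpha_1, \alpha_2 \rangle = \langle \alpha_1, \delta \rangle + \langle \delta, \alpha_2 \rangle + \langle \alpha_1 - \delta, \alpha_2 - \delta \rangle,
\end{equation*}
which follows by expanding the last term using the bilinearity and antisymmetry of $ \langle \cdot, \cdot \rangle $. Interpreting $ \tfrac{1}{2}|\langle \beta, \gamma \rangle| $ as the area of the triangle $ 0\beta\gamma $ and $ \tfrac{1}{2}|\langle \alpha_1-\delta, \alpha_2-\delta \rangle| $ as the area of the triangle $ \delta\alpha_1\alpha_2 $ translates the identity into a statement about signed areas. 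The hypothesis $ \delta \in B_{\alpha_1, \alpha_2} $ means $ \langle \alpha_1, \delta \rangle \langle \alpha_2, \delta \rangle \leq 0 $, so $ \langle \alpha_1, \delta \rangle $ and $ \langle \delta, \alpha_2 \rangle $ carry the same sign; equivalently, $ \delta $ lies in the closed angular wedge cut out by the rays through $ \alpha_1 $ and $ \alpha_2 $ from the origin.

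The crux is to show that $ \delta $ actually lies in the closed triangle $ 0\alpha_1\alpha_2 $. This is exactly where the concavity of the branch (valid for $ |z| \geq 2 $) enters: since the chord $ \alpha_1\alpha_2 $ lies on the far side of the arc through $ \delta $ from the origin, $ \delta $ is on the same side of the line $ \alpha_1\alpha_2 $ as $ 0 $, and combined with the wedge condition this places $ \delta $ inside (or on the boundary of) the triangle. Once this is established, the standard decomposition of $ 0\alpha_1\alpha_2 $ into the three sub-triangles $ 0\alpha_1\delta $, $ 0\delta\alpha_2 $, and $ \delta\alpha_1\alpha_2 $ yields
\begin{equation*}
|\langle \alpha_1, \alpha_2 \rangle| = |\langle \alpha_1, \delta \rangle| + |\langle \delta, \alpha_2 \rangle| + |\langle \alpha_1 - \delta, \alpha_2 - \delta \rangle|,
\end{equation*}
from which the claimed inequality is immediate. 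The main obstacle is the concavity step: one must verify that under the hypotheses on $ z $ and the definition of $ B_{\alpha_1,\alpha_2} $ the point $ \delta $ really sits on the origin-side of the chord. For $ |z| = 2 $ this is trivial since $ \delta $ already lies on the chord itself (equality case); for $ |z| \geq 3 $ one can read directly off the rotated form $ (2+z)u^2 + (2-z)v^2 = 4M $ that each branch is the graph of a strictly convex (resp. concave) function and that the origin lies in the asymptotic region into which the branch curves, so every chord of the branch strictly separates the branch from infinity and hence from the origin.
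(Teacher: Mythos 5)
Your proposal is correct, but it is worth pointing out that the paper does not actually prove this statement at all: it is labelled a \emph{Fact} and is justified only by the picture in the accompanying figure and the remark that the inequality ``holds clearly true if the considered branches are concave.'' Your argument therefore supplies something the paper omits. The two ingredients you use --- the exact identity $\langle \alpha_1,\alpha_2\rangle = \langle \alpha_1,\delta\rangle + \langle \delta,\alpha_2\rangle + \langle \alpha_1-\delta,\alpha_2-\delta\rangle$ (which holds since $\langle\delta,\delta\rangle=0$), and the observation that $\delta$ lies in the closed triangle $0\alpha_1\alpha_2$ so that all three summands share the sign of $\langle\alpha_1,\alpha_2\rangle$ --- turn the paper's appeal to a figure into an actual proof, and the diagonalization $u=x+y$, $v=x-y$ cleanly settles the branch count (degenerate pair of parallel lines for $|z|=2$, genuine hyperbola for $|z|\geq 3$; emptiness for $|z|=2$, $M<0$ is excluded by the existence of $B$).

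Two places deserve slightly more care than your sketch gives them. First, the condition $\langle\alpha_1,\delta\rangle\langle\alpha_2,\delta\rangle\leq 0$ by itself only places $\delta$ in the union of the wedge spanned by $\alpha_1,\alpha_2$ and its antipodal wedge; if $\delta$ were in the antipodal wedge the signs of $\langle\alpha_1,\delta\rangle$ and $\langle\delta,\alpha_2\rangle$ would be opposite to that of $\langle\alpha_1,\alpha_2\rangle$ and the identity would give nothing. You must invoke that $\delta$ lies on the \emph{same branch} as $\alpha_1,\alpha_2$ and that for $|z|\geq 2$ a branch contains no antipodal pair (each branch lies in a half-plane cut off by $l_{z,2}$, resp.\ a quadrant), which is exactly the point the paper makes in the paragraph following the definition of the closed branch. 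Second, for the origin-side-of-the-chord step it is cleanest to argue via \Cref{lem_at_most_two_solutions}: each ray from the origin meets a branch at most once, so the segment $[0,\delta]$ meets the convex region $R=\{u\geq u(v)\}$ only at $\delta$ itself; since the chord segment lies in $R$ and has the same angular extent as the arc, the chord point on the ray through $\delta$ is at radius $\geq |\delta|$, which is precisely $\delta\in 0\alpha_1\alpha_2$. With those two points made explicit the proof is complete.
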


\begin{example}
	The inequality of \Cref{fact_ineq_concave} says that the absolute value of the red area in \Cref{ex_fact_ineq} is greater or equal to the sum of the corresponding green and blue area, respectively. This holds clearly true if the considered branches are concave, i.e. if the branches are defined by the Diophantine inequality $ x^2 + zxy + y^2 = M $ for $ z \in \mathbb{Z} \setminus \{-1,0,1\} $ and $ M \in \mathbb{Z} \setminus \{ 0 \} $. In case $ z = 2 $ the branches are lines, so the inequality of \Cref{fact_ineq_concave} will then be an equality.
\end{example}

\vspace{5mm}
\begin{figure}[h]  
\begin{center}
\pagestyle{empty}

\definecolor{ffqqqq}{rgb}{1.,0.,0.}
\definecolor{qqzzqq}{rgb}{0.,0.6,0.}
\definecolor{qqqqff}{rgb}{0.,0.,1.}
\definecolor{ffxfqq}{rgb}{1.,0.4980392156862745,0.}
\begin{tikzpicture}[line cap=round,line join=round,>=triangle 45,x=0.8cm,y=0.8cm]
\draw[->,color=black] (-7.535371744336492,0.) -- (8.17863013548406,0.);
\foreach \x in {-7.,-6.,-5.,-4.,-3.,-2.,-1.,1.,2.,3.,4.,5.,6.,7.,8.}
\draw[shift={(\x,0)},color=black] (0pt,2pt) -- (0pt,-2pt) node[below] {\footnotesize $\x$};
\draw[->,color=black] (0.,-2.5208135511574863) -- (0.,7.214834256599425);
\foreach \y in {-2.,-1.,1.,2.,3.,4.,5.,6.,7.}
\draw[shift={(0,\y)},color=black] (2pt,0pt) -- (-2pt,0pt) node[left] {\footnotesize $\y$};
\draw[color=black] (0pt,-10pt) node[right] {\footnotesize $0$};
\clip(-7.535371744336492,-2.5208135511574863) rectangle (8.17863013548406,7.214834256599425);
\fill[line width=2.pt,color=qqqqff,fill=qqqqff,fill opacity=0.10000000149011612] (0.,0.) -- (-0.7236913742465916,6.57888023916402) -- (1.6602741052528058,1.7205733795278597) -- cycle;
\fill[line width=2.pt,color=qqzzqq,fill=qqzzqq,fill opacity=0.10000000149011612] (0.,0.) -- (7.459569528102715,-0.9815193105631677) -- (1.6602741052528058,1.7205733795278597) -- cycle;
\fill[line width=2.pt,color=ffqqqq,fill=ffqqqq,fill opacity=0.10000000149011612] (0.,0.) -- (-0.7236913742465916,6.57888023916402) -- (7.459569528102715,-0.9815193105631677) -- cycle;
\fill[line width=2.pt,color=qqqqff,fill=qqqqff,fill opacity=0.10000000149011612] (0.,0.) -- (-1.4319868607026376,4.4549405447413415) -- (-2.7603611732992532,1.4234155998940285) -- cycle;
\fill[line width=2.pt,color=qqzzqq,fill=qqzzqq,fill opacity=0.10000000149011612] (0.,0.) -- (-2.7603611732992532,1.4234155998940285) -- (-6.7968897518727225,1.7430283657819157) -- cycle;
\fill[line width=2.pt,color=ffqqqq,fill=ffqqqq,fill opacity=0.10000000149011612] (0.,0.) -- (-6.7968897518727225,1.7430283657819157) -- (-1.4319868607026376,4.4549405447413415) -- cycle;
\draw [samples=50,domain=-0.99:0.99,rotate around={-135.:(0.,0.)},xshift=0.cm,yshift=0.cm,line width=2.pt] plot ({2.390457218668787*(1+(\x)^2)/(1-(\x)^2)},{3.6514837167011076*2*(\x)/(1-(\x)^2)});
\draw [samples=50,domain=-0.99:0.99,rotate around={-135.:(0.,0.)},xshift=0.cm,yshift=0.cm,line width=2.pt] plot ({2.390457218668787*(-1-(\x)^2)/(1-(\x)^2)},{3.6514837167011076*(-2)*(\x)/(1-(\x)^2)});
\draw [line width=2.pt,color=qqqqff] (0.,0.)-- (-0.7236913742465916,6.57888023916402);
\draw [line width=2.pt,color=qqqqff] (-0.7236913742465916,6.57888023916402)-- (1.6602741052528058,1.7205733795278597);
\draw [line width=2.pt,color=qqqqff] (1.6602741052528058,1.7205733795278597)-- (0.,0.);
\draw [line width=2.pt,color=qqzzqq] (0.,0.)-- (7.459569528102715,-0.9815193105631677);
\draw [line width=2.pt,color=qqzzqq] (7.459569528102715,-0.9815193105631677)-- (1.6602741052528058,1.7205733795278597);
\draw [line width=2.pt,color=qqzzqq] (1.6602741052528058,1.7205733795278597)-- (0.,0.);
\draw [line width=2.pt,color=ffqqqq] (0.,0.)-- (-0.7236913742465916,6.57888023916402);
\draw [line width=2.pt,color=ffqqqq] (-0.7236913742465916,6.57888023916402)-- (7.459569528102715,-0.9815193105631677);
\draw [line width=2.pt,color=ffqqqq] (7.459569528102715,-0.9815193105631677)-- (0.,0.);
\draw [samples=50,domain=-0.99:0.99,rotate around={-45.:(0.,0.)},xshift=0.cm,yshift=0.cm,line width=2.pt] plot ({2.581988897471611*(1+(\x)^2)/(1-(\x)^2)},{1.6903085094570331*2*(\x)/(1-(\x)^2)});
\draw [samples=50,domain=-0.99:0.99,rotate around={-45.:(0.,0.)},xshift=0.cm,yshift=0.cm,line width=2.pt] plot ({2.581988897471611*(-1-(\x)^2)/(1-(\x)^2)},{1.6903085094570331*(-2)*(\x)/(1-(\x)^2)});
\draw [line width=2.pt,color=qqqqff] (0.,0.)-- (-1.4319868607026376,4.4549405447413415);
\draw [line width=2.pt,color=qqqqff] (-1.4319868607026376,4.4549405447413415)-- (-2.7603611732992532,1.4234155998940285);
\draw [line width=2.pt,color=qqqqff] (-2.7603611732992532,1.4234155998940285)-- (0.,0.);
\draw [line width=2.pt,color=qqzzqq] (0.,0.)-- (-2.7603611732992532,1.4234155998940285);
\draw [line width=2.pt,color=qqzzqq] (-2.7603611732992532,1.4234155998940285)-- (-6.7968897518727225,1.7430283657819157);
\draw [line width=2.pt,color=qqzzqq] (-6.7968897518727225,1.7430283657819157)-- (0.,0.);
\draw [line width=2.pt,color=ffqqqq] (0.,0.)-- (-6.7968897518727225,1.7430283657819157);
\draw [line width=2.pt,color=ffqqqq] (-6.7968897518727225,1.7430283657819157)-- (-1.4319868607026376,4.4549405447413415);
\draw [line width=2.pt,color=ffqqqq] (-1.4319868607026376,4.4549405447413415)-- (0.,0.);
\begin{scriptsize}
\draw [fill=black] (0.,0.) circle (2.5pt);
\draw[color=black] (1.9,6.6) node {\fontsize{10}{0} $x^2+5xy+y^2 = 20$};
\draw[color=black] (-4.2,6.6) node {\fontsize{10}{0} $x^2+5xy+y^2 = -10$};
\draw[color=black] (-3.6,-2.2) node {\fontsize{10}{0} $x^2+5xy+y^2 = 20$};
\draw[color=black] (3.9,-2.2) node {\fontsize{10}{0} $x^2+5xy+y^2 = -10$};
\draw [fill=black]
(-0.7236913742465916,6.57888023916402) circle (2.5pt);
\draw[color=black] (-1.25,6.75) node {\fontsize{10}{0} $\alpha_2 $};
\draw [fill=black] (1.6602741052528058,1.7205733795278597) circle (2.5pt);
\draw[color=black] (1.9,2.05) node {\fontsize{10}{0} $\delta$};
\draw[color=qqqqff] (0.53,1.7) node {\fontsize{10}{0} $ \tfrac{1}{2}\left\vert \big< \delta, \alpha_2 \big> \right\vert $};
\draw [fill=black] (7.459569528102715,-0.9815193105631677) circle (2.5pt);
\draw[color=black] (7.62,-0.62) node {\fontsize{10}{0} $\alpha_1 $};
\draw[color=qqzzqq] (1.74,0.5) node {\fontsize{10}{0} $ \tfrac{1}{2}\left\vert \big< \alpha_1, \delta \big> \right\vert $};
\draw[color=ffqqqq] (3.38,1.5) node {\fontsize{10}{0} $ \tfrac{1}{2}\left\vert \big< \alpha_1, \alpha_2 \big> \right\vert $};
\draw [fill=black] (-1.4319868607026376,4.4549405447413415) circle (2.5pt);
\draw[color=black] (-1.05,4.7) node {\fontsize{10}{0} $\alpha_2' $};
\draw [fill=black] (-2.7603611732992532,1.4234155998940285) circle (2.5pt);
\draw[color=black] (-2.95,1.9) node {\fontsize{10}{0} $\delta' $};
\draw[color=qqqqff] (-1.5,1.45) node {\fontsize{10}{0} $ \tfrac{1}{2}\left\vert \big< \delta',\alpha_2' \big> \right\vert $};
\draw [fill=black] (-6.7968897518727225,1.7430283657819157) circle (2.5pt);
\draw[color=black] (-6.85,2.23) node {\fontsize{10}{0} $ \alpha_1' $};
\draw[color=qqzzqq] (-3.8,0.5) node {\fontsize{10}{0} $ \tfrac{1}{2}\left\vert \big< \alpha_1', \delta' \big> \right\vert $};
\draw[color=ffqqqq] (-3.5,2.65) node {\fontsize{10}{0} $ \tfrac{1}{2}\left\vert \big< \alpha_1', \alpha_2' \big> \right\vert$};

\end{scriptsize}
\end{tikzpicture}
\caption{The inequality of \Cref{fact_ineq_concave}}
\label{ex_fact_ineq}
\end{center}
\end{figure}
\vspace{5mm}

Now we will show a lot of statements which we finally use to prove the Local Solution Theorems 1 and 2. A big milestone for the proof of the first local solution theorem will be \Cref{prop_part_of_branches} and \Cref{coro_neg_partition} where we will show that for each branch and level set we find a useful partition consisting of subbranches.

\begin{lemma} \label{lem_branch_rules}
	Let $ z \in \mathbb{Z} $, $ M \in \mathbb{Z} \setminus \left\{ 0 \right\} $ be arbitrary, $ \alpha_0,\alpha_1, \alpha_2 \in S_M \subseteq \mathbb{Z}[i_z] $, $ B_{\alpha_{0}} \subseteq S_M $ be a subbranch, $ B_{\alpha_1,\alpha_2} \subseteq S_M $ be a closed branch contained on the branch $ B $ and let $ \Phi : \mathbb{R}[i_z] \to \mathbb{R}[i_{-z}] $ the ring isomorphism as already defined before. Then the following holds true:
	\begin{enumerate}
		\item[i)] $ \I_{+}^{n}\left( B_{\alpha_0} \right) = B_{\I_{+}^{n} \left(\alpha_0\right)} $  and $ \I_{-}^{n}\left( B_{\alpha_0} \right) = B_{\I_{-}^{n} \left(\alpha_0\right)} $
		\item[ii)] $ \Phi \left( B_{\alpha_0} \right) = B_{\Phi \left( \alpha_0 \right)} $ if $ z \neq 0 $
		\item[iii)] $ \Phi \left(B_{\alpha_1, \alpha_2} \right) = B_{\Phi \left( \alpha_1 \right),\Phi \left( \alpha_2 \right)} $ if $ z \notin \left\{ -1,0,1 \right\} $
	\end{enumerate}
\end{lemma}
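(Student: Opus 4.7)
The plan is to reduce each of the three claims to a direct verification of the defining inequalities, exploiting the fact that $\I_+, \I_-$ are multiplications by units of norm $1$ (so they preserve oriented area by \Cref{proposition_prop_iz} iii) and \Cref{coro_In} iii)), and that $\Phi$ intertwines $\I_+$ with $\I_-$ while flipping the sign of every oriented area.

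For (i), since $\I_+^n$ and $\I_-^n$ are bijections on $\mathbb{R}[i_z]$, I would simply check that $\beta \in B_{\alpha_0}$ iff $\I_+^n(\beta) \in B_{\I_+^n(\alpha_0)}$, and analogously for $\I_-^n$. Each of the four rows in the definition of a subbranch is a conjunction of two oriented-area inequalities of the form $\langle \alpha_0, \beta \rangle \lessgtr 0$ and $\langle \I_{\pm}(\alpha_0), \beta \rangle \lessgtr 0$. Applying $\I_+^n$ simultaneously to both arguments leaves each oriented area unchanged, and $\I_+^n$ commutes with $\I_\pm$ (as multiplication in the commutative ring $\mathbb{R}[i_z]$), so the second inequality transforms into $\langle \I_{\pm}(\I_+^n(\alpha_0)), \I_+^n(\beta) \rangle \lessgtr 0$ as required. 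The $\I_-^n$ argument is verbatim the same.

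For (ii), I would translate the four defining cases of $B_{\alpha_0}$ under $\Phi$. The map $\Phi$ carries $\mathbb{R}[i_z]$ to $\mathbb{R}[i_{-z}]$, so on the target side the sign of $z$ has flipped and the applicable ``row'' of the definition switches from $\I_+$-based to $\I_-$-based (or vice versa), which is precisely compatible with $\Phi \circ \I_+ = \I_- \circ \Phi$ from the proof of \Cref{coro_In}. Each defining inequality acquires a sign under $\Phi$ because $\langle \Phi(\alpha), \Phi(\beta) \rangle = -\langle \alpha, \beta \rangle$ (the anti-commutative diagram), and this sign flip exactly pairs with the $\geq/\leq$ and $>/<$ swap forced by moving to the opposite $z$-row of the definition. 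The result is that $\beta \in B_{\alpha_0}$ iff $\Phi(\beta) \in B_{\Phi(\alpha_0)}$.

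For (iii), $\Phi$ preserves norms so $\Phi(S_M) = S_M$, and it is a continuous bijection on $\mathbb{R}^2$, hence maps each branch of $S_M \subseteq \mathbb{R}[i_z]$ bijectively onto a branch of $S_M \subseteq \mathbb{R}[i_{-z}]$ (two branches in both rings, since $z \notin \{-1,0,1\}$ implies $-z \notin \{-1,0,1\}$). The defining condition $\langle \alpha_1, \beta \rangle \langle \alpha_2, \beta \rangle \leq 0$ is a product of two oriented areas, so the two sign flips cancel and the product is preserved, yielding $\beta \in B_{\alpha_1,\alpha_2}$ iff $\Phi(\beta) \in B_{\Phi(\alpha_1),\Phi(\alpha_2)}$. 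I expect the main obstacle to be the sign bookkeeping in (ii), where the four case-row-sign patterns must be tracked carefully; (i) and (iii) should each collapse to one- or two-line checks once the preservation and sign-flip properties are invoked.
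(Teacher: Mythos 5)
Your proposal is correct and follows essentially the same route as the paper's proof: all three parts are reduced to the defining oriented-area inequalities, using that $\I_{\pm}^{n}$ preserves oriented area and commutes with $\I_{\pm}$ for part i), that $\Phi$ flips the sign of each oriented area while intertwining $\I_{+}$ with $\I_{-}$ (so the sign flip matches the switch of the defining row) for part ii), and that the two sign flips cancel in the product for part iii). No gaps.
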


\begin{proof}
Let $ M \gtrless 0 $. Use that the functions $ \I_{+} $ and $ \I_{-} $ commute and the statements in the proof of \Cref{proposition_prop_iz} and \Cref{coro_In} (in particular, use the commutative diagrams which tell us that for $ \alpha_1,\alpha_2 \in \mathbb{Z}[i_z] $ we have $ \big< \alpha_1, \alpha_2 \big> = -\big< \Phi \left(\alpha_1\right), \Phi \left(\alpha_2\right) \big> $, $ \Phi \circ \I_{+} = \I_{-} \circ \Phi $ and also $ \Phi \circ \I_{-} = \I_{+} \circ \Phi $).
	\begin{enumerate}	
		\item[i)] At first assume $ z \geq 0 $, then we have:
			\begin{align*}
				\gamma \in \I_{+}^{n}\left( B_{\alpha_0} \right) 			&\Longleftrightarrow \I_{+}^{-n} \left( \gamma \right) \in B_{\alpha_0} \\
				&\Longleftrightarrow	\big< \alpha_0, \I_{+}^{-n} \left( \gamma \right) \big> \gtreqless 0 \wedge  \big< \I_{+} \left(\alpha_0\right), \I_{+}^{-n} \left( \gamma \right) \big> \lessgtr 0 \\
				&\Longleftrightarrow \big< \I_{+}^n \left(\alpha_0 \right), \gamma \big> \gtreqless 0 \wedge \big< \I_{+}^{n+1} \left(\alpha_0\right),\gamma \big> \lessgtr 0 \\
				&\Longleftrightarrow \gamma \in B_{\I_{+}^{n} \left( \alpha_{0} \right)}.
			\end{align*}
On the other hand, if	$ z < 0 $, then we get:	
	\begin{align*}
				\gamma \in \I_{+}^{n}\left( B_{\alpha_0} \right) 			&\Longleftrightarrow \I_{+}^{-n} \left( \gamma \right) \in B_{\alpha_0} \\
				&\Longleftrightarrow	\big< \alpha_0, \I_{+}^{-n} \left( \gamma \right) \big> \lesseqgtr 0 \wedge  \big< \I_{-} \left(\alpha_0\right), \I_{+}^{-n} \left( \gamma \right) \big> \gtrless 0 \\
				&\Longleftrightarrow \big< \I_{+}^n \left(\alpha_0 \right), \gamma \big> \lesseqgtr 0 \wedge \big< \I_{-} \left(\I_{+}^{n} \left(\alpha_0\right) \right),\gamma \big> \gtrless 0 \\
				&\Longleftrightarrow \gamma \in B_{\I_{+}^{n} \left( \alpha_{0} \right)}				
	\end{align*}	
This implies $ \I_{+}^{n}\left( B_{\alpha_0} \right) = B_{\I_{+}^{n} \left(\alpha_0\right)} $. Analogously, we can show for $ z \geq 0 $
	\begin{align*}
		\gamma \in \I_{-}^{n}\left( B_{\alpha_0} \right) 			&\Longleftrightarrow \I_{-}^{-n} \left( \gamma \right) \in B_{\alpha_0} \\
				&\Longleftrightarrow	\big< \alpha_0, \I_{-}^{-n} \left( \gamma \right) \big> \gtreqless 0 \wedge  \big< \I_{+} \left(\alpha_0\right), \I_{-}^{-n} \left( \gamma \right) \big> \lessgtr 0 \\
				&\Longleftrightarrow \big< \I_{+}^n \left(\alpha_0 \right), \gamma \big> \gtreqless 0 \wedge \big< \I_{+} \left(\I_{-}^{n} \left(\alpha_0\right) \right),\gamma \big> \lessgtr 0 \\
				&\Longleftrightarrow \gamma \in B_{\I_{-}^{n} \left( \alpha_{0} \right)}				
	\end{align*}	
and for $ z < 0 $
	\begin{align*}
				\gamma \in \I_{-}^{n}\left( B_{\alpha_0} \right) 			&\Longleftrightarrow \I_{-}^{-n} \left( \gamma \right) \in B_{\alpha_0} \\
				&\Longleftrightarrow	\big< \alpha_0, \I_{-}^{-n} \left( \gamma \right) \big> \lesseqgtr 0 \wedge  \big< \I_{-} \left(\alpha_0\right), \I_{-}^{-n} \left( \gamma \right) \big> \gtrless 0 \\
				&\Longleftrightarrow \big< \I_{-}^n \left(\alpha_0 \right), \gamma \big> \lesseqgtr 0 \wedge \big< \I_{-}^{n+1} \left(\alpha_0\right),\gamma \big> \gtrless 0 \\
				&\Longleftrightarrow \gamma \in B_{\I_{-}^{n} \left( \alpha_{0} \right)}			
	\end{align*}	
which proves $ \I_{-}^{n}\left( B_{\alpha_0} \right) = B_{\I_{-}^{n} \left(\alpha_0\right)} $.
		\item[ii)] Assume $ z > 0 $, then we have
			\begin{align*}
				\gamma \in \Phi \left( B_{\alpha_0} \right) &\Longleftrightarrow \Phi^{-1} \left( \gamma \right) \in B_{\alpha_0} \\
		&\Longleftrightarrow \underbrace{\big< \alpha_0, \Phi^{-1} \left( \gamma \right) \big>}_{=-\big< \Phi \left( \alpha_0 \right), \Phi \left( \Phi^{-1} \left( \gamma \right) \right) \big>}	\gtreqless 0 \wedge 	\underbrace{\big< \I_{+} \left( \alpha_0 \right), \Phi^{-1} \left( \gamma \right) \big>}_{= -\big< \Phi \left( \I_{+} \left( \alpha_0 \right) \right), \Phi \left( \Phi^{-1} \left( \gamma \right) \right)\big>} \lessgtr 0 \\
		&\Longleftrightarrow \big< \Phi \left( \alpha_0 \right), \gamma \big> \lesseqgtr 0 \wedge \big< \I_{-} \left( \Phi \left( \alpha_0 \right) \right), \gamma \big> \gtrless 0 \\
		&\Longleftrightarrow \gamma \in B_{\Phi \left( \alpha_0 \right)}
			\end{align*}				
and also for	 $ z < 0 $ it follows
	\begin{align*}
				\gamma \in \Phi \left( B_{\alpha_0} \right) &\Longleftrightarrow \Phi^{-1} \left( \gamma \right) \in B_{\alpha_0} \\
		&\Longleftrightarrow \underbrace{\big< \alpha_0, \Phi^{-1} \left( \gamma \right) \big>}_{=-\big< \Phi \left( \alpha_0 \right), \Phi \left( \Phi^{-1} \left( \gamma \right) \right) \big>} \lesseqgtr 0 \wedge 	\underbrace{\big< \I_{+} \left( \alpha_0 \right), \Phi^{-1} \left( \gamma \right) \big>}_{= -\big< \Phi \left( \I_{+} \left( \alpha_0 \right) \right), \Phi \left( \Phi^{-1} \left( \gamma \right) \right)\big>} \gtrless 0 \\
		&\Longleftrightarrow \big< \Phi \left( \alpha_0 \right), \gamma \big> \gtreqless 0 \wedge \big< \I_{-} \left( \Phi \left( \alpha_0 \right) \right), \gamma \big> \lessgtr 0 \\
		&\Longleftrightarrow \gamma \in B_{\Phi \left( \alpha_0 \right)}.
			\end{align*}			
Hence, we conclude $ \Phi \left( B_{\alpha_0} \right) = B_{\Phi \left( \alpha_0 \right)} $ if $ z \neq 0 $.		
		\item[iii)] Observe that $ \Phi \left( \gamma \right), \Phi \left( \alpha_1 \right), \Phi \left( \alpha_2 \right) \in \Phi \left( B \right) $ where $ \Phi \left( B \right) \subseteq \mathbb{R}[i_{-z}] $ is a branch, too. Therefore we get
	\begin{align*}
		\gamma \in \Phi \left( B_{\alpha_1,\alpha_2} \right)	&\Longleftrightarrow	\Phi^{-1} \left( \gamma \right) \in B_{\alpha_1, \alpha_2} \\
		&\Longleftrightarrow \underbrace{\big< \alpha_1,\Phi^{-1} \left( \gamma \right) \big>}_{=-\big< \Phi \left( \alpha_1 \right), \gamma \big>} \underbrace{\big< \alpha_2,\Phi^{-1} \left( \gamma \right) \big>}_{=-\big< \Phi \left( \alpha_2 \right), \gamma \big>} \leq 0 \\
		&\Longleftrightarrow \big< \Phi \left( \alpha_1 \right), \gamma \big>\big< \Phi \left( \alpha_2 \right), \gamma \big> \leq 0 \\
		&\Longleftrightarrow \gamma \in B_{\Phi \left( \alpha_1 \right),\Phi \left( \alpha_2 \right)}
	\end{align*}	 
which implies $ \Phi \left(B_{\alpha_1, \alpha_2} \right) = B_{\Phi \left( \alpha_1 \right),\Phi \left( \alpha_2 \right)} $. 	
	\end{enumerate}
\end{proof}

\begin{lemma} \label{lem_sub_closed_branch}
Let $ z \in \mathbb{Z} \setminus \left\{ -1,0,1 \right\} $, $ M \in \mathbb{Z} \setminus \left\{ 0 \right\} $, $ B $ be a branch with $ \alpha_0, \alpha_1, \alpha_2, \gamma \in B $. Then the following holds true:
	\begin{enumerate}
		\item[i)] $ B_{\alpha_0} \subseteq B_{\alpha_0,\I_{+}\left( \alpha_0 \right)} \subseteq B $ if $ z \geq 0 $ and $ B_{\alpha_0} \subseteq B_{\alpha_0,\I_{-}\left( \alpha_0 \right)} \subseteq B $ if $ z < 0 $
		\item[ii)] If $ \gamma \in B_{\alpha_1,\alpha_2} $, then $ B_{\alpha_1, \gamma} \subseteq B_{\alpha_1,\alpha_2} $ and $ B_{\alpha_2, \gamma} \subseteq B_{\alpha_1,\alpha_2} $
	\end{enumerate}
\end{lemma}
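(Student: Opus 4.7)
The plan is to prove (i) and (ii) by direct sign analysis, treating the closed branch $B_{\alpha_1,\alpha_2}$ as an "arc" between $\alpha_1$ and $\alpha_2$ on the branch $B$.

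For (i), the right-hand inclusion $B_{\alpha_0,\I_{+}(\alpha_0)} \subseteq B$ (and its $\I_{-}$ analogue) is immediate from the definition of the closed branch. For the left inclusion I start with the case $z \geq 0$, $M > 0$. By \Cref{proposition_prop_iz}\,ii), $\alpha_0$ and $\I_{+}(\alpha_0)$ lie on the same branch $B$. For $\beta \in B_{\alpha_0}$ the two defining inequalities $\langle \alpha_0, \beta \rangle \geq 0$ and $\langle \I_{+}(\alpha_0), \beta \rangle < 0$ multiply to $\langle \alpha_0, \beta \rangle \langle \I_{+}(\alpha_0), \beta \rangle \leq 0$, which is exactly the closed-branch condition. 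Membership of $\beta$ in $B$ (rather than in the opposite branch $-B$) follows from the geometric discussion immediately after the definition of the subbranch, where the sign pattern of $B_{\alpha_0}$ is shown to exclude the elements "between $-\alpha_0$ and $-\I_{+}(\alpha_0)$". The remaining three sign combinations for $(z, M)$ work the same way: the product of the two defining inequalities of $B_{\alpha_0}$ always yields $\leq 0$, matching the closed-branch condition with $\I_{\pm}$ chosen as dictated by the sign of $z$.

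For (ii), by $\alpha_1 \leftrightarrow \alpha_2$ symmetry it suffices to show $B_{\alpha_1,\gamma} \subseteq B_{\alpha_1,\alpha_2}$. The plan is to parametrize $B$ by a homeomorphism $\phi: \mathbb{R} \to B$ (available because $z \notin \{-1,0,1\}$ makes $B$ either a line, when $|z|=2$, or a branch of a hyperbola, when $|z|>2$, in either case homeomorphic to $\mathbb{R}$) and identify closed branches with parameter intervals. The key observation is that for every $\alpha \in B$ the continuous function $t \mapsto \langle \alpha, \phi(t) \rangle$ vanishes at exactly one point, namely $t_\alpha := \phi^{-1}(\alpha)$: by \Cref{lem_orient_prod_zero}, $\langle \alpha, \beta \rangle = 0$ on $S_M$ forces $\beta \in \{-\alpha, \alpha\}$, and $-\alpha \notin B$ since the two branches of $S_M$ are disjoint when $|z| \geq 2$. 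Consequently $\langle \alpha, \phi(\cdot) \rangle$ changes sign precisely at $t_\alpha$, and with a consistently chosen orientation of $\phi$ one obtains
\begin{align*}
B_{\alpha_1,\alpha_2} &= \phi\bigl([\min(t_1,t_2),\, \max(t_1,t_2)]\bigr), \\
B_{\alpha_1,\gamma} &= \phi\bigl([\min(t_1,t_\gamma),\, \max(t_1,t_\gamma)]\bigr),
\end{align*}
where $t_\star := \phi^{-1}(\star)$, because the product $\langle \alpha_1, \phi(t) \rangle\,\langle \alpha_2, \phi(t) \rangle$ is $\leq 0$ precisely on the parameter interval between $t_1$ and $t_2$ (similarly for $\gamma$). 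The hypothesis $\gamma \in B_{\alpha_1,\alpha_2}$ gives $t_\gamma \in [\min(t_1,t_2), \max(t_1,t_2)]$, and the elementary interval inclusion $[\min(t_1,t_\gamma), \max(t_1,t_\gamma)] \subseteq [\min(t_1,t_2), \max(t_1,t_2)]$ yields the desired containment.

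The main obstacle is bridging \Cref{lem_orient_prod_zero}, which is a statement about all of $S_M$, with the sign behaviour on a single branch; this is resolved by invoking the disjointness of the two branches of $S_M$ when $|z| \geq 2$, so that $-\alpha \notin B$ and the only zero of $\langle \alpha, \cdot \rangle|_B$ is $\alpha$ itself. A secondary technical point is arranging that the sign of $\langle \alpha, \phi(t) \rangle$ for $t$ slightly greater than $t_\alpha$ is the same for every $\alpha \in B$; this consistency follows from continuity together with the connectedness of $B$. Edge cases where $\gamma \in \{\alpha_1, \alpha_2\}$ or where one of the relevant oriented areas vanishes are immediate, since then the corresponding products are zero and membership in $B_{\alpha_1,\alpha_2}$ holds trivially.
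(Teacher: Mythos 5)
For part (i) your observation that the two defining inequalities of $ B_{\alpha_0} $ multiply to the closed-branch condition is exactly the easy half of the claim; the genuinely non-trivial content of (i) is that every $ \beta \in B_{\alpha_0} $ (a priori only an element of $ S_M $) lies on the branch $ B $ containing $ \alpha_0 $, and here you defer to the informal discussion following the definition of the subbranch, which is motivation rather than proof. The paper closes this gap with a concrete argument: the two sign conditions confine $ \beta $ to a cone of opening at most $ \pi/2 $ spanned by $ \alpha_0 $ and $ \I_{+}\left( \alpha_0 \right) $ (which lie on the same branch by \Cref{proposition_prop_iz}), and a polar-angle comparison with the separating line $ l_{1,1} $ (or the quadrant decomposition when $ M<0 $) shows this cone meets only one branch of $ S_M $; the case $ z<-1 $ is then transported via $ \Phi $. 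Some such argument is needed, since the definition of $ B_{\alpha_0} $ makes no reference to branches at all.

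For part (ii) your route is genuinely different from the paper's and, in my view, cleaner. The paper argues by contradiction, repeatedly applying the concavity inequality of \Cref{fact_ineq_concave} to force degenerate equalities via \Cref{lem_orient_prod_zero}; you instead parametrize $ B \cong \mathbb{R} $ and identify closed branches with parameter intervals, reducing the claim to an inclusion of intervals. This buys independence from \Cref{fact_ineq_concave} (which the paper only states as a Fact) at the cost of two points you flag but do not fully discharge. First, \Cref{lem_orient_prod_zero} only tells you that $ t \mapsto \big< \alpha, \phi(t) \big> $ vanishes solely at $ t_{\alpha} $; you still must rule out a zero without sign change. This follows because the line $ \{ t\alpha \mid t \in \mathbb{R} \} $ meets the conic $ S_M $ in exactly the two distinct points $ \pm\alpha $, hence crosses $ B $ transversally at $ \alpha $. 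Second, the orientation consistency is most cleanly obtained by noting that $ F(s,t) \coloneqq \big< \phi(s), \phi(t) \big> $ is continuous, vanishes exactly on the diagonal, and therefore has constant sign on the connected set $ \{ t > s \} $ (and the opposite sign on $ \{ t<s \} $ by antisymmetry). With these two additions your interval argument is complete and correct.
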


\begin{proof}
	\begin{enumerate}
		
\item[i)] Let $ M \gtrless 0 $ and assume at first that $ z > 1 $.  Let $ \gamma \coloneqq c_1 + c_2 i_z \in B_{\alpha_0} $ and $ \alpha_0 = a_1 + a_2i_z \in \mathbb{R}[i_z] $, then we have
	$$ \big< \alpha_0, \gamma \big> = a_1c_2- a_2c_1 \gtreqless 0 $$
and 
	$$ \big< \I_{+} \left( \alpha_0 \right), \gamma \big> = -a_2c_2 - \left( a_1+za_2 \right)c_2i_z \lessgtr 0 .$$ The line through $ \alpha_0  $ and the origin is line $ l_{-a_2,a_1} $. We can interpret $ \alpha_0 $ and $ \I_{+} \left( \alpha_0 \right) $ as vectors in the complex plane. Depending whether $ M > 0 $ or $ M < 0 $ the elements on the left or right side including the line itself, respectively, where left or right refers  to the direction of the vector $ \alpha_0 $ on $ l_{-a_2,a_1} $,  satisfy the condition $ \big< \alpha_0, \gamma \big> \gtreqless 0 $. Whereas the line $ l_{a_1+za_2,a_2} $ is defined by the vector $ \I_{+} \left( \alpha_0 \right) $ and the origin and all elements on the complex plane on the right or left side of the line, respectively, which are not included on the line and do satisfy the condition $ \big< \I_{+} \left( \alpha_0 \right), \gamma \big> \lessgtr 0 $. Hence, the elements which satisfy both conditions must lie in a cone defined by the origin and the two lines (with $ l_{-a_2,a_1} $ and without $ l_{a_1+za_2,a_2} $). This means that $ \gamma $ must lie in this cone and in $ S_M $.

Now we know that the line $ l_{1,1} $ separates the branches of $ S_M $ if $ M > 0 $ and otherwise, i.e. if $ M < 0 $, then the branches lie entirely in the second or fourth quadrant of the complex plane. By \Cref{lem_at_most_two_solutions} we observe that each element of $ S_M $ can be described uniquely by its angle in polar coordinates. Since $ l_{1,1} $ separates the branches, we could describe all the elements on $ S_M $ above or below uniquely by an angle in $ \theta \in (-\tfrac{1}{4}\pi, \tfrac{3}{4}\pi) $ or $ (\tfrac{3}{4}\pi, \tfrac{7}{4}\pi) $, respectively. Moreover, if $ M < 0 $, then $ \theta \in \left( \tfrac{1}{2}\pi, \pi \right) $ or $ \theta \in \left( -\pi,0 \right) $. Now the elements in the cone have an angle in polar coordinates which lies between the angles of $ \alpha_0 $ (including the angle of it) to  $ \I_{+} \left( \alpha_0 \right) $ (not included this angle). So the elements in $ B_{\alpha_0} $ are the ones in the cone and in $ S_M $. However, all these elements have either an angle which is also above or below $ l_{1,1} $ for $ M > 0 $ and for $ M < 0 $, the cone lies entirely in the second or fourth quadrant. Hence $ \gamma \in B $ and $ \big< \alpha_0, \gamma \big> \big< \I_{+} \left( \alpha_0 \right), \gamma \big> \leq 0 $, so $ \gamma \in B_{\alpha_0,\I_{+}\left( \alpha_0 \right)} $.

In case $ z < -1 $ we have
	$$ \Phi \left( \alpha_0 \right) \in \Phi \left( B_{\alpha_0} \right) = B_{\Phi \left( \alpha_0 \right)} \subseteq B_{\Phi \left( \alpha_0 \right), \I_{+}\left( \Phi \left(\alpha_0 \right) \right)} = \Phi \left( B_{\alpha_0, \I_{-}\left( \alpha_0 \right)} \right) $$
by \Cref{lem_branch_rules} and the previous part. Since $ \Phi $ is an isomorphism we deduce that $ \alpha_0 \in B_{\alpha_0, \I_{-}\left( \alpha_0 \right)} $.

\item[ii)] As $ \alpha_1, \alpha_2 $ have symmetric roles it remains to show $ B_{\alpha_1, \gamma} \subset B_{\alpha_1, \alpha_2} $. Let $ \delta \in B_{\alpha_1, \gamma} $, then
	$$ \big< \alpha_1, \delta \big> \big< \gamma, \delta\big> \leq 0 .$$
We assume that $ \delta \notin B_{\alpha_1, \alpha_2} $ and lead it to contradiction. Since $ \alpha_1,\alpha_2, \delta $ are all on the same branch we have
	$$ \big< \alpha_1, \delta \big>\big< \alpha_2, \delta \big> > 0 $$
and so $ \big< \alpha_1, \delta \big> $	 and $ \big< \alpha_2, \delta \big> $ have the same sign. Furthermore, we have that 
	$$ \big< \alpha_2, \delta \big> \big< \gamma, \delta \big> \leq 0 $$
and therefore $ \delta \in B_{\alpha_2, \gamma} $.	

We now have two cases: Either $ \big< \alpha_1,\alpha_2 \big> $ or $ \big< \alpha_2,\alpha_1 \big> $ is zero or has another sign than $ \big< \alpha_1, \delta \big> $ and $ \big< \alpha_2, \delta \big> $. Assume that $ \big< \alpha_1,\alpha_2 \big> \big< \alpha_1,\delta \big> \leq 0 $, then we have that $ \alpha_1 \in B_{\alpha_2, \delta} $. Since also $ \gamma \in B_{\alpha_1,\alpha_2} $ we get by applying \Cref{fact_ineq_concave} three times:
	\begin{align*}
		\left\vert \big< \alpha_2,\delta \big> \right\vert &\geq \left\vert \big< \alpha_2,\alpha_1 \big> \right\vert  + \left\vert \big< \alpha_1, \delta \big> \right\vert \\
		&\geq \left\vert \big< \alpha_2, \gamma \big> \right\vert + \left\vert \big< \gamma,\alpha_1 \big> \right\vert + \left\vert \big< \alpha_1, \delta \big> \right\vert \\
		&\geq \left\vert \big< \alpha_2, \delta \big> \right\vert + \left\vert \big< \delta, \gamma \big> \right\vert + \left\vert \big< \gamma, \alpha_1  \big> \right\vert + \left\vert \big< \alpha_1, \delta \big> \right\vert
	\end{align*}
However, this can only hold true if $ \alpha_1 = \gamma = \delta  $ by \Cref{lem_orient_prod_zero}. But then $ \delta \in B_{\alpha_1, \alpha_2} $ which is a contradiction.

On the other hand, if $ \big< \alpha_2,\alpha_1 \big> $ and $ \big< \alpha_2,\delta \big> $ are not both strictly positive or negative, we have
	$$ \big< \alpha_2,\alpha_1 \big> \big< \alpha_2,\delta \big> \leq 0 $$
and so $ \alpha_2 \in B_{\alpha_1, \delta} $. Again by applying \Cref{lem_orient_prod_zero} three times we get
	\begin{align*}
		\left\vert \big< \alpha_1,\delta \big> \right\vert &\geq \left\vert \big< \alpha_1,\alpha_2 \big> \right\vert  + \left\vert \big< \alpha_2, \delta \big> \right\vert \\
		&\geq \left\vert \big< \alpha_1, \gamma \big> \right\vert + \left\vert \big< \gamma,\alpha_2  \big> \right\vert + \left\vert \big< \alpha_2, \delta \big> \right\vert \\
		&\geq \left\vert \big< \gamma, \delta \big> \right\vert + \left\vert \big< \delta, \alpha_1 \big> \right\vert + \left\vert \big< \gamma,\alpha_2 \big> \right\vert + \left\vert \big< \alpha_2, \delta \big> \right\vert
	\end{align*}
We deduce $ \alpha_2 = \gamma = \delta $ and so clearly $ \delta \in B_{\alpha_1,\alpha_2} $ and again we have a contradiction. Hence, $ \delta \in B_{\alpha_1, \alpha_2} $ and we are done.
	\end{enumerate}
\end{proof}	

\vspace{5mm}
\begin{figure}[h]  
\begin{center}
\pagestyle{empty}

\definecolor{qqwuqq}{rgb}{0.,0.39215686274509803,0.}
\definecolor{qqzzqq}{rgb}{0.,0.6,0.}
\definecolor{ududff}{rgb}{0.30196078431372547,0.30196078431372547,1.}
\definecolor{xdxdff}{rgb}{0.49019607843137253,0.49019607843137253,1.}
\definecolor{uuuuuu}{rgb}{0.26666666666666666,0.26666666666666666,0.26666666666666666}
\definecolor{ffqqqq}{rgb}{1.,0.,0.}
\definecolor{qqqqff}{rgb}{0.,0.,1.}
\definecolor{cqcqcq}{rgb}{0.7529411764705882,0.7529411764705882,0.7529411764705882}
\begin{tikzpicture}[line cap=round,line join=round,>=triangle 45,x=0.8cm,y=0.8cm]
\draw [color=cqcqcq,, xstep=0.8cm,ystep=0.8cm] (-8.5,-2.5) grid (4.5,7.5);
\draw[->,color=black] (-8.5,0.) -- (4.5,0.);
\foreach \x in {-8.,-7.,-6.,-5.,-4.,-3.,-2.,-1.,1.,2.,3.,4.}
\draw[shift={(\x,0)},color=black] (0pt,2pt) -- (0pt,-2pt) node[below] {\footnotesize $\x$};
\draw[->,color=black] (0.,-2.5) -- (0.,7.5);
\foreach \y in {-2.,-1.,1.,2.,3.,4.,5.,6.,7.}
\draw[shift={(0,\y)},color=black] (2pt,0pt) -- (-2pt,0pt) node[left] {\footnotesize $\y$};
\draw[color=black] (0pt,-10pt) node[right] {\footnotesize $0$};
\clip(-8.5,-2.5) rectangle (4.5,7.5);
\fill[line width=2.pt,color=qqzzqq,fill=qqzzqq,fill opacity=0.10000000149011612] (0.,0.) -- (7.098023020731952,49.68616114512364) -- (-60.,20.) -- cycle;
\draw [shift={(0.,0.)},line width=2.pt,color=qqzzqq,fill=qqzzqq,fill opacity=0.10000000149011612] (0,0) -- (81.869897645844:0.5829755139804748) arc (81.869897645844:161.56505117707798:0.5829755139804748) -- cycle;
\draw [samples=50,domain=-0.99:0.99,rotate around={-45.:(0.,0.)},xshift=0.cm,yshift=0.cm,line width=2.pt,color=qqqqff] plot ({2.7080128015453204*(1+(\x)^2)/(1-(\x)^2)},{4.69041575982343*2*(\x)/(1-(\x)^2)});
\draw [samples=50,domain=-0.99:0.99,rotate around={-45.:(0.,0.)},xshift=0.cm,yshift=0.cm,line width=2.pt,color=qqqqff] plot ({2.7080128015453204*(-1-(\x)^2)/(1-(\x)^2)},{4.69041575982343*(-2)*(\x)/(1-(\x)^2)});
\draw [line width=2.pt,dash pattern=on 4pt off 4pt,color=qqzzqq] (0.,0.)-- (7.098023020731952,49.68616114512364);
\draw [line width=2.pt,color=qqzzqq] (7.098023020731952,49.68616114512364)-- (-60.,20.);
\draw [line width=2.pt,color=qqzzqq] (-60.,20.)-- (0.,0.);
\begin{scriptsize}
\draw [fill=ffqqqq] (-3.,1.) circle (2.5pt);
\draw[color=ffqqqq] (-3.05,0.6) node {\fontsize{10}{0} $ \alpha $};
\draw [fill=ffqqqq] (1.,7.) circle (2.5pt);
\draw[color=ffqqqq] (1.85,6.85) node {\fontsize{10}{0} $ \I_{-} \left( \alpha \right) $};
\draw [fill=uuuuuu] (0.,0.) circle (2.5pt);
\draw [fill=xdxdff] (-263.4615214048064,87.82050713493545) circle (2.5pt);
\draw [fill=xdxdff] (7.098023020731952,49.68616114512364) circle (2.5pt);
\draw [fill=ududff] (-60.,20.) circle (2.5pt);
\draw[color=qqqqff] (-6.15,-1.8) node {\fontsize{10}{0} $x^2-4xy+y^2 = 22$};
\draw[color=qqqqff] (-1.8,2.8) node {\fontsize{10}{0} $B_{\alpha}$};
\draw[color=qqzzqq]  (-0.2,0.3) node {\fontsize{10}{0} $ \theta $};

\end{scriptsize}
\end{tikzpicture}
\caption{A subbranch in a cone as in the proof of \Cref{lem_sub_closed_branch}}
\label{example_cone_of_branch}
\end{center}
\end{figure}
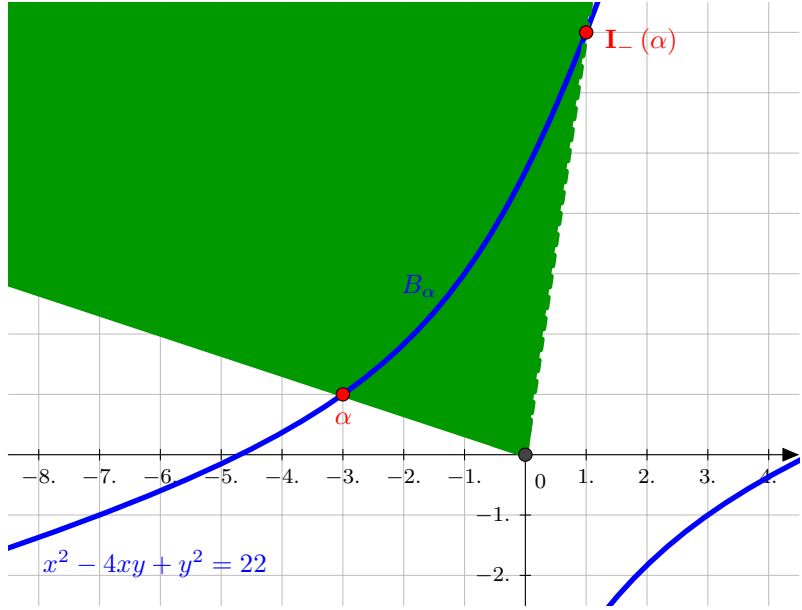
\vspace{5mm}

\begin{lemma} \label{lem_orientation_and_approx}
	Let $ z \in \mathbb{N} \setminus \left\{ 0,1 \right\} $, $ M \in \mathbb{Z} \setminus \left\{ 0 \right\} $, $ n \in \mathbb{N} \setminus \left\{ 0 \right\} $, $ \alpha, \gamma \in B $ where $ B \subseteq S_M $ is a branch and $ \gamma \in B_{\alpha} $. Then we have 
	$$ \left\vert \big< \I_{+}^{n+1} \left( \alpha \right),\gamma \big> \right\vert \geq \left\vert \big< \I_{+}^{n} \left( \alpha \right),\gamma \big> \right\vert + \left\vert \N \left( \alpha \right) \right\vert $$	
and the terms $ \big< \I_{+}^{n} \left( \alpha \right), \gamma \big> $ have the same (strictly negative or positive) sign for all $ n \in \mathbb{N} \setminus \left\{ 0 \right\} $. Moreover, for all $ n \in \mathbb{N} $ it holds true
	$$ \left\vert \big< \I_{+}^{-\left(n+1\right)} \left( \alpha \right),\gamma \big> \right\vert \geq \left\vert \big< \I_{+}^{-n} \left( \alpha \right),\gamma \big> \right\vert + \left\vert \N \left( \alpha \right) \right\vert $$
where the $ \big< \I_{+}^{-n} \left( \alpha \right),\gamma \big> $ have the same (strictly positive or negative) sign opposite to the terms above.
\end{lemma}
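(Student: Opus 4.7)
The plan is to reduce both inequalities to the concavity estimate \Cref{fact_ineq_concave}, applied to triangles whose vertices are $ \gamma $ together with two consecutive iterates of $ \alpha $ under $ \I_{+} $ (or its inverse). Set $ a_n \coloneqq \big< \I_{+}^{n}\left( \alpha \right), \gamma \big> $ for $ n \in \mathbb{Z} $. From $ i_z^2 = zi_z - 1 $ one has $ \I_{+}^{n+1}\left( \alpha \right) = z\I_{+}^{n}\left( \alpha \right) - \I_{+}^{n-1}\left( \alpha \right) $, so bilinearity of the oriented area gives the Chebyshev-type recurrence $ a_{n+1} = za_n - a_{n-1} $. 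By \Cref{proposition_prop_iz}, $ \I_{+} $ preserves the norm, oriented areas, and, since $ z \geq 2 $, the branches of every level set; hence the whole orbit $ \left\{ \I_{+}^{k}\left( \alpha \right) \right\}_{k \in \mathbb{Z}} $ lies on $ B $ and
\begin{displaymath}
\big< \I_{+}^{k}\left( \alpha \right), \I_{+}^{k+1}\left( \alpha \right) \big> = \N\left( \alpha \right) \quad \text{for every } k \in \mathbb{Z}.
\end{displaymath}

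The first step is to pin down the signs of the $ a_n $. Assume $ M > 0 $; the definition of $ B_\alpha $ gives $ a_0 \geq 0 $ and $ a_1 < 0 $, so
\begin{displaymath}
a_2 - a_1 = \left( z-1 \right)a_1 - a_0 < 0
\end{displaymath}
because $ \left( z-1 \right)a_1 < 0 $ and $ -a_0 \leq 0 $. An induction then shows: if $ a_n \leq a_{n-1} \leq 0 $, then $ \left( z-1 \right) a_n \leq a_n $, so $ a_{n+1} - a_n = \left( z-1 \right) a_n - a_{n-1} \leq a_n - a_{n-1} \leq 0 $. Therefore $ a_n \leq a_1 < 0 $ for all $ n \geq 1 $. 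The case $ M < 0 $ is symmetric with all inequalities reversed. For the negative iterates set $ b_n \coloneqq a_{-n} $; it satisfies $ b_{n+1} = zb_n - b_{n-1} $ with $ b_0 = a_0 $ and $ b_1 = za_0 - a_1 $, so $ b_1 $ has sign strictly opposite to $ a_1 $, and exactly the same monotonicity argument forces $ b_n $ to keep the sign of $ b_1 $ for all $ n \geq 1 $.

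The magnitude estimate then follows from \Cref{fact_ineq_concave}. For $ n \geq 1 $ one verifies $ \I_{+}^{n}\left( \alpha \right) \in B_{\gamma, \I_{+}^{n+1}\left( \alpha \right)} $ by computing
\begin{displaymath}
\big< \gamma, \I_{+}^{n}\left( \alpha \right) \big> \, \big< \I_{+}^{n+1}\left( \alpha \right), \I_{+}^{n}\left( \alpha \right) \big> = \left( -a_n \right) \left( -\N\left( \alpha \right) \right) = a_n \N\left( \alpha \right),
\end{displaymath}
which is non-positive by the sign claim (the second factor equals $ -\N\left( \alpha \right) $ via area preservation applied to $ \big< \I_{+}\left( \alpha \right), \alpha \big> = -\N\left( \alpha \right) $). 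Since $ \gamma, \I_{+}^{n}\left( \alpha \right), \I_{+}^{n+1}\left( \alpha \right) $ all lie on the concave branch $ B $, \Cref{fact_ineq_concave} yields
\begin{displaymath}
\left\vert \big< \gamma, \I_{+}^{n+1}\left( \alpha \right) \big> \right\vert \geq \left\vert \big< \gamma, \I_{+}^{n}\left( \alpha \right) \big> \right\vert + \left\vert \big< \I_{+}^{n}\left( \alpha \right), \I_{+}^{n+1}\left( \alpha \right) \big> \right\vert,
\end{displaymath}
which is exactly $ \left\vert a_{n+1} \right\vert \geq \left\vert a_n \right\vert + \left\vert \N\left( \alpha \right) \right\vert $. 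For the second inequality I would apply the same reasoning to $ \gamma, \I_{+}^{-n}\left( \alpha \right), \I_{+}^{-\left( n+1 \right)}\left( \alpha \right) $: checking $ \I_{+}^{-n}\left( \alpha \right) \in B_{\gamma, \I_{+}^{-\left( n+1 \right)}\left( \alpha \right)} $ reduces to the non-positivity of $ -b_n\,\N\left( \alpha \right) $, which is again the sign claim, and \Cref{fact_ineq_concave} closes the argument.

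The main obstacle I expect is not the geometric step---the concavity of $ B $ does all the work there---but the bookkeeping of the sign claim across the four cases $ M \gtrless 0 $ crossed with positive and negative iterates. Note in particular that the recurrence by itself only gives that $ \left\vert a_n \right\vert $ is non-decreasing, not the sharper jump by $ \left\vert \N\left( \alpha \right) \right\vert $; this jump comes entirely from \Cref{fact_ineq_concave}, so the geometric input is essential and must be invoked once per step.
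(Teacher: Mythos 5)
Your proof is correct and takes essentially the same route as the paper's: the recurrence $ a_{n+1} = za_n - a_{n-1} $ coming from $ i_z^2 = zi_z - 1 $ pins down the signs, and \Cref{fact_ineq_concave} applied to the triple $ \gamma, \I_{+}^{n}\left( \alpha \right), \I_{+}^{n+1}\left( \alpha \right) $ (after checking $ \I_{+}^{n}\left( \alpha \right) \in B_{\gamma,\I_{+}^{n+1}\left( \alpha \right)} $) supplies the jump by $ \left\vert \N\left( \alpha \right) \right\vert $. The only, harmless, difference is bookkeeping: you decouple the sign claim (a pure monotonicity induction on the recurrence) from the magnitude estimate, whereas the paper runs a single induction that feeds the inequality at step $ n $ back into the sign argument at step $ n+1 $; your variant is a slight streamlining, not a different method.
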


\begin{proof}
	Let $ M \gtrless 0 $. Therefore we have $ \big< \alpha, \gamma \big> \gtreqless 0 $ and $ \big< \I_{+} \left( \alpha \right), \gamma \big> \lessgtr 0 $. Now we are going to prove the upper part of the lemma by induction over $ n $. 
	
At first we will show the case $ n = 1 $. Observe that 
$$ \I_{+}^2 \left( \alpha \right) -z\I_{+} \left( \alpha \right) + \alpha = \left( i_z^2 -zi_z+1 \right) \alpha = 0 $$
and therefore we get 
	$$ \big< \I_{+}^2 \left( \alpha \right), \gamma \big> = z \underbrace{\big< \I_{+} \left( \alpha \right), \gamma \big>}_{\lessgtr 0} - \underbrace{\big< \alpha, \gamma \big>}_{\gtreqless 0} \lessgtr 0 $$
by using that the oriented area is bilinear. 

Now we show the inequality above for $ n = 1 $. By \Cref{proposition_prop_iz} we know that 
$$ \big<  \I_{+} \left( \alpha \right) , \I_{+}^2 \left( \alpha \right)  \big> = \N \left( \alpha \right) \gtrless 0 $$ 
and so we get 
$$ \big< \gamma, \I_{+} \left( \alpha \right) \big>\big<  \I_{+}^2 \left( \alpha \right) , \I_{+} \left( \alpha \right)  \big> \leq 0 $$
and hence $ \I_{+} \left( \alpha \right) \in B_{\gamma,\I_{+}^2 \left( \alpha \right)} $ because $ \gamma,\I_{+} \left( \alpha \right),\I_{+}^2 \left( \alpha \right) \in B $ are all on the same branch. By the inequality of \Cref{fact_ineq_concave} we have
	$$ \left\vert \big< \I_{+}^{2} \left( \alpha \right),\gamma \big> \right\vert \geq \underbrace{\left\vert \big< \I_{+}^{2} \left( \alpha \right),\I_{+} \left( \alpha \right) \big>\right\vert}_{=\left\vert \N \left( \alpha \right) \right\vert} + \left\vert \big< \I_{+} \left( \alpha \right),\gamma \big> \right\vert $$
which shows the inequality for $ n = 1 $.

Now we assume that there is an $ n \in \mathbb{N}_{\geq 1} $ such that the above inequality holds true and $ \big< \I_{+}^{m} \left( \alpha \right), \gamma \big> \lessgtr 0 $ for each $ m \in \mathbb{N}_{\leq n} \setminus \left\{ 0 \right\} $. By the induction hypothesis we have
	$$ \left\vert \big< \I_{+}^{n} \left( \alpha \right),\gamma \big> \right\vert \geq \left\vert \big< \I_{+}^{n-1} \left( \alpha \right),\gamma \big> \right\vert + \underbrace{\left\vert \N \left( \alpha \right) \right\vert}_{>0} > \left\vert \big< \I_{+}^{n-1} \left( \alpha \right),\gamma \big> \right\vert .$$

On the other hand, by using the same arguments as above we see that
\begin{align*}
	\big< \I_{+}^{n+1} \left( \alpha \right), \gamma \big> &= z \big< \I_{+}^n \left( \alpha \right), \gamma \big> - \big< \I_{+}^{n-1}\left( \alpha \right), \gamma \big> \\
	&= \underbrace{\left( z-1 \right)}_{>0} \underbrace{\big< \I_{+}^n \left( \alpha \right), \gamma \big>}_{\lessgtr 0} + \underbrace{\left( \big< \I_{+}^n \left( \alpha \right), \gamma \big> - \big< \I_{+}^{n-1}\left( \alpha \right), \gamma \big> \right)}_{\lessgtr0} \lessgtr 0
\end{align*} 
Hence, we get $ \big< \gamma, \I_{+}^n \left( \alpha \right) \big> \big< \I_{+}^{n+1} \left( \alpha \right),\I_{+}^n \left( \alpha \right) \big> \leq 0 $ and so $ \I_{+}^n \left( \alpha \right) \in B_{\gamma, \I_{+}^{n+1} \left( \alpha \right)} $ and
	$$ \left\vert \big< \I_{+}^{n+1} \left( \alpha \right),\gamma \big> \right\vert \geq \underbrace{\left\vert \big< \I_{+}^{n+1} \left( \alpha \right),\I_{+}^{n} \left( \alpha \right) \big>\right\vert}_{=\left\vert \N \left( \alpha \right) \right\vert} + \left\vert \big< \I_{+}^{n} \left( \alpha \right),\gamma \big> \right\vert $$
by \Cref{fact_ineq_concave}.

This shows the first part of the lemma. Now we prove the second part also by induction. We start with $ n = 0 $ and we show that the inequality below in the lemma holds true as well as that we have $ \big< \I_{+}^{-1} \left( \alpha \right),\gamma \big> \gtrless 0 $. Since 
	$$ \I_{+} \left( \alpha \right) -z \alpha + \I_{+}^{-1} \left( \alpha \right) = i_z^{-1}\underbrace{\left( i_z^2-zi_z+1 \right)}_{=0} \alpha = 0 $$
and the oriented area is bilinear, we get that
	$$ \big< \I_{+}^{-1} \left( \alpha \right), \gamma \big> = z\underbrace{\big< \alpha,\gamma \big>}_{ \gtreqless 0} - \underbrace{\big< \I_{+}\left( \alpha \right),\gamma \big>}_{\lessgtr 0}  \gtrless 0$$	
which shows that the sign of $ \big< \I_{+}^{-1} \left( \alpha \right), \gamma \big> $ is as claimed.	 By \Cref{proposition_prop_iz} we know that 
$$ \big<  \I_{+}^{-1} \left( \alpha \right) , \alpha \big> = \big< \alpha, \I_{+} \left( \alpha \right) \big> = \N \left( \alpha \right) \gtrless 0 $$
and so we get 
$$ \big< \I_{+}^{-1} \left( \alpha \right), \alpha \big> \big<  \gamma , \alpha  \big> \leq 0 $$
which shows that $ \alpha \in B_{\I_{+}^{-1} \left( \alpha \right), \gamma} $ as $ \I_{+}^{-1} \left( \alpha \right), \alpha, \gamma \in B $ are located on the same branch. By \Cref{fact_ineq_concave} we have
	$$ \left\vert \big< \I_{+}^{-1} \left( \alpha \right),\gamma \big> \right\vert \geq \underbrace{\left\vert \big< \I_{+}^{-1} \left( \alpha \right), \alpha \big>\right\vert}_{=\left\vert \N \left( \alpha \right) \right\vert} + \left\vert \big< \alpha,\gamma \big> \right\vert $$
which shows the inequality for $ n = 0 $.

Now we assume that the inequality above holds true for an $ n \in \mathbb{N}_{\geq 0} $ and for all $ m \in \mathbb{N}_{\leq n} $ we have that $ \big< \I_{+}^{-m} \left( \alpha \right), \gamma \big> \gtrless 0 $. By the induction hypothesis we get
	$$ \left\vert \big< \I_{+}^{-n} \left( \alpha \right),\gamma \big> \right\vert \geq \left\vert \big< \I_{+}^{-\left(n-1\right)} \left( \alpha \right),\gamma \big> \right\vert + \underbrace{\left\vert \N \left( \alpha \right) \right\vert}_{>0} > \left\vert \big< \I_{+}^{-\left(n-1\right)} \left( \alpha \right),\gamma \big> \right\vert. $$
By using the same arguments as above we see that
\begin{align*}
	\big< \I_{+}^{-\left(n+1\right)} \left( \alpha \right), \gamma \big> &= z \big< \I_{+}^{-n} \left( \alpha \right), \gamma \big> - \big< \I_{+}^{-\left(n-1\right)}\left( \alpha \right), \gamma \big> \\
	&= \underbrace{\left( z-1 \right)}_{>0} \underbrace{\big< \I_{+}^{-n} \left( \alpha \right), \gamma \big>}_{\gtrless 0} + \underbrace{\left( \big< \I_{+}^{-n} \left( \alpha \right), \gamma \big> - \big< \I_{+}^{-\left(n-1\right)}\left( \alpha \right), \gamma \big> \right)}_{\gtrless 0} \gtrless 0
\end{align*} 
which shows that the sign of $ \big< \I_{+}^{-\left(n+1\right)} \left( \alpha \right), \gamma \big> $ is the desired one. As before we have $ \I_{+}^{-n} \left( \alpha \right) \in B_{\gamma,\I_{+}^{-\left(n+1\right)} \left( \alpha \right) } $ and so we get
$$ \left\vert \big< \I_{+}^{-\left(n+1\right)} \left( \alpha \right),\gamma \big> \right\vert \geq \underbrace{\left\vert \big< \I_{+}^{-\left(n+1\right)} \left( \alpha \right),\I_{+}^{-n} \left( \alpha \right) \big>\right\vert}_{=\left\vert \N \left( \alpha \right) \right\vert} + \left\vert \big< \I_{+}^{-n} \left( \alpha \right),\gamma \big> \right\vert .$$
\end{proof}

\begin{lemma} \label{lem_dist1}
	Let $ z \in \mathbb{N} \setminus \left\{ 0,1 \right\}$, $ M \in \mathbb{Z} \setminus \left\{ 0 \right\} $, $ \alpha, \gamma \in B $ where $ B \subset S_M $ is a branch and $ \gamma \in B_{\alpha} $ . Then we have
	$$ \left\vert \N \left( \alpha \right) \right\vert > \left\vert \big< \alpha, \gamma \big> \right\vert $$
\end{lemma}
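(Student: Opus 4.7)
The plan is to apply the concavity inequality of \Cref{fact_ineq_concave} to the triple $\alpha$, $\I_+(\alpha)$, $\gamma$, using that $\langle \alpha, \I_+(\alpha)\rangle = \N(\alpha)$ by \Cref{proposition_prop_iz} iv). First I would check the hypotheses: since $z \geq 2$, part ii) of \Cref{proposition_prop_iz} guarantees $\I_+(\alpha)$ lies on the same branch $B$ as $\alpha$; and \Cref{lem_sub_closed_branch} i) gives $\gamma \in B_\alpha \subseteq B_{\alpha, \I_+(\alpha)}$. Thus \Cref{fact_ineq_concave} applies and yields
\[
|\N(\alpha)| = \bigl|\langle \alpha, \I_+(\alpha)\rangle\bigr| \;\geq\; \bigl|\langle \alpha, \gamma\rangle\bigr| + \bigl|\langle \gamma, \I_+(\alpha)\rangle\bigr|.
\]

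To upgrade this to the desired strict inequality, I need to show $|\langle \gamma, \I_+(\alpha)\rangle| > 0$. Since both $\gamma$ and $\I_+(\alpha)$ lie in $S_M$ with $M \neq 0$, \Cref{lem_orient_prod_zero} tells me that $\langle \gamma, \I_+(\alpha)\rangle = 0$ forces $\I_+(\alpha) \in \{-\gamma, \gamma\}$. Because $z \geq 2$, the level set $S_M$ consists of two distinct branches related by negation, so no branch contains both an element and its negative; hence $\I_+(\alpha) \neq -\gamma$, and we would be left with $\I_+(\alpha) = \gamma$. But the membership $\gamma \in B_\alpha$ comes with the strict condition $\langle \I_+(\alpha), \gamma\rangle < 0$ (or $> 0$, depending on the sign of $M$), which fails at $\gamma = \I_+(\alpha)$ because $\langle \I_+(\alpha), \I_+(\alpha)\rangle = 0$. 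Therefore $\gamma \neq \I_+(\alpha)$, so $\langle \gamma, \I_+(\alpha)\rangle \neq 0$, and the inequality becomes strict, giving $|\N(\alpha)| > |\langle \alpha, \gamma\rangle|$.

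The only subtle step is the separation argument that rules out $\I_+(\alpha) = -\gamma$; everything else is a clean assembly of previously established facts. I would verify this separation by invoking that for $|z| > 1$ and $M \neq 0$ the two branches of $S_M$ are disjoint and interchanged by $\beta \mapsto -\beta$ (as already used implicitly in the discussion preceding the lemma), so $\beta, -\beta$ cannot coexist on a single branch. With that in hand, the proof is essentially a one-line application of the concavity inequality.
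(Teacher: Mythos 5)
Your proof is correct and follows essentially the same route as the paper: both pass from $\gamma \in B_{\alpha}$ to $\gamma \in B_{\alpha,\I_{+}(\alpha)}$ via \Cref{lem_sub_closed_branch}, apply the concavity inequality of \Cref{fact_ineq_concave} with $\big<\alpha,\I_{+}(\alpha)\big> = \N(\alpha)$, and obtain strictness by showing $\big<\gamma,\I_{+}(\alpha)\big> \neq 0$ using \Cref{lem_orient_prod_zero}. You merely spell out the final exclusion ($\I_{+}(\alpha) \neq \pm\gamma$) in more detail than the paper's one-line justification; note that the strict inequality $\big<\I_{+}(\alpha),\gamma\big> \lessgtr 0$ built into the definition of $B_{\alpha}$ already gives this directly, so the separation argument, while valid, is not even needed.
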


\begin{proof}
Let $ M \gtrless 0 $. Therefore $ \gamma \in B_{\alpha} $ implies that $ \big< \alpha, \gamma \big> \gtreqless 0 $ and $ \big< \I_{+} \left(\alpha\right), \gamma \big> \lessgtr 0 $. Since $ \gamma \in B_{\alpha} $, we conclude that $ \gamma \in B_{\alpha, \I_{+} \left(\alpha\right)} $ by \Cref{lem_sub_closed_branch} and therefore we get by \Cref{fact_ineq_concave}:
	$$ \left\vert \N \left( \alpha \right) \right\vert = \left\vert \big< \alpha, \I_{+} \left( \alpha \right) \big> \right\vert \geq \left\vert \big< \alpha, \gamma \big> \right\vert + \underbrace{\left\vert \big< \gamma,\I_{+} \left( \alpha \right) \big> \right\vert}_{>0} > \left\vert \big< \alpha, \gamma \big> \right\vert .$$
The last step follows because $ \I_{+} \left( \alpha \right) \notin B_{\alpha} $ and \Cref{lem_orient_prod_zero}.
\end{proof}

\begin{lemma} \label{lem_uniqueness}
	Let $ z \in \mathbb{N} \setminus \left\{ 0,1 \right\} $, $ M \in \mathbb{Z} \setminus \left\{ 0 \right\} $, $ n \in \mathbb{Z} $, $ m \in \left\{ 0,1 \right\} $, $ \alpha \in S_M $ and $ \gamma \in B_{\alpha} $. Then we have that $  \I_{+}^n \left( \left(-1 \right)^{m}\gamma \right) \in B_{\alpha } $ if and only if $ n = 0 = m $.
\end{lemma}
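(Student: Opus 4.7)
The reverse direction is immediate: when $n = 0 = m$ we have $\I_+^n((-1)^m \gamma) = \gamma$, which lies in $B_\alpha$ by hypothesis. For the forward direction I argue by contradiction: assume $\I_+^n((-1)^m \gamma) \in B_\alpha$ with $(n, m) \neq (0, 0)$ and derive a violation of one of the two defining inequalities of $B_\alpha$.

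The key preparatory step is to translate both pairings that define membership in $B_\alpha$ through $\I_+^{-n}$. Since $\I_+$ preserves oriented areas by \Cref{proposition_prop_iz} iii), and the same holds for $\I_+^{-1}$, we obtain
\[
\big< \alpha, (-1)^m \I_+^n(\gamma) \big> = (-1)^m \big< \I_+^{-n}(\alpha), \gamma \big>, \qquad \big< \I_+(\alpha), (-1)^m \I_+^n(\gamma) \big> = (-1)^m \big< \I_+^{1-n}(\alpha), \gamma \big>,
\]
so the problem reduces entirely to the sign pattern of the integer sequence $k \mapsto \big< \I_+^k(\alpha), \gamma \big>$ evaluated at the two indices $-n$ and $1-n$, together with the sign $(-1)^m$.

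By \Cref{lem_orientation_and_approx} applied to $\gamma \in B_\alpha$, the terms $\big< \I_+^k(\alpha), \gamma \big>$ for $k \geq 1$ all share one fixed strict sign (the one required of the $\I_+(\alpha)$-slot in the definition of $B_\alpha$), whereas the terms for $k \leq -1$ all share the opposite strict sign; the boundary value $\big<\alpha, \gamma\big>$ is weakly on the latter side, and equals zero precisely when $\gamma = \alpha$ by \Cref{lem_orient_prod_zero}. Running through the three cases: for $m = 0$ and $n \geq 1$ the second pairing $\big<\I_+(\alpha),\I_+^n(\gamma)\big> = \big< \I_+^{1-n}(\alpha), \gamma \big>$ is on the wrong side of the required strict inequality (for $n = 1$ it is the weakly-signed boundary value $\big<\alpha,\gamma\big>$, for $n \geq 2$ it is strictly signed opposite to what $B_\alpha$ demands); for $m = 0$ and $n \leq -1$ the first pairing $\big<\alpha,\I_+^n(\gamma)\big> = \big< \I_+^{-n}(\alpha),\gamma \big>$ has the wrong strict sign; for $m = 1$ the overall $-1$ prefactor flips both signs and the same analysis applies after the flip.

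The delicate subcase is $m = 1, n = 0$: the first-slot inequality $\big< \alpha, -\gamma\big> = -\big<\alpha,\gamma\big>$ is only a non-strict inequality, so when $\gamma = \alpha$ it is met with equality and does not suffice alone. The resolution is to switch to the strict second-slot inequality: $\big< \I_+(\alpha), -\gamma\big> = -\big<\I_+(\alpha),\gamma\big>$ has the opposite strict sign from the one required for $B_\alpha$ by the sign pattern above, using $\big<\I_+(\alpha),\alpha\big> = -\N(\alpha) \neq 0$ from \Cref{proposition_prop_iz} iv) in the boundary case $\gamma = \alpha$. Once this edge case is handled, every remaining subcase is a one-line sign comparison against \Cref{lem_orientation_and_approx}, completing the proof.
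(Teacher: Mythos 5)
Your proof is correct, and it takes a genuinely different route from the paper's. The paper argues quantitatively: using \Cref{fact_ineq_concave} (or \Cref{lem_orientation_and_approx}) it first establishes $ \left\vert \big< \gamma, \I_{+}^{n}\left( \gamma \right) \big> \right\vert \geq \left\vert n \right\vert \left\vert \N \left( \alpha \right) \right\vert $, inserts this into a chain of concavity inequalities anchored at $ \left\vert \N \left( \alpha \right) \right\vert = \left\vert \big< \alpha, \I_{+}\left( \alpha \right) \big> \right\vert $ to force $ n \in \left\{ -1,0,1 \right\} $, and then eliminates $ n = \pm 1 $ and the residual case $ m = 1 $, $ n = 0 $ by separate arguments. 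You bypass the magnitude bound entirely: after translating both defining inequalities of $ B_{\alpha} $ through $ \I_{+}^{-n} $ using area-preservation (\Cref{proposition_prop_iz} iii, which indeed also holds for $ \I_{+}^{-1} $) and bilinearity, membership reduces to the signs of $ \big< \I_{+}^{-n}\left( \alpha \right), \gamma \big> $ and $ \big< \I_{+}^{1-n}\left( \alpha \right), \gamma \big> $, and the strict sign dichotomy of $ k \mapsto \big< \I_{+}^{k}\left( \alpha \right), \gamma \big> $ on $ k \geq 1 $ versus $ k \leq -1 $ from \Cref{lem_orientation_and_approx} (with the weak sign at $ k = 0 $) kills every $ \left( n,m \right) \neq \left( 0,0 \right) $ uniformly. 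This is shorter and avoids the two-stage structure of first bounding $ \left\vert n \right\vert $ and then doing casework; the paper's version, on the other hand, stays within the same quantitative framework it reuses for the existence half of \Cref{prop_part_of_branches}. One small remark: your ``delicate subcase'' $ m = 1 $, $ n = 0 $ is more elaborate than necessary, since the required inequality $ \big< \I_{+}\left( \alpha \right), -\gamma \big> = -\big< \I_{+}\left( \alpha \right), \gamma \big> $ already has the wrong strict sign directly from $ \gamma \in B_{\alpha} $, without isolating the boundary case $ \gamma = \alpha $.
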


\begin{proof}
	Assume that there is $ n \in \mathbb{Z} $ such that $ \gamma \in B_{\alpha} $, $ \I_{+}^n \left(\left(-1 \right)^{m}\gamma \right) \in B_{\alpha} $ and let $ M\gtrless 0 $. Therefore we have $ \big< \alpha, \gamma \big> \gtreqless 0 $, $ \big< \I_{+}\left(\alpha\right)$, $ \gamma \big> \lessgtr 0 $, $ \big< \alpha, \I_{+}^n \left( \left(-1 \right)^{m}\gamma \right) \big> \gtreqless 0 $ and $ \big< \I_{+}\left(\alpha\right)$, $ \I_{+}^n \left(\left(-1 \right)^{m}\gamma \right) \big> \lessgtr 0 $. 

Since $ \big< \I_{+}^k \left( \gamma \right),\I_{+}^{k+1} \left( \gamma \right) \big> \gtreqless 0 $ for all $ k \in \mathbb{Z} $, we have that $ \I_{+}^k \left( \gamma \right) \in B_{\I_{+}^{k-1}\left( \gamma \right),\I_{+}^{k+1}\left( \gamma \right)} $ holds true for all $ k \in \mathbb{Z} $. Hence, we can use the inequality from \Cref{fact_ineq_concave} several times or \Cref{lem_orientation_and_approx} to get
\begin{align*}
	\left\vert \big< \gamma, \I_{+}^n \left( \left(-1 \right)^m \gamma \right) \big> \right\vert &=
	\left\vert \big< \gamma, \I_{+}^n \left( \gamma \right) \big> \right\vert \\ &\geq \sum_{k=0}^{\left\vert n \right\vert-1} \left\vert\big< \I_{+}^k \left( \gamma \right), \I_{+}^{k+1} \left( \gamma \right) \big>\right\vert = \left\vert n \right\vert \left\vert \N \left( \gamma \right) \right\vert = \left\vert n \right\vert \left\vert \N \left( \alpha \right) \right\vert
\end{align*}
where we can use 
	$$ \left\vert \big< \gamma, \I_{+}^n \left( \gamma \right) \big> \right\vert = \left\vert \big< \I_{+}^{-n} \left( \gamma \right), \gamma \big> \right\vert = \left\vert \big< \gamma, \I_{+}^{-n} \left( \gamma \right) \big> \right\vert $$
in case that $ n \in \mathbb{Z} $ is negative.


Now either $ \big< \I_{+}^n \left( \left(-1 \right)^m\gamma \right), \gamma \big> \lesseqgtr 0 $ or $ \big< \I_{+}^n \left( \left(-1 \right)^m\gamma \right), \gamma \big> \gtrless 0 $ which implies $ \gamma \in B_{\alpha,\I_{+}^n \left( \left(-1 \right)^m\gamma \right)} $ and $ \I_{+}^n \left( \left(-1 \right)^m\gamma \right) \in B_{\gamma,\I_{+}\left( \alpha \right)} $ or $ \I_{+}^n \left( \left(-1 \right)^m\gamma \right) \in B_{\alpha,\gamma} $ and $ \gamma \in B_{\I_{+}^n \left( \left(-1 \right)^m\gamma \right),\I_{+}\left( \alpha \right)} $, respectively. By \Cref{fact_ineq_concave} we get either
\begin{align*}
	\left\vert \N \left( \alpha \right) \right\vert &= \left\vert \big< \alpha, \I_{+} \left( \alpha \right) \big> \right\vert \\
	&\geq \left\vert \big< \alpha, \gamma \big> \right\vert + \underbrace{\left\vert \big< \gamma, \I_{+}^n \left( \left(-1 \right)^m\gamma \right)\big> \right\vert}_{\geq\left\vert n \right\vert \left\vert \N \left( \alpha \right) \right\vert} + \left\vert \big<\I_{+}^n \left( \left(-1 \right)^m\gamma \right),\I_{+} \left( \alpha \right) \big> \right\vert 
\end{align*}
or
\begin{align*}
	\left\vert \N \left( \alpha \right) \right\vert &= \left\vert \big< \alpha, \I_{+} \left( \alpha \right) \big> \right\vert \\
	&\geq \left\vert \big< \alpha, \I_{+}^n \left( \left(-1 \right)^m\gamma \right) \big> \right\vert + \underbrace{\left\vert \big< \I_{+}^n \left( \left(-1 \right)^m\gamma \right), \gamma\big> \right\vert}_{\geq\left\vert n \right\vert \left\vert \N \left( \alpha \right) \right\vert} + \left\vert \big<\gamma,\I_{+} \left( \alpha \right) \big> \right\vert
\end{align*}
where the entries of the oriented area can be exchanged if we take the absolute value of it.

Therefore we have $ n \in \left\{ -1,0,1 \right\} $. We show now that the case $ n \in \left\{ -1,1 \right\} $ is not possible. Otherwise we get that $ \left\vert \big<\gamma,\I_{+} \left( \alpha \right) \big> \right\vert = 0 $ by the second inequality which is not possible or by the first inequality $ \left\vert \big< \alpha, \gamma \big> \right\vert = 0 $, i.e. $ \gamma = \alpha $ or $ \gamma = -\alpha $ by \Cref{lem_orient_prod_zero}. However, $ \gamma = -\alpha $ is not possible as $ \big< \I_{+} \left(\alpha \right), \gamma \big> \lessgtr 0 $ would imply $ \big< \alpha, \I_{+} \left(\alpha \right) \big> \lessgtr 0 $ which is a contradiction to iv) of \Cref{proposition_prop_iz}. Hence, we need to discuss the case $ \gamma = \alpha $, i.e. we have to show that $ \I_{+} \left( \left(-1 \right)^m \gamma\right) = \I_{+} \left( \left(-1 \right)^m \alpha \right) $ and $ \I_{+}^{-1} \left( \left(-1 \right)^m \gamma \right)= \I_{+}^{-1} \left( \left(-1 \right)^m \alpha \right) $ are not contained in $ B_{\alpha} $. Observe that both of them lie on the same branch and by \Cref{proposition_prop_iz} we have that 
$$ \underbrace{\left\vert \big< \alpha,\I_{+}^{-1}\left( \left(-1 \right)^m\alpha \right) \big> \right\vert}_{= \left\vert \big< \alpha,\I_{+}^{-1}\left( \alpha \right) \big> \right\vert} = \left\vert \N \left( \alpha \right) \right\vert = \underbrace{\left\vert \big< \alpha,\I_{+}\left( \left(-1 \right)^m\alpha \right) \big> \right\vert}_{= \left\vert \big< \alpha,\I_{+}\left( \alpha \right) \big> \right\vert} $$
which is a contradiction to \Cref{lem_dist1} if we assume that either $ \I_{+}^{-1}\left( \left(-1 \right)^m \gamma\right) \in B_{\alpha} $ or $ \I_{+}\left( \left(-1 \right)^m \gamma\right) \in B_{\alpha} $. 

Hence, the remaining case is $ n = 0 $. If $ m = 0 $ there is nothing to show. Therefore we only need to show that $ m = -1 $ is not possible. In this case we have $ \pm \gamma \in B_{\alpha} $. However this is not possible as we cannot have $ \big< \alpha, \gamma \big> \gtreqless 0 $, $ \big< \I_{+}\left(\alpha\right)$, $ \gamma \big> \lessgtr 0 $ and $ \big< \alpha, -\gamma \big> \gtreqless 0 $, $ \big< \I_{+}\left(\alpha\right)$, $ -\gamma \big> \lessgtr 0 $ at the same time.
\end{proof}

\begin{lemma} \label{lem_dist2}
	Let $ z \in \mathbb{N} \setminus \left\{ 0,1 \right\} $, $ M \in \mathbb{Z} \setminus \left\{ 0 \right\} $ and $ \alpha, \gamma \in B $ where $ B \subseteq S_M $ is a branch. If $ \left\vert \big< \alpha,\gamma \big> \right\vert \geq \left\vert \N \left( \alpha \right) \right\vert $, we have that either
	$$ \left\vert \big< \I_{+}\left( \alpha \right), \gamma \big>  \right\vert \leq \left\vert \big< \alpha, \gamma \big> \right\vert - \left\vert \N \left( \alpha \right) \right\vert $$
	or
$$ \left\vert \big< \I_{+}^{-1}\left( \alpha \right), \gamma \big> \right\vert \leq \left\vert \big< \alpha, \gamma \big> \right\vert - \left\vert \N \left( \alpha \right) \right\vert .$$
\end{lemma}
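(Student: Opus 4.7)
The plan is to apply the concavity inequality of \Cref{fact_ineq_concave} to suitably chosen triples drawn from the four branch elements $\I_{+}^{-1}(\alpha), \alpha, \I_{+}(\alpha), \gamma$. Since $z \geq 2 \geq 0$, all four lie on the same branch $B$ by \Cref{proposition_prop_iz} ii). The crucial algebraic inputs are
\[
  \big|\big< \alpha, \I_{+}(\alpha) \big>\big| \;=\; |\N(\alpha)| \;=\; \big|\big< \I_{+}^{-1}(\alpha), \alpha \big>\big|,
\]
where the first identity is \Cref{proposition_prop_iz} iv) and the second follows by applying it to $\I_{+}^{-1}(\alpha)$ and using that $\I_{+}$ preserves the norm.

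Call $(\star)$ the following trichotomy: for any three points $p, q, \beta$ on the same branch, at least one of $\beta \in B_{p,q}$, $p \in B_{\beta, q}$, $q \in B_{p, \beta}$ holds. Algebraically this reduces to verifying that of the three products $\big<p,\beta\big>\big<q,\beta\big>$, $\big<p,\beta\big>\big<p,q\big>$, $\big<p,q\big>\big<q,\beta\big>$ at least one has the sign required by the definition of closed branches, which follows from a short sign-table calculation using that their overall product is a square.

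First apply $(\star)$ with $\{p,q\} = \{\alpha, \I_{+}(\alpha)\}$ and $\beta = \gamma$. If $\I_{+}(\alpha) \in B_{\alpha, \gamma}$, then \Cref{fact_ineq_concave} with $\I_{+}(\alpha)$ as the middle element gives
\[
  \big|\big<\alpha,\gamma\big>\big| \;\geq\; \big|\big<\alpha, \I_{+}(\alpha)\big>\big| + \big|\big<\I_{+}(\alpha),\gamma\big>\big| \;=\; |\N(\alpha)| + \big|\big<\I_{+}(\alpha),\gamma\big>\big|,
\]
which is the first conclusion. If $\gamma \in B_{\alpha, \I_{+}(\alpha)}$, \Cref{fact_ineq_concave} with $\gamma$ as the middle element yields $|\N(\alpha)| \geq |\big<\alpha,\gamma\big>| + |\big<\I_{+}(\alpha),\gamma\big>|$; combined with $|\big<\alpha,\gamma\big>| \geq |\N(\alpha)|$, this forces $|\big<\I_{+}(\alpha),\gamma\big>| = 0$ and again gives the first conclusion. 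Otherwise $\alpha \in B_{\gamma, \I_{+}(\alpha)}$, i.e.\ $\gamma$ lies on the side of $\alpha$ opposite to $\I_{+}(\alpha)$, and I apply $(\star)$ to $\{\I_{+}^{-1}(\alpha), \alpha\}$ and $\gamma$. The sub-cases $\gamma \in B_{\I_{+}^{-1}(\alpha), \alpha}$ and $\I_{+}^{-1}(\alpha) \in B_{\gamma, \alpha}$ yield the second conclusion $|\big<\I_{+}^{-1}(\alpha),\gamma\big>| \leq |\big<\alpha,\gamma\big>| - |\N(\alpha)|$ by the same two arguments, now using $|\big<\I_{+}^{-1}(\alpha), \alpha\big>| = |\N(\alpha)|$. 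The remaining sub-case $\alpha \in B_{\I_{+}^{-1}(\alpha), \gamma}$ contradicts the ambient $\alpha \in B_{\gamma, \I_{+}(\alpha)}$: with $\big<\I_{+}(\alpha), \alpha\big> = -\N(\alpha)$ and $\big<\I_{+}^{-1}(\alpha), \alpha\big> = \N(\alpha)$, the two defining sign conditions force $\big<\gamma,\alpha\big> \cdot \N(\alpha)$ to be simultaneously $\geq 0$ and $\leq 0$, hence $\big<\alpha, \gamma\big> = 0$; then $\gamma = \alpha$ by \Cref{lem_orient_prod_zero} (since $-\alpha$ lies on the opposite branch when $z \geq 2$), contradicting $|\big<\alpha,\gamma\big>| \geq |\N(\alpha)| > 0$.

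The main obstacle is the verification of $(\star)$: it is intuitively obvious from the picture of a branch as a connected curve cut into arcs by two points, but the formal justification via signs of oriented areas must also dispose of the boundary configurations where one of the oriented areas vanishes, which by \Cref{lem_orient_prod_zero} only happens when two of the three points coincide on the branch and are therefore handled trivially.
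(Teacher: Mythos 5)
Your proof is correct and follows essentially the same route as the paper's: both arguments reduce the claim to showing that $\I_{+}(\alpha)$ or $\I_{+}^{-1}(\alpha)$ lies in the closed branch $B_{\alpha,\gamma}$ (the paper by a direct sign-chasing contradiction, you by two applications of your trichotomy $(\star)$, which is the same sign analysis packaged differently), and both then conclude with \Cref{fact_ineq_concave} together with $\left\vert \big< \alpha, \I_{+}^{\pm 1}(\alpha) \big> \right\vert = \left\vert \N(\alpha) \right\vert$. Your explicit handling of the degenerate sub-cases (where $\gamma \in B_{\alpha,\I_{+}(\alpha)}$ forces equality and a vanishing oriented area) is slightly more careful than the paper's, but the underlying argument is the same.
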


\begin{proof}
	At first we will show that either $ \I_{+} \left( \alpha \right) \in B_{\alpha, \gamma} $ or $ \I_{+}^{-1} \left( \alpha \right) \in B_{\alpha, \gamma} $. If both is not the case, we have
	$$ \big< \alpha, \I_{+} \left( \alpha \right) \big> \big< \gamma, \I_{+} \left( \alpha \right) \big> > 0 $$
and
	$$ \big< \alpha, \I_{+}^{-1} \left( \alpha \right) \big> \big< \gamma, \I_{+}^{-1} \left( \alpha \right) \big> > 0 .$$
Now by applying iii) of \Cref{proposition_prop_iz} we see that the signs of $ \big< \alpha, \I_{+}^{-1} \left( \alpha \right) \big> $ and $ \big< \alpha, \I_{+} \left( \alpha \right) \big> $ must be different because
	$$ \big< \alpha, \I_{+}^{-1} \left( \alpha \right) \big> = \big< \I_{+} \left( \alpha \right), \I_{+} \left(\I_{+}^{-1} \left( \alpha \right) \right) \big> = \big< \I_{+} \left(\alpha\right), \alpha \big> = - \big< \alpha,\I_{+} \left(\alpha\right) \big> .$$
Hence, also the signs of $ \big< \I_{+}^{-1} \left( \alpha \right), \gamma \big> $ and  $\big< \I_{+} \left( \alpha \right), \gamma \big> $ must be different. Now $ \big< \alpha, \gamma \big> $ is either positive or negative and therefore we have that either $ \gamma \in B_{\alpha,\I_{+}^{-1} \left( \alpha \right)} $ or $ \gamma \in B_{\alpha,\I_{+} \left( \alpha \right)} $. By \Cref{fact_ineq_concave} we get in both cases
	$$ \left\vert \N \left( \alpha \right) \right\vert = \left\vert \big< \alpha, \I_{+} \left( \alpha \right) \big> \right\vert = \left\vert \big< \alpha, \I_{+}^{-1} \left( \alpha \right) \big> \right\vert \geq \left\vert \big< \alpha, \gamma \big> \right\vert $$
which is a contradiction to the assumption in the lemma.
Therefore either $ \I_{+} \left( \alpha \right) \in B_{\alpha, \gamma} $ or $ \I_{+}^{-1} \left( \alpha \right) \in B_{\alpha, \gamma} $ hold true. Again by \Cref{fact_ineq_concave} we get either
	$$ \left\vert \big< \alpha, \gamma \big> \right\vert \geq \left\vert \big< \alpha, \I_{+}\left( \alpha \right) \big> \right\vert + \left\vert \big< \I_{+}\left( \alpha \right), \gamma \big> \right\vert $$
or
	$$ \left\vert \big< \alpha, \gamma \big> \right\vert \geq \left\vert \big< \alpha, \I_{+}^{-1}\left( \alpha \right) \big> \right\vert + \left\vert \big< \I_{+}^{-1}\left( \alpha \right), \gamma \big> \right\vert $$
where the desired result follows by iii) and iv) of \Cref{proposition_prop_iz} because
	$$ \left\vert \big< \alpha, \I_{+} \left( \alpha \right) \big> \right\vert = \left\vert \N \left( \alpha \right) \right\vert = \left\vert \big< \I_{+}^{-1}\left( \alpha \right), \alpha \big> \right\vert .$$			
\end{proof}

\begin{lemma} \label{lem_construct_containing_subbranch}
	Let $ z \in \mathbb{N} \setminus \left\{ 0,1 \right\} $, $ M \in \mathbb{Z} \setminus \left\{ 0 \right\} $ and $ \alpha, \gamma \in S_M $. If there is $ n \in \mathbb{Z} $ such that 
	$$ \left\vert \big< \I_{+}^n \left( \alpha \right), \gamma \big> \right\vert < \left\vert \N \left( \alpha \right) \right\vert ,$$
then	 
$$ \big< \I_{+}^{n-1} \left( \alpha \right),\gamma \big> \big< \I_{+}^{n} \left( \alpha \right),\gamma \big> \leq 0 $$
or
	$$ \big< \I_{+}^{n+1} \left( \alpha \right),\gamma \big> \big< \I_{+}^{n} \left( \alpha \right),\gamma \big> \leq 0 $$
hold.	
\end{lemma}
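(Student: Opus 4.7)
The plan is to convert the claim into a statement about the scalar sequence $a_k \coloneqq \langle \I_{+}^k(\alpha), \gamma \rangle$ for $k \in \mathbb{Z}$, and exploit two structural facts about this sequence: a three-term linear recurrence and a quadratic invariant.

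First, the defining identity $i_z^2 = z i_z - 1$ yields $\I_{+}^{k+1}(\alpha) = z \I_{+}^k(\alpha) - \I_{+}^{k-1}(\alpha)$ for every $k$, and bilinearity of the oriented area then gives the recurrence
\[
a_{k+1} = z a_k - a_{k-1}.
\]
Second, and this is the key ingredient, I claim the invariant
\[
I_k \coloneqq a_k^2 - z a_k a_{k-1} + a_{k-1}^2 = M^2 \quad (k \in \mathbb{Z}).
\]
Constancy $I_{k+1} = I_k$ follows from the algebraic identity $I_{k+1} - I_k = (a_{k+1} - a_{k-1})\bigl(a_{k+1} + a_{k-1} - z a_k\bigr)$ together with the recurrence. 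To pin down the common value, I would compute $\alpha \overline{\gamma}$ directly from the definitions of the $z$-product and of the conjugate: a short calculation shows $\alpha \overline{\gamma} = a_{-1} + (-a_0)\, i_z$. Taking norms and using multiplicativity of $\N$ from \Cref{Nz_lemma} then gives $I_0 = \N(\alpha \overline{\gamma}) = \N(\alpha)\N(\gamma) = M^2$, so $I_k = M^2$ for every $k$.

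Third, I would conclude by contradiction. Suppose both $a_{n-1} a_n > 0$ and $a_n a_{n+1} > 0$; then $a_{n-1}, a_n, a_{n+1}$ all share a common sign. Replacing $\gamma$ by $-\gamma$ negates every $a_k$ but preserves the hypothesis $a_n^2 < M^2$ as well as the conclusion, so I may assume all three are strictly positive. Evaluating the invariant at $k = n$ and using $a_n^2 < M^2$ yields $a_{n-1}^2 - z a_n a_{n-1} > 0$; dividing by $a_{n-1} > 0$ gives $a_{n-1} > z a_n$, and the recurrence then forces $a_{n+1} = z a_n - a_{n-1} < 0$, contradicting $a_{n+1} > 0$.

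The step I expect to be the main obstacle is discovering and verifying the invariant $I_k = M^2$ — in particular, recognizing $I_0$ as the norm of the product $\alpha \overline{\gamma}$. Once this is available, the remainder is a short sign-tracking argument that only uses $z \geq 2$ (to guarantee $z a_n > a_n$ when $a_n > 0$, which powers the final contradiction).
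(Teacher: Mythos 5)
Your proof is correct, but it takes a genuinely different route from the paper's. The paper argues geometrically: assuming both products are strictly positive, it deduces that $ \I_{+}^{n-1}\left( \alpha \right) $ or $ \I_{+}^{n+1}\left( \alpha \right) $ lies in the closed branch $ B_{\gamma, \I_{+}^{n}\left( \alpha \right)} $ (after a ``without loss of generality $ \gamma \in B $'' reduction), and then invokes the concavity inequality of \Cref{fact_ineq_concave} to get $ \left\vert \big< \gamma, \I_{+}^{n}\left( \alpha \right) \big> \right\vert \geq \left\vert \N \left( \alpha \right) \right\vert $, contradicting the hypothesis. You instead work entirely with the scalar sequence $ a_k = \big< \I_{+}^{k}\left( \alpha \right), \gamma \big> $: the recurrence $ a_{k+1} = z a_k - a_{k-1} $ (which the paper also uses, in \Cref{lem_orientation_and_approx}) plus the conserved quantity $ a_k^2 - z a_k a_{k-1} + a_{k-1}^2 = \N\left( \alpha \overline{\gamma} \right) = M^2 $, whose identification via the computation $ \alpha\overline{\gamma} = a_{-1} - a_0 i_z $ I have checked and which is correct. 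The sign-tracking conclusion is also sound: with $ a_{n-1}, a_n, a_{n+1} > 0 $ and $ a_n^2 < M^2 $, the invariant gives $ a_{n-1}\left( a_{n-1} - z a_n \right) > 0 $, hence $ a_{n+1} = z a_n - a_{n-1} < 0 $, a contradiction. What your approach buys is self-containedness: it does not rest on \Cref{fact_ineq_concave}, which the paper states without proof as a geometric ``fact'' about concave branches, and it avoids the branch-membership bookkeeping entirely, treating arbitrary $ \alpha, \gamma \in S_M $ uniformly. What the paper's approach buys is coherence with its surrounding machinery, since the same concavity fact powers \Cref{lem_orientation_and_approx}, \Cref{lem_dist1}, \Cref{lem_dist2} and the partition proposition; your conserved-form identity is local to this lemma (though it could in principle be used to re-derive several of those neighbours as well). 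One small remark: as you note at the end, your final step does not actually need $ z \geq 2 $ beyond guaranteeing that the division by $ a_{n-1} > 0 $ makes sense, which it does under the contradiction hypothesis; the restriction on $ z $ in the statement is inherited from the surrounding framework rather than forced by your argument.
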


\begin{proof}
	Assume $ \gamma \notin B_{\I_{+}^{n-1} \left( \alpha \right),\I_{+}^n \left( \alpha \right)} \cup B_{\I_{+}^{n} \left( \alpha \right),\I_{+}^{n+1} \left( \alpha \right)} $, then it must hold
	$$ \big< \I_{+}^{n-1} \left( \alpha \right),\gamma \big> \big< \I_{+}^{n} \left( \alpha \right),\gamma \big> > 0 $$
and	
	$$ \big< \I_{+}^{n+1} \left( \alpha \right),\gamma \big> \big< \I_{+}^{n} \left( \alpha \right),\gamma \big> > 0 .$$
This means that also the terms
$ \big< \gamma, \I_{+}^{n-1} \left( \alpha \right) \big> $	 and $ \big< \gamma, \I_{+}^{n+1} \left( \alpha \right) \big> $ have the same sign and so one of the following inequalities has to be satisfied: Either
	$$ \big< \gamma, \I_{+}^{n-1} \left( \alpha \right) \big> \big< \I_{+}^{n}\left( \alpha \right), \I_{+}^{n-1}\left( \alpha \right) \big> \leq 0 $$
or
	$$ \big<\gamma, \I_{+}^{n+1} \left( \alpha \right),  \big> \big< \I_{+}^{n}\left( \alpha \right), \I_{+}^{n+1}\left( \alpha \right) \big> \leq 0 $$
holds true because the terms 
$$ \big< \I^{n-1}\left( \alpha \right),\I^{n}\left( \alpha \right) \big> = \N \left( \alpha \right) = - \big< \I^{n+1}\left( \alpha \right),\I^{n}\left( \alpha \right) \big> $$
do not have the same signs by \Cref{proposition_prop_iz}. Without loss of generality, we can assume that $ \gamma \in B $ and otherwise we can work with $ -\gamma $ instead and replace $ \gamma $ everywhere by $ -\gamma $ without changing the assumptions of this lemma. Hence, we get
	$$ \I_{+}^{n-1} \left( \alpha \right) \in B_{\gamma,\I_{+}^{n} \left( \alpha \right)} $$
or
	$$ \I_{+}^{n+1} \left( \alpha \right) \in B_{\gamma,\I_{+}^{n} \left( \alpha \right)} $$
and so we can use \Cref{fact_ineq_concave} to get a contradiction 
	$$ \left\vert \big< \gamma, \I_{+}^n \left( \alpha \right) \big> \right\vert \geq \left\vert \big< \gamma, \I_{+}^{n-1} \left( \alpha \right) \big>\right\vert + \underbrace{\left\vert \big< \I_{+}^{n-1} \left( \alpha \right),\I_{+}^{n} \left( \alpha \right) \big> \right\vert}_{= \left\vert \N \left( \alpha \right) \right\vert} $$
or	
	$$ \left\vert \big< \gamma, \I_{+}^n \left( \alpha \right) \big> \right\vert \geq \left\vert \big< \gamma, \I_{+}^{n+1} \left( \alpha \right) \big>\right\vert + \underbrace{\left\vert \big< \I_{+}^{n+1} \left( \alpha \right),\I_{+}^{n} \left( \alpha \right) \big> \right\vert}_{= \left\vert \N \left( \alpha \right) \right\vert} .$$
\end{proof}

The next statement will be an important tool we use for the proof of the following theorem. 

\begin{proposition} \label{prop_part_of_branches}
	Let $ M \in \mathbb{Z} \setminus \{ 0 \} $, $ z \in \mathbb{N} $, $ S_M \subset \mathbb{R}[i_z] $ and $ \alpha \in S_M $. Then the following holds true:
	\begin{enumerate}
		\item[i)] If $ z = 0 $ and $ M > 0 $, then
			$ \coprod_{j=1}^{4}B_{\I_{+}^{j} \left(\alpha\right)} = S_M $ is a partition.
		\item[ii)] If $ z = 1 $ and $ M > 0 $, then
			$ \coprod_{j=1}^{6}B_{\I_{+}^{j} \left(\alpha\right)} = S_M $ is a partition.
		\item[iii)]	If $ z > 1 $ and $ B \subseteq S_M $ is a branch with $ \alpha \in B $, then $ \coprod_{j\in \mathbb{Z}}B_{\I_{+}^{j} \left(\alpha\right)} = B $ and $ \coprod_{j\in \mathbb{Z}, k \in \{ -1,1\}}B_{\I_{+}^{j} \left(k\alpha\right)} = S_M $ are partitions. 
	\end{enumerate}
\end{proposition}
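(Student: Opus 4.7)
The plan is to separately establish pairwise disjointness of the pieces and their joint covering of the ambient set (either a branch or the whole level set).

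For disjointness in case (iii), I first use Lemma \ref{lem_branch_rules}(i), which gives $B_{\I_{+}^{j}(\alpha)} = \I_{+}^{j}(B_{\alpha})$, to reduce the question to: given $\gamma \in B_\alpha$, when is $\I_{+}^{n}(\gamma) \in B_\alpha$? Lemma \ref{lem_uniqueness} with $m = 0$ forces $n = 0$, yielding $B_{\I_{+}^{j}(\alpha)} \cap B_{\I_{+}^{k}(\alpha)} = \emptyset$ for $j \neq k$; the same lemma with $m = 1$ gives $B_{\I_{+}^{j}(\alpha)} \cap B_{\I_{+}^{k}(-\alpha)} = \emptyset$ for all $j,k$, handling the cross terms appearing in the second partition. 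For cases (i) and (ii), $i_{z}$ has multiplicative order $4$ or $6$ on the unit group (see Example \ref{ex_unit_ellipse} and the analogous computation for the Gaussian integers), so the elements $\I_{+}^{j}(\alpha)$ for $j = 1,\ldots,4$ or $j = 1,\ldots,6$ are the distinct rotations of $\alpha$ by $i_{z}$; disjointness of the corresponding half-open arcs then follows directly from the strict inequality $\big<\I_{+}(\alpha),\beta\big> < 0$ in the definition of $B_\alpha$ combined with convexity of the circle (respectively ellipse).

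For covering in (i) and (ii), each $B_{\I_{+}^{j}(\alpha)}$ is a half-open arc on $S_{M}$ from $\I_{+}^{j}(\alpha)$ (inclusive) to $\I_{+}^{j+1}(\alpha)$ (exclusive); a direct angular argument shows that the four (respectively six) such arcs tile the convex closed curve $S_{M}$. For covering in (iii), fix $\gamma \in B$ and define $f(n) = \big< \I_{+}^{n}(\alpha), \gamma\big>$. The key step is to produce some $n_{0} \in \mathbb{Z}$ with $|f(n_{0})| < |\N(\alpha)|$: starting at $n = 0$, as long as $|f(n)| \geq |\N(\alpha)|$, Lemma \ref{lem_dist2} (applied with $\I_{+}^{n}(\alpha)$ in place of $\alpha$) supplies a neighbour $n' \in \{n-1,\,n+1\}$ with $|f(n')| \leq |f(n)| - |\N(\alpha)|$, and iterating this strictly descending process must terminate in finitely many steps at the desired $n_{0}$. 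Lemma \ref{lem_construct_containing_subbranch} then places $\gamma$ inside $B_{\I_{+}^{n_{0}-1}(\alpha),\I_{+}^{n_{0}}(\alpha)}$ or $B_{\I_{+}^{n_{0}}(\alpha),\I_{+}^{n_{0}+1}(\alpha)}$, and comparing the signs of $f(n_{0}-1), f(n_{0}), f(n_{0}+1)$ with the defining inequalities of the subbranch (while using Lemma \ref{lem_orientation_and_approx} to read off the orientations) locates $\gamma$ inside either $B_{\I_{+}^{n_{0}-1}(\alpha)}$ or $B_{\I_{+}^{n_{0}}(\alpha)}$. For the second partition in (iii), the level set $S_{M}$ decomposes for $z > 1$ into exactly two branches $B$ and $-B$, with $-\alpha \in -B$; applying the single-branch covering just proved to both $\alpha$ on $B$ and $-\alpha$ on $-B$ yields the claimed partition of $S_{M}$.

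The principal obstacle is the covering in (iii), where the orbit $\{\I_{+}^{n}(\alpha) : n \in \mathbb{Z}\}$ must be shown to be dense enough on the (non-compact, concave) hyperbolic branch to reach every $\gamma \in B$. The descent driven by Lemma \ref{lem_dist2} is exactly what turns the qualitative growth supplied by Lemma \ref{lem_orientation_and_approx} into a quantitative step that forces the existence of a small $|f(n_{0})|$, after which Lemma \ref{lem_construct_containing_subbranch} identifies the enclosing closed subbranch containing $\gamma$.
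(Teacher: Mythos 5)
Your proposal is correct and follows essentially the same route as the paper: disjointness via \Cref{lem_branch_rules} and \Cref{lem_uniqueness}, covering in the compact cases by the order-$4$/order-$6$ angular tiling, and covering in case (iii) by the descent through \Cref{lem_dist2} to reach $\left\vert \big< \I_{+}^{n_0}(\alpha),\gamma\big>\right\vert < \left\vert \N(\alpha)\right\vert$ followed by \Cref{lem_construct_containing_subbranch} and a sign analysis. The only cosmetic difference is that you obtain the partition of $S_M$ in (iii) by applying the single-branch result to $B$ and $-B$ separately, while the paper covers $S_M$ directly with the $\pm\alpha$ cases interleaved; the two orderings are equivalent.
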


\begin{proof}
That the subbranches are contained in $ S_M $ or in $ B $ is clear by definition and by \Cref{lem_sub_closed_branch}. We need to show that for each element in $ S_M $ or in $ B $ there is a unique subbranch as indicated in the disjoint union of $ S_M $ or in $ B $, respectively, containing this element. 

Let $ \alpha = a_1+a_2i_z \in \mathbb{R}[i_z] $. Now we can determine the intermediate angle defined by $ \alpha $ and $ \I_{+} \left( \alpha \right) $ considered as vectors in the complex plane. We have
	$$ \I_{+} \left( \alpha \right) = \left( a_1i_z + a_2i_z^2 \right) = -a_2+ \left( a_1+za_2 \right)i_z .$$
By using the scalar product we get that the angle $ \theta $ defined by $ \alpha, \I_{+} \left( \alpha \right) $ and the origin in between satisfies
	$$ \cos \left( \theta \right) = \frac{a_2^2 z}{\sqrt{a_1^2+a_2^2}\sqrt{a_2^2+\left(a_1+a_2z \right)^2}} .$$	
Hence, we clearly have that $ 0 \leq \cos \left( \theta \right) \leq 1 $ and so $ \theta $ is at most a right angle.

Let $ z = 0 $ and $ M > 0 $, then $ S_M $ is a circle of radius $ \sqrt{M} $ around the origin and $ \alpha, \I_{+} \left( \alpha \right), \I_{+}^2 \left( \alpha \right), \I_{+}^3 \left( \alpha \right) $ are distributed on the circle anticlockwise each by an angle of $ \tfrac{\pi}{2} $ to their neighbors (compare with \Cref{units_z0}). Hence, each $ \delta \in S_M $ lies exactly between two of the four elements $ \alpha, \I_{+} \left( \alpha \right), \I_{+}^2 \left( \alpha \right), \I_{+}^3 \left( \alpha \right) $ (observe that $ \I_{+}^{4} \left( \alpha \right) = \alpha $) or is exactly equal to one of them and so there is a unique subbranch containing $ \delta $.

If $ z = 1 $ and $ M > 0 $, then $ S_M $ is an ellipse. As seen in \Cref{ex_unit_ellipse} the multiplicative order of $ i_1 $ is $ 6 $ and so $ \I_{+}^j \left( \alpha \right) $ are different points on the ellipse for $ j = 0,1,2,3,4,5 $ distributed anticlockwise around an ellipse. Therefore an element $ \delta \in S_M $ is equal or located between two neighbored elements $ \I_{+}^j \left( \alpha \right), \I_{+}^{j+1}\left( \alpha \right)$ for some $ j $ and so there exist exactly one subbranch containing $ \delta $.


Now let $ z \geq 2 $ and $ M \gtrless 0 $. To prove $ \coprod_{j\in \mathbb{Z}, k \in \{ -1,1\}}B_{\I_{+}^{j} \left(k\alpha\right)} = S_M $ we need to show that for each $ \delta \in B $ or $ \delta \in S_M $, respectively, there exist a unique $ n \in \mathbb{Z} $ and a unique $ m \in \left\{ -1,1 \right\} $ such that $ \delta \in B_{\I_{+}^{n} \left(\alpha\right)} $ or $ \delta \in B_{\I_{+}^{n} \left(m\alpha\right)} $, respectively. 

Existence: Let $ \delta \in S_M $. At first we show that there is $ n \in \mathbb{Z} $ such that $ \left\vert \big< \I_{+}^n \left( \alpha \right), \delta \big> \right\vert < \left\vert \N \left( \alpha \right) \right\vert $ and then we find a subbranch which contains $ \delta $. If $ \left\vert \big< \alpha, \delta \big> \right\vert < \left\vert \N \left( \alpha \right) \right\vert $, then $ \left\vert \big< \I_{+}^n \left( \alpha \right), \delta \big> \right\vert < \left\vert \N \left( \alpha \right) \right\vert $ trivially holds true for $ n = 0 $. Assume now that $ \left\vert \big< \alpha, \delta \big> \right\vert \geq \left\vert \N \left( \alpha \right) \right\vert $. Then we can apply \Cref{lem_dist2} and either
$$ \left\vert \big< \I_{+}\left( \alpha \right), \delta \big>  \right\vert \leq \left\vert \big< \alpha, \delta \big> \right\vert - \left\vert \N \left( \alpha \right) \right\vert .$$
	or
$$ \left\vert \big< \I_{+}^{-1}\left( \alpha \right), \delta \big> \right\vert \leq \left\vert \big< \alpha, \delta \big> \right\vert - \left\vert \N \left( \alpha \right) \right\vert $$
hold true. Hence, if $ \left\vert \big< \I_{+}\left( \alpha \right), \delta \big>  \right\vert $ and $ \left\vert \big< \I_{+}^{-1}\left( \alpha \right), \delta \big> \right\vert $ are still larger than $ \left\vert \N \left( \alpha \right) \right\vert $, we can proceed with \Cref{lem_dist2} applied to the smaller term of both until we get the first $ n \in \mathbb{Z} $ such that $ \left\vert \big< \I_{+}^n \left( \alpha \right), \delta \big> \right\vert < \left\vert \N \left( \alpha \right) \right\vert $. Observe that 
	$$ \left\vert \big< \alpha, \delta \big> \right\vert \geq \left\vert \big< \I_{+} \left(\alpha\right), \delta \big> \right\vert + \left\vert \N \left( \alpha \right) \right\vert \geq \dots \geq \left\vert \big< \I_{+}^{\left\vert n \right\vert} \left(\alpha\right), \delta \big> \right\vert + \left\vert n \left\vert \right\vert \N \left( \alpha \right) \right\vert $$
or	
$$ \left\vert \big< \alpha, \delta \big> \right\vert \geq \left\vert \big< \I_{+}^{-1} \left(\alpha\right), \delta \big> \right\vert + \left\vert \N \left( \alpha \right) \right\vert \geq \dots \geq \left\vert \big< \I_{+}^{-\left\vert n \right\vert} \left(\alpha\right), \delta \big> \right\vert + \left\vert n \left\vert \right\vert \N \left( \alpha \right) \right\vert $$
must hold depending on whether $ n $ is positive or not (i.e. $ n = \left\vert n \right\vert $ or $ n = -\left\vert n \right\vert $). The reason why such an $ n \in \mathbb{Z} $ has to exist is that there is an $ m \in \mathbb{N} $ such that $ \left\vert \big< \alpha, \delta \big> \right\vert - m  \left\vert \N \left( \alpha \right) \right\vert < \left\vert \N \left( \alpha \right) \right\vert > 0$ and so $ \left\vert n \right\vert \leq m $.



By \Cref{lem_construct_containing_subbranch} we can assume that either 
$$ \big< \I_{+}^{n-1} \left( \alpha \right),\delta \big> \big< \I_{+}^{n} \left( \alpha \right),\delta \big> \leq 0 $$
or 
$$ \big< \I_{+}^{n} \left( \alpha \right),\delta \big> \big< \I_{+}^{n+1} \left( \alpha \right),\delta \big> \leq 0 .$$ Now we will discuss both cases. At first consider
$$ \big< \I_{+}^{n-1} \left( \alpha \right),\delta \big> \big< \I_{+}^{n} \left( \alpha \right),\delta \big> \leq 0 $$ and hence either $ \big< \I_{+}^{n-1} \left( \alpha \right),\delta \big> \gtreqless 0 $ and $ \big< \I_{+}^{n} \left( \alpha \right),\delta \big> \lesseqgtr 0 $ or both relations are exchanged. In case $ \big< \I_{+}^{n} \left( \alpha \right),\delta \big> = 0 $, then we have $ \delta \in \left\{ -\I_{+}^{n} \left( \alpha \right),\I_{+}^{n} \left( \alpha \right) \right\} $ by \Cref{lem_orient_prod_zero} and so $ \delta \in B_{\I_{+}^n \left( \alpha \right)} $ or $ \delta \in B_{\I_{+}^n \left( -\alpha \right)} $.
Otherwise we have $ \big< \I_{+}^{n} \left( \alpha \right),\delta \big> \lessgtr 0 $ and then $ \delta \in B_{\I_{+}^{n-1} \left( \alpha \right)} $. In case the relations are exchanged, i.e. $ \big< \I_{+}^{n-1} \left( \alpha \right),\delta \big> \lesseqgtr 0 $ and $ \big< \I_{+}^{n} \left( \alpha \right),\delta \big> \gtreqless 0 $, then we have that $ \big< \I_{+}^{n-1} \left( -\alpha \right),\delta \big> \gtreqless 0 $ and $ \big< \I_{+}^{n} \left( -\alpha \right),\delta \big> \lesseqgtr 0 $ and so we can do the same discussion as above where all $ \alpha $'s are exchanged by $ -\alpha $. Hence, we can show that either $ \delta \in B_{\I_{+}^{n-1} \left( -\alpha \right)} $ or $ \delta \in B_{\I_{+}^{n} \left( -\alpha \right)} $.

Secondly, if $$ \big< \I_{+}^{n} \left( \alpha \right),\delta \big> \big< \I_{+}^{n+1} \left( \alpha \right),\delta \big> \leq 0 ,$$ then we can assume that $ \delta \notin \left\{ - \I_{+}^{n+1} \left( \alpha \right), \I_{+}^{n+1} \left( \alpha \right) \right\} $ because $ \left\vert \big< \I_{+}^n \left( \alpha \right), \delta \big> \right\vert < \left\vert \N \left( \alpha \right) \right\vert $. Hence, we deduce either $ \big< \I_{+}^{n} \left( \alpha \right),\delta \big> \gtreqless 0 $ and $ \big< \I_{+}^{n+1} \left( \alpha \right),\delta \big> \lessgtr 0 $ or $ \big< \I_{+}^{n} \left( \alpha \right),\delta \big> \lesseqgtr 0 $ and $ \big< \I_{+}^{n+1} \left( \alpha \right),\delta \big> \gtrless 0 $, i.e. $ \big< \I_{+}^{n} \left( -\alpha \right),\delta \big> \gtreqless 0 $ and $ \big< \I_{+}^{n+1} \left( -\alpha \right),\delta \big> \lessgtr 0 $. Thus, we get either $ \delta \in B_{\I_{+}^{n} \left( \alpha \right)} $ or $ \delta \in B_{\I_{+}^{n} \left( -\alpha \right)} $.

This shows that $ \coprod_{j\in \mathbb{Z}, k \in \{ -1,1\}}B_{\I_{+}^{j} \left(k\alpha\right)} $ covers $ S_M $. Since $ S_M $ consists of two branches and one branch $ B $ does not contain $ -\alpha $ and all the elements which we get by applying $ \I_{+}^n $ to $ \alpha $ for $ n \in \mathbb{Z} $ we deduce that $ B_{\I_{+}^n \left(-\alpha \right)} \nsubseteq B $ and so we must have that $ B $ is covered by $ \coprod_{j\in \mathbb{Z}}B_{\I_{+}^{j} \left(\alpha\right)} $.

Uniqueness: We have to show that $ \delta \in S_M $ can be contained in at most one of these subbranches. Assume not, then we find $ n_1,n_1 \in \mathbb{Z} $ and $ m_1,m_2 \in \left\{ 0,1 \right\} $ such that $ \delta \in B_{\I_{+}^{n_j}\left( \left( -1\right)^{m_j}\alpha\right)}$ for $ j = 1,2 $. Define $ \gamma \coloneqq \I_{+}^{n_1}\Big( \left( -1\right)^{m_1}\alpha\Big) $, then we have that $ \alpha = \I_{+}^{-n_1}\left( \left( -1\right)^{-m_1}\gamma\right) $ and so $ \I_{+}^{n_2}\left( \left( -1\right)^{m_2}\alpha\right) = \I_{+}^{n_2-n_1}\left( \left( -1\right)^{m_2-m_1}\gamma\right) $. Therefore we can also say $ \delta \in B_{\gamma} $ and $ \delta \in B_{\I_{+}^{n_2-n_1}\left( \left( -1\right)^{m_2-m_1}\gamma\right)} $ where the latter is equivalent to $ \I_{+}^{n_1-n_2}\left( \left( -1 \right)^{m_1-m_2} \delta \right) \in B_{\gamma} $ by \Cref{lem_branch_rules}. By \Cref{lem_uniqueness} we conclude that $ n_1=n_2 $ and $ m_1=m_2 $ which shows that the subbranch containing $ \delta $ is unique. This shows that all branches of the form $ B_{\I_{+}^{j} \left(k\alpha\right)} $ for all $ j\in \mathbb{Z} $ and $ k \in \left\{ -1,1\right\} $ are pairwise disjoint.
\end{proof}


\begin{corollary} \label{coro_neg_partition}
	Let $ M \in \mathbb{Z} \setminus \{ 0 \} $, $ z \in \mathbb{N} $, $ S_M \subset \mathbb{R}[i_{-z}] $ and $ \alpha \in S_M $. Then the following holds true:
	\begin{enumerate}
		\item[i)] If $ z = 0 $ and $ M > 0 $, then
			$ \coprod_{j=1}^{4}B_{\I_{-}^{j} \left(\alpha\right)} = S_M $ is a partition.
		\item[ii)] If $ z = -1 $ and $ M > 0 $, then
			$ \coprod_{j=1}^{6}B_{\I_{-}^{j} \left(\alpha\right)} = S_M $ is a partition.
		\item[iii)]	If $ z < -1 $ and $ B \subseteq S_M $ is a branch with $ \alpha \in B $. Then $ \coprod_{j\in \mathbb{Z}}B_{\I_{-}^{j} \left(\alpha\right)} = B $ and $ \coprod_{j\in \mathbb{Z}, k \in \{ -1,1\}}B_{\I_{-}^{j} \left(k\alpha\right)} = S_M $ are partitions. 
	\end{enumerate}
\end{corollary}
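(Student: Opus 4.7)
The plan is to transfer \Cref{prop_part_of_branches} through the ring isomorphism $\Phi \colon \mathbb{R}[i_z] \to \mathbb{R}[i_{-z}]$ from the extension section. I will use three properties of $\Phi$ throughout: it preserves the norm (so maps $S_M \subset \mathbb{R}[i_z]$ bijectively onto $S_M \subset \mathbb{R}[i_{-z}]$), it satisfies the intertwining relation $\Phi \circ \I_{+} = \I_{-} \circ \Phi$ already established in the proof of \Cref{coro_In}, and by \Cref{lem_branch_rules} ii) it sends subbranches to subbranches, i.e.\ $\Phi(B_{\alpha_0}) = B_{\Phi(\alpha_0)}$ whenever $z \neq 0$.

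Case i) is effectively tautological: with $z = 0$ both rings coincide with $\mathbb{R}[i]$, and since $i$ has multiplicative order $4$ the elements $\I_{-}^{j}(\alpha) = (-i)^{j}\alpha$ for $j = 1,\dots,4$ are the same four associates as the $\I_{+}^{j}(\alpha) = i^{j}\alpha$ in reversed cyclic order, so the partition agrees with \Cref{prop_part_of_branches} i). For the remaining two cases I would proceed uniformly: set $\alpha' \coloneqq \Phi^{-1}(\alpha)$, which lies in $S_M \subset \mathbb{R}[i_z]$ by norm preservation, and apply \Cref{prop_part_of_branches} ii) or iii) to $\alpha'$. This yields a partition of $S_M \subset \mathbb{R}[i_z]$ (respectively of the branch through $\alpha'$) indexed by $\I_{+}$-iterates. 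Pushing this partition forward by the bijection $\Phi$ and invoking
$$\Phi\bigl(B_{\I_{+}^{j}(k\alpha')}\bigr) = B_{\Phi(\I_{+}^{j}(k\alpha'))} = B_{\I_{-}^{j}(k\Phi(\alpha'))} = B_{\I_{-}^{j}(k\alpha)}$$
for $k \in \{-1,1\}$ then delivers the claimed partition in $\mathbb{R}[i_{-z}]$.

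The only step that merits a bit more care is the branch refinement in case iii), namely the statement that $\Phi$ maps a branch $B' \subseteq S_M \subset \mathbb{R}[i_z]$ onto a branch $B \subseteq S_M \subset \mathbb{R}[i_{-z}]$. This follows because, in the standard real coordinates of the complex planes, $\Phi$ acts as the reflection $(a_1,a_2) \mapsto (a_1,-a_2)$; this is a homeomorphism of $\mathbb{R}^2$, so it permutes the connected components of $S_M$, and since $\Phi(\alpha') = \alpha$ the component containing $\alpha'$ must be sent to the component containing $\alpha$. With this, the refined identity $\coprod_{j \in \mathbb{Z}} B_{\I_{+}^{j}(\alpha')} = B'$ transports to $\coprod_{j \in \mathbb{Z}} B_{\I_{-}^{j}(\alpha)} = B$. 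I do not expect any substantive obstacle, since the whole argument is a systematic pushforward along an explicit isomorphism.
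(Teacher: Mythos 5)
Your proposal is correct and follows essentially the same route as the paper, whose proof is precisely the one-line instruction to transfer \Cref{prop_part_of_branches} via the isomorphism $\Phi$ using \Cref{lem_branch_rules} and the intertwining relation from the proof of \Cref{coro_In}; you have simply filled in the details, including the correct separate treatment of $z=0$ (where \Cref{lem_branch_rules} ii) does not apply but $\I_{-}^{j}(\alpha)$ runs through the same four associates as $\I_{+}^{j}(\alpha)$) and the observation that $\Phi$, being a reflection, carries branches to branches.
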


\begin{proof}
	Use the isomorphism $ \Phi $ between $ z $-rings, \Cref{prop_part_of_branches}, \Cref{lem_branch_rules} and \Cref{coro_In} as well as its proof.
\end{proof}


We are finally ready for one of the main results of this section and its proof:

\begin{theorem}[Local Solution Theorem 1]\label{theo_local_branch}
	Let $ z \in \mathbb{Z} $ and $ M \in \mathbb{Z} \setminus \{ 0\} $. Then the Diophantine equation $ x^2+zxy+y^2 = M $ is solvable if and only if $ S_M \neq \emptyset $ and for all $ \alpha \in S_M $ we have that $ B_{\alpha} \cap \mathbb{Z}[i_z] \neq \emptyset $.	
\end{theorem}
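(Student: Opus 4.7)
The plan is to deduce both directions from the partition results \Cref{prop_part_of_branches} and \Cref{coro_neg_partition}, together with the closure properties of $\mathbb{Z}[i_z]$ under $\I_\pm^{\pm 1}$ (note that $i_z^{-1} = z - i_z \in \mathbb{Z}[i_z]$) and under negation.

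The direction ``$\Leftarrow$'' is essentially free. Since $S_M \neq \emptyset$, pick any $\alpha \in S_M$. By hypothesis $B_\alpha \cap \mathbb{Z}[i_z]$ contains some $\gamma$; as $B_\alpha \subseteq S_M$ we have $\N(\gamma) = M$, and because $\gamma \in \mathbb{Z}[i_z]$ the pair $(\mathrm{Re}(\gamma), \mathrm{Im}(\gamma))$ is an integer solution of $x^2 + zxy + y^2 = M$.

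For ``$\Rightarrow$'', suppose $\gamma_0 \in \mathbb{Z}[i_z]$ satisfies $\N(\gamma_0) = M$; then $\gamma_0 \in S_M$, in particular $S_M \neq \emptyset$. Fix an arbitrary $\alpha \in S_M$; we must produce a lattice point in $B_\alpha$. Treat first the case $z \geq 0$. By the appropriate case of \Cref{prop_part_of_branches}, there exist $j \in \mathbb{Z}$ and $m \in \{0, 1\}$ with $\gamma_0 \in B_{\I_+^j((-1)^m \alpha)}$ (one may take $m = 0$ when $z \in \{0, 1\}$, since the partitions in i) and ii) already cycle through $\pm \alpha$). Applying \Cref{lem_branch_rules}(i) gives $\I_+^{-j}(\gamma_0) \in B_{(-1)^m \alpha}$. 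A direct reading of the defining inequalities of a subbranch, together with the bilinearity of $\langle \cdot, \cdot \rangle$, shows that $\delta \in B_{-\alpha}$ if and only if $-\delta \in B_\alpha$, equivalently $B_{-\alpha} = -B_\alpha$. Therefore
\[
\gamma := (-1)^m \, \I_+^{-j}(\gamma_0) \in B_\alpha,
\]
and $\gamma \in \mathbb{Z}[i_z]$ because $\gamma_0 \in \mathbb{Z}[i_z]$ and $\I_+^{-1}$ acts on $\mathbb{Z}[i_z]$ as multiplication by $z - i_z$. The case $z < 0$ is identical after replacing $\I_+$ by $\I_-$ and invoking \Cref{coro_neg_partition}.

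The main obstacle, and really the only substantial point, is keeping track of the several sub-cases in the partition theorems (sign of $z$, and whether $|z| \leq 1$ or $|z| \geq 2$); once one adopts the uniform formulation ``$B_{\I_\pm^j((-1)^m \alpha)}$'' and exploits that all the operations $\I_\pm^{\pm 1}$ and negation stabilize $\mathbb{Z}[i_z]$, the argument reduces to a single application of \Cref{lem_branch_rules} and the elementary observation $B_{-\alpha} = -B_\alpha$.
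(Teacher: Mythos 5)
Your proof is correct and follows essentially the same route as the paper's: partition $S_M$ into subbranches via \Cref{prop_part_of_branches} and \Cref{coro_neg_partition}, transport the known integer solution back into $B_{\alpha}$ using \Cref{lem_branch_rules}(i), and conclude with the closure of $\mathbb{Z}[i_z]$ under $\I_{\pm}^{\pm 1}$ and negation. The only cosmetic difference is that you absorb the sign through the identity $B_{-\alpha} = -B_{\alpha}$, whereas the paper replaces $\gamma_0$ by $-\gamma_0$ (which also solves the equation) — the same symmetry expressed on the other side.
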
	
	

Recall that in case $ \vert z \vert \leq 1 $ and $ M < 0 $ we have that $ S_M = \emptyset $ and so it is clear that in this case $ x^2+zxy+y^2 = M $ is not solvable, see \Cref{ex_non_neg_sol}. However, if $ M > 0 $, $ S_M $ is not empty, so we can choose $ \alpha \in S_M \subseteq \mathbb{R}[i_z] $ ($ \alpha $ does not have to be an element of $ \mathbb{Z}[i_z] $) and reduce the problem of solvability of $ x^2+zxy+y^2 = M $ to the local problem whether $ B_{\alpha} $ does contain an integer solution of $ x^2+zxy+y^2 = M $ or not. If not, then $ x^2+zxy+y^2 = M $ is not solvable at all, compare with \Cref{example_z6_pm3_pm7}.	
	
	

\begin{proof}[Proof of \Cref{theo_local_branch}]	
	Assume that the Diophantine equation $ x^2+zxy+y^2 = M $ for $ z \in \mathbb{Z} $ and $ M \in \mathbb{Z} \setminus \{ 0\} $ can be solved by $ \gamma \in \mathbb{Z}[i_z] $. Then $ \gamma \in S_M \neq \emptyset $. Let $ \alpha \in S_M $ be arbitrary. We have to show that $ B_{\alpha} \cap \mathbb{Z}[i_z] \neq \emptyset $. To make the notation easier we will now denote $ \I_{+} $ or $ \I_{-} $ by $ \I $ depending whether $ z \geq 0 $ or $ z < 0 $, respectively. Let $ B $ be the branch containing $ \alpha $. Then either $ \gamma \in B $ or $ -\gamma \in B $ (or both if $ z \in \left\{-1,0,1 \right\} $). Hence, by \Cref{prop_part_of_branches} and \Cref{coro_neg_partition} we find $ n \in \mathbb{Z} $ such that either $ \gamma \in B_{\I^n \left( \alpha \right)} $ or $ -\gamma \in B_{\I^n \left( \alpha \right)} $. Since $ -\gamma $ also solves the Diophantine equation above, we can assume, without loss of generality, that the first case holds true (otherwise we exchange $ \gamma $ by $ -\gamma $). 
	
By \Cref{lem_branch_rules} $ \gamma \in B_{\I^n \left( \alpha \right)} = \I^{n}\left( B_{\alpha} \right) $ and this is equivalent to $ \I^{-n}\left( \gamma \right) \in B_{\alpha} $. Since $ \gamma \in \mathbb{Z}[i_z] $ we also have $ \I^{-n}\left( \gamma \right) \in \mathbb{Z}[i_z] $ by v) of \Cref{proposition_prop_iz} and \Cref{coro_In} and so $ B_{\alpha} \cap \mathbb{Z}[i_z] \neq \emptyset $.


The reverse direction is clear as an element of the set $ B_{\alpha} \cap \mathbb{Z}[i_z] \subseteq S_M $ satisfies the Diophantine equation $ x^2+zxy+y^2 = M $.
\end{proof}

\begin{example}
	We can verify the statement of \Cref{theo_local_branch} on \Cref{units_z0}. For example, we see that $ B_{\sqrt{6}-2i} \cap \mathbb{Z}[i] \neq \emptyset $. Indeed, $ 3 \pm i_z $ solves $ x^2+y^2 = 10 $. Also the other branches contain exactly two solutions to the above Diophantine equation (see the intersections of the blue circle with the $ \mathbb{Z} \times \mathbb{Z} $-grid) and so it does not matter which branch we consider. It would even work if we choose another $ \alpha \in S_{10} $.
\end{example}

In fact, a consequence of \Cref{theo_local_branch} is that if we find no positive solution (i.e. $ x,y \geq 0 $) to $ x^2 + zxy + y^2 = M $ for $ z \in \mathbb{N} $ and $ M \in \mathbb{N} \setminus \{ 0 \} $, then the Diophantine equation has no solution in general what we will show now.

\begin{corollary} \label{coro_pos_sol}
	If the Diophantine equation $ x^2 + zxy + y^2 = M $ is solvable for $ x,y \in \mathbb{Z} $ where $ z \in \mathbb{N} $ and $ M \in \mathbb{N} \setminus \{ 0 \} $, then there exist a solution for it where both, $ x,y $, are non-negative. Moreover, if $ \alpha \in \mathbb{Z}[i_z] $ is a solution to $ x^2 + zxy + y^2 = M $, then there is a unique unit $ \varepsilon \in \mathbb{Z}[i_z] $ such that $ \varepsilon \alpha $ is a positive solution to the Diophantine equation above.
\end{corollary}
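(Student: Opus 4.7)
The plan is to apply the partition of $S_M$ into subbranches from Proposition~\ref{prop_part_of_branches}, anchored at the canonical real element $\sqrt{M} \in S_M \subseteq \mathbb{R}[i_z]$, and to identify $B_{\sqrt{M}}$ with (most of) the closed first quadrant so that existence of a positive solution follows automatically.

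First, I would read off $B_{\sqrt{M}}$ explicitly from its definition. Writing $\sqrt{M} = (\sqrt{M},0)$ and $\I_{+}(\sqrt{M}) = (0,\sqrt{M})$, a direct evaluation of the two oriented-area inequalities on $\beta = b_1 + b_2 i_z$ gives $\big< \sqrt{M}, \beta \big> = \sqrt{M}\,b_2$ and $\big< \I_{+}(\sqrt{M}), \beta \big> = -\sqrt{M}\,b_1$, so that
\[
B_{\sqrt{M}} = \{\, \beta \in S_M \mid b_1 > 0,\ b_2 \geq 0 \,\}.
\]
In particular every element of $B_{\sqrt{M}}$ is a positive solution.

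Second, given any solution $\alpha \in S_M \cap \mathbb{Z}[i_z]$, I would invoke Proposition~\ref{prop_part_of_branches} with base point $\sqrt{M}$. For $z \geq 2$ it yields unique $n \in \mathbb{Z}$ and $m \in \{0,1\}$ with $\alpha \in B_{\I_{+}^{n}((-1)^m \sqrt{M})}$; for $z \in \{0,1\}$ only $n$ is needed, since $-1$ is already a power of $i_z$ (as $i^2=-1$ and $i_1^3=-1$) and the partition in parts i)--ii) of that proposition is indexed purely by powers of $\I_{+}$. Using Lemma~\ref{lem_branch_rules}, $B_{\I_{+}^{n}((-1)^m \sqrt{M})} = \I_{+}^{n}((-1)^m B_{\sqrt{M}})$, so the element
\[
\varepsilon \coloneqq (-1)^m i_z^{-n}
\]
(which lies in $\mathbb{Z}[i_z]$ because $i_z^{-1} = z - i_z$, as follows from $i_z^2 = z i_z - 1$) is a unit with $\varepsilon\alpha \in B_{\sqrt{M}}$. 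This produces the desired positive solution $\varepsilon\alpha$, and the uniqueness of the representing subbranch in the partition translates into uniqueness of the pair $(n,m)$ and hence of $\varepsilon$.

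The main subtlety I expect is the half-open nature of $B_{\sqrt{M}}$: its bottom boundary (the positive real axis) is included while its left boundary (the positive imaginary axis) is excluded. A solution already on the positive imaginary axis is ``positive'' in the sense of the definition, yet the canonical $\varepsilon$ supplied by the partition sends it to a different representative on the real axis, so one must interpret the uniqueness assertion as uniqueness of $\varepsilon$ among units carrying $\alpha$ into the distinguished subbranch $B_{\sqrt{M}}$. The rest of the argument is bookkeeping: matching the partition indices $(n,m)$ to the correct power of $i_z$ and sign, and verifying in each of the cases $z=0$, $z=1$, $z \geq 2$ that the resulting $\varepsilon$ is indeed a unit of $\mathbb{Z}[i_z]$ via Lemma~\ref{Nz_lemma}.
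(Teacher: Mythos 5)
Your existence argument is exactly the paper's: anchor the partition of Proposition~\ref{prop_part_of_branches} at $\sqrt{M}$, observe that $B_{\sqrt{M}}$ is the first-quadrant arc of $S_M$ minus the point $\sqrt{M}i_z$, and pull any integer solution into $B_{\sqrt{M}}$ by the unit $(-1)^m i_z^{-n}$ read off from the partition index. Your explicit computation of $B_{\sqrt{M}}$ and the remark about the half-open boundary are both correct and match the paper's intent.

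The gap is in the uniqueness claim. The uniqueness of the partition index $(n,m)$ only tells you that $\varepsilon$ is unique \emph{within the family of units of the form $\pm i_z^n$}. The corollary asserts uniqueness among \emph{all} units of $\mathbb{Z}[i_z]$, and at this point in the paper nothing rules out a unit $\eta$ not of that form with $\eta\alpha$ also positive. Closing this requires knowing that every unit of $\mathbb{Z}[i_z]$ with norm $+1$ is of the form $\pm i_z^n$ (units of norm $-1$ are harmless here, since they would send $\alpha$ to $S_{-M}$), which is precisely the content of Theorem~\ref{theorem_units}. The paper is explicit about this: it proves only existence at the point where the corollary is stated, remarks that uniqueness cannot yet be shown ``as there might also exist other units,'' and completes the uniqueness argument only after the characterization of the unit groups. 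Note also that the order of dependencies matters — the proof of Theorem~\ref{theorem_units} itself uses the partition and the existence half of this corollary — so you cannot simply cite the unit-group structure inside your proof without checking for circularity; you must split the statement as the paper does, or restate uniqueness as ``unique among units of the form $\pm i_z^n$'' (equivalently, unique unit carrying $\alpha$ into $B_{\sqrt{M}}$), which is what your final paragraph already gestures at.
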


To prove uniqueness of $ \varepsilon $ we need to know more about the units in $ \mathbb{Z}[i_z] $ if $ z \in \mathbb{N} $. Therefore we will postpone it to the next section and only prove the existence of $ \varepsilon $ in the following part.

\begin{proof}
Observe that $ \sqrt{M} \in S_M $ and $ \I_{+} \left( \sqrt{M}\right) = \sqrt{M}i_z $. Moreover, $ B_{\sqrt{M}} \cup \{ \sqrt{M}i_z \} $ is the part of $ S_M $ in the first quadrant. Hence, by \Cref{theo_local_branch} a solution to the Diophantine equation exists if and only if $ B_{\sqrt{M}}\cap \mathbb{Z}[i_z] \neq \emptyset $, i.e. we find a positive solution. Furthermore, if $ \alpha \in \mathbb{Z}[i_z] $ is any solution to the Diophantine equation $ x^2 + zxy + y^2 = M $, then we find unique $ n \in \mathbb{Z} $ and $ m \in \left\{ -1,1 \right\} $ such that $ \alpha \in B_{\I_{+}^{-n} \left( \left( -1 \right)^m \sqrt{M} \right)} $ by \Cref{prop_part_of_branches} which is equivalent to $ \I_{+}^n \left( \left( -1 \right)^m \alpha \right) = \left( -1 \right)^mi_z^n \alpha \in B_{\sqrt{M}} $, so $ \varepsilon \coloneqq \left( -1 \right)^mi_z^n $ is the desired unit such that $ \varepsilon \alpha $ is a positive solution to $ x^2 + zxy + y^2 = M $.
\end{proof}

Note that we cannot show now that $ \varepsilon $ is unique as there might also exist other units in $ \mathbb{Z}[i_z] $ such that $ \varepsilon \alpha $ will be a positive solution to $ x^2 + zxy + y^2 = M $.

\begin{example} \label{example_exist_sol}
We can show that the Diophantine equation 
	$$ x^2+6xy+y^2 = 7 $$
has no solution by considering its graph in the first quadrant of the complex plane and seeing that there is no intersection with the $ \mathbb{Z} \times \mathbb{Z} $-grid (see \Cref{example_pm3_pm7}). We could also argue in the follwoing way: If there is a positive solution and $ x = 0  $ or $ y = 0 $ does not work as $ 7 $ is not a square, we would have $ x,y >0 $ where 
	$$ x^2+6xy+y^2 \geq 1^2 + 6 \cdot 1 \cdot 1 + 1^2 > 7 .$$ 
This would be a contradiction to the existence of a positive solution by \Cref{coro_pos_sol} and so $ x^2+6xy+y^2 = 7 $ is not solvable.
\end{example}
	


So far we know that the existence or non-existence of a solution to the Diophantine equation $ x^2+zxy+y^2 = M $ for $ z \in \mathbb{Z} $, $ M \in \mathbb{Z} \setminus \{ 0 \} $ can be proved by considering any subbranch in $ S_M $, i.e. some bounded and connected subset of $ S_M $ which contains a solution if and only if the Diophantine equation is solvable. Our goal now is to develop another criterion for proving the non-existence of a solution to $ x^2+zxy+y^2 = M $ by considering a connected part of a branch which contains no solution to $ x^2+zxy+y^2 = M $, but a subbranch. To find out whether a subbranch is contained in the considered part of the branch we will “measure” the “length” of the part of the branch by using the oriented area. This only works if all our branches are concave. For this approach use closed branches as  closed branches are easy to work with (we can choose start and end points) and so we get another criterion simpler to handle for proving the non-existence of a solution.

\begin{theorem}[Local Solution Theorem 2] \label{theo_no_solution} 
	Let $ z \in \mathbb{Z} \setminus \{ -1,0,1 \} $, $ M \in \mathbb{Z} \setminus \{ 0 \} $ and $ B \subseteq S_M $ be a branch where $ \alpha_1, \alpha_2 \in B $. If $ B_{\alpha_1, \alpha_2} \cap \mathbb{Z}[i_z] = \emptyset $ and $ \vert \big< \alpha_1 , \alpha_2 \big> \vert \geq \vert M \vert $, then the Diophantine equation $ x^2+ zxy + y^2 = M $ has no solution.
	\end{theorem}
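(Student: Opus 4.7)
The argument will be by contradiction: assume there exists $\gamma \in \mathbb{Z}[i_z]$ with $\N(\gamma) = M$. By \Cref{theo_local_branch} every subbranch of $S_M$ then meets $\mathbb{Z}[i_z]$, so the desired contradiction will follow once I exhibit a single subbranch $B_\beta$ satisfying $B_\beta \subseteq B_{\alpha_1,\alpha_2}$. Because the isomorphism $\Phi: \mathbb{R}[i_z] \to \mathbb{R}[i_{-z}]$ respects closed branches and subbranches by \Cref{lem_branch_rules} and intertwines $\I_+$ with $\I_-$ by \Cref{coro_In}, I may assume throughout that $z \geq 2$.

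\textbf{Locating $\alpha_2$ relative to $\alpha_1$.} By \Cref{prop_part_of_branches}(iii) the branch $B$ partitions as $B = \coprod_{j \in \mathbb{Z}} B_{\I_+^j(\alpha_1)}$, so there is a unique $N \in \mathbb{Z}$ with $\alpha_2 \in B_{\I_+^N(\alpha_1)}$. If $N = 0$ then $\alpha_2 \in B_{\alpha_1}$, and \Cref{lem_dist1} forces $\lvert \big< \alpha_1, \alpha_2 \big> \rvert < \lvert M \rvert$, contradicting the hypothesis. Because the statement is symmetric under $\alpha_1 \leftrightarrow \alpha_2$ (both $\lvert \big< \alpha_1,\alpha_2 \big> \rvert$ and $B_{\alpha_1,\alpha_2}$ are invariant under the swap), I may further assume $N \geq 1$.

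\textbf{The key containment.} I claim $\I_+(\alpha_1) \in B_{\alpha_1,\alpha_2}$, i.e.\ $\big< \alpha_1, \I_+(\alpha_1) \big> \cdot \big< \alpha_2, \I_+(\alpha_1) \big> \leq 0$. Since $\big< \alpha_1, \I_+(\alpha_1) \big> = \N(\alpha_1) = M$ by \Cref{proposition_prop_iz}(iv), it suffices to show that $\big< \I_+(\alpha_1), \alpha_2 \big>$ has (weakly) the same sign as $M$. For $N = 1$ this is immediate from the defining inequality of $B_{\I_+(\alpha_1)}$. For $N \geq 2$ I apply \Cref{lem_orientation_and_approx} with base $\alpha := \I_+^N(\alpha_1)$ and $\gamma := \alpha_2 \in B_\alpha$: the lemma asserts that $\big< \I_+^{-k}(\alpha), \alpha_2 \big>$ has strictly the same sign as $M$ for every $k \geq 1$, and choosing $k = N-1$ gives exactly the sign of $\big< \I_+(\alpha_1), \alpha_2 \big>$. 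With the containment in hand, \Cref{lem_sub_closed_branch}(i) and (ii) chain to yield $B_{\alpha_1} \subseteq B_{\alpha_1, \I_+(\alpha_1)} \subseteq B_{\alpha_1,\alpha_2}$. By \Cref{theo_local_branch}, applied to the assumed existence of $\gamma$, the subbranch $B_{\alpha_1}$ meets $\mathbb{Z}[i_z]$, hence so does $B_{\alpha_1,\alpha_2}$, the sought contradiction.

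\textbf{Expected main obstacle.} The substantive work is the sign bookkeeping in the previous paragraph: one must simultaneously track the sign of $M$, decide whether $\alpha_2$ lies on the $\I_+$ or $\I_+^{-1}$ side of $\alpha_1$, and handle the weaker (non-strict) sign statements that appear at the boundary index of \Cref{lem_orientation_and_approx}. In particular, the degenerate subcase $\alpha_2 = \I_+(\alpha_1)$ inside $N=1$ and the analogous tight case that would occur for $N=-1$ before the symmetry reduction force a brief separate treatment, where \Cref{fact_ineq_concave} applied on $B_{\I_+^{-1}(\alpha_1),\alpha_1}$, combined with $\lvert \big< \alpha_1,\alpha_2 \big> \rvert \geq \lvert M \rvert$, pins down $\alpha_2$ as a neighbour of $\alpha_1$ in the partition so that the containment $B_\beta \subseteq B_{\alpha_1,\alpha_2}$ becomes immediate from \Cref{lem_sub_closed_branch}(i).
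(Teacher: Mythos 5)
Your proposal is correct, and its skeleton coincides with the paper's: both proofs reduce everything to the single containment $\I(\alpha_1)\in B_{\alpha_1,\alpha_2}$, then chain $B_{\alpha_1}\subseteq B_{\alpha_1,\I(\alpha_1)}\subseteq B_{\alpha_1,\alpha_2}$ via \Cref{lem_sub_closed_branch} and invoke \Cref{theo_local_branch}. Where you differ is in how that containment is obtained. The paper does it in three lines: assuming $\I(\alpha_1)\notin B_{\alpha_1,\alpha_2}$ forces $\alpha_2\in B_{\alpha_1,\I(\alpha_1)}$, and then \Cref{fact_ineq_concave} together with the hypothesis $\lvert\big<\alpha_1,\alpha_2\big>\rvert\geq\lvert M\rvert$ squeezes $\lvert\big<\alpha_2,\I(\alpha_1)\big>\rvert$ to $0$, i.e.\ $\alpha_2=\I(\alpha_1)$, a contradiction. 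You instead locate $\alpha_2$ in the partition $B=\coprod_j B_{\I_+^j(\alpha_1)}$ of \Cref{prop_part_of_branches}, use \Cref{lem_dist1} plus the area hypothesis to exclude index $0$, and read off the sign of $\big<\I_+(\alpha_1),\alpha_2\big>$ from \Cref{lem_orientation_and_approx}. Your route is heavier but has the merit of isolating exactly where the hypothesis $\lvert\big<\alpha_1,\alpha_2\big>\rvert\geq\lvert M\rvert$ is really used (only to kill the ``$\alpha_2$ in the subbranch of $\alpha_1$'' configurations), whereas the paper's is shorter and needs only \Cref{fact_ineq_concave}. The one step you should spell out is the symmetry reduction to $N\geq 1$: if $\alpha_2\in B_{\I_+^N(\alpha_1)}$ with $N\leq -2$ then indeed $\alpha_1$ lies in $B_{\I_+^{N'}(\alpha_2)}$ with $N'\geq 1$, but for $N=-1$ the half-open subbranches allow $N'=0$ after the swap, which must then be dispatched by the same \Cref{lem_dist1} argument applied to the relabelled pair; you flag this in your final paragraph, and once written out it closes the argument.
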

	
\begin{proof}
The idea of the proof is the following: We will show that $ B_{\alpha_1,\alpha_2} $ contains a subbranch and then we can apply the Local Solution Theorem.	Let $ \I $ denote $ \I_{+} $ if $ z \geq 0 $ and $ \I_{-} $ if $ z < 0 $. Recall that $ \alpha_j, \I \left( \alpha_j \right) $ for $ j = 1,2 $ are on the same branch. At first we will show that either $ \I \left( \alpha_1 \right) \in B_{\alpha_1, \alpha_2} $ or $ \I \left( \alpha_2 \right) \in B_{\alpha_1, \alpha_2} $.

More concretely, let $ M \gtrless 0 $ and assume without loss of generality that $ \big< \alpha_1, \alpha_2 \big> \gtreqless 0 $ (otherwise we can just exchange $ \alpha_1 $ and $ \alpha_2 $) and show that then $ \I \left( \alpha_1 \right) \in B_{\alpha_1, \alpha_2} $. If not, then we have
	$$ \big< \alpha_1, \I \left( \alpha_1 \right) \big> \big< \alpha_2, \I \left( \alpha_1 \right) \big> > 0 .$$
Since $ \big< \alpha_1, \I \left( \alpha_1 \right) \big> \gtrless 0 $ we also have that $ \big< \alpha_2, \I \left( \alpha_1 \right) \big> \gtrless 0 $ and hence
	$$ \big< \alpha_1, \alpha_2 \big> \big< \I \left(\alpha_1\right), \alpha_2 \big> \leq 0 ,$$
so we have that $  \alpha_2 \in B_{\alpha_1, \I \left( \alpha_1 \right)} $ and by \Cref{fact_ineq_concave} it follows
	$$ \left\vert M \right\vert = \vert \big< \alpha_1,\I \left(\alpha_1 \right) \big> \vert \geq \vert \big< \alpha_1,\alpha_2 \big> \vert + \vert \big<\alpha_2, \I \left( \alpha_1 \right)\big> \vert \geq \left\vert M \right\vert $$
which implies that $ \vert \big<\alpha_2, \I \left( \alpha_1 \right)\big> \vert = 0 $, so $ \alpha_2 = \I \left( \alpha_1 \right) $ which is a contradiction because we assumed that $ \I \left( \alpha_1 \right) \notin B_{\alpha_1, \alpha_2} $.

Hence, we can assume $ \I \left( \alpha_1 \right) \in B_{\alpha_1, \alpha_2} $ and we also have $ B_{\alpha_1} \subseteq B_{\alpha_1,\I \left( \alpha_1 \right)} \subseteq B_{\alpha_1,\alpha_2} $ by \Cref{lem_sub_closed_branch}. Moreover, $ B_{\alpha_1, \alpha_2} \cap \mathbb{Z}[i_z] = \emptyset $ by assumption and so also $ B_{\alpha_1} \cap \mathbb{Z}[i_z] = \emptyset $. By \Cref{theo_local_branch} we conclude.
\end{proof}

We will see concrete applications of the last statement in the next few sections.

\subsection{Unit group of $ \mathbb{Z}[i_z] $}

The aim of this section is to identify the set of units in each $ z $-ring. For this we would like to prove the following theorem.

\begin{theorem}[Characterization of unit groups of $ z $-rings] \label{theorem_units}
	Let $ z \in \mathbb{Z} $. Then the set of units in $ \mathbb{Z}[i_z] $ is isomorphic to the additive group
	\begin{itemize}
		\item $ \mathbb{Z}/4	\mathbb{Z} $ and generated by $ i_z,-i_z $ if $ z = 0 $
		\item $ \mathbb{Z}/6	\mathbb{Z} $ and generated by $ \pm i_z $ if $ z = \pm 1 $, respectively
		\item $ \mathbb{Z}/2	\mathbb{Z} \times \mathbb{Z} $ and generated by $ -1,\pm i_z $ if $ z = 2 \vee z \geq 4 $ or $ z = -2 \vee z \leq -4 $, respectively
		\item $ \mathbb{Z}/2	\mathbb{Z} \times \mathbb{Z} $ and generated by $ -1,-1 \pm i_z $ if $ z = \pm 3 $, respectively.
	\end{itemize}
\end{theorem}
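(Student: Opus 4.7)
By \Cref{Nz_lemma}\,iv) the unit group of $\mathbb{Z}[i_z]$ is precisely $(S_1 \cup S_{-1}) \cap \mathbb{Z}[i_z]$, so my plan is to enumerate the integer points of these two level sets and then read off the group structure. For $|z| \leq 1$ one has $S_{-1} = \emptyset$ by \Cref{ex_non_neg_sol}, and the four or six integer points of the ellipse $S_1$ were already enumerated in \Cref{ex_unit_ellipse}, giving the cyclic groups claimed. For $z = \pm 2$ the norm factors as $(a \pm b)^2$, so $S_{-1}$ is again empty; a short induction gives $i_2^n = (1-n, n)$ for all $n \in \mathbb{Z}$, whence $S_1 \cap \mathbb{Z}[i_2] = \{\pm i_2^n : n \in \mathbb{Z}\}$ with $i_2$ of infinite order. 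The case $z = -2$ follows via the norm-preserving isomorphism $\Phi$ of \Cref{lem_isomorphy}.

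\textbf{Norm-one units for $|z| \geq 3$.} I would apply the partition \Cref{prop_part_of_branches} (or \Cref{coro_neg_partition} if $z < 0$) with $\alpha = 1 \in S_1$; combined with \Cref{lem_branch_rules}, this expresses $S_1 \cap \mathbb{Z}[i_z]$ as the disjoint union of the sets $\pm i_z^n \cdot (B_1 \cap \mathbb{Z}[i_z])$ over $n \in \mathbb{Z}$. A direct estimate shows $B_1 \cap \mathbb{Z}[i_z] = \{1\}$: for $z \geq 0$, every element of $B_1$ has real part $b_1 > 0$ and imaginary part $b_2 \geq 0$, so the equation $b_1^2 + z b_1 b_2 + b_2^2 = 1$ forces $b_2 = 0$ and $b_1 = 1$; the case $z < 0$ is transported through $\Phi$. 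Hence the norm-$1$ units are exactly $\{\pm i_z^n : n \in \mathbb{Z}\}$, and $i_z$ has infinite order since the recursion $i_z^{n+1} = z\, i_z^n - i_z^{n-1}$ forces its coordinates to grow in absolute value.

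\textbf{Norm $-1$.} For $z = \pm 3$ a direct computation yields $\N(-1 \pm i_z) = -1$ and $(-1 \pm i_z)^2 = \pm i_z$ (with signs matching the sign of $z$), so $-1 \pm i_z$ is a norm-$-1$ unit whose even powers exhaust the norm-$1$ subgroup; multiplying any other norm-$-1$ unit by $(-1 \pm i_z)^{-1}$ then lands in that subgroup, which yields the stated presentation $\mathbb{Z}/2\mathbb{Z} \times \mathbb{Z}$. For $|z| \geq 4$ the central claim is $S_{-1} \cap \mathbb{Z}[i_z] = \emptyset$, and once this is established the enumeration from the previous paragraph already gives the full unit group. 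The cases $z \leq -4$ reduce to $z \geq 4$ via $\Phi$, which maps $i_z$ to $-i_{-z}$ and thereby accounts for the sign swap in the listed generators.

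\textbf{The main obstacle.} What remains is to prove $S_{-1} \cap \mathbb{Z}[i_z] = \emptyset$ when $z \geq 4$, which I would establish by Vieta jumping. Assume positive integers $a > c$ with $a^2 - z a c + c^2 = -1$; the companion root $a' = z c - a = (c^2 + 1)/a$ of the quadratic $x^2 - z c x + (c^2 + 1) = 0$ is again a positive integer with $a' \leq c$, and the equality $a' = c$ forces $(a, c) = (2, 1)$, which only solves the equation when $z = 3$. Hence for $z \geq 4$ one has $a' < c$ strictly; passing to the pair $(c, a')$ and iterating produces a strictly decreasing infinite sequence of positive integers, a contradiction. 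The clean separation between $z = 3$ (where norm-$-1$ units are forced to exist) and $z \geq 4$ (where they cannot) is precisely what makes the Vieta descent succeed, and this is the subtlest step of the whole proof.
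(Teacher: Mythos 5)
Your proposal is correct, and for most of its length it follows the same route as the paper: reduce via the subbranch partition of \Cref{prop_part_of_branches} and \Cref{lem_branch_rules} to the claim $B_1 \cap \mathbb{Z}[i_z] = \{1\}$, verify that claim by the estimate $a^2+zab+b^2 \geq 2+z > 1$ for $a,b\geq 1$, treat $z=\pm 3$ through the explicit norm-$(-1)$ unit $-1\pm i_z$ whose square is $\pm i_z$, and transport negative $z$ through $\Phi$. Where you genuinely diverge is on the subtlest step, the emptiness of $S_{-1}\cap\mathbb{Z}[i_z]$ for $z\geq 4$. The paper proves this geometrically: it builds the region $G$ around the fourth-quadrant branch of $S_{-1}$, exhibits boundary points $\alpha_1=1+y_{-}i_z$ and $\alpha_2=x_{+}-i_z$ with $y_{-}<-z+1$ and $x_{+}>z-1$, checks $B_{\alpha_1,\alpha_2}\cap\mathbb{Z}[i_z]=\emptyset$ together with $\vert\langle\alpha_1,\alpha_2\rangle\vert>1$, and invokes Local Solution Theorem 2 (\Cref{theo_no_solution}). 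Your Vieta-jumping descent on $a^2-zac+c^2=-1$ replaces all of that with a self-contained elementary argument, and it is sound: the companion root $a'=zc-a=(c^2+1)/a$ is a positive integer, $a'\leq(c^2+1)/(c+1)\leq c$ with equality forcing $(a,c)=(2,1)$, which solves the equation only for $z=3$, so for $z\geq 4$ the descent is strict and yields an infinite decreasing sequence of positive integers. The trade-off is that your argument is shorter and independent of the analytic/geometric machinery of Section 4.3 for this step, whereas the paper's version showcases Local Solution Theorem 2 (one of its central tools) and the same template later yields \Cref{lem_represented_M}; the cleanness of your descent also makes transparent why $z=3$ is the exact boundary case. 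Your direct computation $i_2^n=(1-n,n)$ for $z=\pm 2$ is likewise a small simplification over the paper's uniform treatment of $z\geq 2$. The only details left implicit — reducing an arbitrary solution of $x^2+zxy+y^2=-1$ to positive $a>c$ with $a^2-zac+c^2=-1$, and ruling out $a=c$ — are routine sign and symmetry considerations.
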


By applying a theorem of Gauss \cite[p.57]{Mordell_dio_equ} we can deduce that the Diophantine equation $ x^2+zxy+y^2 = 1 $ for $ z \in \mathbb{Z} $ has infinitely many solutions if the discriminant $ D = z^2-4 > 0 $ is not a prefect square. I.e. the unit sets of $ z $-rings have infinite cardinality if $ z \geq 3 $ (and of course also for $ z \leq -3 $). For $ \vert z \vert \leq 1 $ we know that all level sets are bounded and so it is easy to see that the unit sets must be finite (compare with \Cref{eisenstein_integers}). For $ z \in \left\{ -2,2 \right\} $ we will see that there are also infinitely many units in $ \mathbb{Z}[i_z] $.

\begin{proof}[Proof of \Cref{theorem_units}]	
		At first let $ z \in \{0,1 \} $. In these cases $ S_1 $ is bounded and consists of one branch and $ S_{-1} = \emptyset $. Therefore we can count the units (compare with \Cref{ex_unit_ellipse}). There are $ 4 $ and $ 6 $ units in $ S_1\cap \mathbb{Z}[i_z] $ for $ z = 0 $ and $ z = 1 $, respectively. Moreover, the unit $ i_z $ has order $ 4 $ in $ \mathbb{Z}[i] $ and $ 6 $ in $ \mathbb{Z}[i_1] $. By the fundamental theorem of finitely generated abelian groups we deduce that the set of unit groups of $  \mathbb{Z}[i] $ and $  \mathbb{Z}[i_1] $ are isomorphic to $ \mathbb{Z}/4	\mathbb{Z} $ and $ \mathbb{Z}/6	\mathbb{Z} $, respectively.
	
	
Let now $ z \geq 2 $. At first we consider an arbitrary unit $ \varepsilon \in \mathbb{Z}[i_z] $. This means $ \N(\varepsilon) \in \left\{ -1,1\right\} $ by \Cref{Nz_lemma}. We will discuss both cases below.

Consider the subbranch $ B_{1} \subset S_{1} $. Observe that $ B_1 \subseteq B_{1,\I_{+}\left( 1\right)} = B_{1,i_z} $ is entirely contained in the first quadrant. By \Cref{prop_part_of_branches} for each unit $ \varepsilon \in S_1 $ there exist $ n \in \mathbb{Z} $ and $ k \in \left\{ 0,1\right\} $ such that $ \varepsilon \in B_{\I_{+}^n \left( \left(-1\right)^k \right)} $ which is equivalent to 
	$$ \I_{+}^{-n}\left(\left(-1\right)^k \varepsilon \right) \in B_{1} .$$
Now we would like to show that $ 1 $ is the only unit contained in $ B_{1} $. If we have a unit $ a+bi_z \in \mathbb{Z}[i_z] $ in the first quadrant, then clearly $ a,b \geq 0 $ and $ a,b $ are not zero at the same time. Moreover $ a,b \geq 1 $ is not possible as then
	$$ 1 = a^2 + zab + b^2 \geq 2 + z > 1 .$$
Hence, $ 1 $ and $ i_z $ are the only units in the first quadrant of $ \mathbb{Z}[i_z] $. Since $ \I_{+} \left( 1\right) = i_z \notin B_1 $ we conclude 
	$$ \I_{+}^{-n}\left(\left(-1\right)^k \varepsilon \right) = 1 $$
and so each unit in $ \mathbb{Z}[i_z] $ with norm equal to $ 1 $ is of the form
	$$ \varepsilon = \I_{+}^{n} \left( \left( -1\right)^{k} \right)= \left( -1\right)^{k}i_z^n. $$
Observe that this holds for all units with norm $ 1 $ in $ \mathbb{Z}[i_z] $ if $ z \geq 2 $. If $ z = 2 $, then $ S_{-1} = \emptyset $, so there are no units with norm equal to $ -1 $. Hence, the units of $ \mathbb{Z}[i_2] $ are generated by $ -1 $ and $ i_z $. Observe that $ i_z \in \mathbb{Z}[i_z] $ must have infinite order for $ z \geq 2 $ because otherwise the subbraches in \Cref{prop_part_of_branches} would not define a partition. Hence, the units of $ \mathbb{Z}[i_2] $ are isomorphic to $ \mathbb{Z}/2	\mathbb{Z} \times \mathbb{Z}$.


Let now $ z \geq 3 $ and $ \varepsilon \in \mathbb{Z}[i_3] $ be an arbitrary unit with $ \N \left( \varepsilon \right) = -1 $. Observe that $ \I_{+} \left(-1+i_3\right) = -1+2i_3 $ and so $ -1+2i_3 \notin B_{-1+i_3} $. Let $ B $ be the branch which contains $ B_{-1+i_3} $. Since $ B $ is concave, we have 
	$$ B_{-1+i_3} \subseteq B_{-1+i_3,-1+2i_3} \subseteq [-1,0] \times [1,2]i_3 .$$
Moreover, $ B $ does not intersect the axis of the complex plane and so $ B_{-1+i_3} \cap \mathbb{Z}[i_3] = \left\{ -1+i_3 \right\} $. Now if $ \varepsilon \in \mathbb{Z}[i_z] $ is a unit with norm equal to $ -1 $, then we find $ n \in \mathbb{Z} $ and $ k \in \left\{ 0,1\right\} $ such that $ \varepsilon \in B_{\I_{+}^n \left( -1+i_3 \right)} $ by \Cref{prop_part_of_branches}. With the same argument as above we deduce 
	$$ \varepsilon = \I_{+}^{n} \left( \left( -1\right)^{k}\left( -1+i_3 \right) \right)= \left( -1\right)^{k}i_3^n \left( -1+i_3 \right). $$
However, since 
	$$ \left( -1+i_3 \right)^2 = 1 - 2i_3 + i_3^2 = 1 - 2i_3 + 3i_3-1 = i_3 $$
we have in fact 
	$$ \varepsilon = \left( -1 \right)^k {\left( -1+i_3 \right)}^{2n+1}. $$
Hence, the unit group of $ \mathbb{Z}[i_3] $ is generated by $ -1, -1+i_3 $ and all the units with norm equal to $ 1 $ are generated by $ -1+i_3 $ with an even exponent whereas odd exponents are used to generate units with norm equal to $ -1 $. Thus, the unit group of $ \mathbb{Z}[i_3] $ is isomorphic to $ \mathbb{Z}/2	\mathbb{Z} \times \mathbb{Z} $. 

Now we consider the case $ z > 3 $. It remains to show that $ S_{-1} \cap \mathbb{Z}[i_z] $ is empty because we already know that $ S_{1} \cap \mathbb{Z}[i_z] $ is isomorphic to $ \mathbb{Z}/2\mathbb{Z} \times \mathbb{Z} $. For this we consider the branch of $ S_{-1} $ entirely in the fourth quadrant. We will denote it by $ B $ and it is enough to show that $ B $ contains no unit of $ \mathbb{Z}[i_z] $ because then the other branch does neither by symmetry reasons. At first we show that there is an element $ \gamma = c\left( 1-i_z \right) \in B $ for some $ c \in \mathbb{R} $. Since $ \gamma \in B $ is in the fourth quadrant, we have that $ c>0 $ and
	$$ c^2-zc^2+c^2=-1. $$
Hence, $ c = \sqrt{\frac{1}{z-2}}<1 $. 

Consider now
$$ G \coloneqq \{ \beta \in B \mid 0 < \mathrm{Re}\left( \beta \right) < 1 \vee -1 < \mathrm{Im}\left( \beta \right) < 0 \} .$$ 
Since $ 0 < c < 1 $, we have $ \gamma \in G $ and so $ G $ is not empty. We try to estimate the coordinates of the elements on the boundary of $ G $ (they are not contained in $ G $). For this consider $ x^2+zxy+y^2 =-1 $ and let $ x = 1 $. We get
$$ y = \frac{-z \pm \sqrt{z^2-8}}{2} $$
We would like to study both of these solutions and call $ y_{+} $ the solution with the plus sign and $ y_{-} $ the other one. Clearly for both of them it holds $ y < 0 $ because $ B $ lies entirely in the fourth quadrant. Moreover, we have
	\begin{align*}
		y_{+} &= \frac{-z + \sqrt{z^2-8}}{2} \\
			&= \frac{-z + \sqrt{ \left( z-2 \right)^2+4z-12 }}{2} \\
			&> \frac{-z +  z-2 }{2} \\
			&= -1
	\end{align*}
and
	\begin{align*}
		y_{-} &= \frac{-z - \sqrt{z^2-8}}{2} \\
			&= \frac{-z - \sqrt{ \left( z-2 \right)^2+4z-12 }}{2} \\
			&< \frac{-z - \left( z-2 \right)}{2} \\
			&= -z+1.
	\end{align*}
Hence, $ \alpha_1 \coloneqq 1+y_{-}i_z $ is an element on the boundary of $ G $. By symmetry we can conclude that there must be another such element $ \alpha_2 \coloneqq  x_{+}-i_z $ where $ x_{+}>z-1 $. Consider now the closed branch $ B_{\alpha_1,\alpha_2} $. Then we have $ B_{\alpha_1,\alpha_2} \setminus \left\{\alpha_1, \alpha_2 \right\} \subset G $ and $ G \cap \mathbb{Z}[i_z] = \emptyset $. Moreover, $ \alpha_j \notin \mathbb{Z}[i_z] $ for $ j = 1,2 $ and $ z \geq 4 $. Thus, we have $ B_{\alpha_1,\alpha_2} \cap \mathbb{Z}[i_z] = \emptyset $.

Furthermore,
	\begin{align*}
		\left\vert \big< \alpha_1 , \alpha_2 \big> \right\vert &= \left\vert -1-y_{-}x_{+} \right\vert \\
		&> -1+\left( z-1 \right)^2 \\
		&> 1 
	\end{align*}
and so we have that the Diophantine equation $ x^2+zxy+y^2 = -1 $ is not solvable for $ z \geq 4 $ by \Cref{theo_no_solution}. This means that there are no units in $ \mathbb{Z}[i_z] $ with norm equal to $ -1 $ and so the group structure of the units of $ \mathbb{Z}[i_z] $ as well as its generators are in this case the same as for $ z = 2 $ (compare with \Cref{proof_units} for $ z = 7 $).
	
We will now consider the unit group of $ \mathbb{Z}[i_z] $ if $ z < 0 $. Observe that the ring isomorphism $ \Phi $ between $ \mathbb{Z}[i_z] $ and $ \mathbb{Z}[i_{-z}] $ respects the norm and so also the units. Hence, the group structures of the unit groups are the same for $ z $ as for $ -z $. The only thing which changes are the generators of the units because $ \Phi $ changes the imaginary parts, i.e. the imaginary parts of all generators of units in $ \mathbb{Z}[i_{z}] $ are a multiple of $ -1 $ compared to the imaginary parts of the generators uf units in $ \mathbb{Z}[i_{-z}] $. Thus, we conclude.
\end{proof}

\vspace{5mm}
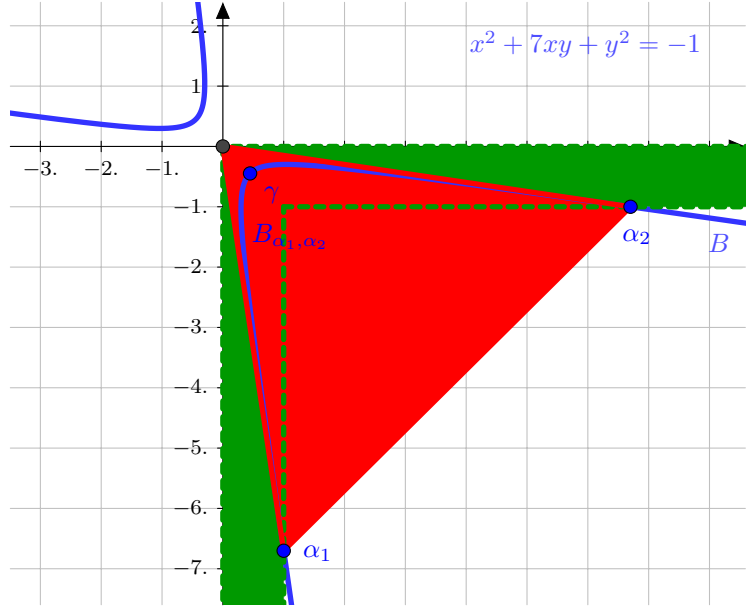
\begin{figure}[h]  
\begin{center}
\pagestyle{empty}

\definecolor{ffqqqq}{rgb}{1.,0.,0.}
\definecolor{qqzzqq}{rgb}{0.,0.6,0.}
\definecolor{ududff}{rgb}{0.30196078431372547,0.30196078431372547,1.}
\definecolor{xdxdff}{rgb}{0.49019607843137253,0.49019607843137253,1.}
\definecolor{uuuuuu}{rgb}{0.26666666666666666,0.26666666666666666,0.26666666666666666}
\definecolor{qqqqff}{rgb}{0.,0.,1.}
\definecolor{ttttff}{rgb}{0.2,0.2,1.}
\definecolor{cqcqcq}{rgb}{0.7529411764705882,0.7529411764705882,0.7529411764705882}
\begin{tikzpicture}[line cap=round,line join=round,>=triangle 45,x=0.8cm,y=0.8cm]
\draw [color=cqcqcq,, xstep=0.8cm,ystep=0.8cm] (-3.4943516949643376,-7.5966201167001035) grid (8.589838948901445,2.3890135412555527);
\draw[->,color=black] (-3.4943516949643376,0.) -- (8.589838948901445,0.);
\foreach \x in {-3.,-2.,-1.,1.,2.,3.,4.,5.,6.,7.,8.}
\draw[shift={(\x,0)},color=black] (0pt,2pt) -- (0pt,-2pt) node[below] {\footnotesize $\x$};
\draw[->,color=black] (0.,-7.5966201167001035) -- (0.,2.3890135412555527);
\foreach \y in {-7.,-6.,-5.,-4.,-3.,-2.,-1.,1.,2.}
\draw[shift={(0,\y)},color=black] (2pt,0pt) -- (-2pt,0pt) node[left] {\footnotesize $\y$};
\draw[color=black] (0pt,-10pt) node[right] {\footnotesize $0$};
\clip(-3.4943516949643376,-7.5966201167001035) rectangle (8.589838948901445,2.3890135412555527);
\fill[line width=2.pt,color=qqzzqq,fill=qqzzqq,fill opacity=0.10000000149011612] (0.,0.) -- (0.,-9.) -- (1.,-9.) -- (1.,-1.) -- (14.,-1.) -- (14.,0.) -- cycle;
\fill[line width=2.pt,color=ffqqqq,fill=ffqqqq,fill opacity=0.10000000149011612] (0.,0.) -- (1.,-6.701562118716424) -- (6.701562118716424,-1.) -- cycle;
\draw [samples=50,domain=-0.99:0.99,rotate around={-45.:(0.,0.)},xshift=0.cm,yshift=0.cm,line width=2.pt,color=ttttff] plot ({0.6324555320336759*(1+(\x)^2)/(1-(\x)^2)},{0.4714045207910317*2*(\x)/(1-(\x)^2)});
\draw [samples=50,domain=-0.99:0.99,rotate around={-45.:(0.,0.)},xshift=0.cm,yshift=0.cm,line width=2.pt,color=ttttff] plot ({0.6324555320336759*(-1-(\x)^2)/(1-(\x)^2)},{0.4714045207910317*(-2)*(\x)/(1-(\x)^2)});
\draw [line width=2.pt,dash pattern=on 3pt off 3pt,color=qqzzqq] (0.,0.)-- (0.,-9.);
\draw [line width=2.pt,color=qqzzqq] (0.,-9.)-- (1.,-9.);
\draw [line width=2.pt,dash pattern=on 3pt off 3pt,color=qqzzqq] (1.,-9.)-- (1.,-1.);
\draw [line width=2.pt,dash pattern=on 3pt off 3pt,color=qqzzqq] (1.,-1.)-- (14.,-1.);
\draw [line width=2.pt,color=qqzzqq] (14.,-1.)-- (14.,0.);
\draw [line width=2.pt,dash pattern=on 3pt off 3pt,color=qqzzqq] (14.,0.)-- (0.,0.);
\draw [line width=2.pt,color=ffqqqq] (0.,0.)-- (1.,-6.701562118716424);
\draw [line width=2.pt,color=ffqqqq] (1.,-6.701562118716424)-- (6.701562118716424,-1.);
\draw [line width=2.pt,color=ffqqqq] (6.701562118716424,-1.)-- (0.,0.);
\begin{scriptsize}
\draw [fill=qqqqff] (0.447213595499958,-0.447213595499958) circle (2.5pt);
\draw[color=qqqqff] (0.75,-0.8) node {\fontsize{10}{0} $ \gamma $};
\draw [fill=uuuuuu] (0.,0.) circle (2.5pt);
\draw [fill=xdxdff] (0.,-9.) circle (2.5pt);
\draw [fill=ududff] (1.,-9.) circle (2.5pt);
\draw[color=qqqqff] (1.05,-1.5) node {\fontsize{10}{0} $B_{\alpha_1,\alpha_2}$};
\draw[color=qqzzqq] (0.5,-6.5) node {\fontsize{10}{0} $G$};
\draw[color=ududff] (8.1,-1.6) node {\fontsize{10}{0} $B$};
\draw[color=ududff] (5.9,1.7) node {\fontsize{10}{0} $x^2+7xy+y^2=-1$};
\draw [fill=ududff] (14.,-1.) circle (2.5pt);
\draw [fill=qqqqff] (6.701562118716424,-1.) circle (2.5pt);
\draw[color=qqqqff] (6.75,-1.5) node {\fontsize{10}{0} $\alpha_2 $};
\draw [fill=qqqqff] (1.,-6.701562118716424) circle (2.5pt);
\draw[color=qqqqff] (1.5,-6.75) node {\fontsize{10}{0} $\alpha_1 $};
\draw[color=ffqqqq] (2.9,-2.6) node {\fontsize{10}{0} $ \tfrac{1}{2} \left\vert \big< \alpha_1 , \alpha_2 \big> \right\vert > 1 $};

\end{scriptsize}
\end{tikzpicture}
\caption{Construction to show that $ S_{-1}\cap\mathbb{Z}[i_7] $ is empty}
\label{proof_units}
\end{center}
\end{figure}
\vspace{5mm}


\begin{example}
We showed in \Cref{theorem_units} that the unit group of $ \mathbb{Z}[i_3] $ is generated by the two elements $ -1+i_3 $ and $ -1 $. In fact, the elements in $ S_{1}\cap\mathbb{Z}[i_3] $ are generated by an even power of $ g \coloneqq -1+i_3 $ whereas the elements in $ S_{-1}\cap\mathbb{Z}[i_3] $ are generated by an odd power of $ -1+i_3 $. Multiplying with $ -1 $ has the effect of a mirror reflection on the origin as we can see in \Cref{units_z3}.
\end{example}

We postponed the proof of uniqueness of $ \varepsilon \in \mathbb{Z} $ in \Cref{coro_pos_sol}. Indeed, the set of units in $ \mathbb{Z}[i_z] $ on one branch with norm equal to one is generated by the unit $ i_z $ if $ z \in \mathbb{N} $ by \Cref{theorem_units}. Hence, if $ \alpha \in \mathbb{Z} $ satisfies the Diophantine equation $ x^2 + zxy+ y^2 = M > 0 $, then all associated solutions $ \varepsilon \alpha $ can be described by $ \pm \alpha i_z^n $ for $ n \in \mathbb{Z} $, i.e. $ \varepsilon \in \left\{ \I_+^{n} \left( 1 \right), -\I_+^{n} \left( 1\right)\right\} $. However, there exists exactly one such $ n \in \mathbb{Z} $ such that either $ \I_+^{n} \left( 1 \right) $ or $ -\I_+^{n} \left( 1 \right) $ (not both) lies in $ B_1 $ and is a positive solution to the Diophantine equation $ x^2 + zxy+ y^2 = M $. This finishes the proof of \Cref{coro_pos_sol}.

\vspace{5mm}
\begin{figure}[h]  
\begin{center}
\pagestyle{empty}

\definecolor{qqqqff}{rgb}{0.,0.,1.}
\definecolor{xfqqff}{rgb}{0.4980392156862745,0.,1.}
\definecolor{cqcqcq}{rgb}{0.7529411764705882,0.7529411764705882,0.7529411764705882}
\begin{tikzpicture}[line cap=round,line join=round,>=triangle 45,x=0.7cm,y=0.7cm]
\draw [color=cqcqcq,, xstep=0.7cm,ystep=0.7cm] (-13.395201486745927,-2.2835759690255717) grid (3.5591822156666373,8.411860438245636);
\draw[->,color=black] (-13.395201486745927,0.) -- (3.5591822156666373,0.);
\foreach \x in {-13.,-12.,-11.,-10.,-9.,-8.,-7.,-6.,-5.,-4.,-3.,-2.,-1.,1.,2.,3.}
\draw[shift={(\x,0)},color=black] (0pt,2pt) -- (0pt,-2pt) node[below] {\footnotesize $\x$};
\draw[->,color=black] (0.,-2.2835759690255717) -- (0.,8.411860438245636);
\foreach \y in {-2.,-1.,1.,2.,3.,4.,5.,6.,7.,8.}
\draw[shift={(0,\y)},color=black] (2pt,0pt) -- (-2pt,0pt) node[left] {\footnotesize $\y$};
\draw[color=black] (0pt,-10pt) node[right] {\footnotesize $0$};
\clip(-13.395201486745927,-2.2835759690255717) rectangle (3.5591822156666373,8.411860438245636);
\draw [samples=50,domain=-0.99:0.99,rotate around={-45.:(0.,0.)},xshift=0.cm,yshift=0.cm,line width=2.pt,color=xfqqff] plot ({1.4142135623730951*(1+(\x)^2)/(1-(\x)^2)},{0.6324555320336759*2*(\x)/(1-(\x)^2)});
\draw [samples=50,domain=-0.99:0.99,rotate around={-45.:(0.,0.)},xshift=0.cm,yshift=0.cm,line width=2.pt,color=xfqqff] plot ({1.4142135623730951*(-1-(\x)^2)/(1-(\x)^2)},{0.6324555320336759*(-2)*(\x)/(1-(\x)^2)});
\draw [samples=50,domain=-0.99:0.99,rotate around={-135.:(0.,0.)},xshift=0.cm,yshift=0.cm,line width=2.pt,color=qqqqff] plot ({0.6324555320336759*(1+(\x)^2)/(1-(\x)^2)},{1.4142135623730951*2*(\x)/(1-(\x)^2)});
\draw [samples=50,domain=-0.99:0.99,rotate around={-135.:(0.,0.)},xshift=0.cm,yshift=0.cm,line width=2.pt,color=qqqqff] plot ({0.6324555320336759*(-1-(\x)^2)/(1-(\x)^2)},{1.4142135623730951*(-2)*(\x)/(1-(\x)^2)});
\begin{scriptsize}
\draw [fill=qqqqff] (1.,0.) circle (2.5pt);
\draw[color=qqqqff] (1.375,0.475) node {\fontsize{10}{0} $ g^0 $};
\draw [fill=qqqqff] (0.,1.) circle (2.5pt);
\draw[color=qqqqff] (0.425,1.425) node {\fontsize{10}{0} $ g^2 $};
\draw [fill=qqqqff] (-3.,8.) circle (2.5pt);
\draw[color=qqqqff] (-2.45,8.) node {\fontsize{10}{0} $ g^6 $};
\draw [fill=qqqqff] (-1.,3.) circle (2.5pt);
\draw[color=qqqqff] (-0.6,3.45) node {\fontsize{10}{0} $ g^4 $};
\draw [fill=xfqqff] (-1.,1.) circle (2.5pt);
\draw[color=xfqqff] (-1.3,1.375) node {\fontsize{10}{0} $ g^{1} $};
\draw [fill=xfqqff] (-1.,2.) circle (2.5pt);
\draw[color=xfqqff] (-1.5,2.3) node {\fontsize{10}{0} $ g^3 $};
\draw [fill=xfqqff] (-2.,5.) circle (2.5pt);
\draw[color=xfqqff] (-2.55,4.85) node {\fontsize{10}{0} $ g^5 $};
\draw [fill=xfqqff] (-2.,1.) circle (2.5pt);
\draw[color=xfqqff] (-2,1.55) node {\fontsize{10}{0} $ g^{-1} $};
\draw [fill=xfqqff] (-5.,2.) circle (2.5pt);
\draw[color=xfqqff] (-4.5,2.5) node {\fontsize{10}{0} $ g^{-3} $};
\draw [fill=xfqqff] (-13.,5.) circle (2.5pt);
\draw[color=xfqqff] (-12.5,5.55) node {\fontsize{10}{0} $ g^{-5} $};
\draw [fill=qqqqff] (-8.,3.) circle (2.5pt);
\draw[color=qqqqff] (-8.25,2.45) node {\fontsize{10}{0} $ -g^{-4} $};
\draw [fill=qqqqff] (3.,-1.) circle (2.5pt);
\draw[color=qqqqff] (3.1,-1.7) node {\fontsize{10}{0} $ g^{-2} $};
\draw [fill=qqqqff] (-1.,0.) circle (2.5pt);
\draw[color=qqqqff] (-0.7,0.4) node {\fontsize{10}{0} $- g^{0} $};
\draw [fill=qqqqff] (0.,-1.) circle (2.5pt);
\draw[color=qqqqff] (-0.6,-1.4) node {\fontsize{10}{0} $ -g^{2} $};
\draw [fill=xfqqff] (1.,-1.) circle (2.5pt);
\draw[color=xfqqff] (0.5,-0.8) node {\fontsize{10}{0} $ -g^{1} $};
\draw [fill=xfqqff] (2.,-1.) circle (2.5pt);
\draw[color=xfqqff] (1.9,-1.5) node {\fontsize{10}{0} $ -g^{-1} $};
\draw [fill=xfqqff] (1.,-2.) circle (2.5pt);
\draw[color=xfqqff] (1.8,-1.95) node {\fontsize{10}{0} $ -g^{-3} $};
\draw[color=qqqqff] (-10.95,-1.7) node {\fontsize{10}{0} $x^2+3xy+y^2 = 1$};
\draw[color=xfqqff] (-10.775,7.325) node {\fontsize{10}{0} $x^2+3xy+y^2 = -1$};

\end{scriptsize}
\end{tikzpicture}
\caption{Units in $ \mathbb{Z}[i_3] $}
\label{units_z3}
\end{center}
\end{figure}
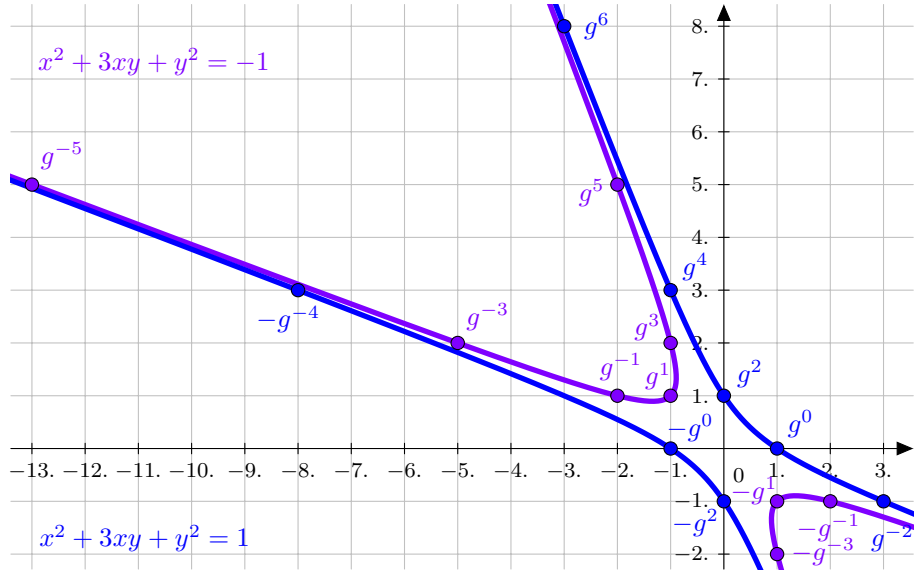
\vspace{5mm}

The next statement is a consequence of the proof of \Cref{theorem_units}.

\begin{corollary} \label{coro_z=3_sol_-1}
	The Diophantine equation $ x^2 + zxy + y^2 = -1 $ can only be solved for $ x,y,z \in \mathbb{Z} $ if and only if $ z \in \left\{ -3,3 \right\} $. Moreover, if $ z \in \left\{ -3,3 \right\} $, then $ x^2 + zxy + y^2 = M $ is solvable if and only if $ x^2 + zxy + y^2 = -M $ is solvable.
\end{corollary}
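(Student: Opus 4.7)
The plan is to reduce the first claim to the already-established classification of units. By part iv) of \Cref{Nz_lemma}, an element $\alpha = x + yi_z \in \mathbb{Z}[i_z]$ satisfies $\N(\alpha) = -1$ if and only if $\alpha$ is a unit of norm $-1$. Hence the question of solvability of $x^2 + zxy + y^2 = -1$ is equivalent to the question of existence of a unit of norm $-1$ in $\mathbb{Z}[i_z]$, which has just been answered in \Cref{theorem_units} and its proof. The first I would do is therefore write out this equivalence explicitly.

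Next I would read off the three cases from the proof of \Cref{theorem_units}. For $|z| \leq 2$, \Cref{ex_non_neg_sol} already forces $S_{-1} = \emptyset$, so no such $\alpha$ exists. For $z = \pm 3$, the element $-1 + i_3 \in \mathbb{Z}[i_3]$ has norm $1 - 3 + 1 = -1$, and its image $\Phi(-1+i_3) = -1 - i_{-3} \in \mathbb{Z}[i_{-3}]$ has norm $-1$ by the norm-compatibility of $\Phi$ established after \Cref{lem_isomorphy}; so the equation is solvable. For $|z| \geq 4$, the proof of \Cref{theorem_units} constructs an explicit closed branch $B_{\alpha_1,\alpha_2}$ on the branch of $S_{-1}$ lying in the fourth quadrant, verifies that it contains no lattice point and that $|\langle \alpha_1, \alpha_2 \rangle| > 1$, and concludes by \Cref{theo_no_solution} that $x^2 + zxy + y^2 = -1$ has no solution; the case $z \leq -4$ follows by applying $\Phi$. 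Putting the three cases together yields the first assertion.

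For the second assertion, fix $z \in \{-3, 3\}$ and let $\varepsilon \in \mathbb{Z}[i_z]$ be a unit of norm $-1$, whose existence has just been established. Multiplication by $\varepsilon$ is a bijection of $\mathbb{Z}[i_z]$ (its inverse being multiplication by $-\overline{\varepsilon}$, by part iv) of \Cref{Nz_lemma}), and by the multiplicativity of the norm, part ii) of \Cref{Nz_lemma}, we have
\[
\N(\varepsilon\alpha) = \N(\varepsilon)\N(\alpha) = -\N(\alpha)
\]
for every $\alpha \in \mathbb{Z}[i_z]$. Hence $\alpha \mapsto \varepsilon\alpha$ restricts to a bijection between $S_M \cap \mathbb{Z}[i_z]$ and $S_{-M} \cap \mathbb{Z}[i_z]$, so the equation $x^2 + zxy + y^2 = M$ is solvable if and only if $x^2 + zxy + y^2 = -M$ is.

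No new difficulty arises: both parts are packagings of results already proved. The only mild subtlety is ensuring, for the $|z| \geq 4$ direction, that one is excluding \emph{all} $\alpha \in \mathbb{Z}[i_z]$ with $\N(\alpha) = -1$ rather than just those considered as candidate units in the course of \Cref{theorem_units}; this is automatic since norm $-1$ already implies unit by \Cref{Nz_lemma}.
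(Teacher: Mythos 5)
Your proposal is correct and follows essentially the same route as the paper: the paper likewise reads the (non-)existence of lattice points on $S_{-1}$ off the proof of \Cref{theorem_units} (empty level set for $\lvert z\rvert\leq 2$ via \Cref{ex_non_neg_sol}, the explicit unit of norm $-1$ for $z=\pm 3$, and the closed-branch argument with \Cref{theo_no_solution} for $\lvert z\rvert\geq 4$), and then proves the second assertion by multiplying solutions by a unit of norm $-1$ (the paper uses $1\mp i_z$ where you use $-1\pm i_z$, which is immaterial). No gaps.
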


\begin{proof}
	We showed in the proof of \Cref{theorem_units} that $ S_{-1} = \emptyset $ if and only if $ z \notin \left\{ -3,3 \right\} $. Moreover, $ \mathbb{Z}[i_3] $ and $ \mathbb{Z}[i_{-3}] $ are isomorphic (recall that the isomorphism between them preserves $ \mathbb{Z} $ and changes the sign of the imaginary part). If $ z = \pm 3 $, then $ 1 \mp i_z \in S_{-1} $ is a unit in the corresponding $ z $-ring. Hence, if $ z = \pm 3 $, then $ \alpha \in \mathbb{Z}[i_z] $ solves $ x^2 + zxy + y^2 = M $ if and only if $ \left( 1 \mp i_z \right)\alpha $ solves  $ x^2 + zxy + y^2 = -M $.
\end{proof}

We can ask what happens if $ z \in \mathbb{Z} \setminus \left\{  -3,3 \right\} $ and $ M \in \mathbb{Z} \setminus \left\{ 0 \right\} $. Can we still find solutions to $ x^2 + zxy + y^2 = M $ and $ x^2 + zxy + y^2 = -M $? Sometimes yes, what we will see in the next example. However, it does not work if $ z \in \left\{-2, -1,0,1,2 \right\} $, $ M \in \mathbb{Z} $ is a prime (see \Cref{coro_solvability_pm_p}) or if $ \mathbb{Z}[i_z] $ is a unique factorization domain (see \Cref{primes_if_uniq_factor_domain}).

\begin{example} \label{exa_z39}
	Consider $ 5-i_{39} \in \mathbb{Z}[i_{39}] $, then $ \N \left( 5-i_{39} \right)= -169 $, i.e. the Diophantine equation 
	$$ x^2+ 39xy + y^2 = -169 $$ can be solved. 
Moreover, the Diophantine equation 
	$$ x^2+ 39xy + y^2 = 169 $$
can be solved, too, for example, if $ x= 13 $ and $ y = 0 $.
\end{example}

\subsection{Primes in $ \mathbb{Z} $ with respect to $ \mathbb{Z}[i_z] $}

To deal with the Diophantine equations of the form $ x^2+zxy+ y^2 = M $ we will see that the prime elements in $ \mathbb{Z} $ play an important role. Considering them in $ \mathbb{Z}[i_z] $, we will split them up into the following categories:



\begin{definition}
	Let $ p \in \mathbb{Z} $ be prime. Then we call $ p $ considered as an element of $ \mathbb{Z}[i_{z}] $ {\em regular (element)} if it is irreducible in $ \mathbb{Z}[i_{z}] $. Otherwise we call $ p $ {\em irregular (element)}. If $ p $ is irregular and we can solve the Diophantine equation $ x^2+zxy + y^2 = p $, then we say that $ p $ is {\em of type I}. Otherwise we say that $ p $ is {\em of type II}. If $ p = \alpha \overline{\alpha} $ is irregular for $ \alpha \in \mathbb{Z}[i_z] $ such that $ \alpha $ and $ \overline{\alpha} $ are associated, then we call $ p $ {\em special (element)}.
\end{definition}

Note that special elements are always of type I as they are equal to the norm of their associated irreducible factors. Recall that irreducible and prime elements are not the same in general if the ring we consider is not a unique factorization domain. However, we will see later that all irregular elements are prime elements whereas there are regular elements (so irreducible) which are not prime with respect to the corresponding $ z $-ring (compare with the ring $ \mathbb{Z}[i_{39}] $ we will discuss in \Cref{ex_prime__neq_irreducible}). Also note that $ p \in \mathbb{Z}[i_z] $ is regular/irregular/special if and only if the same holds true for $ p \in \mathbb{Z}[i_{-z}] $ as these rings are isomorphic and the corresponding isomorphism preserves $ \mathbb{Z} $.

\begin{example} \label{example_primes_Gaussian}
	Let us consider the Gaussian integers $ \mathbb{Z}[i] $. Then we know that the positive, regular primes $ p \in \mathbb{Z}[i] $ are of the form $ p \equiv 3 \pmod{4} $ and the positive, irregular primes are either of the form $ p \equiv 1 \pmod{4} $ or $ p = 2 $. Clearly the positive irregular primes $ p $ are of type I and the negative ones of type II as the Diophantine equation $ x^2+y^2 = p $ is not solvable if $ p $ is negative.
	 In fact, $ p = 2 $ is the only special prime in $ \mathbb{Z}[i] $ as we saw in \cite{Miniatur}. Its factors $ 1+i,1-i $ are associated as $ i \in \mathbb{Z}[i] $ is a unit and $ i \left( 1 - i \right) = i + 1 $. Note that $ -2 $ is not special as it cannot be written as a product of two conjugated elements in $ \mathbb{Z}[i_z] $.
\end{example}

\begin{lemma} \label{lemma_ir_and_reg_primes}
	Let $ p \in \mathbb{Z} $ be prime. Then $ p \in \mathbb{Z}[i_z] $ is regular if and only if both Diophantine equations $ x^2+zxy+ y^2 = p $ and $ x^2+zxy+ y^2 = -p $ are not solvable. Furthermore, if $ p \in \mathbb{Z}[i_z] $ is irregular, then either $ p = \alpha \overline{\alpha} = \N \left( \alpha \right) $ or $ p = -\alpha \overline{\alpha} = -\N \left( \alpha \right) $ for some $ \alpha \in \mathbb{Z}[i_z] $.
\end{lemma}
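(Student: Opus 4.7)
The plan is to reduce everything to the multiplicativity of the norm combined with the characterization of units from \Cref{Nz_lemma}. The key observation is that $\N(p) = p^2$, so any factorization $p = \alpha\beta$ in $\mathbb{Z}[i_z]$ forces $\N(\alpha)\N(\beta) = p^2$, and the four conditions ($p$ irregular, solvability of $x^2+zxy+y^2 = p$, solvability of $x^2+zxy+y^2 = -p$, existence of $\alpha$ with $\N(\alpha) = \pm p$) should all be essentially equivalent through this formula.

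First I would prove the second assertion. Assume $p$ is irregular, so that $p = \alpha\beta$ with neither $\alpha$ nor $\beta$ a unit. Taking norms and using \Cref{Nz_lemma}(ii) gives $\N(\alpha)\N(\beta) = p^2$. Both norms lie in $\mathbb{Z}$, are non-zero (otherwise the product would be zero, contradicting $p \neq 0$; here one also quickly dispatches the corner case $z = \pm 2$ where $\N$ can vanish on non-zero elements, since that would still force $\N(\alpha)\N(\beta) = 0$), and neither equals $\pm 1$ by \Cref{Nz_lemma}(iv). Since $p$ is prime in $\mathbb{Z}$, this forces $|\N(\alpha)| = |\N(\beta)| = p$, so $\N(\alpha) = \pm p$. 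Then \Cref{Nz_lemma}(iii) yields $\alpha\overline{\alpha} = \N(\alpha) = \pm p$, which is precisely $p = \pm\alpha\overline{\alpha} = \pm\N(\alpha)$.

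Next I would establish the biconditional. From the paragraph above, if $p$ is irregular then $\N(\alpha) = \pm p$ for a suitable $\alpha \in \mathbb{Z}[i_z]$, so the pair $(\mathrm{Re}(\alpha),\mathrm{Im}(\alpha))$ solves $x^2+zxy+y^2 = p$ or $x^2+zxy+y^2 = -p$. Conversely, suppose one of these equations is solvable by some $\alpha \in \mathbb{Z}[i_z]$ with $\N(\alpha) = \varepsilon p$ for $\varepsilon \in \{-1,1\}$. Then $\alpha\overline{\alpha} = \varepsilon p$, and so $p = (\varepsilon\alpha)\overline{\alpha}$ is a factorization in $\mathbb{Z}[i_z]$. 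Because $|\N(\varepsilon\alpha)| = |\N(\overline{\alpha})| = p > 1$, \Cref{Nz_lemma}(iv) tells us that neither factor is a unit, so this factorization is non-trivial and $p$ is irregular. Contrapositively, $p$ is regular iff neither $x^2+zxy+y^2 = p$ nor $x^2+zxy+y^2 = -p$ has a solution.

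There is no substantial obstacle here; the only care that is needed lies in the bookkeeping of signs (and checking that $-\alpha$ is still a non-unit, which is immediate from $\N(-\alpha) = \N(\alpha)$) and in noticing that the potentially degenerate behaviour of $\N$ when $z \in \{-2,2\}$ does not interfere, since $p \neq 0$ prevents any factor from having vanishing norm.
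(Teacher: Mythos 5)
Your proposal is correct and follows essentially the same route as the paper: both directions rest on the multiplicativity of the norm from \Cref{Nz_lemma}, the identity $\alpha\overline{\alpha} = \N(\alpha)$, and the fact that units are exactly the elements of norm $\pm 1$, so that a non-trivial factorization of $p$ forces a factor of norm $\pm p$. Your extra remarks on the sign bookkeeping and the degenerate case $z \in \{-2,2\}$ are sound but not needed beyond what the paper already implicitly uses.
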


\begin{proof}
	If $ x^2+zxy+ y^2 = p $ or $ x^2+zxy+ y^2 = -p $ is solvable, then either $ p = \left( x+i_zy \right) \overline{\left( x+i_zy \right)} $ or $ -p = \left( x+i_zy \right) \overline{\left( x+i_zy \right)} $, so $ p $ is reducible. Conversely, if $ p $ is reducible, then there exist $ \alpha \in \mathbb{Z}[i_z] $ with $ \alpha \mid p $ and $ \N \left( \alpha \right) \notin \{ \pm 1,\pm p^2\} $. By \Cref{Nz_lemma} we have that $ \N \left( \alpha \right) \mid \N \left( p \right) = p^2 $ and hence $ \N \left( \alpha \right) \in \left\{ -p,p \right\} $ which shows that $ \alpha $ solves the Diophantine equation $ x^2+zxy+ y^2 = M $ for either $ M = p $ or $ M = -p $. Moreover, in this case we have either $ \N \left( \alpha \right) = \alpha \overline{\alpha} = p $ or $ \N \left( \alpha \right) = \alpha \overline{\alpha} = -p $ by \Cref{Nz_lemma}.
\end{proof}



\begin{example} \label{example_z6_pm3_pm7}
Recall \Cref{example_exist_sol} where we showed that $ x^2+6xy+y^2 = 7 $ has no solution. Since $ x^2+6xy+y^2 = -7 $ is solvable by $ x = 4 $ and $ y = -1 $, we clearly have that $ -7, 7 \in \mathbb{Z}[i_6] $ are irregular by \Cref{lemma_ir_and_reg_primes} where $ -7 $ is of type I and $ 7 $ of type II. On the other hand, both equations $ x^2 + 6xy + y^2 = \pm 3 $ are not solvable. That $ x^2 + 6xy + y^2 = 3 $ is not solvable follows by \Cref{coro_pos_sol} (see \Cref{example_pm3_pm7}, there is no intersection of the light blue line and the $ \mathbb{Z} \times \mathbb{Z} $-grid in the first quadrant). Moreover, that $ x^2 + 6xy + y^2 = -3 $ cannot be solved can be seen in the following way: Clearly there is an element $ -2+y_{*}i_6 \in S_{-3} $ (marked with a cross in \Cref{example_pm3_pm7}). By calculation it follows that $ y_{*} \in \{ 6\pm \sqrt{29} \} $. We can choose $ y_{*} = 6- \sqrt{29} $ and calculate
	\begin{align*}
		\I_{+} \left( -2+ \left(6-  \sqrt{29} \right) i_6\right) &= -2i_6+ \left(6-  \sqrt{29} \right) \left(6i_6-1 \right) \\
		&= \sqrt{29}-6 + \left( 34-6 \sqrt{29} \right)i_6
	\end{align*}	
Now we have that 
	$$ \mathrm{Im} \left( \I_{+} \left( -2+y_{*} \right) \right) = 34- 6\sqrt{25}<4.$$
And so it is enough to consider just a part of the branch, namely, we have to check whether the intersection of the $ \mathbb{Z} \times \mathbb{Z} $-grid and the dark blue line within the green part in \Cref{example_pm3_pm7} is empty which is clearly true. Since the considered green part of $ S_{-3} $ contains the subbranch $ B_{-2+y_{*}} $, we get by \Cref{theo_local_branch} that there is no solution to $ x^2 + 6xy + y^2 = -3 $. Therefore $ -3,3 \in \mathbb{Z}[i_6] $ are regular by \Cref{lemma_ir_and_reg_primes}.
\end{example}

\vspace{5mm}
\begin{figure}[h]  
\begin{center}
\pagestyle{empty}

\definecolor{qqqqcc}{rgb}{0.,0.,0.8}
\definecolor{ttffqq}{rgb}{0.2,1.,0.}
\definecolor{qqqqff}{rgb}{0.,0.,1.}
\definecolor{wwccff}{rgb}{0.4,0.8,1.}
\definecolor{ffzztt}{rgb}{1.,0.6,0.2}
\definecolor{ffqqtt}{rgb}{1.,0.,0.2}
\definecolor{cqcqcq}{rgb}{0.7529411764705882,0.7529411764705882,0.7529411764705882}
\begin{tikzpicture}[line cap=round,line join=round,>=triangle 45,x=1.0cm,y=1.0cm]
\draw [color=cqcqcq,, xstep=1.0cm,ystep=1.0cm] (-2.9687499955146395,-1.4939158301683642) grid (4.682373906970594,4.613791698884489);
\draw[->,color=black] (-2.9687499955146395,0.) -- (4.682373906970594,0.);
\foreach \x in {-2.,-1.,1.,2.,3.,4.}
\draw[shift={(\x,0)},color=black] (0pt,2pt) -- (0pt,-2pt) node[below] {\footnotesize $\x$};
\draw[->,color=black] (0.,-1.4939158301683642) -- (0.,4.613791698884489);
\foreach \y in {-1.,1.,2.,3.,4.}
\draw[shift={(0,\y)},color=black] (2pt,0pt) -- (-2pt,0pt) node[left] {\footnotesize $\y$};
\draw[color=black] (0pt,-10pt) node[right] {\footnotesize $0$};
\clip(-2.9687499955146395,-1.4939158301683642) rectangle (4.682373906970594,4.613791698884489);
\fill[line width=2.pt,color=ttffqq,fill=ttffqq,fill opacity=0.10000000149011612] (0.,4.) -- (-2.,4.) -- (-2.,0.) -- (0.,0.) -- cycle;
\draw [samples=50,domain=-0.99:0.99,rotate around={-135.:(0.,0.)},xshift=0.cm,yshift=0.cm,line width=2.pt,color=ffqqtt] plot ({1.3228756555322954*(1+(\x)^2)/(1-(\x)^2)},{1.8708286933869707*2*(\x)/(1-(\x)^2)});
\draw [samples=50,domain=-0.99:0.99,rotate around={-135.:(0.,0.)},xshift=0.cm,yshift=0.cm,line width=2.pt,color=ffqqtt] plot ({1.3228756555322954*(-1-(\x)^2)/(1-(\x)^2)},{1.8708286933869707*(-2)*(\x)/(1-(\x)^2)});
\draw [samples=50,domain=-0.99:0.99,rotate around={-45.:(0.,0.)},xshift=0.cm,yshift=0.cm,line width=2.pt,color=ffzztt] plot ({1.8708286933869707*(1+(\x)^2)/(1-(\x)^2)},{1.3228756555322954*2*(\x)/(1-(\x)^2)});
\draw [samples=50,domain=-0.99:0.99,rotate around={-45.:(0.,0.)},xshift=0.cm,yshift=0.cm,line width=2.pt,color=ffzztt] plot ({1.8708286933869707*(-1-(\x)^2)/(1-(\x)^2)},{1.3228756555322954*(-2)*(\x)/(1-(\x)^2)});
\draw [samples=50,domain=-0.99:0.99,rotate around={-135.:(0.,0.)},xshift=0.cm,yshift=0.cm,line width=2.pt,color=wwccff] plot ({0.8660254037844386*(1+(\x)^2)/(1-(\x)^2)},{1.224744871391589*2*(\x)/(1-(\x)^2)});
\draw [samples=50,domain=-0.99:0.99,rotate around={-135.:(0.,0.)},xshift=0.cm,yshift=0.cm,line width=2.pt,color=wwccff] plot ({0.8660254037844386*(-1-(\x)^2)/(1-(\x)^2)},{1.224744871391589*(-2)*(\x)/(1-(\x)^2)});
\draw [samples=50,domain=-0.99:0.99,rotate around={-45.:(0.,0.)},xshift=0.cm,yshift=0.cm,line width=2.pt,color=qqqqff] plot ({1.224744871391589*(1+(\x)^2)/(1-(\x)^2)},{0.8660254037844386*2*(\x)/(1-(\x)^2)});
\draw [samples=50,domain=-0.99:0.99,rotate around={-45.:(0.,0.)},xshift=0.cm,yshift=0.cm,line width=2.pt,color=qqqqff] plot ({1.224744871391589*(-1-(\x)^2)/(1-(\x)^2)},{0.8660254037844386*(-2)*(\x)/(1-(\x)^2)});
\begin{scriptsize}
\draw [fill=ffzztt] (-2.,1.) circle (2.5pt);
\draw [fill=ffzztt] (-1.,2.) circle (2.5pt);
\draw [fill=ffzztt] (2.,-1.) circle (2.5pt);
\draw [fill=ffzztt] (4.,-1.) circle (2.5pt);
\draw [fill=ffzztt] (-1.,4.) circle (2.5pt);
\draw [color=qqqqff] 
(-2,0.6148300063564661)-- ++(-2.5pt,-2.5pt) -- ++(5.0pt,5.0pt) ++(-5.0pt,0) -- ++(5.0pt,-5.0pt);
\draw[color=ffqqtt] (2.4,3.75) node {\fontsize{10}{0} $x^2+6xy+y^2 = 7$};
\draw[color=ffzztt] (2.55,3.25) node {\fontsize{10}{0} $x^2+6xy+y^2 = -7$};
\draw[color=wwccff] (2.425,2.75) node {\fontsize{10}{0} $x^2+6xy+y^2 = 3$};
\draw[color=qqqqcc] (2.575,2.25) node {\fontsize{10}{0} $x^2+6xy+y^2 = -3$};


\end{scriptsize}
\end{tikzpicture}
\caption{Some level sets in $ \mathbb{R}[i_6] $}
\label{example_pm3_pm7}
\end{center}
\end{figure}
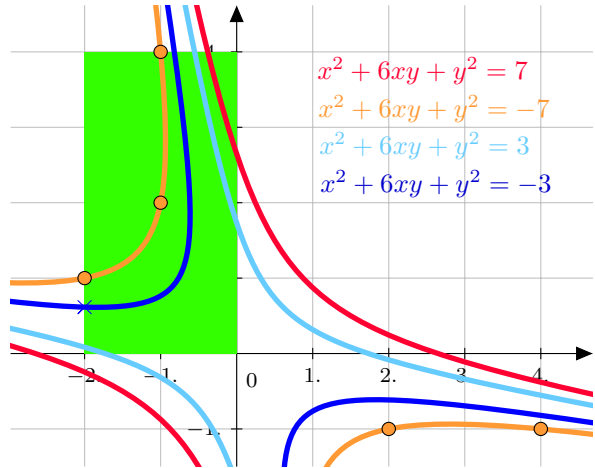
\vspace{5mm}

\begin{example} \label{example_primes_z123}
	Let $ z \in \left\{ -3,3 \right\} $, then we have that an element $ p \in \mathbb{Z}[i_z] $ which is prime in $ \mathbb{Z} $ is of type I if and only if $ -p $ is also of type I. This is a consequence of the fact that $ \mathbb{Z}[i_z] $ contains elements with norm $ -1 $ (compare with the proof of \Cref{coro_z=3_sol_-1}). Conversely, the inverse statement also holds true, i.e. the existence of elements being prime in $ \mathbb{Z} $ such that $ \pm p $ are of type I is true if and only if $ z \in \left\{ -3,3 \right\} $, see \Cref{coro_solvability_pm_p} in the next section. 
For $ z \in \{ 0, \pm 1 \pm 2 \} $ it is obvious that there are no primes $ \pm p \in \mathbb{Z} $ both of type I since $ x^2+zxy+y^2 = M $ has no solution if $ M < 0 $ as mentioned \Cref{ex_non_neg_sol}. In particular, for $ z \in \left\{ -2,2 \right\} $ there exist no irregular elements in $ \mathbb{Z}[i_z] $ as a prime $ p \in \mathbb{Z} $ is never a square and so $ p $ cannot be represented by $ x^2 \pm 2xy + y^2 = \left( x \pm y \right)^2 $.
\end{example}







In the next few sections we would like to find out more about the rings $ \mathbb{Z}[i_z] $ i.e. which one of them are no unique factorization domains, about properties of their regular, irregular (both types), special and non-special primes and the connection to the Diophantine equation $ x^2 + zxy + y^2 = M $.

\subsection{The irregular elements in $ \mathbb{Z}[i_z] $}


Recall that the irregular elements can be factorized as stated in \Cref{lemma_ir_and_reg_primes}. Moreover, it is clear that these two factors are irreducible because their norm is equal to a prime in $ \mathbb{Z} $ and \Cref{Nz_lemma}. However, it is a priori not clear that they are also prime in the corresponding $ z $-ring. The goal of this section will be to show this. At first we start with a weaker form of the above statement. For proving both statements we use the following lemma:

\begin{lemma} \label{lem_trinity_divisability}
	Let $ p \in \mathbb{Z} $ be prime, $ \alpha \in \mathbb{Z}[i_z] $ such that $ p $ divides (in $ \mathbb{Z} $) two of the following terms:
	$$ \mathrm{Re} \left( \alpha \right), \mathrm{Im} \left( \alpha \right), \N \left( \alpha \right) $$
Then $ p $ divides $ \alpha $ (in $ \mathbb{Z}[i_z] $). Moreover, if $ p $ divides $ \mathrm{Re} \left( \alpha \right) $ and $ \mathrm{Im} \left( \alpha \right) $ (in $ \mathbb{Z} $), then $ p^2 $ divides $ \N \left( \alpha \right) $ (in $ \mathbb{Z} $). Conversely, if $ n \in \mathbb{Z} $ divides $ \alpha $ (in $ \mathbb{Z}[i_z] $), then $ n $ divides $ \mathrm{Re} \left( \alpha \right) $ and $ \mathrm{Im} \left( \alpha \right) $ (in $ \mathbb{Z} $).
\end{lemma}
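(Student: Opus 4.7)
The plan is to work directly from the definition $\N(\alpha) = a^2 + zab + b^2$ where $\alpha = a + bi_z$, and exploit that $p$ is prime in $\mathbb{Z}$ (so Euclid's lemma applies). I would reduce the three possible combinations of two divisibilities to the single ``both coordinates are divisible'' case, from which everything else follows immediately.

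First I would handle the case $p \mid a$ and $p \mid b$: writing $a = pa'$ and $b = pb'$ gives $\alpha = p(a' + b'i_z)$, which exhibits $p$ as a divisor of $\alpha$ in $\mathbb{Z}[i_z]$ via the identification of $\mathbb{Z}$ with $\mathbb{Z} \times \{0\} \subset \R_z$ and the definition of the $z$-product; the same substitution yields $\N(\alpha) = p^2\left((a')^2 + za'b' + (b')^2\right)$, so $p^2 \mid \N(\alpha)$, which is the ``moreover'' claim. Next I would treat the case $p \mid a$ and $p \mid \N(\alpha)$: the relation $b^2 = \N(\alpha) - a(a+zb)$ shows $p \mid b^2$, and since $p$ is prime in $\mathbb{Z}$, Euclid's lemma forces $p \mid b$, reducing us to the previous case. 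The case $p \mid b$ and $p \mid \N(\alpha)$ is completely symmetric (use $a^2 = \N(\alpha) - b(za + b)$).

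For the converse I would argue as follows: suppose $n \in \mathbb{Z}$ divides $\alpha$ in $\mathbb{Z}[i_z]$, say $\alpha = n \cdot \beta$ for some $\beta = c + di_z$. Under the identification $n \leftrightarrow (n,0)$, the $z$-product gives $(n,0) * (c,d) = (nc, nd)$, so $a = nc$ and $b = nd$, whence $n \mid a$ and $n \mid b$ in $\mathbb{Z}$.

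I do not anticipate any serious obstacle: the lemma is a bookkeeping statement whose entire content is the defining formula of $\N$ plus primality of $p$. The only thing to be careful about is to invoke the explicit formula for scalar multiplication of integers in $\R_z$ (as recorded just after \Cref{prop_ring}), so that the converse direction really does yield divisibility of both coordinates rather than something weaker.
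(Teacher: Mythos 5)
Your proposal is correct and follows essentially the same route as the paper: reduce everything to the case where both coordinates are divisible via the identity $a^2+zab+b^2=\N(\alpha)$, then read off the factorization $\alpha = p(a'+b'i_z)$ and the $p^2$ divisibility, with the converse coming from the explicit formula $(n,0)*(c,d)=(nc,nd)$. If anything, you are more careful than the paper, which compresses the step ``$p\mid b^2$ hence $p\mid b$'' (where primality of $p$ is genuinely needed) into a single ``hence we see''.
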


\begin{proof}
	We have
		$$ \mathrm{Re} \left( \alpha \right)^2 + z\mathrm{Re} \left( \alpha \right)\mathrm{Im} \left( \alpha \right) + \mathrm{Im} \left( \alpha \right)^2 = \N \left( \alpha \right) $$
and hence we see that $ p $ dividing two of the terms $ \mathrm{Re} \left( \alpha \right), \mathrm{Im} \left( \alpha \right), \N \left( \alpha \right) $ also implies that it divides all of them. Additionally, we have that $ p^2 $ divides $ \N \left( \alpha \right) $ if $ p $ divides $ \mathrm{Re} \left( \alpha \right) $ and $ \mathrm{Im} \left( \alpha \right) $ because each summand on the left-hand side of the equation consists of terms divisible by $ p^2 $. Moreover, if $ p \mid \mathrm{Re} \left( \alpha \right) $ and $ p \mid \mathrm{Im} \left( \alpha \right) $, then we find $ a_1,a_2 \in \mathbb{R} $ such that $ \mathrm{Re} \left( \alpha \right) = a_1p $ and $ \mathrm{Im} \left( \alpha \right) = a_2p $. Thus, we have
	$$ \alpha = p \left( a_1 + a_2i_z \right) $$
where the product here is the $ z $-product.	
Conversely, if $ n \in \mathbb{Z} \subseteq \mathbb{Z}[i_z] $ divides $ \alpha $, then there is $ b_1 + b_2i_z \in \mathbb{Z}[i_z] $ such that 
	$$ n \left( b_1 + b_2i_z \right) = \alpha .$$
On the other hand, we have 
	$$ n \left( b_1 + b_2i_z \right) = nb_1 + nb_2i_z $$
and so $ \mathrm{Re} \left( \alpha \right) = nb_1 $ and $ \mathrm{Im} \left( \alpha \right) = nb_2 $ which shows that the real and the imaginary part of $ \alpha $ are divisible by $ n $.
\end{proof}

\begin{proposition} \label{prop_norm_prime_argument}
	Let $ p \in \mathbb{Z} $ be irregular, $ p = \alpha \overline{\alpha} $ or $ p = -\alpha \overline{\alpha} $ with  $ \alpha \in \mathbb{Z}[i_z] $ and $ \beta \in \mathbb{Z}[i_z] $ with $ p \mid \N \left( \beta \right) $. Then either $ \alpha \mid \beta $ or $ \overline{\alpha} \mid \beta $. 
\end{proposition}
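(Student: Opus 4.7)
The plan is to reduce divisibility in $\mathbb{Z}[i_z]$ to divisibility of a single integer coordinate by invoking \Cref{lem_trinity_divisability}. Since $p \mid \N(\beta)=\beta\overline{\beta}$ and $p=\pm\alpha\overline{\alpha}$, the natural objects to inspect are the two products $\beta\overline{\alpha}$ and $\beta\alpha$. Their norms are
\[
\N(\beta\overline{\alpha})=\N(\beta)\N(\alpha)=\pm p\,\N(\beta),\qquad \N(\beta\alpha)=\N(\beta)\N(\alpha)=\pm p\,\N(\beta),
\]
so $p$ divides both norms. By \Cref{lem_trinity_divisability} it therefore suffices to show that $p$ divides the imaginary part of at least one of these two products.

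Writing $\alpha=a_1+a_2i_z$ and $\beta=b_1+b_2i_z$, I would compute directly that
\[
\mathrm{Im}(\beta\overline{\alpha})=a_1b_2-a_2b_1,\qquad \mathrm{Im}(\beta\alpha)=a_2b_1+a_1b_2+za_2b_2,
\]
and then expand the product, collect terms, and use the definitions of $\N(\alpha)$ and $\N(\beta)$ to obtain the clean identity
\[
\mathrm{Im}(\beta\overline{\alpha})\cdot\mathrm{Im}(\beta\alpha)=b_2^{\,2}\N(\alpha)-a_2^{\,2}\N(\beta).
\]
Since $p\mid\N(\alpha)$ and $p\mid\N(\beta)$, the right-hand side is divisible by $p$ in $\mathbb{Z}$. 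Because $p$ is prime in $\mathbb{Z}$, it must divide one of the two factors on the left.

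Now I would finish as follows. Assume without loss of generality that $p\mid\mathrm{Im}(\beta\overline{\alpha})$. Combined with $p\mid\N(\beta\overline{\alpha})$, \Cref{lem_trinity_divisability} gives $p\mid\beta\overline{\alpha}$ in $\mathbb{Z}[i_z]$, i.e.\ $\alpha\overline{\alpha}\mid\beta\overline{\alpha}$. Since $p$ being irregular forces $z\notin\{-2,2\}$ (compare \Cref{example_primes_z123}), the ring $\mathbb{Z}[i_z]$ is an integral domain and $\overline{\alpha}\neq 0$, so we may cancel $\overline{\alpha}$ to conclude $\alpha\mid\beta$. In the symmetric case $p\mid\mathrm{Im}(\beta\alpha)$ the same argument with the roles of $\alpha$ and $\overline{\alpha}$ swapped yields $\overline{\alpha}\mid\beta$.

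The only step that requires real work is the algebraic identity $\mathrm{Im}(\beta\overline{\alpha})\cdot\mathrm{Im}(\beta\alpha)=b_2^{\,2}\N(\alpha)-a_2^{\,2}\N(\beta)$; the remainder is a clean application of the three-way divisibility criterion and integral-domain cancellation. There is no deep obstacle, but a careful use of $i_z^{\,2}=zi_z-1$ in the expansion is needed, together with the observation that both the primality of $p$ in $\mathbb{Z}$ and the fact that $\mathbb{Z}[i_z]$ is an integral domain are essential.
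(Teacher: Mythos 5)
Your proof is correct and follows essentially the same strategy as the paper's: form two products of $\beta$ with $\alpha$-related elements, show that $p$ divides the product of one integer coordinate of each, invoke the primality of $p$ in $\mathbb{Z}$ together with \Cref{lem_trinity_divisability}, and then cancel a factor of $\alpha$ or $\overline{\alpha}$. The only (cosmetic) differences are that the paper inspects the \emph{real} parts of $\alpha\beta$ and $\widetilde{\alpha}\beta$ via a congruence mod $p$, while you inspect the \emph{imaginary} parts of $\beta\alpha$ and $\beta\overline{\alpha}$ via an exact identity, and you make explicit the integral-domain cancellation (valid since irregularity forces $z\notin\{-2,2\}$) that the paper leaves implicit.
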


\begin{proof}
	Assume $ \alpha = a_1 + a_2i_z $, $ \beta = b_1 + b_2 i_z $ with $ \N \left( \beta \right) = pM $ for $ M \in \mathbb{Z} $, then we have:
	\begin{align*}
		\alpha \beta &= \left( a_1+a_2i_z \right)_{p} \left( b_1+b_2i_z \right)_{pM} = \left( a_1b_1-a_2b_2 + \mathrm{Im} \left( \alpha \beta \right)i_z\right)_{p^2M} \\
		\widetilde{\alpha} \beta &= \left( a_2+a_1i_z \right)_{p} \left( b_1+b_2i_z \right)_{pM} = \left( a_2b_1-a_1b_2 + \mathrm{Im} \left( \widetilde{\alpha} \beta \right) i_z\right)_{p^2M}
	\end{align*}
At first we would like to show that one of the real parts of the above products is divisible by $ p $:
	\begin{align*}
		\mathrm{Re}\left( \alpha \beta \right) \mathrm{Re}\left( \widetilde{\alpha} \beta \right) 
		&= \left( a_1b_1-a_2b_2 \right) \left( a_2b_1-a_1b_2 \right) \\
		&\equiv a_1a_2b_1^2 - a_1^2b_1b_2 - a_2^2b_1b_2 + a_1a_2b_2^2 \pmod{p} \\
		&\equiv a_1 a_2 \left( b_1^2 + b_2^2 \right) - \left( a_1^2 + a_2^2 \right)b_1b_2 \pmod{p} \\
		&\equiv a_1 a_2 \left( b_1^2 +zb_1b_2 + b_2^2 \right) - \left( a_1^2 + za_1a_2 + a_2^2 \right)b_1b_2 \pmod{p} \\
		&\equiv a_1 a_2 \N \left( \beta \right)- b_1b_2 \N \left( \alpha \right) \pmod{p} \\
		&\equiv 0 \pmod{p}
	\end{align*}
where the last step follows because $ \N \left( \alpha \right) $ and $ \N \left( \beta \right) $ are divisible by $ p $. Therefore either $ \mathrm{Re}\left( \alpha \beta \right) $ or $ \mathrm{Re}\left( \widetilde{\alpha} \beta \right) $ is divisible by $ p $. Since $ p \mid \N \left( \alpha \beta \right) = p \N \left( \beta \right) $ and $ p \mid \N \left( \widetilde{\alpha} \beta \right) = p \N \left( \beta \right) $ we deduce that either $ p \mid \alpha \beta $ or $ p \mid \widetilde{\alpha} \beta $ by \Cref{lem_trinity_divisability}. Observe that $ p \mid \widetilde{\alpha} \beta $ and $ p \mid \overline{\alpha} \beta $ is equivalent because $ \overline{\alpha} $ and $ \widetilde{\alpha} $ are associated by \Cref{lemma_mirror}. Hence, if $ p \mid \alpha \beta $, then 
	$$ \tfrac{\beta}{\overline{\alpha}} = \tfrac{\alpha\beta}{\alpha\overline{\alpha}} = \tfrac{\alpha\beta}{p} \in \mathbb{Z}[i_z] $$
and if $ p \mid \widetilde{\alpha} \beta $, we have
	$$ \tfrac{\beta}{\alpha} = \tfrac{\overline{\alpha}\beta}{\overline{\alpha}\alpha} = \tfrac{\overline{\alpha}\beta}{p} \in \mathbb{Z}[i_z] .$$
	\end{proof}

With the last proposition we can conclude a fact which we already mentioned before:

\begin{corollary} \label{coro_solvability_pm_p}
Let $ p \in \mathbb{Z} $ be prime and irregular in $ \mathbb{Z}[i_z] $. Then $ p $ and $ -p $ are of type I if and only if $ z \in \{ -3,3 \} $.
\end{corollary}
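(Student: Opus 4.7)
The plan is to split the claim into its two implications, each of which reduces to one previously established result. For the direction ($\Leftarrow$), when $z \in \{-3, 3\}$ I would argue in one line: by \Cref{lemma_ir_and_reg_primes} the irregularity of $p$ forces at least one of $x^2 + zxy + y^2 = p$ and $x^2 + zxy + y^2 = -p$ to be solvable, and \Cref{coro_z=3_sol_-1} delivers the other equation for free. Since $-p = (-1) \cdot p$ and $-1$ is a unit in $\mathbb{Z}[i_z]$, the irregularity of $p$ is equivalent to that of $-p$, so both $p$ and $-p$ are of type I.

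For ($\Rightarrow$) the idea is to extract a unit of norm $-1$ from the double representability. I would pick $\alpha, \gamma \in \mathbb{Z}[i_z]$ with $\N(\alpha) = p$ and $\N(\gamma) = -p$. By \Cref{Nz_lemma} we have $\alpha \overline{\alpha} = p$, so the hypothesis of \Cref{prop_norm_prime_argument} is met. Since $p \mid -p = \N(\gamma)$, that proposition gives either $\alpha \mid \gamma$ or $\overline{\alpha} \mid \gamma$. After possibly swapping $\alpha$ with $\overline{\alpha}$ (which does not affect the norm), I may write $\gamma = \alpha \delta$; multiplicativity of the norm then yields
\[
-p = \N(\gamma) = \N(\alpha)\,\N(\delta) = p\,\N(\delta),
\]
so $\N(\delta) = -1$, and \Cref{Nz_lemma} identifies $\delta$ as a unit of norm $-1$.

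The final step is then to read off from \Cref{theorem_units} that $\mathbb{Z}[i_z]$ admits a unit of norm $-1$ only when $z \in \{-3, 3\}$: the listed generators $i_z$ (for $z = 0$), $\pm i_z$ (for $z = \pm 1$), and $-1, \pm i_z$ (for $z = \pm 2$ or $|z| \geq 4$) all have norm $+1$, while for $z = \pm 3$ the extra generator $-1 \pm i_z$ has norm $1 \mp 3 + 1 = -1$. I do not anticipate a genuine obstacle here; the three ingredients have each been proved in the preceding subsections, and the only mildly delicate point is the disjunction produced by \Cref{prop_norm_prime_argument}, which is neutralised by the symmetry $\alpha \leftrightarrow \overline{\alpha}$ under a single \emph{without loss of generality}.
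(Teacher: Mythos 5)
Your proposal is correct and follows essentially the same route as the paper: the forward direction extracts a unit of norm $-1$ via \Cref{prop_norm_prime_argument} and multiplicativity of the norm, and the converse combines \Cref{lemma_ir_and_reg_primes} with \Cref{coro_z=3_sol_-1}. The only cosmetic difference is that you close the forward direction by inspecting the generators in \Cref{theorem_units}, whereas the paper cites \Cref{coro_z=3_sol_-1} directly; both say the same thing, namely that an element of norm $-1$ exists only for $z \in \{-3,3\}$.
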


\begin{proof}
	If both $ \pm p $ are of type I we can find $ \alpha_1, \alpha_2 \in \mathbb{Z}[i_z] $ such that $ p = \alpha_1 \overline{\alpha_1} = -\alpha_2 \overline{\alpha_2} $ where $ \N \left( \alpha_1\right) = p = -\N \left( \alpha_2\right) $. Without loss of generality, we can assume that $ \alpha_1 \mid \alpha_2 $ by \Cref{prop_norm_prime_argument}. I.e.we find $ \varepsilon \in \mathbb{Z}[i_z] $ such that $ \alpha_2 = \varepsilon \alpha_1 $. Then we have
	$$ -p= \N \left( \alpha_2\right) = \N \left( \varepsilon\right) \N \left( \alpha_1\right) = \N \left( \varepsilon\right)p $$
and so we deduce that $ \N \left( \varepsilon \right) = -1 $. Thus $ z \in \{ -3,3 \} $ by \Cref{coro_z=3_sol_-1}.

The converse statement is a consequence of \Cref{coro_z=3_sol_-1} because if $ p \in \mathbb{Z}[i_z] $ is irregular, then either $ x^2+zxy+y^2 = p $ or $ x^2+zxy+y^2 = -p $ can be solved by \Cref{lemma_ir_and_reg_primes} and hence both Diophantine equations. Therefore $ -p,p \in \mathbb{Z}[i_z] $ are both of type I.
\end{proof}





We will come now to the main theorem of this section:

\begin{theorem} \label{theo_prime_type_I}
The irreducible factors of irregular elements in $ \mathbb{Z}[i_z] $ are prime elements.
\end{theorem}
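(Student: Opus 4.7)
My strategy is to show that for an irreducible factor $\alpha$ of an irregular element $p \in \mathbb{Z}$, the quotient ring $\mathbb{Z}[i_z]/(\alpha)$ is isomorphic to $\mathbb{Z}/p\mathbb{Z}$. Since $p$ is prime in $\mathbb{Z}$, this quotient is a field, so $(\alpha)$ is a maximal ideal, in particular a prime ideal, and hence $\alpha$ is a prime element of $\mathbb{Z}[i_z]$. By symmetry the same holds for $\overline{\alpha}$, which together with $\alpha$ exhausts the irreducible factors of $p$ up to units. Note that if $z \in \{-2,2\}$ there are no irregular elements by \Cref{example_primes_z123}, so the statement is vacuous in those cases; for all other $z$, $\mathbb{Z}[i_z]$ is an integral domain and the notion of prime element is meaningful.

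By \Cref{lemma_ir_and_reg_primes}, writing $\alpha = a_1 + a_2 i_z$ one has $\N(\alpha) = \pm p$ and $\pm p = \alpha\overline{\alpha}$. First I would verify that $\gcd(a_2, p) = 1$: if $p \mid a_2$, then reducing $\N(\alpha) = a_1^2 + z a_1 a_2 + a_2^2$ modulo $p$ yields $a_1^2 \equiv 0 \pmod{p}$, hence $p \mid a_1$; then \Cref{lem_trinity_divisability} gives $p \mid \alpha$ in $\mathbb{Z}[i_z]$, forcing $p^2 \mid \N(\alpha) = \pm p$, a contradiction. The case $a_2 = 0$ is ruled out directly because $a_1^2 = \pm p$ has no integer solution for $p$ prime.

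Next I would consider the natural ring homomorphism $\pi : \mathbb{Z}/p\mathbb{Z} \to \mathbb{Z}[i_z]/(\alpha)$, which is well defined because $p = \pm \alpha \overline{\alpha} \in (\alpha)$. The heart of the proof is showing $\pi$ is surjective: given $\beta \in \mathbb{Z}[i_z]$ I need $n \in \mathbb{Z}$ with $\alpha \mid \beta - n$. Multiplying by $\overline{\alpha}$, this is equivalent to $p \mid \overline{\alpha}(\beta - n)$ in $\mathbb{Z}[i_z]$. Since $\N(\overline{\alpha}(\beta-n)) = \pm p \cdot \N(\beta - n)$ is automatically divisible by $p$, \Cref{lem_trinity_divisability} reduces the problem to making just the imaginary part divisible by $p$, i.e.\ to solving the linear congruence $\mathrm{Im}(\overline{\alpha}\beta) + n a_2 \equiv 0 \pmod{p}$; this has a solution precisely because $a_2$ is invertible modulo $p$.

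Surjectivity of $\pi$ together with $\pi(1) \neq 0$ (as $\alpha$ is not a unit) then forces $\pi$ to be an isomorphism, since $\mathbb{Z}/p\mathbb{Z}$ is a field. Hence $\mathbb{Z}[i_z]/(\alpha)$ is a field, $(\alpha)$ is maximal (so prime), and $\alpha$ is prime in $\mathbb{Z}[i_z]$. The main obstacle I anticipate is the surjectivity step — more specifically, the observation that \Cref{lem_trinity_divisability} permits one to control only one coordinate of $\overline{\alpha}(\beta - n)$ thanks to the automatic norm divisibility. Without this shortcut one would be forced to solve two simultaneous congruences in a single unknown $n$, which is generally infeasible.
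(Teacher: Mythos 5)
Your proof is correct, but it takes a genuinely different route from the paper. The paper verifies the prime property directly: given $ \alpha \mid \beta\gamma $, it computes $ \mathrm{Re}\left( \widetilde{\alpha}\beta \right)\mathrm{Re}\left( \widetilde{\alpha}\gamma \right) \equiv a_2\, \mathrm{Re}\left( \widetilde{\alpha}\beta\gamma \right) \equiv 0 \pmod{p} $, uses \Cref{lem_trinity_divisability} (one coordinate plus the norm) to conclude $ p \mid \widetilde{\alpha}\beta $ or $ p \mid \widetilde{\alpha}\gamma $, and then divides out $ \overline{\alpha} $. You instead prove the stronger ideal-theoretic statement $ \mathbb{Z}[i_z]/(\alpha) \cong \mathbb{Z}/p\mathbb{Z} $, so that $ (\alpha) $ is maximal and hence prime. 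Both arguments lean on the same engine, \Cref{lem_trinity_divisability}: the paper uses it to upgrade divisibility of a single real part to divisibility of the whole element, while you use it to reduce surjectivity of $ \pi $ to the single linear congruence $ \mathrm{Im}\left( \overline{\alpha}\beta \right) + na_2 \equiv 0 \pmod{p} $ — which is exactly the shortcut you flag as the main obstacle, and it does work. Your route needs the extra (easy) observation $ p \nmid a_2 $, which the paper's computation sidesteps because the factor $ a_2 $ sits on the side of the congruence that is already $ \equiv 0 $. What your approach buys: it identifies the residue field as $ \mathbb{Z}/p\mathbb{Z} $ and shows $ (\alpha) $ is maximal rather than merely prime, and it is the standard conceptual argument familiar from the Gaussian integers; the paper's version stays entirely at the level of explicit coordinates and never leaves the ring. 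Your treatment of the edge cases is also sound: for $ z \in \{-2,2\} $ the statement is vacuous by \Cref{example_primes_z123}, and for all other $ z $ the ring is a domain, which you implicitly need when cancelling $ \overline{\alpha} $ in the equivalence $ \alpha \mid \beta - n \Leftrightarrow p \mid \overline{\alpha}\left( \beta - n \right) $.
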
 

\begin{proof}
	Let $ p = \alpha \overline{\alpha} $ be an irregular element in $ \mathbb{Z}[i_z] $ with irreducible factors $ \alpha, \overline{\alpha} \in \mathbb{Z}[i_z] $. Let $ \beta = b_1+b_2i_z \in \mathbb{Z}[i_z] $ and $ \gamma = c_1+c_2i_z \in \mathbb{Z}[i_z] $ with $ \alpha \mid \beta \gamma $. We show that either $ \alpha \mid \beta $ or $ \alpha \mid \gamma $.
	
At first we calculate
	\begin{align*}
		\widetilde{\alpha} \beta \gamma &= \left( a_2+a_1 i_z \right)  \left( b_1+b_2 i_z \right) \left( c_1+a_2 i_z \right) \\
		&= \left( a_2+a_1 i_z \right) \big( b_1c_1-b_2c_2 + \left( b_1c_2+b_2c_1+zb_2c_2 \right)i_z \big) \\
		&= a_2 \left( b_1 c_1 - b_2 c_2 \right) -a_1 \left( b_1 c_2 + b_2 c_1 + z b_2 c_2 \right) +\mathrm{Im} \left(  \widetilde{\alpha} \beta \gamma \right)i_z 
	. \end{align*}
Since $ \alpha \mid \beta \gamma $ we also have that $ p = \overline{\alpha} \alpha \mid \overline{\alpha} \beta \gamma $	 and hence $ p \mid i_z\overline{\alpha} \beta \gamma = \widetilde{\alpha} \beta \gamma$ which implies $ p \mid \mathrm{Re} \left(\widetilde{\alpha} \beta \gamma \right) $ by \Cref{lem_trinity_divisability}. Additionally, we also have that $ \N \left( \alpha \right) = a_1^2+za_1a_2+a_2^2 = p $ and so $ a_1^2 \equiv -\left( za_1a_2+a_2^2 \right)\pmod{p} $. With this we get:
	\begin{align*}
		\mathrm{Re}\left( \widetilde{\alpha}\beta \right) \mathrm{Re}\left( \widetilde{\alpha}\gamma \right) &= \left( a_2b_1 - a_1b_2 \right) \left( a_2c_1 - a_1c_2 \right) \\
		&\equiv a_2^2b_1c_1+a_1^2b_2c_2-a_1a_2 \left( b_1c_2 + b_2c_1 \right) \pmod{p} \\
		&\equiv a_2^2b_1c_1-\left( za_1a_2 + a_2^2 \right) b_2c_2-a_1a_2 \left( b_1c_2 + b_2c_1 \right) \pmod{p} \\
		&\equiv a_2 \big( a_2 \left( b_1c_1-b_2c_2 \right)-a_1\left(b_1c_2+b_2c_1+zb_2c_2 \right) \big) \pmod{p} \\
		&\equiv a_2 \mathrm{Re}\left( \widetilde{\alpha} \beta \gamma \right) \pmod{p} \\
		&\equiv 0
	\end{align*}
Since $ p \mid p \N \left( \beta \right) = \N \left( \widetilde{\alpha} \beta \right) $, $ p \mid p \N \left( \gamma \right) = \N \left( \widetilde{\alpha} \gamma \right) $ and either $ p \mid \mathrm{Re}\left( \widetilde{\alpha}\beta \right) $ or $ p \mid \mathrm{Re}\left( \widetilde{\alpha}\gamma \right) $ we deduce that either $ p \mid \widetilde{\alpha}\beta $ or $ p \mid \widetilde{\alpha}\gamma $ again by \Cref{lem_trinity_divisability}. Dividing by $ \overline{\alpha} $ implies that either $ \alpha \mid i_z\beta $ or $ \alpha \mid i_z\gamma $ which is equivalent to $ \alpha \mid \beta $ or $ \alpha \mid \gamma $.
\end{proof}

With \Cref{theo_prime_type_I} we can easily prove \Cref{prop_norm_prime_argument}: Indeed, since $ \alpha \mid p $ and $ p \mid \N \left( \beta \right)= \beta \overline{\beta} $, we conclude that $ \alpha \mid \beta $ or $ \alpha \mid \overline{\beta} $ where the latter is equivalent to $ \overline{\alpha} \mid \beta $.  


If we assume $ \mathbb{Z}[i_z] $ to be a unique factorization domain, then we can conclude the follwoing statement by using \Cref{theo_prime_type_I}:

\begin{corollary} \label{primes_if_uniq_factor_domain}
	Let $ \mathbb{Z}[i_z] $ be a unique factorization domain. 
If $ M \in \mathbb{Z} $ and $ z \notin \left\{ -3,3 \right\} $, then at most one of the Diophantine equations $ x^2 + zxy + y^2 = M $ and $ x^2 + zxy + y^2 = -M $ can be solved.
\end{corollary}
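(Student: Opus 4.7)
The plan is to argue by contradiction: assume $\mathbb{Z}[i_z]$ is a UFD with $z\notin\{-3,3\}$, and suppose both $\N(\alpha)=M$ and $\N(\beta)=-M$ have solutions $\alpha,\beta\in\mathbb{Z}[i_z]$. The case $M=0$ is trivial since the two equations coincide, so I take $M\ne 0$.

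The key preliminary observation, via \Cref{theorem_units} (equivalently \Cref{coro_z=3_sol_-1}), is that for $z\notin\{-3,3\}$ we have $S_{-1}\cap\mathbb{Z}[i_z]=\emptyset$, so every unit in $\mathbb{Z}[i_z]$ has norm $+1$. Hence, for any UFD factorization $\gamma=u\prod_i\pi_i^{a_i}$, the norm is $\N(\gamma)=\prod_i \N(\pi_i)^{a_i}$, with no sign contribution from the unit.

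I then classify each rational prime $q\mid M$ by its behavior in $\mathbb{Z}[i_z]$. If $q$ is regular it remains prime in $\mathbb{Z}[i_z]$ with $\N(q)=q^2>0$. If $q$ is irregular, by \Cref{lemma_ir_and_reg_primes} and \Cref{theo_prime_type_I} it factors, up to a unit, as $q\sim\pi\bar\pi$ for a prime $\pi\in\mathbb{Z}[i_z]$ with $\N(\pi)=+q$ if $q$ is of type I and $\N(\pi)=-q$ if $q$ is of type II. Writing $m_q(\gamma)$ for the sum of the multiplicities of (associates of) $\pi$ and $\bar\pi$ in the UFD factorization of $\gamma$, a quick calculation gives $v_q(|\N(\gamma)|)=2m_q(\gamma)$ if $q$ is regular and $v_q(|\N(\gamma)|)=m_q(\gamma)$ if $q$ is irregular; in particular every regular prime must divide $|M|$ to an even power. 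Moreover the sign of $\N(\gamma)$ is controlled solely by the type II irregular primes and simplifies to
\[
\sign\N(\gamma) \;=\; \prod_{q\,\text{type II}}(-1)^{m_q(\gamma)} \;=\; \prod_{q\,\text{type II}}(-1)^{v_q(|M|)},
\]
where in the last step I use that $|\N(\gamma)|=|M|$ for $\gamma\in\{\alpha,\beta\}$.

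Since the right-hand side depends only on $|M|$, it takes the same value for $\alpha$ and for $\beta$; but the left-hand sides are $\sign(M)$ and $\sign(-M)=-\sign(M)$, which are opposite for $M\ne 0$, the desired contradiction. The main technical point to handle carefully is the case of a special irregular prime, where $\bar\pi\sim\pi$ and the two ``primes above $q$'' coincide (so $q\sim u\pi^2$); in that case $m_q(\gamma)$ is just the multiplicity of the single associate class of $\pi$, and a direct check shows the relation $v_q(|\N(\gamma)|)=m_q(\gamma)$ continues to hold, so the sign bookkeeping is unaffected.
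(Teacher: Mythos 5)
Your proof is correct, and it takes a genuinely different route from the paper's. The paper argues by descent: it peels off one rational prime divisor of $M$ at a time, dividing $\alpha$ and $\beta$ by a common prime factor of $\mathbb{Z}[i_z]$ (the rational prime itself in the regular case, an irreducible factor $\gamma$ with $\N(\gamma)=\pm p$ in the irregular case), so that the two norms remain negatives of each other at every stage; after finitely many steps it reaches norms $+1$ and $-1$ and invokes \Cref{coro_z=3_sol_-1}. You instead make a single global computation: writing the UFD factorization of any $\gamma$ with $\N(\gamma)=\pm M$, you observe that since all units have norm $+1$ for $z\notin\{-3,3\}$, the sign of $\N(\gamma)$ is $\prod_q (-1)^{m_q(\gamma)}$ over the type~II primes, and that each exponent $m_q(\gamma)$ is forced to equal $v_q(\lvert M\rvert)$ by comparing valuations of $\lvert\N(\gamma)\rvert$. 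Hence the sign of the norm is an invariant of $\lvert M\rvert$, which is exactly the obstruction. The two arguments rest on the same three ingredients (unique factorization, the regular/irregular dichotomy of \Cref{lemma_ir_and_reg_primes}, and the absence of elements of norm $-1$ from \Cref{coro_z=3_sol_-1}), but your version avoids the induction and the slightly delicate ``without loss of generality'' step in the paper where one must arrange a common prime divisor of both $\alpha$ and $\beta$; it also isolates cleanly why the statement is really about a sign invariant. Your handling of the special-prime case ($\bar\pi\sim\pi$, so $q\sim u\pi^2$) is the right thing to check and is correct: the valuation relation $v_q(\lvert\N(\gamma)\rvert)=m_q(\gamma)$ is unchanged. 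The only cosmetic caution is that the paper allows negative rational primes, so your type~I/type~II bookkeeping should be normalized to positive primes $q$ (with ``type II'' meaning $\N(\pi)=-q<0$), which is how you have implicitly set it up.
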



\begin{proof}
	Assume $ \alpha, \beta \in \mathbb{Z}[i_z] $ such that $ \alpha $ solves $ x^2 + z x y + y^2 = M $ and $ \beta $ solves $ x^2 + z x y + y^2 = -M $. Let $ M $ be divisible by a prime $ p \in \mathbb{Z} $. Then $ p \in \mathbb{Z}[i_z] $ can either be regular or irregular. In the following we will discuss both cases.

Assume at first that $ p $ is regular. Then $ p $ is irreducible and also prime in $ \mathbb{Z}[i_z] $ because $ \mathbb{Z}[i_z] $ is a unique factorization domain. Since $ p \mid M = \alpha \overline{\alpha}$ this means that $ p \mid \alpha $ or $ p \mid \overline{\alpha} $. If $ p \mid \overline{\alpha} $, then $ p = \overline{p} \mid \overline{\overline{\alpha}} = \alpha $ and so always $ p \mid \alpha $. Moreover, $ p \mid \beta $ holds by the same arguments. Define $ \alpha' \coloneqq \tfrac{\alpha}{p} $, $ \beta' \coloneqq \tfrac{\beta}{p} $ and $ M' \coloneqq \tfrac{M}{p^2} $. 

In case $ p $ is irregular, then $ p = \gamma \overline{\gamma} $ or $ p = -\gamma \overline{\gamma} $ for $ \gamma \in \mathbb{Z}[i_z] $ prime. Then we have $ \gamma \mid \alpha $ or $ \gamma \mid \overline{\alpha} $ and also $ \gamma \mid \beta $ or $ \gamma \mid \overline{\beta} $. Without loss of generality, we can assume that $ \gamma \mid \alpha $ and $ \gamma \mid \beta $. Hence, we can set $ \alpha' = \tfrac{\alpha}{\gamma} $, $ \beta' = \tfrac{\beta}{\gamma} $ and $ M' = \tfrac{M}{p} $. 

In both cases we see that $ \alpha' $ solves $ x^2 + z x y + y^2 = M' $ and $ \beta' $ solves $ x^2 + z x y + y^2 = -M' $. We can iterate this process for all prime factors of $ M' $ until we come to $ M' \in \{ \pm 1 \} $. However, we know by \Cref{coro_z=3_sol_-1} that this is only possible if $ z \in \left\{ -3,3 \right\} $. 
\end{proof}

The question whether a given $ z $-ring is a unique factorization domains or not not is not easy to answer in general. We will see later that most of them cannot be unique factorization domains. For example, if $ z = 3,5,9,21 $, then $ \mathbb{Z}[i_z] $ is a unique factorization domain. However, for $ z = 7,11,13,15 $ it is not.

\end{section}

\subsection{Special elements in $ \mathbb{Z}[i_z] $}	

Sometimes it happens that elements in $ \mathbb{Z}[i_z] $ and their conjugates are associated. For example, this holds true for $ 1 + i_z $ and $ 1 - i_z $:
	\begin{align*}
		\overline{\left( 1+i_z \right)} i_z &= \left( 1+z-i_z \right) i_z = 1 + i_z \\
		\overline{\left( 1-i_z \right)} \left(-i_z\right) &= \left( 1-z+i_z \right) \left(-i_z\right) = 1 - i_z.
	\end{align*}
To construct the positive, primitive solutions of a Diophantine equation $ x^2 + zxy + y^2 = M $ for $ M \in \mathbb{Z} $ being a product of primes in $ \mathbb{Z} $ of type I, we will distinguish whether these primes have associated factors, i.e if they are special or not. The goal of this section is to characterize the special elements in $ \mathbb{Z}[i_z] $. For this we would like to prove the following statement:

\begin{theorem}[Characterization of Special Primes] \label{theorem_special_primes}
Let $ z \in \mathbb{Z} \setminus \{ \pm 3,\pm 4 \} $. Then $ \mathbb{Z}[i_z] $ can have at most two special elements of the form $ 2 \pm z \in \mathbb{Z}[i_z] $. Each of them is special in $ \mathbb{Z}[i_z] $ if and only if it is prime in $ \mathbb{Z} $. Otherwise the special elements are 
	\begin{itemize}
		\item $ \pm 5 \in \mathbb{Z}[i_z] $ if $ z = \pm 3 $
		\item $ -2,-3 \in \mathbb{Z}[i_z] $ if $ z = \pm 4 $.	
	\end{itemize}	
\end{theorem}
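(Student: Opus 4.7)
The plan is to classify the possible units $\varepsilon$ that can appear in the relation $\overline{\alpha} = \varepsilon\alpha$, working modulo a natural equivalence. Suppose $p \in \mathbb{Z}$ is prime and special, so $p = \alpha\overline{\alpha}$ for some irreducible $\alpha \in \mathbb{Z}[i_z]$ with $\overline{\alpha} = \varepsilon\alpha$. Taking norms forces $\N(\varepsilon) = 1$, and \Cref{theorem_units} gives $\varepsilon \in \{\pm i_z^n : n \in \mathbb{Z}\}$. The key normalization is that replacing $\alpha$ by $u\alpha$ for any unit $u$ changes $\varepsilon$ into $u^{-2}\varepsilon$ while leaving $p$ unchanged; hence $\varepsilon$ is only determined modulo the subgroup of squared units. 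Since $(\pm i_z^n)^2 = i_z^{2n}$, that subgroup is $\{i_z^{2m}\}$, and a complete set of coset representatives is $\{1,\,i_z,\,-1,\,-i_z\}$, with coincidences for $z \in \{0,\pm 1\}$ where $-1$ is itself a square.

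Next, I would solve $\overline{\alpha} = \varepsilon\alpha$ in each of the four cases by equating real and imaginary parts of $\alpha = a_1 + a_2 i_z$. The case $\varepsilon = 1$ forces $\alpha \in \mathbb{Z}$, so $p = \alpha^2$ is never prime. The case $\varepsilon = i_z$ forces $a_1 = -(z+1)a_2$, so $\alpha = a_2\bigl(-(z+1)+i_z\bigr)$ with $\N(\alpha) = a_2^2(z+2)$; primality forces $a_2 = \pm 1$ and $p = 2+z$. Symmetrically $\varepsilon = -i_z$ gives $\alpha = a_2\bigl((1-z)+i_z\bigr)$ and $p = 2-z$. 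The case $\varepsilon = -1$ yields $2a_1 + za_2 = 0$, which splits by the parity of $z$: for $z$ odd the primitive solution is $\alpha = b(-z+2i_z)$ with $\N(\alpha) = b^2(4-z^2)$, and for $z$ even it is $\alpha = a_2(-z/2+i_z)$ with $\N(\alpha) = a_2^2(4-z^2)/4$. In both subcases, primitivity forces the scalar to be $\pm 1$, so the only primes arising are those values of $4-z^2$ or $(4-z^2)/4$ that happen to be prime in $\mathbb{Z}$.

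A short numerical check shows that this last condition holds only for $z \in \{\pm 1,\pm 3,\pm 4\}$: for $z = \pm 1$ one recovers $3 = 2\pm z$, already accounted for by the $\pm i_z$ case; for $z = \pm 3$ the extra prime is $-5$; for $z = \pm 4$ the extra prime is $-3$. For every other $z$, the $\varepsilon = -1$ case contributes a composite norm, so the only special primes are $2+z$ and $2-z$, each special precisely when it is prime in $\mathbb{Z}$. The converse direction is then immediate: for every candidate $p$, the explicit witness $\alpha$ constructed in the case analysis satisfies $\N(\alpha) = p$ and $\overline{\alpha} = \varepsilon\alpha$ by direct substitution, so $p$ is indeed special.

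The step I expect to be the main obstacle is the coset reduction in the first paragraph: one must argue cleanly that the four candidates $\{1,-1,\pm i_z\}$ truly exhaust the possibilities, handling the finite unit groups in $\mathbb{Z}[i_0]$ (where $-1 = i^2$ is already a square) and in $\mathbb{Z}[i_{\pm 1}]$ (where the cyclic unit group of order $6$ has only two cosets modulo squares). A secondary subtlety is the primitivity argument when $\varepsilon = -1$ and $z$ is even, where one must check that $a_2 = \pm 1$ indeed produces the primitive $\alpha$ of norm $(4-z^2)/4$, rather than $a_2 = \pm 2$ which merely recovers the $z$-odd-style solution scaled up to norm $4-z^2$.
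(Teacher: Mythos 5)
Your proposal is correct, but it reaches the classification by a genuinely different route than the paper. The paper first proves (\Cref{lemma_associate}) that an irregular prime $p$ is special if and only if $p \mid 2-z$ or $p \mid 2+z$, via a divisibility argument on $\mathrm{Re}(\alpha^2)$ and $\mathrm{Im}(\alpha^2)$; it then needs the representability range result (\Cref{lem_represented_M}, proved with the closed-branch machinery) to rule out proper divisors of $2 \pm z$, and finishes with a case check for $\lvert z\rvert \leq 5$. You instead normalize the unit $\varepsilon$ in $\overline{\alpha} = \varepsilon\alpha$ modulo squares of norm-one units and solve the resulting linear equations explicitly, which hands you a complete parametrization of all possible witnesses $\alpha$ together with their norms $2+z$, $2-z$, $4-z^2$ (odd $z$), $(4-z^2)/4$ (even $z$); primality of these integers is then an elementary check. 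What your approach buys is that it bypasses \Cref{lem_represented_M} entirely and replaces the divisibility bookkeeping by linear algebra; what it costs is a dependence on the full unit-group classification (\Cref{theorem_units}) already at the normalization step, whereas the paper's \Cref{lemma_associate} needs no knowledge of the unit group. Two small imprecisions, neither fatal: the transformation rule should read $\varepsilon \mapsto \N(u)\,u^{-2}\varepsilon$ rather than $u^{-2}\varepsilon$, which differs only when $\N(u) = -1$ (i.e.\ only for $z = \pm 3$); restricting to norm-one $u$ suffices, since the four representatives $\{1,-1,i_z,-i_z\}$ already cover the quotient by $\{i_z^{2m}\}$, and a redundant covering set is harmless. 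Also, for $z = \pm 1$ the element $-1 = i_1^3$ is \emph{not} a square in the cyclic unit group of order $6$; the coincidence among your representatives there is that $-1$, $i_1$ and $-i_1$ all lie in the same nontrivial coset modulo squares, which is consistent with your computation that the $\varepsilon = -1$ and $\varepsilon = i_1$ cases both return the prime $3$.
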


The following statements will be needed to finally prove \Cref{theorem_special_primes}:

\begin{lemma} \label{lemma_associate}
	Let $ p = \alpha \overline{\alpha} \in \mathbb{Z}[i_z] $ be irregular (i.e. of type I). Then $ p $ is special if and only if $ p \mid 2-z $ or $ p \mid 2+z $.
\end{lemma}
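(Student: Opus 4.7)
The plan is to recast the definition of special in purely algebraic terms and then exploit the norm relation $a^2 + zab + b^2 = p$ to constrain $z$ modulo $p$. Writing $\alpha = a + b i_z$, I would first establish that $p$ is special if and only if $p \mid \alpha^2$ in $\mathbb{Z}[i_z]$. Indeed, $\overline{\alpha} = \varepsilon \alpha$ for a unit $\varepsilon$ gives $p = \alpha \overline{\alpha} = \varepsilon \alpha^2$, so $p \mid \alpha^2$; conversely, if $\alpha^2 = p \gamma$, then comparing norms $p^2 = p^2 \N(\gamma)$ forces $\N(\gamma) = 1$, so $\gamma$ is a unit and $\overline{\alpha}/\alpha = p/\alpha^2 = \gamma^{-1}$ is a unit. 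A direct computation using $i_z^2 = z i_z - 1$ yields $\alpha^2 = (a^2 - b^2) + b(2a + zb) i_z$, and since $p \mid \N(\alpha^2) = p^2$, \Cref{lem_trinity_divisability} reduces $p \mid \alpha^2$ to the integer condition $p \mid b(2a + zb)$.

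For the forward direction, suppose $p$ is special. I would first rule out $p \mid b$: otherwise $p \mid \N(\alpha) - zab - b^2 = a^2$ forces $p \mid a$, and then \Cref{lem_trinity_divisability} would yield $p \mid \alpha$, contradicting $\N(\alpha) = p$. Hence $p \mid 2a + zb$. The driving identity is
\[
(2a + zb)^2 \;=\; 4(a^2 + zab + b^2) + (z^2 - 4) b^2 \;=\; 4p + (z-2)(z+2) b^2,
\]
so that $p \mid (z-2)(z+2) b^2$, and combined with $p \nmid b$ and primality of $p$ this gives $p \mid 2 - z$ or $p \mid 2 + z$.

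For the converse, the same identity runs in reverse once we know $p \mid a \pm b$: if $p \mid 2 - z$, then $z \equiv 2 \pmod p$ gives $\N(\alpha) \equiv (a+b)^2 \pmod p$, so $p \mid a+b$ and hence $2a + zb \equiv 2(a+b) \equiv 0 \pmod p$; the case $p \mid 2 + z$ is symmetric via $\N(\alpha) \equiv (a-b)^2 \pmod p$. In either case $p \mid b(2a + zb) = \mathrm{Im}(\alpha^2)$, and \Cref{lem_trinity_divisability} upgrades this to $p \mid \alpha^2$, which by the opening equivalence means $p$ is special. The only subtle point in the whole argument is verifying $p \nmid b$ in the forward direction, which prevents the divisibility $p \mid (z^2 - 4) b^2$ from being vacuous; once that is in place, the proof reduces to the single modular identity $(2a + zb)^2 \equiv (z-2)(z+2) b^2 \pmod p$ combined with \Cref{lem_trinity_divisability} to shuttle between divisibility in $\mathbb{Z}$ and in $\mathbb{Z}[i_z]$.
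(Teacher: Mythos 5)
Your proof is correct and follows essentially the same route as the paper's: both reduce speciality of $ p $ to the condition $ \tfrac{1}{p}\alpha^{2} \in \mathbb{Z}[i_z] $ (equivalently $ p \mid \alpha^{2} $), use \Cref{lem_trinity_divisability} to translate this into integer divisibility conditions on $ \mathrm{Re}\left( \alpha^{2} \right) $ and $ \mathrm{Im}\left( \alpha^{2} \right) $, and conclude by elementary congruences modulo $ p $ after ruling out $ p \mid a $ and $ p \mid b $. Your single identity $ \left( 2a+zb \right)^{2} = 4p + \left( z-2 \right)\left( z+2 \right) b^{2} $ packages the paper's two linear combinations $ a\left( za+2b \right) $ and $ a\left( 4-z^{2} \right) $ slightly more cleanly, but it is not a genuinely different argument.
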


\begin{proof}
	Let $ \alpha = a + bi_z \in \mathbb{Z}[i_z] $. Then $ p = a^2 + zab + b^2 $ and therefore we have that $ p \nmid a $ and $ p \nmid b $ (otherwise we have that $ p $ divides $ a $ and $ b $, so $ p^2 $ divides $ p $ which is a contradiction). 
	
	We will first assume that $ p $ is special, i.e. $ \alpha $ and $ \overline{\alpha} $ are associated. Hence, we can find a unit $ \varepsilon \in \mathbb{Z}[i_z] $ such that $ \alpha =  \overline{\alpha}\varepsilon $. And so we conclude $ \alpha^2 = \alpha \overline{\alpha} \varepsilon = p \varepsilon $, i.e. 
	$$ \varepsilon = \tfrac{1}{p} \alpha^2 = \tfrac{1}{p} \left( a^2-b^2 + \left( 2ab + zb^2 \right)i_z \right) \in \mathbb{Z}[i_z]$$
	This means $ p \mid a^2-b^2 $ and $ p \mid 2ab + zb^2 = b \left( 2a + zb\right) $ by \Cref{lem_trinity_divisability}. Therefore we have 
	$$ p \mid z \left( a^2-b^2 \right) + \left( 2ab + zb^2  \right) = a \left( za + 2b \right) .$$
	From the above we deduce $ p \mid 2a + zb $ and $ p \mid za + 2b $ because $ p \nmid a $ and $ p \nmid b $. Thus, $ p $ also divides linear combinations of terms divisible by $ p $, namely
	$$ p \mid 2 \left( 2a+zb \right) - z\left( za + 2b \right) = a \left( 4-z^2 \right) = a \left( 2-z \right) \left( 2 + z \right) $$
and therefore we get $ p \mid 2-z $ or $ p \mid 2 + z $.	

On the other hand, let us assume that $ p \mid 2 + z $ or $ p \mid 2 - z $ what we will denote by $ p \mid 2 \pm z $ to show both cases in one. Then we also have that $ p \mid \left( z \pm 2 \right)ab $ and so we get that
	$$ p \mid p - \left( 2 \pm z \right)ab = a^2 + zab + b^2 -\left( z \pm 2 \right)ab .$$
Hence, $ p \mid \left( a \mp b \right)^2 $ and so $ p \mid a-b $ or $ p \mid a+b $. Thus, in both cases we have $ p \mid a^2-b^2 $. Moreover, we also have  that 
	$$ p \mid a^2 + zab + b^2 - \left( a^2-b^2 \right) = zab + 2b^2 $$
and so we can conclude that 
$$ \tfrac{1}{p}\alpha^2 = \tfrac{1}{p} \left( a^2-b^2 + \left( 2ab + zb^2 \right)i_z \right) \in \mathbb{Z}[i_z] .$$
Since $ \N(\tfrac{1}{p}\alpha^2) = \tfrac{1}{p^2}\N(\alpha^2) = \tfrac{1}{p^2}\N(\alpha)^2 = 1 $
we observe that $ \varepsilon \coloneqq \tfrac{1}{p}\alpha^2 \in \mathbb{Z}[i_z] $ is a unit and
	$$ \varepsilon \overline{\alpha} = \tfrac{1}{p}\alpha^2\overline{\alpha} = \alpha $$
holds which shows that $ \alpha $ and $ \alpha $ are associated.
\end{proof}


Now we would like to show that apart from some few exceptions all special elements are of the form $ 2 \pm z $. For this we need the following technical lemma which will also give us some information about a range in $ \mathbb{Z} $ where we can only find regular elements in the corresponding $ z $-ring.

\begin{lemma} \label{lem_represented_M}
	If $ z,M \in \mathbb{Z} $ with $ 2-\vert z \vert < M < 2+ \vert z \vert $, then $ M $ is represented by $ x^2+zxy+y^2 $ if and only if $ \sqrt{M} \in \mathbb{N} $. Moreover, primes $ p \in \mathbb{Z} $ with $ 2-\vert z \vert < p < \vert z \vert + 2 $ are not of type I in $ \mathbb{Z}[i_z] $ and if $ \vert p \vert < \vert z \vert -2 $, then $ p \in \mathbb{Z}[i_z] $ is regular.
\end{lemma}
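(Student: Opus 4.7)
The ``if'' direction is immediate: $M = k^2$ is represented by $(x, y) = (k, 0)$. For the converse, suppose $(x, y) \in \mathbb{Z}^2$ satisfies $f(x,y) := x^2 + zxy + y^2 = M$ with $2 - |z| < M < 2 + |z|$. Since the substitution $(z, y) \mapsto (-z, -y)$ preserves $f$, I reduce to $z \geq 0$. The cases $z \in \{0, 1, 2\}$ are handled by hand via the identity $4f(x, y) = (2x + zy)^2 + (4 - z^2)y^2$: for $z = 0$ the admissible range is empty; for $z = 1$ only $M = 2$ lies in range and a short enumeration (the identity forces $3y^2 \leq 8$, hence $|y| \leq 1$) rules it out; and for $z = 2$ the form collapses to $(x + y)^2$, automatically a perfect square.

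For $z \geq 3$, I induct on $m := \min(|x|, |y|)$. When $m = 0$, $M \in \{x^2, y^2\}$ and we are done. Otherwise, by the symmetries $f(x, y) = f(y, x)$ and $f(-x, -y) = f(x, y)$, assume $|x| \geq |y| \geq 1$ and $y > 0$. If $x > 0$, then $f \geq 1 + z + 1 = 2 + z$, contradicting $M < 2 + z$. So $x < 0$; writing $u := -x \geq y \geq 1$, $f$ becomes a convex quadratic in $u$, and direct substitution gives
\begin{align*}
    f\big|_{u = y} \;=\; (2-z)y^2, \qquad f\big|_{u = (z-1)y} \;=\; (2-z)y^2, \qquad f\big|_{u = (z+1)y} \;=\; (z+2)y^2.
\end{align*}
Since $y \geq 1$ and $z \geq 3$, the first two values satisfy $(2-z)y^2 \leq 2 - z$ and the third satisfies $(z+2)y^2 \geq 2 + z$. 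Convexity in $u$ then forces $f \leq 2 - z$ on $u \in [y, (z-1)y]$ and $f \geq 2 + z$ on $u \geq (z+1)y$, both in conflict with the strict bounds on $M$. Hence $u \in ((z-1)y, (z+1)y)$, so $y_1 := u - zy$ satisfies $|y_1| < y$.

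To close the descent, I apply $T(x, y) := (y, -x - zy)$, which visibly preserves $f$. The new solution $(x', y') := T(-u, y) = (y, y_1)$ has $\min(|x'|, |y'|) \leq |y_1| < y = m$, and strong induction delivers the perfect-square conclusion.

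The two remaining statements fall out quickly. Since no prime is a perfect square, the first part forbids any prime $p \in (2 - |z|, |z| + 2)$ from being represented by $x^2 + zxy + y^2$; hence such $p$ is not of type I. If additionally $|p| < |z| - 2$, both $p$ and $-p$ lie in $(2 - |z|, |z| + 2)$ and fail to be squares, so neither of the equations $x^2 + zxy + y^2 = \pm p$ is solvable, and \Cref{lemma_ir_and_reg_primes} concludes that $p$ is regular in $\mathbb{Z}[i_z]$. The principal technical hurdle is the three endpoint evaluations of $f$ paired with the convex dichotomy that excludes the ``trivial'' ranges of $u$; once those are in place, the transformation $T$ mechanically closes the induction.
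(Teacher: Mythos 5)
Your proof is correct, and it takes a genuinely different route from the paper's. The paper splits into $M>0$ and $M<0$: for $M>0$ it invokes \Cref{coro_pos_sol} to pass to a non-negative solution and then gets the contradiction $M\geq 2+z$ in one line; for $M<0$ it constructs explicit boundary points $\alpha_1,\alpha_2$ of a region $G$ containing no lattice points, verifies the discriminant estimate and the bound $\vert\langle\alpha_1,\alpha_2\rangle\vert\geq -M$, and concludes via the Local Solution Theorem 2 (\Cref{theo_no_solution}). You instead run a classical descent on the solution itself: after disposing of $\vert z\vert\leq 2$ by hand, the three evaluations $f\vert_{u=y}=f\vert_{u=(z-1)y}=(2-z)y^2$ and $f\vert_{u=(z+1)y}=(z+2)y^2$ together with convexity of $u\mapsto u^2-zuy+y^2$ pin $u$ into $((z-1)y,(z+1)y)$, and the norm-preserving map $T(x,y)=(y,-x-zy)$ (which is just multiplication by the unit $-i_z$) strictly decreases $\min(\vert x\vert,\vert y\vert)$, so the solution descends to one with a zero coordinate. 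This treats positive and negative $M$ uniformly, is entirely elementary, and is independent of the branch/subbranch machinery and of \Cref{coro_pos_sol}; what it gives up is the illustration of the geometric local-solution framework that the paper is deliberately showcasing (the $M<0$ case is one of the few concrete applications of \Cref{theo_no_solution} in the text). Your handling of the two prime statements matches the paper's: non-squareness rules out type I, and for $\vert p\vert<\vert z\vert-2$ both $\pm p$ fall in the forbidden window, so \Cref{lemma_ir_and_reg_primes} yields regularity.
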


\begin{proof}

We only need to show the statement for $ z \geq 0 $ as the isomorphism between the $ z $- and $ \left(-z\right) $-ring preserves $ \mathbb{Z} $. 

	If $ M $ is a square in $ \mathbb{Z} $, then we can set $ x = \sqrt{M} $ and $ y=0 $ and we see that $ x^2+zxy+y^2 = M $. Now assume that $ M \in \mathbb{Z} $ is not a square and represented by $ x^2+zxy+y^2 $ with $ 2-z < M < 2+z $. Then we find $ a,b \in \mathbb{Z} $ such that $ a^2 + zab + b^2 = M $. We will consider now the cases when $ M $ is positive or negative separately.
	
Let $ M > 0 $. Then by \Cref{coro_pos_sol} we can assume that $ a $ and $ b $ are non-negative. Moreover, neither $ a = 0 $ nor $ b = 0 $ as otherwise $ M $ would be a square. Therefore, we have $ a,b \geq 1 $ and therefore we get the contradiction 
	$$ 2 + z > M = a^2 + zab + b^2 \geq 1^2+ z + 1^2 = 2 + z .$$
	
It remains to discuss the case $ M < 0 $. In this case $ z \geq 4 $ holds. We need to show that $ M $ is not represented by $ x^2 + zxy + y^2 $. For this we consider the branch $ B \subseteq S_{M} $ in the fourth quadrant. The idea is to show that a closed branch in $ S_M $ contains no solution to $ x^2+zxy+y^2 = M $. Therefore consider 
$$ G \coloneqq S_{M} \cap \left( \left(0,1 \right) \times \left(-\infty,0 \right)i_z \cup \left(0,\infty \right) \times \left( -1,0 \right)i_z \right) .$$
Clearly $ G $ is connected and 
$ G \cap \mathbb{Z}[i_z] = \emptyset $. Now would like to find a connected part of a branch lying entirely in $ G $. For this let $ \epsilon > 0 $ be small enough. We will show now the existence of elements on $ B $ which we can use to define a closed branch on it. Let one of these elements have real part $ 1- \epsilon $ and the other one imaginary part $ \epsilon -1 $. We will denote the corresponding elements  by $ \alpha_1 $ and $ \alpha_2 $, respectively. Let us determine them such that they lie on $ B $. Clearly $ \alpha_1 $ has to satisfy the equation 
	$$ \left( 1- \epsilon \right)^2 + \left( 1- \epsilon \right)zy + y^2 = M .$$
Therefore we get
	$$ y_{\pm} = \frac{-\left( 1- \epsilon \right)z \pm \sqrt{\left( 1- \epsilon \right)^2z^2-4\left( \left(1-\epsilon \right)^2-M\right)}}{2} .$$ 	
Analogously, $ \alpha_2 $ satisfies the equation
	$$ x^2 -\left( 1- \epsilon \right)zx + \left( 1- \epsilon \right)^2 = M $$
and so we deduce	
	$$ x_{\pm} = \frac{\left( 1- \epsilon \right)z \pm \sqrt{\left( 1- \epsilon \right)^2z^2-4\left( \left(1-\epsilon \right)^2-M\right)}}{2} .$$ 
	
We have to make sure that term under the square root is positive. Observe that the condition of the lemma implies $ 3-z \leq M $. If $ \epsilon \leq \tfrac{1}{z} $, then we get
	\begin{align*}
		\left( 1- \epsilon \right)^2z^2-4\left( \left(1-\epsilon \right)^2-M\right) &= \left( 1- \epsilon \right)^2\left(z^2-4\right)+4M \\
		&\geq \left( 1- \epsilon \right)^2\left(z^2-4\right) + 4\left( 3-z \right) \\ 
		&= \left( 1-\epsilon \right)^2z^2-4z + 8 + \underbrace{4\left( 1- \left( 1- \epsilon\right)^2\right)}_{>0}    \\
		&> \left( 1-\tfrac{1}{z} \right)^2z^2-4z + 8 \\
		&= z^2-6z+9 \\
		&= \left(z-3 \right)^2\\
		&\geq 0.
	\end{align*}
Now we can define $ \alpha_1 \coloneqq 1-\epsilon +y_{-}i_z \in \mathbb{R}[i_z] $ and $ \alpha_2 \coloneqq x_{+}- \left(1-\epsilon\right)i_z \in \mathbb{R}[i_z] $. Then $ 1-\epsilon +y_{+}i_z,x_{-} - \left(1-\epsilon\right)i_z \in B_{\alpha_1,\alpha_2} \subseteq B $ and since $ B $ is concave we clearly get that $ B_{\alpha_1,\alpha_2} \subseteq G $ which implies $ B_{\alpha_1,\alpha_2} \cap \mathbb{Z}[i_z] = \emptyset $ (compare with \Cref{proof_lem_z-2_M_0} where $ z = 5 $, $ M = -2 $ and $ \epsilon = \tfrac{1}{5} $).
	
We would like to estimate the absolute value of the oriented area defined by $ \alpha_1 $ and $ \alpha_2 $. Observe that for non-negative $ a,b \in \mathbb{R} $ the following inequality holds true
	$$ \left( a+b \right)^2 \geq \left( a+ b \right)\left( a - b \right) = a^2-b^2 .$$
With this inequality we get	
	\begin{align*}
		\left\vert \big< \alpha_1,\alpha_2 \big> \right\vert &= \left\vert -\left( 1-\epsilon \right)^2 -x_{+}y_{-} \right\vert \\
		&\geq -\left( 1-\epsilon \right)^2 + \frac{1}{4}\left( \left( 1- \epsilon \right)z + \sqrt{\left( 1- \epsilon \right)^2z^2-4\left( \left(1-\epsilon \right)^2-M\right)}\right)^2 \\	
		&\geq -\left( 1-\epsilon \right)^2 + \frac{1}{4} \left( \left( 1-\epsilon \right)^2z^2 - \left( \left( 1-\epsilon \right)^2z^2 -4 \left( \left( 1- \epsilon \right)^2 -M \right) \right)\right) \\
		&= -M.
	\end{align*} 
However, since $ M $ is negative we conclude by \Cref{theo_no_solution} that 
	$$ x^2+ zxy + y^2 = M $$
has no solution.

The remaining part is a consequence of the above since $ p $ is never a square in $ \mathbb{N} $. Moreover, if $ \vert p \vert < \vert z \vert -2 $ is satisfied, then $ 2-\vert z \vert < \pm p < \vert z \vert - 2 <\vert z \vert + 2 $ which means that both $ -p,p \in \mathbb{Z}[i_z] $ are not of type I. By \Cref{lemma_ir_and_reg_primes} we have that $ -p,p \in \mathbb{Z}[i_z] $ are regular.

\end{proof}
\vspace{5mm}
\begin{figure}[h]  
\begin{center}
\pagestyle{empty}

\definecolor{ffqqqq}{rgb}{1.,0.,0.}
\definecolor{qqqqff}{rgb}{0.,0.,1.}
\definecolor{qqzzqq}{rgb}{0.,0.6,0.}
\definecolor{ududff}{rgb}{0.30196078431372547,0.30196078431372547,1.}
\definecolor{xdxdff}{rgb}{0.49019607843137253,0.49019607843137253,1.}
\definecolor{uuuuuu}{rgb}{0.26666666666666666,0.26666666666666666,0.26666666666666666}
\definecolor{ttttff}{rgb}{0.2,0.2,1.}
\definecolor{cqcqcq}{rgb}{0.7529411764705882,0.7529411764705882,0.7529411764705882}
\begin{tikzpicture}[line cap=round,line join=round,>=triangle 45,x=1.0cm,y=1.0cm]
\draw [color=cqcqcq,, xstep=1.0cm,ystep=1.0cm] (-1.5,-4.5) grid (5.5,1.5);
\draw[->,color=black] (-1.5,0.) -- (5.5,0.);
\foreach \x in {-1.,1.,2.,3.,4.,5.}
\draw[shift={(\x,0)},color=black] (0pt,2pt) -- (0pt,-2pt) node[below] {\footnotesize $\x$};
\draw[->,color=black] (0.,-4.5) -- (0.,1.5);
\foreach \y in {-4.,-3.,-2.,-1.,1.}
\draw[shift={(0,\y)},color=black] (2pt,0pt) -- (-2pt,0pt) node[left] {\footnotesize $\y$};
\draw[color=black] (0pt,-10pt) node[right] {\footnotesize $0$};
\clip(-1.5,-4.5) rectangle (5.5,1.5);
\fill[line width=2.pt,color=qqzzqq,fill=qqzzqq,fill opacity=0.10000000149011612] (0.,0.) -- (0.,-9.) -- (1.,-9.) -- (1.,-1.) -- (14.,-1.) -- (14.,0.) -- cycle;
\fill[line width=2.pt,color=ffqqqq,fill=ffqqqq,fill opacity=0.10000000149011612] (0.,0.) -- (0.8,-3.166190378969061) -- (3.16619037896906,-0.8) -- cycle;
\draw [samples=50,domain=-0.99:0.99,rotate around={-45.:(0.,0.)},xshift=0.cm,yshift=0.cm,line width=2.pt,color=ttttff] plot ({1.1547005383792515*(1+(\x)^2)/(1-(\x)^2)},{0.7559289460184544*2*(\x)/(1-(\x)^2)});
\draw [samples=50,domain=-0.99:0.99,rotate around={-45.:(0.,0.)},xshift=0.cm,yshift=0.cm,line width=2.pt,color=ttttff] plot ({1.1547005383792515*(-1-(\x)^2)/(1-(\x)^2)},{0.7559289460184544*(-2)*(\x)/(1-(\x)^2)});
\draw [line width=2.pt,dash pattern=on 3pt off 3pt,color=qqzzqq] (0.,0.)-- (0.,-9.);
\draw [line width=2.pt,color=qqzzqq] (0.,-9.)-- (1.,-9.);
\draw [line width=2.pt,dash pattern=on 3pt off 3pt,color=qqzzqq] (1.,-9.)-- (1.,-1.);
\draw [line width=2.pt,dash pattern=on 3pt off 3pt,color=qqzzqq] (1.,-1.)-- (14.,-1.);
\draw [line width=2.pt,color=qqzzqq] (14.,-1.)-- (14.,0.);
\draw [line width=2.pt,dash pattern=on 3pt off 3pt,color=qqzzqq] (14.,0.)-- (0.,0.);
\draw [line width=2.pt,color=ffqqqq] (0.,0.)-- (0.8,-3.166190378969061);
\draw [line width=2.pt,color=ffqqqq] (0.8,-3.166190378969061)-- (3.16619037896906,-0.8);
\draw [line width=2.pt,color=ffqqqq] (3.16619037896906,-0.8)-- (0.,0.);
\begin{scriptsize}
\draw[color=ttttff] (3.55,1.2) node {\fontsize{10}{0} $ x^2+5xy+y^2 = -2 $};
\draw [fill=uuuuuu] (0.,0.) circle (2.5pt);
\draw [fill=xdxdff] (0.,-9.) circle (2.5pt);
\draw [fill=ududff] (1.,-9.) circle (2.5pt);
\draw[color=qqzzqq] (4.45,-0.5) node {\fontsize{10}{0} $G$};
\draw [fill=ududff] (14.,-1.) circle (2.5pt);
\draw [fill=xdxdff] (14.,0.) circle (2.5pt);
\draw [fill=ududff] (0.8,3.) circle (2.5pt);
\draw [fill=qqqqff] (0.8,-3.166190378969061) circle (2.5pt);
\draw[color=qqqqff] (0.45,-3.35) node {\fontsize{10}{0} $\alpha_1 $};
\draw[color=qqqqff] (0.75,-0.4) node {\fontsize{10}{0} $B_{\alpha_1,\alpha_2} $};
\draw[color=qqqqff] (4.7,-1.38) node {\fontsize{10}{0} $ B $};
\draw [fill=qqqqff] (3.16619037896906,-0.8) circle (2.5pt);
\draw[color=qqqqff] (3.4,-0.55) node {\fontsize{10}{0} $\alpha_2$};
\draw[color=ffqqqq] (1.68,-1.3) node {\fontsize{10}{0} $ \tfrac{1}{2}\big< \alpha_1, \alpha_2 \big> $};

\end{scriptsize}
\end{tikzpicture}
\caption{The case $ M < 0 $ with respect to the proof of \Cref{lem_represented_M}}
\label{proof_lem_z-2_M_0}
\end{center}
\end{figure}
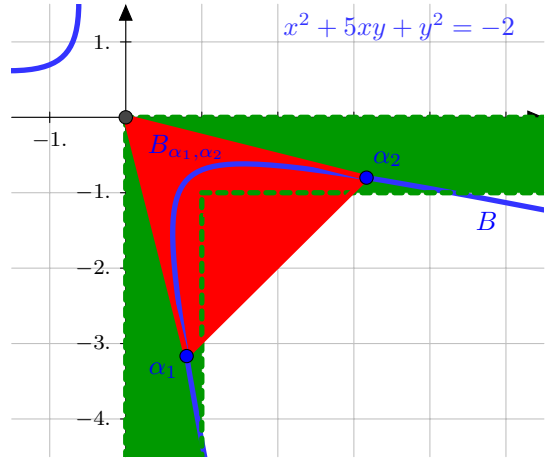
\vspace{5mm}


\begin{example}
	Consider \Cref{example_pm3_pm7} again. There we have that $ z = 6 $. Hence, by \Cref{lem_represented_M} we immediately get that all $ M \in \mathbb{Z} \setminus \left\{ 0,1,4 \right\} $ with $ -4 < M < 8 $ cannot be represented by $ x^2 + 6xy+ y^2 $, i.e. both Diophantine equations
	$$ x^2 + 6xy+ y^2 = 3 $$
and 	
	$$ x^2 + 6xy+ y^2 = -3 $$
have no solution and $ -3,3 \in \mathbb{Z}[i_6] $ are regular. 	
\end{example}

For $ z \geq 6 $ we can show the following consequence of \Cref{lem_represented_M} which will be useful for the proof of \Cref{theorem_special_primes}:

\begin{corollary} \label{coro_regular_p}
	If $ z \geq 6 $ and $ p \in \mathbb{Z} $ is prime with $ p \mid 2+z $ or $ p \mid 2-z $, but $ p \notin \left\{ 2 - z, 2+z \right\} $, then $ p \in \mathbb{Z}[i_z] $ is not special.
\end{corollary}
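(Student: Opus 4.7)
The plan is to assume for contradiction that $p$ is special; by definition a special element is of type I, so there exists $\gamma \in \mathbb{Z}[i_z]$ with $\N(\gamma) = p$. I would then split the argument according to whether $|p|$ saturates the upper bound $z+2$ or not.

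In the generic subcase $p \neq -(2+z)$, combined with the hypothesis $p \notin \{2-z,\,2+z\}$, the prime $|p|$ is forced to be either a proper positive prime divisor of $z+2$ (when $p \mid 2+z$) or to satisfy $|p| \le z-2$ (when $p \mid 2-z$, with the value $|p| = z-2$ possible only in the form $p = z-2 > 0$, because $p = 2-z$ is excluded). A short arithmetic check, using the inequality $(z+2)/2 \le z-2$ for $z \ge 6$ together with the observation that the boundary value $(z+2)/2 = 4$ at $z = 6$ is not prime, gives $p \in (2-z,\,2+z)$. At this point \Cref{lem_represented_M} applies directly and asserts that no prime in this interval is of type I, contradicting specialness.

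The delicate subcase is $p = -(2+z)$, which requires $z+2$ itself to be prime; here $|p|$ saturates the bound and $p \notin (2-z,\,2+z)$, so \Cref{lem_represented_M} is silent. Here I would exploit the factorization $z+2 = (1+i_z)\overline{(1+i_z)}$ together with \Cref{theo_prime_type_I}, which guarantees that $1+i_z$ is prime in $\mathbb{Z}[i_z]$. Since $(z+2) \mid \N(\gamma)$, \Cref{prop_norm_prime_argument} forces, without loss of generality, $\gamma = (1+i_z)\,\delta$ for some $\delta \in \mathbb{Z}[i_z]$. Taking norms yields $(z+2)\,\N(\delta) = \N(\gamma) = -(z+2)$, so $\N(\delta) = -1$. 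But by \Cref{coro_z=3_sol_-1} the equation $x^2 + zxy + y^2 = -1$ has no solution for $z \ge 6$, giving the desired contradiction.

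The main obstacle is precisely this exceptional subcase $p = -(2+z)$: it eludes the purely size-based criterion of \Cref{lem_represented_M} and must be handled by structural arithmetic in $\mathbb{Z}[i_z]$, namely the primality of $1 + i_z$ and the non-existence of elements of norm $-1$ when $z \ge 6$. It is also precisely the point where the hypothesis $z \ge 6$ is genuinely used, since for $z = 3$ an element of norm $-1$ does exist, producing the special element $-5$ listed among the exceptions of the forthcoming characterization theorem.
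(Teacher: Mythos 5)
Your proof is correct. The generic subcase is exactly the paper's own argument: a proper-divisor bound forces $2-z<p<2+z$, and \Cref{lem_represented_M} then rules out type~I and hence specialness. What you add in the exceptional subcase $p=-(2+z)$ is genuinely needed. The paper's proof asserts that every prime $p$ dividing $2+z$ or $2-z$ with $p\notin\{2-z,2+z\}$ satisfies $-\tfrac{2+z}{2}\le p\le\tfrac{2+z}{2}$; but the paper explicitly admits negative primes, and the proof of \Cref{theorem_special_primes} applies this corollary to the candidate $-2-z$, for which the asserted inequality --- and, more importantly, the conclusion $2-z<p<2+z$ --- fails, so \Cref{lem_represented_M} really is silent there. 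Your repair is sound: if $-(2+z)$ were special it would be of type~I, so some $\gamma$ would satisfy $\N(\gamma)=-(2+z)$; since $2+z=\N(1+i_z)$ is irregular of type~I, \Cref{prop_norm_prime_argument} produces a cofactor $\delta$ with $\N(\delta)=-1$, which \Cref{coro_z=3_sol_-1} forbids for $z\ge 6$. (A one-line alternative: $2+z$ is of type~I via $(x,y)=(1,1)$, so \Cref{coro_solvability_pm_p} already excludes $-(2+z)$ from being of type~I when $z\neq\pm3$.) The other boundary value $p=z-2$ likewise violates the paper's intermediate inequality, but there the interval condition $2-z<p<2+z$ still holds, so no new argument is required. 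In short, your proof matches the paper's route where that route works and closes a real gap in the published argument at $p=-(2+z)$; your closing remark that $z=3$ with its special element $-5$ is precisely the case blocked by the hypothesis $z\ge 6$ is also accurate.
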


\begin{proof}
	If $ p $ divides either one of the the numbers $ 2+z $ or $ 2-z $ and $ p $ is not equal to them, then $ -\tfrac{2+z}{2} \leq p \leq \tfrac{2+z}{2} $. Moreover, $ z \geq 6 $ is equivalent to $ 2-z \leq -\tfrac{2+z}{2} $ and hence we deduce $ 2-z <  p  < 2+z $ because $ p \neq 2-z $. By \Cref{lem_represented_M} we have that $ p \in \mathbb{Z}[i_z] $ is not of type I and so we conclude that $ p \in \mathbb{Z}[i_z] $ is not special.
\end{proof}



\begin{proof}[Proof of \Cref{theorem_special_primes}]
Since the rings $ \mathbb{Z}[i_z] $, $ \mathbb{Z}[i_{-z}] $ are isomorphic such that $ \mathbb{Z} $ is preserved, both rings have the same special elements. Therefore we only have to check $ z \geq 0 $.

By \Cref{lemma_associate} we know that a prime $ p \in \mathbb{Z} $ of type I (in $ \mathbb{Z}[i_z] $) is special if and only if it divides either $ 2-z $ and/or $ 2+z $. Observe that a prime $ p  $ which divides either $ 2-z $ and/or $ 2+z $ must satisfy $-2-z \leq p \leq 2+z $. Moreover, primes in $ \mathbb{Z} $ of the form $ 2-z < p < 2+z $ are not of type I by \Cref{lem_represented_M}. Hence, only the candidates 
$$ -2-z,-1-z,-z,1-z,2-z,2+z \in \mathbb{Z}[i_z] $$
can be special and each of them is special if and only if it is of type I and divides either $ 2-z $ or $ 2+z $.

Observe that $ 2-z $ and $ 2+z $ can be represented by $ x^2+zxy+y^2 $ for $ x=1 $ and $ y = 1 $ or $ y = -1 $, respectively, and so all irregular elements of either one of these forms are of type I. Moreover, if $ z \geq 6 $, then such a $ p $ is of type I in the corresponding $ z $-ring if and only if $ p $ is equal to $ 2-z $ or $ 2+z $ by \Cref{coro_regular_p}. Hence, if $ z \geq 6 $, then the special elements of $ \mathbb{Z}[i_z] $ are exactly the primes in $ \mathbb{Z} $ of the form $ 2-z $ or $ 2+z $ and for all $ z \in \left\{ 0,1,2,3,4,5 \right\} $ we have to check all candidates above whether they are special or not separately.

We start with $ z \in \left\{ 0,1,2 \right\} $. Then the above candidates without $ 2-z,2+z $ are all either negative or equal to $ 0 $ or $ 1 $. Hence, they cannot be special as special elements in these $ z $-rings have to be positive prime numbers in $ \mathbb{Z} $ (compare with \Cref{ex_non_neg_sol}, \Cref{example_primes_Gaussian} and \Cref{example_primes_z123}). 

Let $ z = 3 $, then the only candidates being primes in $ \mathbb{Z} $ are $ -2-z = -5 $, $ -z = -3 $, $ 1-z = -2 $, $ 2+z = 5 $. Since $ 2+z = 5 \in \mathbb{Z}[i_3] $ is prime in $ \mathbb{Z} $ we clearly have that it is special. By \Cref{coro_solvability_pm_p} we clearly get that $ -5 \in \mathbb{Z}[i_3] $ is also special because it is of type I, too, and it divides $ z+2 $. However, the other candidates $ -3,-2 $ do neither divide $ 2-z=-1 $ or $ 2+z=5 $, so they cannot be special.

If $ z = 4 $, then the only candidates being primes in $ \mathbb{Z} $ are $ -1-z = -5 $, $ 1-z = -3 $ and $ 2-z = -2 $. Then clearly $ 2-z \in \mathbb{Z}[i_4] $ is special. However, $ -5 $ is not as it does not divide either $ 2-z = -2 $ nor $ 2+z = 6 $. It remains to show that $ -3 \in \mathbb{Z}[i_4] $ is of type I. This follows from
	$$ x^2+4xy+y^2 = -3 $$
if we set $ x = 1 $ and $ y = -2 $ and so $ -3 \in \mathbb{Z}[i_4] $ is special, too.

If $ z = 5 $, then the candidates to check are $ -2-z = -7 $, $ -z = -5 $ and $ 2-z = -3 $ which must be special. However, $ -5 $ cannot be special because it does not divide $ 2-z = -3 $ or $ 2+z = 7 $, nor $ -7 $ is because $ 2+z = 7 \in \mathbb{Z}[i_5] $ is special as prime in $ \mathbb{Z} $ and so $ -7 $ cannot also be of type I by \Cref{coro_solvability_pm_p}.
\end{proof}

\subsection{Many $ z $-rings are not unique factorization domains}

In \Cref{primes_if_uniq_factor_domain} we assumed that $ \mathbb{Z}[i_z] $ is a unique factorization domain. However, in general it is difficult to decide which of these $ z $-rings are unique factorization domains and which not. For example, it is known that $ \mathbb{Z}[i_z] $ is a unique factorization domain, if $ \vert z \vert \leq 5 $. In this section we would like to show that most of the $ \mathbb{Z}[i_z] $ are not unique factorization domains. More concretely, whenever $ 2-z,2+z \in \mathbb{Z} $ are not both primes for $ \left\vert z \right\vert \geq 6 $, then $ \mathbb{Z}[i_z] $ is not a unique factorization domain. At the end of this section we will discuss that the reverse statement does not hold true, i.e. there are $ z $-rings where $ z \pm 2 \in \mathbb{Z} $ are both primes and $ \left\vert z \right\vert \geq 6 $, but $ \mathbb{Z}[i_z] $ is not a unique factorization domain.

\begin{lemma} \label{lemma_irred_elem}
If $ z \in \mathbb{Z} $ with $ \left\vert z \right\vert \geq 6 $, then the elements 
	$$ \left( -1 \right)^n  + \left( -1 \right)^m i_z \in \mathbb{Z}[i_z] $$
are irreducible for all $ n,m \in \left\{ 0,1 \right\} $.
\end{lemma}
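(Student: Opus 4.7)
The plan is to argue by contradiction using the multiplicativity of the norm together with \Cref{lem_represented_M} and \Cref{lem_trinity_divisability}.

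Since the isomorphism $\Phi\colon \mathbb{Z}[i_z]\to\mathbb{Z}[i_{-z}]$ from \Cref{lem_isomorphy} preserves $\mathbb{Z}$ and the norm, and sends $(-1)^n+(-1)^mi_z$ to $(-1)^n-(-1)^mi_{-z}$, it suffices to prove the claim for $z\geq 6$. Fix such a $z$ and set $\alpha=(-1)^n+(-1)^mi_z$. A direct computation gives $N(\alpha)=2+(-1)^{n+m}z$, so $|N(\alpha)|\in\{z-2,\,z+2\}$ and in particular $|N(\alpha)|\leq z+2$.

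Suppose for contradiction that $\alpha=\beta\gamma$ with $\beta,\gamma\in\mathbb{Z}[i_z]$ both non-units. Because $z\neq\pm 2$, \Cref{Nz_lemma} gives $N(\beta),N(\gamma)\neq 0$, and non-unit forces $N(\beta),N(\gamma)\neq\pm 1$, hence $|N(\beta)|,|N(\gamma)|\geq 2$. Multiplicativity of $N$ yields $|N(\beta)|\cdot|N(\gamma)|=|N(\alpha)|\leq z+2$, so
$$|N(\beta)|\leq \tfrac{z+2}{2}\leq z-2,$$
where the last inequality uses $z\geq 6$.

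Thus $N(\beta)\in[2-z,\,z-2]\subset(2-z,\,2+z)$, except possibly at the left endpoint $N(\beta)=2-z$. The endpoint can occur only when $(z+2)/2=z-2$, i.e.\ $z=6$; in that subcase the complementary factor satisfies $|N(\gamma)|=(z+2)/(z-2)=2$, so $N(\gamma)=\pm 2$, which lies strictly inside $(-4,8)$ and is not a perfect square, contradicting \Cref{lem_represented_M}. Therefore $N(\beta)$ is genuinely in the open interval where \Cref{lem_represented_M} applies, and so $N(\beta)=k^2$ for some integer $k\geq 2$.

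It remains to observe that any representation of $k^2$ by $x^2+zxy+y^2$ with $k^2\leq z-2$ is, up to a unit, the trivial one. Indeed, by \Cref{coro_pos_sol} we may take the coordinates of $\beta$ to be nonnegative, and if both were at least $1$ we would get $N(\beta)\geq 1+z+1=z+2>k^2$, a contradiction; so $\beta$ has a zero coordinate and is therefore a unit multiple of the real integer $k$. Hence $(\beta)=(k)$ as ideals in $\mathbb{Z}[i_z]$, so $\beta\mid\alpha$ forces $k\mid\alpha$, which by \Cref{lem_trinity_divisability} means $k$ divides both $\mathrm{Re}(\alpha)=\pm 1$ and $\mathrm{Im}(\alpha)=\pm 1$ in $\mathbb{Z}$. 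This contradicts $k\geq 2$ and finishes the proof.

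The main technical obstacle is the boundary case $z=6$, where the strict inequality in \Cref{lem_represented_M} fails at $N(\beta)=2-z=-4$; this is bypassed by transferring the constraint to the complementary factor $\gamma$, whose norm is then forced into the strict interior of the range.
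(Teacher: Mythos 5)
Your proof is correct and takes essentially the same route as the paper's: bound the norm of one non-unit factor by $(z+2)/2 \leq z-2$, apply \Cref{lem_represented_M} to force that norm to be a perfect square $k^2$ with $k \geq 2$, show via \Cref{coro_pos_sol} that the factor is associated to the integer $k$, and contradict \Cref{lem_trinity_divisability} since $k \nmid \pm 1$. The only organizational difference is that the paper handles $z=6$ as a fully separate case (using that $\pm 2$ is regular there), whereas you fold it into the general argument and only special-case the boundary value $\N(\beta)=2-z$ by transferring the constraint to the complementary factor.
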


\begin{proof}
It is enough to consider $ 6 \geq z $ as the isomorphism 			$$ \Phi: \mathbb{Z}[i_z] \to \mathbb{Z}[i_{-z}] $$  changes only the sign of the imaginary unit and hence we can conclude as irreducibility is preserved by ring isomorphisms.

To simplify the notation let
$$ \gamma_{n,m} \coloneqq \left( -1 \right)^n + \left( -1 \right)^m i_z $$
for $ n,m \in \left\{ 0,1 \right\} $ arbitrary and observe that
$$ \N \left( \gamma_{n,m} \right) = \left\{ 
\begin{array}{ll}
	2+z & n+m \equiv 0 \pmod 2 \\ [1ex]
	2-z & n+m \equiv 1 \pmod 2
\end{array}	
\right. .$$
We assume now that $ \gamma_{n,m} $ is reducible. Then we find $ \alpha, \beta \in \mathbb{Z}[i_z] $ with $ \gamma_{n,m} = \alpha \beta $ such that $ \alpha $ and $ \beta $ are no units. Therefore we have $  \N \left( \gamma_{n,m} \right) = \N \left( \alpha \right) \N \left( \beta \right) $ and 
	$$ 2 \leq \left\vert \N \left( \alpha \right) \right\vert, \left\vert \N \left( \beta \right) \right\vert < z + 2 $$
by \Cref{Nz_lemma}.

At first we will consider the case $ z = 6 $. Then $ \N \left( \gamma_{n,m} \right) \in \left\{ -4,8 \right\} $. Hence, either $ \left\vert \N \left( \alpha \right) \right\vert = 2 $ or $ \left\vert \N \left( \beta \right) \right\vert = 2 $. However, we have that
	$$ 2 = \left\vert \pm 2 \right\vert <  z  -2 = 4 $$
and so $ -2,2 \in \mathbb{Z}[i_6] $ are regular by \Cref{lem_represented_M} which is a contradiction. Therefore $ \gamma_{n,m} \in \mathbb{Z}[i_6] $ is irreducible.

Now let us assume that $ z > 6 $. We have
	$$ \left\vert \N \left( \alpha \right) \right\vert = \left\vert \frac{\N \left( \gamma_{n,m} \right)}{\N \left( \beta \right)} \right\vert \leq \frac{2+z}{2} < z-2 < z + 2 $$
where the second last inequality is equivalent to $ z > 6 $. Thus, we clearly have
	$$ 2-z < \N \left( \alpha \right) < 2+z $$
and by \Cref{lem_represented_M} we conclude that $ \sqrt{\N \left( \alpha \right)} \in \mathbb{N} $. 
Hence, $ \N \left( \alpha \right) $ is positive which allows us to use \Cref{coro_pos_sol} and so we find a unit $ \varepsilon \in \mathbb{Z}[i_z] $ such that $ \varepsilon\alpha \in \mathbb{Z}[i_z] $ has non-negative real and imaginary part. Assume $ \mathrm{Re} \left( \varepsilon\alpha \right), \mathrm{Im} \left( \varepsilon\alpha \right) \geq 1 $, then
	$$ \N \left( \varepsilon\alpha \right) \geq 1^2 + z + 1^2 \geq z + 2 $$
and we get a contradiction. Therefore either the real or the imaginary part of $ \varepsilon\alpha $ is equal to zero and so we clearly have 
	$$ \varepsilon\alpha \in \left\{ \sqrt{\N \left( \alpha \right)}, \sqrt{\N \left( \alpha \right)}i_z \right\} .$$
Since both elements in the set above are associated, we get that $ \alpha $ is associated to $ \sqrt{\N \left( \alpha \right)} \in \mathbb{Z}[i_z] $. Therefore $ \alpha \mid \gamma_{n,m} $ also implies that $ \sqrt{\N \left( \alpha \right)} \mid \gamma_{n,m} $. However, $ \sqrt{\N \left( \alpha \right)} \in \mathbb{Z} $ and so we have that $$ \sqrt{\N \left( \alpha \right)} \mid \mathrm{Re} \left( \gamma_{n,m} \right) \in \left\{ -1,1 \right\} $$
by \Cref{lem_trinity_divisability}. Finally, we conclude that $ \alpha \in \mathbb{Z}[i_z] $ is a unit which is a contradiction and so $ \gamma_{n,m} \in \mathbb{Z}[i_z] $ is also irreducible for $ z > 6 $.
\end{proof}




\begin{theorem} \label{theo_not_uniqe_factor_domain}
	Let $ z \in \mathbb{Z} $ with $ \left\vert z \right\vert \geq 6 $ and $ 2 \pm z \in \mathbb{Z} $ are not both primes. Then $ \mathbb{Z}[i_z] $ is not a unique factorization domain.
\end{theorem}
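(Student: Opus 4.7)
The plan is to exploit the identity $(1 \pm i_z)^2 = (z \pm 2) i_z$ in $\mathbb{Z}[i_z]$, which one computes directly from $i_z^2 = z i_z - 1$. This identity displays $z+2$ (resp.\ $z-2$) as the square of $1+i_z$ (resp.\ $1-i_z$), each of which is irreducible by \Cref{lemma_irred_elem}, multiplied by the unit $i_z$. By the isomorphism $\mathbb{Z}[i_z] \cong \mathbb{Z}[i_{-z}]$, which preserves $\mathbb{Z}$ and swaps the roles of $2+z$ and $2-z$, I may assume $z \geq 6$. Since the hypothesis says that not both of $2 \pm z$ are prime, at least one of them is composite; I will present the argument assuming $z+2$ is composite, the case $2-z$ being entirely analogous using $1 - i_z$ in place of $1 + i_z$. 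Write $z+2 = ab$ with integers $2 \leq a \leq b < z+2$.

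Suppose for contradiction that $\mathbb{Z}[i_z]$ is a unique factorization domain. From $(1+i_z)^2 = (z+2) i_z$ we obtain
\begin{displaymath}
z+2 = (1+i_z)^2 \cdot i_z^{-1},
\end{displaymath}
so (since $i_z$ is a unit) $z+2$ has exactly two irreducible factors in its factorization, both associates of $1+i_z$. On the other hand, in the integer factorization $z+2 = ab$, the factors $a$ and $b$ have norm $a^2, b^2 > 1$, so both are non-units in $\mathbb{Z}[i_z]$; factoring each into irreducibles yields $s + t \geq 2$ irreducible factors of $z+2$ in total. Uniqueness of factorization forces $s = t = 1$, so $a$ and $b$ are irreducible and each is associated to $1+i_z$.

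The main step — and the crux of the whole argument — is then a short computation showing that no integer $a$ with $2 \leq a < z+2$ can be associated to $1+i_z$. Indeed, if $a = \varepsilon (1+i_z)$ for a unit $\varepsilon = x + y i_z \in \mathbb{Z}[i_z]$, then the $z$-product gives
\begin{displaymath}
\varepsilon (1+i_z) = (x - y) + \bigl(x + (z+1)y\bigr) i_z.
\end{displaymath}
For this to equal the integer $a$, the imaginary part must vanish, so $x = -(z+1) y$. If $y = 0$, then $\varepsilon = 0$, contradicting that $\varepsilon$ is a unit; hence $y \neq 0$ and
\begin{displaymath}
a \;=\; x - y \;=\; -(z+1) y - y \;=\; -(z+2) y,
\end{displaymath}
whence $|a| \geq z+2$, contradicting $a < z+2$.

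I do not anticipate a genuine obstacle: once one writes down $(1 + i_z)^2 = (z+2) i_z$, the rest is unique factorization bookkeeping and a one-line calculation in the unit group. The only care needed is to handle the case $2-z$ composite (by replacing $1+i_z$ with $1 - i_z$ and using $(1-i_z)^2 = (z-2) i_z$) and to invoke \Cref{lemma_irred_elem} precisely where asserted, which requires $|z| \geq 6$.
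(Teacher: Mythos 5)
Your proof is correct, and it reaches the contradiction by a genuinely different mechanism than the paper, even though both arguments start from the same identity $(1 \pm i_z)^2 = (z \pm 2)i_z$ and both rely on \Cref{lemma_irred_elem} for the irreducibility of $1 \pm i_z$. The paper extracts a prime $p \in \mathbb{Z}$ dividing $2 \pm z$ with $p^2 \leq z+2$, invokes \Cref{lem_represented_M} to show that $p$ is irreducible in $\mathbb{Z}[i_z]$ (which forces a separate treatment of $z = 6$, where the inequality $\tfrac{z+2}{2} < z-2$ fails), and then derives the contradiction $p \mid 1$ from \Cref{lem_trinity_divisability}. You instead take an arbitrary factorization $z+2 = ab$ into integers at least $2$, let the UFD hypothesis itself force $a$ and $b$ to be irreducible by counting irreducible factors against the two-factor decomposition $\bigl(i_z^{-1}(1+i_z)\bigr)(1+i_z)$, and then rule out the resulting association $a = \varepsilon(1+i_z)$ by a direct computation in the $z$-product showing that any integer associate of $1+i_z$ is a nonzero integer multiple of $z+2$. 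This buys you two things: \Cref{lem_represented_M} is not needed inside this proof (it still enters indirectly, through the proof of \Cref{lemma_irred_elem}), and the case $z = 6$ needs no special handling. What the paper's version buys in exchange is an explicit irreducible-but-not-prime element of $\mathbb{Z}$ (the prime $p$, or $2$ when $z = 6$), which is slightly more informative than the bare non-uniqueness of factorization. A small remark: your final computation uses nothing about $\varepsilon$ beyond $\varepsilon \neq 0$, so it actually proves the stronger statement that $1+i_z$ divides an integer $a$ in $\mathbb{Z}[i_z]$ only if $(z+2) \mid a$ in $\mathbb{Z}$, which is in the spirit of \Cref{lem_trinity_divisability} and could be cited from there instead.
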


\begin{proof}
By isomorphy of $ z $- rings it is enough to prove the statement for $ z \geq 6 $. To show that an integral domain is not a unique factorization domain we simply need to show that there exist irreducible elements which are not prime. Let us consider the case $ z = 6 $ separately. Then both, $ 2 - z = 4 $ and $ 2+z = 8 $ are no primes. For example, we have
	$$ \left( 1 + i_6 \right)^2 = 8i_6 = 2 \cdot 4i_6 $$
where we showed that $ 2 \in \mathbb{Z}[i_6] $ is regular (recall the proof of \Cref{lemma_irred_elem}), i.e. irreducible. By the same lemma we also have that $ 1+i_6  \in \mathbb{Z}[i_6] $ is irreducible and therefore $ 2 \mid 1 + i_6 $ if we assume that $ \mathbb{Z}[i_z] $ is a unique factorization domain which lead us to contradiction as $ 2 \nmid 1 $ by \Cref{lem_trinity_divisability}. Hence, $ 2 \in \mathbb{Z}[i_6] $ is irreducible, but not prime which shows that $ \mathbb{Z}[i_6] $ is not a unique factorization domain.

Let now $ z > 6 $ and assume that either one of $ 2+z,2-z \in \mathbb{Z} $ is not a prime. To consider both cases in one we will say that $ 2 \pm z \in \mathbb{Z} $ is not a prime. Then we find a prime number $ p \in \mathbb{N} $ such that $ p \mid 2 \pm z $ and $ p^2 \leq z+2 $. Moreover, we have that $ p, \tfrac{z \pm 2}{p} \in \mathbb{Z}[i_z] $ and 
	\begin{align*}
		\left( 1 \pm i_z \right)^2 = \left(z \pm 2 \right)i_z = p \cdot \frac{z \pm 2}{p}i_z.
	\end{align*}
We will now show that $ p \in \mathbb{Z}[i_z] $ is irreducible. We have that 
	$$ \left\vert p \right\vert \leq \frac{z+2}{2}< z-2 $$
where the above inequality is equivalent to $ z > 6 $. By \Cref{lem_represented_M} this means that $ p \in \mathbb{Z}[i_z] $ is irreducible. By \Cref{lemma_irred_elem} we know that $ 1 \pm i_z \in \mathbb{Z}[i_z] $ is irreducible, too. Hence, $  p \in \mathbb{Z}[i_z] $ and $ 1 \pm i_z $ must be associated if $ \mathbb{Z}[i_z] $ is a unique factorization domain. However, if they are associated, then $ p \mid 1 + i_z $ in $ \mathbb{Z}[i_z] $ and so $ p \mid 1 $ in $ \mathbb{Z} $ by \Cref{lem_trinity_divisability} which is a contradiction.
\end{proof}



\begin{example}
	In fact, the assumption $ \vert z \vert \geq 6 $ in \Cref{theo_not_uniqe_factor_domain} is necessary. Consider the case $ z = 4 $, then both, $ 2-z = -4 \in \mathbb{Z} $ and	$ 2+z = 8 \in \mathbb{Z} $ are not primes. For example, we have
	$$ \left( 1+i_4 \right)^2 = 6i_4 = 2 \cdot 3 i_4$$
However, the problem here is that all the factors above are not irreducible in $ \mathbb{Z}[i_4] $. In fact, we have
	\begin{align*}
		\left(1 + i_4\right)_{6} &= \left( 1-i_4 \right)_{-2} \left( 2-i_4 \right)_{-3} = \left( 3- i_4 \right)_{-2} \left( 1-2i_4 \right)_{-3} \\
		2 &= \left( 1-i_4 \right)_{-2} \left( 3- i_4 \right)_{-2} \\
		3i_4 &= \left( 2-i_4 \right)_{-3}\left( 1-2i_4 \right)_{-3}
	\end{align*}
where all the factors on the right-hand side must be irreducible because their norm is prime in $ \mathbb{Z} $.
\end{example}


\begin{example} \label{ex_prime__neq_irreducible}
Observe that the reverse statement of \Cref{theo_not_uniqe_factor_domain} is not true, i.e. there are also $ z $-rings with both $ 2-z,2+z \in \mathbb{Z} $ primes, but they are still not unique factorization domains. Recall \Cref{exa_z39} where
	$$ \left( 5-i_{39} \right)_{-169} \left( -34 + i_{39} \right)_{-169} = -13^2 $$
with $ \left( -34 + i_{39} \right)_{-169} = \overline{\left( 5-i_{39} \right)_{-169}} $. Note that
	$$ \left\vert \pm 13 \right\vert < z-2 = 37 $$
and so we get that $ -13, 13 \in \mathbb{Z}[i_{39}] $ are of type II by \Cref{lem_represented_M} and so all elements with norm $ \pm 169 $ as $ 5-i_{39}, -34 + i_{39}, 13 \in \mathbb{Z}[i_{39}] $ must be irreducible. However, $ 13 \in \mathbb{Z}[i_{39}] $ is not a prime element as it divides neither $ 5-i_{39} \in \mathbb{Z}[i_{39}] $ nor $ 34 + i_{39} \in \mathbb{Z}[i_{39}] $ by \Cref{lem_trinity_divisability}. Hence, $ \mathbb{Z}[i_{39}] $ is not a unique factorization domain even if $ 2-z,2+z \in \mathbb{Z} $ are prime numbers.
\end{example}





We could ask whether there are infinitely many unique factorization domains of the form $ \mathbb{Z}[i_z] $ for $ z \in \mathbb{Z} $ or not. A necessary condition for the existence of infinitely many of them is the existence of infinitely many prime pairs $ p,p+4 \in \mathbb{Z} $. Such pairs are called cousin primes and indeed there are infinitely many of these cousin primes, see\cite{Cousin_primes}.

\begin{section}{Positive, primitive solutions of the Diophantine equation $ x^2 + zxy + y^2 = M $ for $ M $ being a product of irregular primes}

\begin{subsection}{The general case}

With the tools we have from the previous sections we can now deal with the question about the number of positive solutions to Diophantine equations of the form $ x^2 + zxy + y^2 = M $ for $ z,M \in \mathbb{Z} $ (particularly, $ M,z \in \mathbb{N} $) if $ M $ is a product of irregular elements in $ \mathbb{Z}[i_z] $. Note that all the statements in this section hold trivially true for $ z \in \left\{ -2,2 \right\} $ as there do not exist irregular elements in these $ z $-rings by \Cref{example_primes_z123}. The next statement is a generalization of Proposition $ 3 $ from \cite{Miniatur}. 

\begin{proposition} \label{prop_3_miniatur}
	Let $ z \in \mathbb{N} $, $ k \in \mathbb{N} \setminus \{ 0 \} $ and $ p = \alpha \overline{\alpha} \in \mathbb{Z}[i_z] $ be irregular, but not special such that $ p^k > 0 $. Then there exists a unit $ \varepsilon \in \mathbb{Z}[i_z] $ such that $ \varepsilon \alpha^k $ is the unique positive, primitive solution to the equation $ x^2 + zxy + y^2 = p^k $.
\end{proposition}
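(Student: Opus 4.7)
The plan is to exploit that although $\mathbb{Z}[i_z]$ need not be a UFD, the two irreducible factors $\alpha,\overline{\alpha}$ of $p$ behave like coprime primes: both are prime by \Cref{theo_prime_type_I}, and since $p$ is not special, \Cref{lemma_associate} forces $\alpha$ and $\overline{\alpha}$ to be non-associated. Existence of $\varepsilon$ will follow directly from \Cref{coro_pos_sol}; the heart of the proof lies in the primitivity of $\varepsilon\alpha^k$ and in reducing every primitive solution to this element.

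First I would observe that $\alpha^k$ already solves the equation: multiplicativity of the norm (\Cref{Nz_lemma}) gives $\N(\alpha^k)=\N(\alpha)^k=p^k$. Since $z\in\mathbb{N}$ and $p^k>0$, \Cref{coro_pos_sol} produces a unique unit $\varepsilon\in\mathbb{Z}[i_z]$ such that $\varepsilon\alpha^k$ has non-negative coordinates, yielding the candidate positive solution. Multiplication by a unit preserves the gcd of the coordinates, so to verify primitivity it suffices to show that $\alpha^k$ is primitive. If a rational prime $q$ divided both coordinates of $\alpha^k$, then by \Cref{lem_trinity_divisability} $q\mid\alpha^k$ in $\mathbb{Z}[i_z]$, and taking norms forces $q=\pm p$. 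From $p\mid\alpha^k$ one gets $\overline{\alpha}\mid\alpha^{k-1}$ by cancelling a factor of $\alpha$ in the integral domain $\mathbb{Z}[i_z]$; iterating primeness of $\overline{\alpha}$ at most $k-1$ times pushes this down to $\overline{\alpha}\mid\alpha$, and irreducibility of $\alpha$ together with $\N(\overline{\alpha})=p\notin\{\pm 1\}$ forces $\alpha$ and $\overline{\alpha}$ to be associated, contradicting that $p$ is not special.

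For uniqueness, given any primitive solution $\beta$ one has $\beta\overline{\beta}=\N(\beta)=p^k$. Primeness of $\alpha$ gives $\alpha\mid\beta$ or $\overline{\alpha}\mid\beta$; both cannot hold simultaneously, since writing $\beta=\alpha\beta'$ and applying primeness of $\overline{\alpha}$ together with non-association (which rules out $\overline{\alpha}\mid\alpha$) would yield $\overline{\alpha}\mid\beta'$, hence $p\mid\beta$, contradicting primitivity via \Cref{lem_trinity_divisability}. In the case $\alpha\mid\beta$, I would write $\beta=\alpha\beta_1$ with $\overline{\alpha}\nmid\beta_1$ and $\N(\beta_1)=p^{k-1}$, and apply the same dichotomy to $\beta_1$ inductively until one extracts $\beta=u\alpha^k$ for some unit $u$; the uniqueness clause of \Cref{coro_pos_sol} then pins $u$ down to $\varepsilon$. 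The symmetric case $\overline{\alpha}\mid\beta$ produces $\beta=u\overline{\alpha}^k$, which by \Cref{lemma_mirror} is associated to $\widetilde{\alpha^k}$, i.e.\ the mirror image of $\varepsilon\alpha^k$, and is identified with it under the paper's counting of positive primitive solutions. The main obstacle I anticipate is precisely the step ``both $\alpha$ and $\overline{\alpha}$ divide $\beta$ forces $p\mid\beta$'': in the absence of a UFD structure on $\mathbb{Z}[i_z]$, one cannot appeal to coprimality from unique factorization and must run the primeness-plus-non-association argument by hand.
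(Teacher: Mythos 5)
Your proposal is correct and follows essentially the same route as the paper: existence via multiplicativity of the norm and \Cref{coro_pos_sol}, primitivity via \Cref{lem_trinity_divisability} together with primeness of $\alpha,\overline{\alpha}$ (\Cref{theo_prime_type_I}) and the non-association forced by non-specialness, and uniqueness via the dichotomy $\alpha\mid\beta$ or $\overline{\alpha}\mid\beta$ applied to $\beta\overline{\beta}=p^k$. Your treatment of the case $\overline{\alpha}\mid\beta$ through the mirror conjugate is in fact slightly more explicit than the paper's ``without loss of generality'' at that step.
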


\begin{proof}
	Observe that we have
	$$ \mathrm{Re} \left( \alpha^k \right)^2 + z \mathrm{Re} \left( \alpha^k \right) \mathrm{Im} \left( \alpha^k \right) + \mathrm{Im} \left( \alpha^k \right)^2 = \N \left( \alpha^k \right) = \N \left( \alpha \right)^k = p^k > 0 $$
and hence $ \alpha^k $	
satisfies the equation $ x^2 + zxy + y^2 = p^k $. By	\Cref{coro_pos_sol} we find a unit $ \varepsilon \in \mathbb{Z}[i_z] $ such that the real an the imaginary part of $ \varepsilon \alpha^k $ are positive. Since $ \N \left( \varepsilon \alpha^k \right) = \N \left( \alpha^k \right) = p^k $, we conclude that $ \varepsilon \alpha^k $ also satisfies $ x^2 + zxy + y^2 = p^k $ which shows the existence of a positive solution.

Now we would like to show that our solution is primitive. Assume not, then there is $ \lambda \in \mathbb{Z} \setminus \{ -1,1 \} $ such the real and the imaginary part of $ \varepsilon \alpha^k $ are divided by $ \lambda $. By \Cref{lem_trinity_divisability} this means that also the norm of $ \varepsilon \alpha^k $,
	$$ \N \left( \varepsilon\alpha^k \right) = \N \left( \varepsilon \right)\N \left( \alpha^k \right) = p^k,$$
is divided by $ \lambda $ and hence $ p \mid \lambda $ which means $ p $ also divides the real and imaginary part of $ \varepsilon \alpha^k $ so $ p = \alpha \overline{\alpha} \mid \varepsilon \alpha^k $ again by \Cref{lem_trinity_divisability}. Now $ k = 1 $ would imply $ \overline{\alpha} \mid \varepsilon $ and this is a contradiction. Hence, $ k > 1 $ and then $ \overline{\alpha} \mid \varepsilon \alpha^{k-1} $ finally implies $ \overline{\alpha} \mid \alpha $ which is a contradiction as $ p $ is not special.


Now we will show that $ \varepsilon \alpha^k $ is the unique positive, primitive solution to $ x^2 + zxy + y^2 = p^k $. We can use a similar trick as in the proof of Proposition 3 in \cite{Miniatur}: Assume that there is another positive, primitive solution $ a,b \in \mathbb{Z} $ with $ a^2 + zab + b^2 = p^k $. Then we have
	$$ \left( a+bi_z\right) \overline{\left( a+bi_z\right)} = p^k = \alpha^k \overline{\alpha}^k .$$
Since $ \alpha, \overline{\alpha} \in \mathbb{Z}[i_z] $ are prime we get that each of them divide one of the factors on the left-hand side. However, non of them divides the same factor because then our solution $ a + bi_z \in \mathbb{Z}[i_z] $ would not be primitive. Therefore, without loss of generality, we can assume that $ \alpha^k \mid \left( a+bi_z\right) $.

Thus, we have
	$$ \frac{a+bi_z}{\alpha^k} \overline{\left(\frac{a+bi_z}{\alpha^k}\right)} = \N \left( \frac{a+bi_z}{\alpha^k} \right) = 1 $$
and so both factors of the left-hand side are units, i.e. there exist a unit $ \varepsilon \in \mathbb{Z}[i_z] $ such that $ \varepsilon \alpha^k = a + bi_z $. By \Cref{coro_pos_sol} there exist only one associated positive, primitive solution to $ x^2 + zxy + y^2 = p^k $ and so we conclude that $ \varepsilon = 1 $ which shows uniqueness.
\end{proof}


If we compare the proof above with the proof of Proposition $ 3 $ in \cite{Miniatur} we see that they look similar, but the more general case here needs other tools as we cannot use Niven's theorem any more because we do not have the link to trigonometric functions which we have if we work with complex numbers. Moreover, it is in general more difficult transform a primitive solution of $ x^2 + zxy + y^2 = M $ to a positive, primitive solution (for the Gaussian integers this was way more simple since we could just take the absolute value of $ x $ and $ y $).

\begin{example} \label{example_pos_sol_49}
We would like to find the unique positive, primitive solution of the Diophantine equation 
	$$ x^2+6xy+y^2 = 49 .$$
By \Cref{example_z6_pm3_pm7}	we already know that $ -7 \in \mathbb{Z}[i_6] $ is of type I and the Diophantine equation 
	$$ x^2+6xy+y^2 = -7 $$
can be solved by $ x = 4 $ and $ y = -1 $. Hence, we have 
	$$ -7 = \left( 4 -i_6 \right) \overline{\left( 4 -i_6 \right)} = \left( 4 -i_6 \right)\left( -2 +i_6 \right) .$$
We set $ \alpha \coloneqq 4-i_6 $, then 
	$$ \alpha^2 = 16-8i_6+i_6^2 = 15-2i_6 $$
must solve the Diophantine equation on the top and it must be primitive (what we can see easily). However, our solution is not positive. Since our solution is on the branch which intersects the first quadrant, there must be $ n \in \mathbb{Z} $ such that $ i_6^n \alpha $ is positive and so in the first quadrant. Recall \Cref{proposition_prop_iz} and/or \Cref{Separating_lines_positive} to see that $ n > 0 $. 
Here we have
	$$ i_6\alpha = 15i_6-2i_6^2 = 2 + 3i_6 $$
which is the positive, primitive solution of the considered Diophantine equation. By \Cref{prop_3_miniatur} we know that it is unique up to interchanging the order of $ x $ and $ y $ what we can see in \Cref{example_p7_sol_49}: Indeed, $ S_{49} $ intersects the $ \mathbb{Z} \times \mathbb{Z}i_z $-grid in the first quadrant only four times where two and two of them are symmetric with respect to their real and imaginary parts. Moreover, the intersection on the axes is not a primitive solution. If we work with $ \overline{\alpha} $ instead of $ \alpha $ we also get that 
	$$ \overline{\alpha}^2 = \left( -2+i_6 \right)^2 = 4-4i_6+i_6^2 = 3+2i_6 $$
and so $ \overline{\alpha}^2 $ is already the primitive, positive solution to the above Diophantine where just $ x $ and $ y $ are interchanged.
\end{example}

\vspace{5mm}
\begin{figure}[h]  
\begin{center}
\pagestyle{empty}

\definecolor{ffqqqq}{rgb}{1.,0.,0.}
\definecolor{qqqqff}{rgb}{0.,0.,1.}
\definecolor{qqzzqq}{rgb}{0.,0.6,0.}
\definecolor{cqcqcq}{rgb}{0.7529411764705882,0.7529411764705882,0.7529411764705882}
\begin{tikzpicture}[line cap=round,line join=round,>=triangle 45,x=0.6cm,y=0.6cm]
\draw [color=cqcqcq,, xstep=0.6cm,ystep=0.6cm] (-2.7673595955934123,-3.2674063907421544) grid (15.795523955113431,7.4737280080347075);
\draw[->,color=black] (-2.7673595955934123,0.) -- (15.795523955113431,0.);
\foreach \x in {-2.,-1.,1.,2.,3.,4.,5.,6.,7.,8.,9.,10.,11.,12.,13.,14.,15.}
\draw[shift={(\x,0)},color=black] (0pt,2pt) -- (0pt,-2pt) node[below] {\footnotesize $\x$};
\draw[->,color=black] (0.,-3.2674063907421544) -- (0.,7.4737280080347075);
\foreach \y in {-3.,-2.,-1.,1.,2.,3.,4.,5.,6.,7.}
\draw[shift={(0,\y)},color=black] (2pt,0pt) -- (-2pt,0pt) node[left] {\footnotesize $\y$};
\draw[color=black] (0pt,-10pt) node[right] {\footnotesize $0$};
\clip(-2.7673595955934123,-3.2674063907421544) rectangle (15.795523955113431,7.4737280080347075);
\draw [samples=50,domain=-0.99:0.99,rotate around={-45.:(0.,0.)},xshift=0.cm,yshift=0.cm,line width=2.pt,color=qqzzqq] plot ({1.8708286933869707*(1+(\x)^2)/(1-(\x)^2)},{1.3228756555322954*2*(\x)/(1-(\x)^2)});
\draw [samples=50,domain=-0.99:0.99,rotate around={-45.:(0.,0.)},xshift=0.cm,yshift=0.cm,line width=2.pt,color=qqzzqq] plot ({1.8708286933869707*(-1-(\x)^2)/(1-(\x)^2)},{1.3228756555322954*(-2)*(\x)/(1-(\x)^2)});
\draw [samples=50,domain=-0.99:0.99,rotate around={-135.:(0.,0.)},xshift=0.cm,yshift=0.cm,line width=2.pt,color=qqqqff] plot ({3.5*(1+(\x)^2)/(1-(\x)^2)},{4.949747468305833*2*(\x)/(1-(\x)^2)});
\draw [samples=50,domain=-0.99:0.99,rotate around={-135.:(0.,0.)},xshift=0.cm,yshift=0.cm,line width=2.pt,color=qqqqff] plot ({3.5*(-1-(\x)^2)/(1-(\x)^2)},{4.949747468305833*(-2)*(\x)/(1-(\x)^2)});
\begin{scriptsize}
\draw [fill=qqzzqq] (4.,-1.) circle (2.5pt);
\draw[color=qqzzqq] (4.025,-2.675) node {\fontsize{10}{0} $ x^2+6xy+y^2 = -7 $};
\draw [fill=qqzzqq] (-2.,1.) circle (2.5pt);
\draw[color=qqzzqq] (-2.1,1.6) node {\fontsize{10}{0} $ \overline{\alpha} $};
\draw[color=qqzzqq] (3.9,-1.575) node {\fontsize{10}{0} $ \alpha $};
\draw [fill=qqqqff] (15.,-2.) circle (2.5pt);
\draw[color=qqqqff] (3.95,6.325) node {\fontsize{10}{0} $x^2+6xy+y^2 = 49$};
\draw[color=qqqqff] (15.2,-1.4) node {\fontsize{10}{0} $ \alpha^2 $};
\draw [fill=ffqqqq] (2.,3.) circle (2.5pt);
\draw[color=ffqqqq] (2.7,3.5) node {\fontsize{10}{0} $ i_6\alpha^2 $};
\draw [fill=ffqqqq] (3.,2.) circle (2.5pt);
\draw[color=ffqqqq] (3.575,2.475) node {\fontsize{10}{0} $ \overline{\alpha}^2 $};

\end{scriptsize}
\end{tikzpicture}
\caption{Positive, primitive solution to $ x^2+ 6xy + y^2 = 7^2 $}
\label{example_p7_sol_49}
\end{center}
\end{figure}
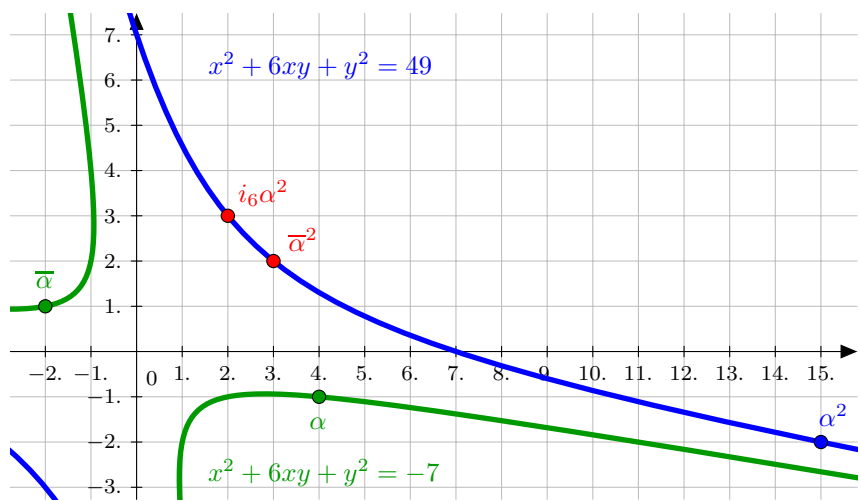
\vspace{5mm}

In \Cref{prop_3_miniatur} we considered the positive, primitive solution in the cases where $ z \geq 0 $ and $ p^k>0 $ if $ p \in \mathbb{Z}[i_z] $ is of type I. However, with the help of this proposition, the concept of subbranches and the isomorphism between the $ z $-rings and other observations we did in the sections before we can also discuss the question about the number of solutions of the Diophantine equation $ x^2+zxy+y^2 = p^k $ and their construction for $ z \in \mathbb{Z} $ and $ k \in \mathbb{Z} \setminus \left\{ 0 \right\} $ in general (even if the solution is not positive and/or not primitive) as long as $ p = \alpha \overline{\alpha} \in \mathbb{Z}[i_z] $ is of type I, but it might also be special. We will consider two solutions of the form $ \left\{ x,y \right\}, \left\{ -x,-y \right\} $ as the same if they are in the same quadrant. Note that associated solutions to Diophantine equations are either both primitive or not by vi) of \Cref{proposition_prop_iz} and \Cref{coro_In}. We will treat now all the different cases:

We start with the case $ p^k > 0 $. Let $ z \geq 0 $ and assume that $ p \in \mathbb{Z}[i_z] $ is not special. Then for each element in the list
	$$ \alpha^k, \overline{\alpha}{\alpha}^{k-1}, \overline{\alpha}^2{\alpha}^{k-2}, \dots, \overline{\alpha}^k $$
there is an associated element on the subbranch $ B_{\sqrt{p^k}} $ (actually on every choice of subbranch) where all the elements in the list cannot be associated as $ p $ is not special and so all of them are representatives of different equivalence classes with respect to association, but not necessarily of different solution classes of the Diophantine equation as some of them might be associated to elements in $ B_{\sqrt{p^k}} $ where they have just exchanged real and imaginary parts. In fact, this happens if and only if solutions of the Diophantine equation are conjugated to each other as
	$ \widetilde{\alpha} = i_z \overline{\alpha} $
where $ \widetilde{\alpha} = \mathrm{Im}\left( \alpha\right) + \mathrm{Re}\left( \alpha\right)i_z $. Hence, only the first $ \lceil \tfrac{k+1}{2} \rceil $ elements in the above list are associated to different solutions in $ B_{\sqrt{p^k}} $ of the Diophantine equation $ x^2 + zxy + y^2 = p^k $ and $ \alpha^k $ is associated to the unique primitive solution to the above Diophantine equation and all the other solutions are not primitive (if $ p \in \mathbb{Z}[i_z] $ is special, then there is no primitive solution for $ k > 1 $).

Let us consider now the case $ z = 0 $. In this case the first quadrant is equal to $ B_{\sqrt{p^k}} \cup \left\{ \sqrt{p^k}i\right\} $, but we do not need to consider $ \sqrt{p^k}i $ as it is the same solution as $ \sqrt{p^k} $ for the Diophantine equation. By \Cref{prop_3_miniatur} and what we discussed before we know that there is only one positive, primitive solution. Additionally, we get that there must be $ \lceil \tfrac{k+1}{2} \rceil -1 $ non-primitive solutions in $ B_{\sqrt{p^k}} $ and so also in the first quadrant. In all the other quadrants we have the same story by symmetry reasons. This means that there is exactly the same amount of primitive and non-primitive solutions to $ x^2+y^2 = p^k $ if $ x,y \geq 0 $ as, for example, for $ x \geq 0 $ and $ y \leq 0 $ (or another choice of $ \leq, \geq $ for both). In case $ p = 2 $, i.e. $ p \in \mathbb{Z}[i_z] $ is special, then all the above representatives of solutions are associated. Hence, there is only one solution to the Diophantine equation and this solution is primitive if and only if $ k = 1 $.

Let $ z = 1 $, then the first quadrant of $ \mathbb{Z} \times \mathbb{Z}i_1 $ is also covered by $ B_{\sqrt{p^k}} \cup \left\{ \sqrt{p^k}i_1\right\} $ where $ \sqrt{p^k} $ and $ \sqrt{p^k}i_1 $ are associated as well and so they are the same solution for the Diophantine equation $ x^2+xy+y^2 = p^k $. Hence, the number of solutions for this Diophantine equation in the first and the third quadrant remains the same by symmetry. However, the second quadrant is covered by two branches as well as a further element, namely $ B_{\I_{+}\left( \sqrt{p^k}\right)} \cup B_{\I_{+}^2\left( \sqrt{p^k}\right)} \cup \left\{ -\sqrt{p^k} \right\} $, and both branches are symmetric in the second quadrant with respect to the diagonal going through the origin and the second and fourth quadrant, respectively, so all associated representatives of the above list in $ B_{\I_{+}\left( \sqrt{p^k}\right)} $ give us a different solution to the above Diophantine equation. Moreover, if $ \I_{+} \left( \sqrt{p^k} \right) $ and $ \I_{+}^2 \left( \sqrt{p^k} \right) $ solves the equation (this happens if and only if $ k $ is even) then they are associated, but not the same solution of the equation and so both of them should be counted as different solutions. In total we get $ 2 \lceil \tfrac{k+1}{2} \rceil $ solutions (i.e. $ k+1 $ and $ k+2 $ if $ k $ is odd or even, respectively) of the Diophantine equation in the second and fourth quadrant. Two of them are primitive and the rest is non-primitive. In case $ p = 3 $, then there is only one solution in the first and third quadrant and two in the second and fourth quadrant which are all primitive if $ k = 1 $ and if $ k > 1 $, then the amount of solutions is the same, but all of them are non-primitive.

If $ z = 2 $, then there are no irregular primes and so there is nothing to show.

If $ z > 2 $, then the amount of solutions in the first and third quadrant is still the same, but there are infinitely many primitive solutions in the second and fourth quadrant to the Diophantine equation $ x^2 + zxy + y^2 = p^k $ as there are infinitely many subbranches contained in both of these quadrants. If $ p \in \mathbb{Z}[i_z] $ is special, we will again have just one primitive solution in the first and third quadrant and infinitely many primitive solutions in the second and fourth quadrant if $ k = 1 $ and if $ k > 1 $ the number of solutions in the quadrants remains the same, but all of them are non-primitive.

Now we can consider the cases if $ z < 0 $. Clearly the isomorphism between $ \mathbb{Z}[i_z] $ and $ \mathbb{Z}[i_{-z}] $ changes the quadrants i.e. what was true for the first/third quadrant for $ z > 0 $ is now true for the second/fourth quadrant and also the other way round.

We discuss what happen if $ p^k < 0 $. Note that we do not have to treat the cases $ z \in \left\{ 0,\pm 1, \pm 2 \right\} $ as there are no negative elements of type I in $ \mathbb{Z}[i_z] $.

Let $ z > 2 $, then there are no solutions in the first and third quadrant to the Diophantine equation $ x^2 + zxy + y^2 = p^k $ and infinitely many primitive solutions in the second and fourth quadrant as these quadrants contain infinitely many subbranches for $ p $ being non-special. This is true even if $ p \in \mathbb{Z}[i_z] $ is special for $ k = 1 $ and all these solutions must be non-primitive if $ k > 1 $.

For $ z < 2 $ we have the same story as for $ z > 2 $ just with the difference that the roles of the first/third and the second/fourth quadrant are exchanged.



The next proof will be similar to Theorem 4 in \cite{Miniatur} (note that we cannot just take absolute values to make the solution positive and so we will multiply our solution with a unit to reach that):

\begin{proposition} \label{theo_4_miniatur}
	Let $ z \in \mathbb{N} $ and $ n, k_l > 0 $ be integers, $ p_l = \alpha_l \overline{\alpha_l} \in \mathbb{Z}[i_z] $ be pairwise distinct non-special elements with different absolute values for all $ l = 1, \dots, n $ and let $ M = \prod_{l=1}^{n}p_l^{k_l} $. Then there exist a unit $ \varepsilon \in \mathbb{Z}[i_z] $ such that $ \varepsilon \prod_{l=1}^{n}\alpha_l^{k_l} $ is a positive, primitive solution to $ x^2+zxy+y^2 = M $.
\end{proposition}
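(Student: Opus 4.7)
The plan is to set $\beta \coloneqq \prod_{l=1}^{n} \alpha_l^{k_l}$ and show that $\varepsilon \beta$ is the desired positive, primitive solution for a suitable unit $\varepsilon$. First I would verify that $\beta$ already solves the equation: by multiplicativity of the norm (\Cref{Nz_lemma}),
\[
\N(\beta) = \prod_{l=1}^{n} \N(\alpha_l)^{k_l} = \prod_{l=1}^{n} p_l^{k_l} = M,
\]
so $\beta \in S_M$. Since $z \in \mathbb{N}$, \Cref{coro_pos_sol} then produces a unit $\varepsilon \in \mathbb{Z}[i_z]$ with $\mathrm{Re}(\varepsilon \beta), \mathrm{Im}(\varepsilon \beta) \geq 0$, i.e.\ $\varepsilon \beta$ is a positive solution of $x^2+zxy+y^2=M$.

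The substance of the proof is primitivity of $\varepsilon \beta$. I would argue by contradiction: assume an integer prime $q$ divides both $\mathrm{Re}(\varepsilon\beta)$ and $\mathrm{Im}(\varepsilon\beta)$; by \Cref{lem_trinity_divisability} this yields $q \mid \varepsilon\beta$ in $\mathbb{Z}[i_z]$ together with $q^2 \mid \N(\varepsilon\beta) = M$. Unique factorization in $\mathbb{Z}$, combined with the fact that the $|p_l|$ are pairwise distinct rational primes, then forces $|q| = |p_l|$ for a unique index $l$, so $q$ is associated to $p_l$ in $\mathbb{Z}$ and hence $\alpha_l \overline{\alpha_l}$ divides $\varepsilon \beta = \varepsilon\, \alpha_l^{k_l} \prod_{j \ne l}\alpha_j^{k_j}$ in $\mathbb{Z}[i_z]$.

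The decisive step is then to locate $\overline{\alpha_l}$ in this product. By \Cref{theo_prime_type_I}, $\overline{\alpha_l}$ is prime in $\mathbb{Z}[i_z]$, so it must divide one of the factors $\varepsilon$, $\alpha_l$, or $\alpha_j$ for some $j \neq l$. It cannot divide $\varepsilon$, since otherwise $p_l = \N(\overline{\alpha_l})$ would have to divide $\N(\varepsilon) = \pm 1$. It cannot divide $\alpha_l$, since irreducibility of $\alpha_l$ would then force $\alpha_l$ and $\overline{\alpha_l}$ to be associated, contradicting non-speciality of $p_l$ (\Cref{lemma_associate}). And for $j \neq l$ it cannot divide $\alpha_j$ either, since otherwise $\alpha_j$ and $\overline{\alpha_l}$ would be associated, so $|p_j| = |\N(\alpha_j)| = |\N(\overline{\alpha_l})| = |p_l|$, contradicting pairwise distinctness of the absolute values. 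This contradiction would close the argument.

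The main obstacle I foresee is exactly this three-way case analysis at the end: it is the only place where \emph{both} hypotheses of the proposition (non-speciality and distinct absolute values) are genuinely used, and it depends crucially on \Cref{theo_prime_type_I} to upgrade irreducibility of $\overline{\alpha_l}$ to primality, so that the ``divides one of the factors'' dichotomy is even available.
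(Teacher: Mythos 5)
Your proposal is correct and follows essentially the same route as the paper: norm multiplicativity plus \Cref{coro_pos_sol} for existence of a positive associate, then primitivity by contradiction via \Cref{lem_trinity_divisability} and the primality of the factors $\alpha_l,\overline{\alpha_l}$ from \Cref{theo_prime_type_I}, with non-speciality and the distinct absolute values ruling out every case. The only difference is organizational: the paper tracks both $\alpha_1$ and $\overline{\alpha_1}$ into factors $\alpha_{l_1},\alpha_{l_2}$ and then reduces to the single-prime argument of \Cref{prop_3_miniatur}, whereas you run the three-way case analysis on $\overline{\alpha_l}$ directly, which is equivalent.
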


\begin{proof}
	First of all,
		$$ \N \left( \prod_{l=1}^{n}\alpha_l^{k_l} \right) = \prod_{l=1}^{n}\alpha_l^{k_l}  \overline{\prod_{l=1}^{n}\alpha_l^{k_l}} = M $$
holds and by \Cref{coro_pos_sol} we find a unit $ \varepsilon \in \mathbb{Z}[i_z] $ such that $ \varepsilon \prod_{l=1}^{n}\alpha_l^{k_l} $ is a positive solution to $ x^2+zxy+y^2 = M $.

It remains to show that this solution is primitive. If not, then there must exist $ \lambda \in \mathbb{Z} \setminus \left\{ -1,1 \right\} $ such that $ \lambda \mid M $ and so $ \lambda $ must be divisible by at least one of the $ p_l $'s. Without loss of generality, let us assume that $ p_1 \mid \lambda $. Hence, $ p_1 $ also divides the real and the imaginary part of $ \varepsilon \prod_{l=1}^{n}\alpha_l^{k_l} $ which implies $ \alpha_1 \overline{\alpha_1} = p_1 \mid \varepsilon \prod_{l=1}^{n}\alpha_l^{k_l} $ by \Cref{lem_trinity_divisability}. Hence, there are $ l_1,l_2 \in \left\{ 1,2, \dots, n \right\} $ such that $ \alpha_1 \mid \alpha_{l_1} $ and $ \overline{\alpha_1} \mid \alpha_{l_2} $ because $ \alpha_1, \overline{\alpha_1} \in \mathbb{Z}[i_z] $ are prime. Therefore we deduce $ p_1 = \N \left( \alpha_1 \right) \mid \N \left( \alpha_{j} \right) = p_j $ for $ j = l_1,l_2 $ which implies $ l_1 = 1 = l_2 $. 
This means $ p_1 \mid \alpha_1^{k_1} $. Now we can proceed as in the proof of \Cref{prop_3_miniatur} i.e. we deduce the contradiction that $ p_1 $ is special.
\end{proof}


Now we would like to generalize Proposition 5 from \cite{Miniatur} for $ z $-rings:

\begin{theorem} \label{theo_main}
	Let $ z,n \in \mathbb{N} $ and $ M = q_1^{r_1}q_2^{r_2}  \prod_{l=1}^{n} {p_l}^{k_l}  \in \mathbb{N} \setminus \left\{ 0,1 \right\} $ be factorized where $ r_1,r_2 \in \{ 0,1 \} $, $ k_j \in \mathbb{N} \setminus \left\{ 0 \right\} $, $ p_j = \alpha_j \overline{\alpha_j} $ are non-special, irregular elements with different absolute values for $ j = 1,2, \dots, n $ and $ q_1,q_2 \in \mathbb{Z}[i_z]$ are each either a special element or equal to $ 1 $ such that their absolute values are also different from each other. Then there are $ \lceil 2^{n-1} \rceil $ positive, primitive solutions to $ x^2 + zxy + y^2 = M $. Moreover, if there is a $ q_j \neq 1 $ such that $ r_j \in \mathbb{N} $ would be at least equal to $ 2 $, then there would be no primitive solution. Also if $ \mathbb{Z}[i_z] $ is a unique factorization domain and if we allow $ M >0 $ to be divisible by any regular element, then there is no primitive solution to $ x^2 + zxy + y^2 = M $.
\end{theorem}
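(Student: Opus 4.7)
The plan is to reduce the problem to a combinatorial count of prime factorizations of $\beta = x + y i_z$ in $\mathbb{Z}[i_z]$, using that each $\alpha_l, \overline{\alpha_l}$ and each prime factor $\gamma_j$ of $q_j = \gamma_j \overline{\gamma_j}$ is in fact prime in $\mathbb{Z}[i_z]$ by \Cref{theo_prime_type_I}, and that by specialness $\overline{\gamma_j} = u_j \gamma_j$ for some unit $u_j$. First I would show that any positive primitive solution $\beta$ with $\N(\beta) = M$ admits a factorization
\[
\beta \;=\; \varepsilon \, \gamma_1^{c_1} \gamma_2^{c_2} \prod_{l=1}^{n} \alpha_l^{a_l} \overline{\alpha_l}^{b_l}
\]
for some unit $\varepsilon$ and exponents $a_l, b_l, c_j \geq 0$; this is obtained by peeling off known primes one at a time, using that for any prime $\pi \in \mathbb{Z}[i_z]$ with $\pi \mid \beta \overline{\beta}$ one has $\pi \mid \beta$ or $\pi \mid \overline{\beta}$. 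Comparing $\alpha_l$- and $\gamma_j$-adic valuations of $\beta \overline{\beta} = M$ then forces $a_l + b_l = k_l$ and $c_j = r_j$.

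Next I would translate primitivity into the constraints $\{a_l, b_l\} = \{0, k_l\}$ and $c_j \leq 1$. If $\min(a_l, b_l) \geq 1$ for some $l$, then $p_l = \alpha_l \overline{\alpha_l}$ divides $\beta$, so by \Cref{lem_trinity_divisability} the rational integer $p_l$ divides both $\mathrm{Re}(\beta)$ and $\mathrm{Im}(\beta)$, contradicting primitivity; the analogous argument with $q_j \sim \gamma_j^2$ rules out $c_j \geq 2$, which simultaneously establishes the second assertion about $r_j \geq 2$. Conversely, any tuple $(a_l, b_l, c_j)$ obeying these constraints yields an element $\beta_0$ with $\N(\beta_0) = M$ whose primitivity follows by reversing the argument: any rational prime dividing $\beta_0$ must divide $M$, hence equal some $p_l$ or $q_j$, and direct valuation counting excludes this. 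By \Cref{coro_pos_sol} each such $\beta_0$ has a unique positive representative in its unit orbit.

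The combinatorial heart of the proof is counting orbits of the resulting $2^n$ factorization classes. I would check that the global flip $(a_l, b_l) \mapsto (b_l, a_l)$ for every $l$ corresponds to $\beta_0 \mapsto \overline{\beta_0}$, and via the identity $\widetilde{\beta} = i_z \overline{\beta}$ from \Cref{lemma_mirror} the positive representative of $\overline{\beta_0}$ is the coordinate-swap $y + x i_z$ of the positive representative $x + y i_z$ of $\beta_0$. Since by definition a solution is the unordered pair $\{\mathrm{Re}(\beta), \mathrm{Im}(\beta)\}$, conjugate factorizations produce the same solution, so distinct positive primitive solutions correspond to orbits of this involution. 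The main obstacle is to verify that the $\mathbb{Z}/2\mathbb{Z}$-action is free for $n \geq 1$: a self-conjugate factorization would require $a_l = b_l$ for every $l$, which is incompatible with $\{a_l, b_l\} = \{0, k_l\}$ and $k_l \geq 1$. Hence for $n \geq 1$ there are $2^{n-1}$ orbits, and for $n = 0$ a single factorization class yields one solution, matching $\lceil 2^{n-1} \rceil$ in both regimes.

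Finally, if $\mathbb{Z}[i_z]$ is a unique factorization domain and some regular prime $p \in \mathbb{Z}$ divides $M$, then $p$ is also prime in $\mathbb{Z}[i_z]$, so $p \mid \beta \overline{\beta}$ together with $\overline{p} = p$ forces $p \mid \beta$, obstructing primitivity by \Cref{lem_trinity_divisability}.
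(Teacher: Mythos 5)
Your proposal is correct and follows essentially the same route as the paper: decompose $\beta=x+yi_z$ into the prime factors $\alpha_l,\overline{\alpha_l},\gamma_j$ (legitimate by \Cref{theo_prime_type_I}), read off primitivity via \Cref{lem_trinity_divisability}, identify conjugate factorizations through $\widetilde{\beta}=i_z\overline{\beta}$ and \Cref{coro_pos_sol}, and count the $2^n$ subsets modulo the fixed-point-free complementation involution. Your valuation bookkeeping is a cleaner packaging of the paper's "compare against $\alpha_I$ for a partition $I,I'$" argument, but it is the same proof in substance.
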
	
	

Observe that the irregular and special elements do not have to be positive.

\begin{proof}
At first let $ n > 0 $. We will assume that two such special elements with different absolute values $ q_1,q_2 \in \mathbb{Z}[i_z] $ exist as for the other cases we can just ignore them and their factors. Then we can find two (associated) prime elements $ \beta_j \in \mathbb{Z}[i_z] $ such that $ q_j = \beta_j \overline{\beta_j} $ for $ j = 1,2 $. 

Let $ I,I' $ be a partition of the set $ \{ 1,2, \dots,n \} $. We can factorize
	\begin{align*}
		M = q_1^{r_1}q_2^{r_2} \prod_{l=1}^{n} {{p_l}^{k_l}} &= \left(\beta_1^{r_1}\beta_2^{r_2}\prod_{l=1}^{n} {\alpha_l}^{k_l} \right)\overline{\left(\beta_1^{r_1}\beta_2^{r_2}\prod_{l=1}^{n} {\alpha_l}^{k_l} \right)} \\
		&= \left(\underbrace{\beta_1^{r_1}\beta_2^{r_2}\prod_{l \in I} {\alpha_l}^{k_l}\prod_{l \in I'} {\overline{\alpha_l}}^{k_l}}_{\eqqcolon \alpha_I} \right) \left(\underbrace{ \overline{\beta_1^{r_1}\beta_2^{r_2}}\overline{\prod_{l \in I} {\alpha_l}^{k_l}\prod_{l \in I'} {\overline{\alpha_l}}^{k_l}}}_{\eqqcolon \overline{\alpha_I}} \right)
	\end{align*}	
and for each $ I $ we find a unit $ \varepsilon_I $ such that $ \varepsilon\alpha_I $ is a positive, primitive solution to $ x^2 +zxy + y^2 = M $ 
for $ r_1 = 0 = r_2 $ by \Cref{theo_4_miniatur}. In case $ r_1 $ or $ r_2 $ are not both zero, then we might have to adjust $ \varepsilon_I $ by \Cref{coro_pos_sol} such that our solution is still positive. Moreover, it is easy to see that the solution remains primitive because if $ q_j $ is a special element, then it cannot happen that $ q_j $ divides the real and imaginary part of $ \alpha_I $ because then $ q_j^2 \mid M $ by \Cref{lem_trinity_divisability} which is a contradiction to $ r_j \leq 1 $.

On the other hand, if $ \left\{ a,b \right\} $ is a positive, primitive solution to $ x^2 + zxy + y^2 = M $, then $ \left(a+bi_z\right)\overline{\left(a+bi_z\right)} = M $. Since $ a,b $ are coprime, we find $ I \subset \{ 1,2, \dots,n \} $ such that $ a + bi_z = \varepsilon \alpha_I $ for a unit $ \varepsilon \in \mathbb{Z}[i_z] $. This works because $ \N \left( a+bi_z \right)= M $ and
$ a+bi_z $ is only divisible by irregular elements which divides $ M $. Moreover, by \Cref{coro_pos_sol} we find a unique unit $ \varepsilon $ such that $ \varepsilon \alpha_I $ has positive real and imaginary part.

Now we would like to show that $ x^2 + zxy + y^2 = M $ has exactly $ 2^{n-1} $ solutions. Let $ I_1,I_2 \subseteq \{ 1,2, \dots,n \} $ and assume that $ \varepsilon_1
\alpha_{I_1} $ and $ \varepsilon_2
\alpha_{I_2} $ represent the same positive, primitive solution for units $ \varepsilon_1, \varepsilon_2 \in \mathbb{Z}[i_z] $. Then we have
	$$ \left\{ \mathrm{Re}\left( \varepsilon_1
\alpha_{I_1} \right), \mathrm{Im}\left( \varepsilon_1
\alpha_{I_1} \right) \right\} = \left\{ \mathrm{Re}\left( \varepsilon_2
\alpha_{I_2} \right), \mathrm{Im}\left( \varepsilon_2
\alpha_{I_2} \right) \right\} $$
and so either 
$$ \varepsilon_1
\alpha_{I_1} = \varepsilon_2
\alpha_{I_2} $$
if $ \mathrm{Re}\left( \varepsilon_1
\alpha_{I_1} \right) = \mathrm{Re}\left( \varepsilon_2
\alpha_{I_2} \right) $ and $ \mathrm{Im}\left( \varepsilon_1
\alpha_{I_1} \right) = \mathrm{Im}\left( \varepsilon_2
\alpha_{I_2} \right) $ or 
$$ \varepsilon_1
\alpha_{I_1} = \varepsilon_2
\widetilde{\alpha_{I_2}} $$
if $ \mathrm{Re}\left( \varepsilon_1
\alpha_{I_1} \right) = \mathrm{Im}\left( \varepsilon_2
\alpha_{I_2} \right) $ and $ \mathrm{Im}\left( \varepsilon_1
\alpha_{I_1} \right) = \mathrm{Re}\left( \varepsilon_2
\alpha_{I_2} \right) $.	
 		
In the case 	$ \varepsilon_1
\alpha_{I_1} = \varepsilon_2
\alpha_{I_2} $ we have that $ I_1 = I_2 $ because $ \alpha_{I_1} $ and $ \alpha_{I_2} $ have a unique prime factorization. If $ \varepsilon_1
\alpha_{I_1} = \varepsilon_2
\widetilde{\alpha_{I_2}} = \varepsilon_2 i_z\overline{\alpha_{I_2}} $, then we conclude that $ I_1 $ and $ I_2 $ are a partition of $ \{ 1,2, \dots,n \} $. 

On the other hand, if $ I_1 $ and $ I_2 $ equal, then trivially $ \alpha_{I_1} = \alpha_{I_2} $ and there is a unique unit $ \varepsilon \in \mathbb{Z}[i_z] $ such that $ \varepsilon\alpha_{I_j} $ has positive real and imaginary part for each $ j = 1,2 $. If $ I_1 $ and $ I_2 $ are a partition of $ \{ 1,2, \dots,n \} $, then $ \alpha_{I_1} = \overline{\alpha_{I_2}} $. Moreover, there are unique units $ \varepsilon_j \in \mathbb{Z}[i_z] $ such that $ \varepsilon_j\alpha_{I_j} $ are positive solutions for $ j = 1,2 $ by \Cref{coro_pos_sol}. Observe that
$$ \widetilde{\varepsilon_2\alpha_{I_2}} = i_z \overline{\varepsilon_2\alpha_{I_2}} = i_z \overline{\varepsilon_2}\alpha_{I_1} $$
and by \Cref{lemma_mirror} we deduce that $ \varepsilon_2\alpha_{I_2} $ and $ i_z \overline{\varepsilon_2}\alpha_{I_1} $ is the same positive solution for the above Diophantine equation. By the uniqueness of the unit $ \varepsilon_1 $ we conclude that $ \varepsilon_1 = i_z \overline{\varepsilon_2} $ and so we have 
$$ \widetilde{\varepsilon_2\alpha_{I_2}} = \varepsilon_1 \alpha_{I_1} $$
which shows that the unique associated positive solutions to $ \alpha_{I_1} $ and $ \alpha_{I_2} $ are the same.   Thus, we have exactly $ 2^{n-1} $ choices of $ I $ such that the resulting positive, primitive solutions are different form each other.



Now we consider the case $ n = 0 $. Then for at least one $ j $ we have $ r_j > 0 $ because $ M \in \mathbb{N} \setminus \left\{ 0,1 \right\} $. We have to show that there exist exactly one positive, primitive solution. Observe that $ \beta_1^{r_1}\beta_2^{r_2} $ satisfies the Diophantine equation and there exist a unit $ \varepsilon \in \mathbb{Z}[i_z] $ such that $ \varepsilon \beta_1^{r_1}\beta_2^{r_2} $ is a positive solution. Moreover, this solution must be primitive because otherwise a prime $ p \in \mathbb{Z} $ would divide $ M $ by \Cref{lem_trinity_divisability} so $ p \in \left\{ \pm q_1, \pm q_2 \right\} $, but then either $ q_1^2 \mid M $ or $ q_2^2 \mid M $ again by \Cref{lem_trinity_divisability} which is a contradiction because $ M = \N \left( \beta_1^{r_1}\beta_2^{r_2} \right) = q_1^{r_1}q_2^{r_2} $.

Conversely, let $ x + yi_z \in \mathbb{Z}[i_z] $ be a positive, primitive solution to the above Diophantine equation. Then 
	$$ \beta_j^{r_j} \mid \N \left( x + yi_z \right) = q_1^{r_1}q_2^{r_2} $$
which implies $ \beta_j^{r_j} \mid x + yi_z $ by \Cref{prop_norm_prime_argument} and the fact that $ \beta_j, \overline{\beta_j} $ are associated for $ j = 1,2 $. Hence, $ \beta_1^{r_1}\beta_2^{r_2} $ and $ x + yi_z \in \mathbb{Z}[i_z] $ are associated positive, primitive solutions and they must be equal by \Cref{coro_pos_sol}.

For the rest of the proof we will assume $ n \in \mathbb{N} $ without any restriction. Now we would like to show that if some $ r_j > 1 $, then there is no primitive solution to $ x^2+zxy+y^2 = M $. If so we have
$$ \beta_j^2\overline{\beta_j}^2 = q_j^2 \mid M = x^2 + zxy + y^2 =  \left( x+yi_z \right) \overline{\left(x+yi_z\right)}, $$
which implies that at least one of the factors on the right-hand side can be divided by two of the factors on the left-hand side. Since these factors on the left-hand side are all associated, we find a unit $ \varepsilon_j \in \mathbb{Z}[i_z] $ such that their product is equal to $ \varepsilon_jq_j $. Without loss of generality we can now assume that $ \varepsilon_jq_j \mid \left( x+yi_z \right) $, i.e. also $ q_j \in \mathbb{Z} $ divides $ x + yi_z $ in $ \mathbb{Z}[i_z] $. By \Cref{lem_trinity_divisability} this means that $ q \mid x $ and $ q \mid y $ which is a contradiction to our assumption that $ x+yi_z $ is a primitive solution to the Diophantine equation above.



Assume that $ p \in \mathbb{Z}[i_z] $ is regular and $ \mathbb{Z}[i_z] $ is a unique factorization domain. Then $ p \in \mathbb{Z}[i_z] $ is irreducible and therefore prime. If 
	$$ p \mid M = x^2 + zxy + y^2 =  \left( x+yi_z \right) \overline{\left(x+yi_z\right)}, $$
then again, without loss of generality, $ p \mid x+yi_z $ which implies $ p \mid x $ and $ p \mid y $ by \Cref{lem_trinity_divisability} and so $ x+yi_z $ is not a primitive solution.
\end{proof}

Observe that the discussion after  \Cref{example_pos_sol_49} about the number and the construction of solutions in a chosen quadrant to a Diophantine equation $ x^2 + zxy + y^2 = M $ if $ M \in \mathbb{Z} $ is a product of irregular primes in $ \mathbb{Z}[i_z] $ and $ z \in \mathbb{Z} $ works analogously. The system of different association equivalence classes is generalized in the notation from \Cref{theo_main} to all possible elements we can produce in the product $ \left(\beta_1^{r_1}\beta_2^{r_2}\prod_{l=1}^{n} {\alpha_l}^{m_l}{\overline{\alpha_l}}^{k_l-m_l} \right) $
for all choices of $ m_l \in \left\{ 0,1, \dots, k_l \right\} $. Of course we should not forget to take the symmetry into consideration, i.e. some of the generated solutions in different quadrants might essentially not be different from each other. Observe that elements in the system of representatives are primitive if and only if we have $ m_l \in \left\{ 0,k_l \right\} $ for all $ l = 1,2, \dots, n $ and $ r_1,r_2 \in \left\{ 0,1 \right\} $.
\end{subsection}

\begin{subsection}{The ring $ \mathbb{Z}[i_3] $ and solutions to $ x^2 + 3xy + y^2 = M $}

In this section we will consider a concrete example, namely the $ z $-ring $ \mathbb{Z}[i_3] $ where we can apply what we especially discussed in the last section. The goal is to understand how many positive, primitive solutions the Diophantine equation 
	$$ x^2 + 3xy + y^2 = M $$
has for any $ M \in \mathbb{N} $. As mentioned before it is known that this ring is a unique factorization domain. Recall that the special elements are $ -5,5 \in \mathbb{Z}[i_3] $ and that there exist also units with norm equal to $ -1 $. At first we would like to determine the regular and irregular elements of $ \mathbb{Z}[i_3] $. For the next statement we use a proof method similar to \cite[p. 21-29]{Aigner_Ziegler}.


\begin{theorem} \label{theo_char_primes_3}
	A prime $ p \in \mathbb{Z} $ is of the form $ 5n \pm 1 $ for $ n \in \mathbb{Z} $ if and only if $ p \in \mathbb{Z}[i_3] $ is irregular, but non-special. Furthermore, the regular elements in $ \mathbb{Z}[i_3] $ are prime.
\end{theorem}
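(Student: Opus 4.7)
The strategy is a quadratic-reciprocity argument anchored on the identity
\[
4(a^2+3ab+b^2) \;=\; (2a+3b)^2 - 5 b^2,
\]
which translates representability of an integer by the norm form on $\mathbb{Z}[i_3]$ into a statement about the quadratic character of $5$. For the $(\Leftarrow)$ direction, take $p$ irregular and non-special; by \Cref{lemma_ir_and_reg_primes} there are $a,b\in\mathbb{Z}$ with $a^2+3ab+b^2=\pm p$. Reducing the displayed identity modulo $5$ (where $4\equiv -1$) yields $\pm p \equiv -(2a+3b)^2 \pmod 5$, so $p$ lies in $\{0,\pm 1\}\pmod 5$; the case $p\equiv 0$ forces $p=\pm 5$, which \Cref{theorem_special_primes} rules out by non-speciality, so $p\equiv \pm 1\pmod 5$.

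For the $(\Rightarrow)$ direction, let $p$ be a prime with $p\equiv \pm 1\pmod 5$, so in particular $p\neq 2,5$. Quadratic reciprocity (using $5\equiv 1\pmod 4$) gives $\bigl(\tfrac{5}{p}\bigr)=\bigl(\tfrac{p}{5}\bigr)=1$, so there is $m\in\mathbb{Z}$ with $m^2\equiv 5\pmod p$. The key computation inside $\mathbb{Z}[i_3]$ is
\[
(2 i_3 - 3)^2 \;=\; 4(3 i_3 -1) - 12 i_3 + 9 \;=\; 5,
\]
hence $p$ divides $m^2-5=(m-2 i_3+3)(m+2 i_3-3)$ in $\mathbb{Z}[i_3]$. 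Each factor has imaginary part $\pm 2$ and $p$ is odd, so by \Cref{lem_trinity_divisability} $p$ divides neither factor. Therefore $p$ is not a prime element of $\mathbb{Z}[i_3]$, and invoking the unique factorization property of $\mathbb{Z}[i_3]$ (asserted earlier in the paper) forces $p$ to be reducible, i.e.\ irregular; since $p\neq \pm 5$, it is automatically non-special.

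For the concluding claim that regular elements are prime, suppose $p\in\mathbb{Z}$ is a regular integer prime. By what has just been proved, $p\not\equiv 0,\pm 1\pmod 5$, so $p\equiv \pm 2\pmod 5$ (this also covers $p=\pm 2,\pm 3$). The presentation $\mathbb{Z}[i_3]\cong \mathbb{Z}[t]/(t^2-3t+1)$ from \Cref{lem_isomorphy} gives
\[
\mathbb{Z}[i_3]/(p) \;\cong\; \mathbb{F}_p[t]/(t^2-3t+1).
\]
The discriminant of $t^2-3t+1$ is $5$, a non-residue mod $p$ for odd $p\equiv \pm 2\pmod 5$ (again by reciprocity); for $p=2$ the polynomial specialises to $t^2+t+1$, which has no root in $\mathbb{F}_2$. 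In every case $t^2-3t+1$ is irreducible over $\mathbb{F}_p$, so the quotient is a field, $(p)$ is a maximal ideal of $\mathbb{Z}[i_3]$, and $p$ is a prime element.

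The hardest step is the $(\Rightarrow)$ direction, where concluding reducibility from non-primality relies on the UFD property of $\mathbb{Z}[i_3]$. Absent that, one could instead manufacture an $\alpha \in \mathbb{Z}[i_3]$ with $\N(\alpha) = \pm p$ directly via Minkowski's theorem, applied to the index-$p$ sublattice $\{(a,b)\in\mathbb{Z}^2 : 2a+3b\equiv m b\pmod p\}$ inside a parallelogram aligned with the indefinite form $u^2-5v^2$ and having area slightly exceeding $4p$, forcing the nontrivial lattice point $\alpha$ produced to satisfy $|\N(\alpha)|<2p$ and hence $\N(\alpha)=\pm p$.
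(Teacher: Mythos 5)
Your proof is correct, but the forward direction takes a genuinely different route from the paper's. The paper proves that every prime $p \equiv \pm 1 \pmod 5$ is actually \emph{represented} by $x^2+3xy+y^2$ via a Thue-type pigeonhole argument: after producing $s_p$ with $s_p^2+3s_p+1\equiv 0 \pmod p$ (the same quadratic-reciprocity input you use), it finds a small nonzero pair $(x,y)$ with $p \mid x^2+3xy+y^2$ and $0<\vert x^2+3xy+y^2\vert <5p$, then eliminates the multiples $2p,3p,4p$ by congruences mod $5$ and a descent mod $4$. That argument is constructive and does not touch unique factorization. You instead exploit the identity $(2i_3-3)^2=5$ to factor $m^2-5$ in $\mathbb{Z}[i_3]$, conclude via \Cref{lem_trinity_divisability} that $p$ is not a prime element, and then invoke the UFD property of $\mathbb{Z}[i_3]$ to pass from ``not prime'' to ``reducible''. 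This is logically sound given that the paper asserts (without proof) that $\mathbb{Z}[i_3]$ is a UFD, and it is slicker, but it makes the forward direction rest on that unproven assertion, whereas the paper needs it only for the final sentence; your Minkowski remark would remove that dependence, but as written it is only a sketch. On the other hand, for the closing claim that regular elements are prime, your argument that $\mathbb{Z}[i_3]/(p)\cong \mathbb{F}_p[t]/(t^2-3t+1)$ is a field when $p\equiv\pm 2\pmod 5$ (and for $p=2$) is independent of unique factorization and is cleaner than the paper's, which simply cites the UFD property again. Your $(\Leftarrow)$ direction, reducing $4(a^2+3ab+b^2)=(2a+3b)^2-5b^2$ modulo $5$, is the same observation the paper opens with (no value $\equiv 2,3\pmod 5$ is represented), just packaged as an identity. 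The only thing your route forgoes is the explicit construction of a solution to $x^2+3xy+y^2=\pm p$, which the theorem as stated does not require.
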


\begin{proof}
Observe that there are no $ x,y \in \mathbb{Z} $ such that 
	$$ x^2 + 3xy + y^2 \equiv 2 \pmod{5} $$
or 
	$$ x^2 + 3xy + y^2 \equiv 3 \pmod{5} $$
hold. Therefore the primes in $ \mathbb{Z} $ for which we can find $ x,y \in \mathbb{Z} $ such that 
	$$ x^2 + 3xy + y^2 = p $$
are either of the form  $ 5n \pm 1 $ for $ n \in \mathbb{Z} $ or equal to $ \pm 5 \in \mathbb{Z} $ where the latter ones are the special elements. The goal is now to show that for all primes of the above form we really find $ x,y \in \mathbb{Z} $ such that $ x^2 + 3xy + y^2 = p $.

At first we will show that for each positive prime $ p \in \mathbb{Z} $ such that $ p \equiv \pm 1 \pmod{5} $ we find an element $ s_p \in \mathbb{N} $ such that 
	$$ s_p^2+3s_p+1 \equiv 0 \pmod{p} .$$
This is equivalent of showing the existence of an element $ s_p \in \mathbb{N} $ such that $$ \left( 2s_p+3 \right)^2 \equiv 5 \pmod{p} $$	
and this is equivalent to finding $ X_p \in \mathbb{N} $ such that $ X_p^2 \equiv 5 \pmod{p} $ holds. By quadratic reciprocity we get that the answer of this question is equivalent of finding $ X_p \in \mathbb{N} $ such that 
	$$ X_p^2 \equiv p \equiv \pm 1 \pmod{5} .$$
And this is clearly possible for $ X_p \equiv 1 \pmod{5} $ and $ X_p \equiv 2 \pmod{5} $. Therefore the existence of such an $ s_p \in \mathbb{N} $ is showed.


Let $ p \in \mathbb{Z} $ of the form $ p = n^2 \pm 1 $ be arbitrary and $ s_p \in \mathbb{N} $ such that 
	$$ s_p^2+3s_p+1 \equiv 0 \pmod{p}. $$ 
Consider the pairs $ \left(x,y\right) \in \mathbb{N} \times \mathbb{N} $ with $ 0 \leq x,y < \sqrt{p} $. Observe that the number of such pairs is strictly greater than $ p $ which allows us to use the pigeon-hole principle: There are at least two such pairs $ \left( x_1,y_1 \right), \left( x_2,y_2 \right) \in \mathbb{N} \times \mathbb{N} $ such that 
	$$ x_1-s_py_1 \equiv x_2 - s_py_2 \pmod{p} $$
holds. Now we define $ x \coloneqq x_1-x_2  \in \mathbb{Z} $ and $ y \coloneqq y_1-y_2  \in \mathbb{Z} $. Observe that $ \vert x \vert , \vert y \vert < \sqrt{p} $ and $ \left( x,y \right) \neq \left( 0,0 \right) $ because the pairs $ \left( x_1,y_1 \right) $ and $ \left( x_2,y_2\right) $ are different from each other. Therefore we get that 
	$$ 0 < \vert x^2 + 3xy + y^2 \vert < 5p $$ 
(remember that $ \N \left( x,y \right) = 0 $ if and only if $ x=0 $ and $ y=0 $ by \Cref{Nz_lemma}).

Moreover, we can also show that $ p \mid x^2 + 3xy + y^2 $. Indeed, we have
	$$ x \equiv x_1-x_2 \equiv s_py_1 -s_py_2 \equiv s_py  \pmod{p} $$
and therefore 
	$$ x^2+3xy+y^2 \equiv y^2 \left( s_p^2+3s_p+1 \right) \equiv 0 \pmod{p} $$
holds. We conclude that $ p \mid x^2 + 3xy + y^2 $. Combined with $ 0 < \vert x^2 + 3xy + y^2 \vert < 5p $ we deduce 
	$$ x^2 + 3xy + y^2 \in \left\{\pm p, \pm 2p, \pm 3p, \pm 4p \right\} .$$

In $ \mathbb{Z}[i_3] $ we find units $ \varepsilon \in  \mathbb{Z}[i_z] $ such that $ \N \left( \varepsilon \right) = -1 $. Therefore we can assume that $ x^2 + 3xy + y^2 \in \left\{ p,  2p, 3p, 4p \right\} $ because if not, we can consider the real and imaginary part of $ \varepsilon \left( x+yi_z \right) $.

Since there are no $ x,y \in \mathbb{N} $ such that $ x^2 + 3xy + y^2 \equiv 2 \pmod{5} $ or $ x^2 + 3xy + y^2 \equiv 3 \pmod{5} $ and $ 2p \equiv \pm 2 \pmod{5} $, $ 3p \equiv \pm 3 \pmod{5} $ we can assume 
	$$ x^2 + 3xy + y^2 \in \left\{ p, 4p \right\} .$$

If $ x^2 + 3xy + y^2 = 4p $, then we have $ x^2 + 3xy + y^2 \equiv 0 \pmod{4} $. However, if $ x^2 + 3xy + y^2 \equiv 0 \pmod{4} $ holds, then we necessarily have that $ 2 \mid x $ and $ 2 \mid y $. In this case we can set $ x' \coloneqq \tfrac{x}{2} $ and $ y' \coloneqq \tfrac{y}{2} $ and we have $ x'^2 + 3x'y' + y'^2 = p $.

Hence, we always find $ x,y \in \mathbb{N} $ such that $ x^2+3xy+y^2 = p $ if $ p \equiv 5n \pm 1 $ is a positive prime. Then $ \varepsilon \left( x+yi \right) $ has norm $ -p $ and so its real and imaginary part satisfy the equation $ x^2 + 3xy + y^2 = -p $.

If a prime $ p \in \mathbb{Z} $ is not of the above form, then it is irreducible (and so regular) in $ \mathbb{Z}[i_z] $ by \Cref{lemma_ir_and_reg_primes}. Hence, $ p \in \mathbb{Z}[i_z] $ is a prime element because $ \mathbb{Z}[i_z] $ is a unique factorization domain.
\end{proof}

With \Cref{theo_main} and \Cref{theo_char_primes_3} we can conclude the following:

\begin{corollary}
	Let $ M = 5^r \left( \prod_{l=1}^{n} {p_l}^{k_l} \right) \in \mathbb{N} \setminus \{0,1 \} $ be factorized, $ n \in \mathbb{N} $, $ k_j \in \mathbb{N} \setminus \{ 0 \} $, $ n_j \in \mathbb{Z} $ and either $ p_j = 5n_j + 1 \in \mathbb{Z} $ or $ p_j = 5n_j - 1 \in \mathbb{Z} $ be pairwise different primes for $ 1 \leq j \leq n $ where $ r \in \{0,1 \} $. Then there are $ \lceil 2^{n-1} \rceil $ positive, primitive solutions to $ x^2 + 3xy + y^2 = M $. Otherwise, i.e. if $ M $ is divisible by at least one prime not in the above form or $ r > 1 $, then there is no primitive solution.
\end{corollary}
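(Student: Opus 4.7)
The plan is to deduce this corollary by reading off the hypotheses of \Cref{theo_main} for the ring $\mathbb{Z}[i_3]$ and combining it with the characterization from \Cref{theo_char_primes_3}. The latter tells us exactly three classes of primes $p\in\mathbb{Z}$ relative to $\mathbb{Z}[i_3]$: those of the form $5n\pm 1$ which are irregular and non-special, the two special elements $\pm 5$, and all others which are regular. Since $\mathbb{Z}[i_3]$ is a unique factorization domain, every regular prime is prime in $\mathbb{Z}[i_3]$. This is the setup that feeds directly into the last part of \Cref{theo_main}.

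First I would treat the positive case. Assume $M$ has the prescribed form, so every prime divisor of $M$ different from $5$ is of the form $5n_j\pm 1$, hence non-special irregular in $\mathbb{Z}[i_3]$ by \Cref{theo_char_primes_3}, and the factor $5$ (if present) contributes at most to the first power. Setting $q_1\coloneqq 5^r$ (so $q_1\in\{1,5\}$ is either trivial or the unique special element appearing), $q_2\coloneqq 1$, and letting the pairwise distinct non-special irregular elements $p_1,\dots,p_n$ be the primes $5n_j\pm 1$ with exponents $k_j$, the hypotheses of \Cref{theo_main} are satisfied. That theorem then yields exactly $\lceil 2^{n-1}\rceil$ positive, primitive solutions of $x^2+3xy+y^2=M$; in particular for $n=0$ (where $M\in\{5\}$ is forced by $M\neq 1$) we obtain the single solution $\{1,1\}$ associated to $\beta_1=1+i_3$.

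Next I would address the non-existence claim in the two subcases. If some prime divisor $p$ of $M$ is not of the form $5n\pm 1$ and not equal to $\pm 5$, then $p\in\mathbb{Z}[i_3]$ is regular by \Cref{theo_char_primes_3}, hence (by the UFD property) prime in $\mathbb{Z}[i_3]$. The last assertion of \Cref{theo_main} immediately rules out a primitive solution. If instead all prime divisors are admissible but $r>1$, so $25\mid M$, then the special element $5$ occurs with exponent $r\geq 2$ and the corresponding statement of \Cref{theo_main} about some $r_j\geq 2$ again forbids a primitive solution.

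The only subtle point is bookkeeping: one must check that no prime of the form $5n\pm 1$ coincides with $\pm 5$ (immediate, since $5\not\equiv\pm 1\pmod 5$), and that two different primes $p_j$ in $\mathbb{Z}$ remain distinct in absolute value as required by \Cref{theo_main}; this is automatic for positive rational primes. Thus all required non-degeneracy conditions reduce to trivial arithmetic, and no further obstacle arises beyond correctly matching the two theorems.
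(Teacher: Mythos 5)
Your proposal is correct and is precisely the derivation the paper intends: the corollary is stated as an immediate consequence of \Cref{theo_main} combined with \Cref{theo_char_primes_3}, with $q_1=5^r$, $q_2=1$ and the $p_j$ playing the role of the non-special irregular elements, and the two non-existence clauses read off from the last two assertions of \Cref{theo_main} (using that $\mathbb{Z}[i_3]$ is a unique factorization domain so regular elements are prime). Your bookkeeping checks (distinct absolute values, $5\not\equiv\pm 1\pmod 5$, the $n=0$ case) are exactly the routine verifications needed, so nothing further is required.
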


\end{subsection}
\end{section}


	








\begin{section}{Attachment}

The next few statements are proved without using the fact that the irreducible factors of irregular elements are prime in the corresponding $ z $-rings. Since the following methods are very basic and it was a surprise to me that it was possible to proceed with them I decided to put them here instead of erasing them even if we did not use them for the previous part.

\begin{lemma} \label{lem_represent_p_-p}
	Let $ p \in \mathbb{Z} $ be a prime and assume that the Diophantine equation 
	$$ x^2+zxy + y^2 = p $$
can be solved for $ x,y \in \mathbb{Z} $. Then $ x^2+zxy + y^2 = -p $ is solvable if and only if $ z \in \left\{ -3,3 \right\} $.
\end{lemma}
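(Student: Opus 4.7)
The plan is to reduce the ``only if'' direction to the existence of a unit of norm $-1$ in $\mathbb{Z}[i_z]$, since \Cref{theorem_units} classifies such units and shows they only exist when $z \in \{-3,3\}$. The ``if'' direction is immediate from \Cref{coro_z=3_sol_-1}. So I would suppose $\alpha = a + bi_z \in \mathbb{Z}[i_z]$ satisfies $\N(\alpha) = a^2 + zab + b^2 = p$ and $\beta = c + di_z \in \mathbb{Z}[i_z]$ satisfies $\N(\beta) = c^2 + zcd + d^2 = -p$, and aim to construct a $\gamma \in \mathbb{Z}[i_z]$ with $\N(\gamma) = -1$.

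The two natural candidates, both with norm $-p^2$, are $\alpha\beta$ and $\alpha\overline{\beta}$. A direct computation (using $i_z^2 = z i_z - 1$) gives
\begin{align*}
\alpha\beta &= (ac - bd) + (ad + bc + zbd)i_z, \\
\alpha\overline{\beta} &= (ac + azd + bd) + (bc - ad)i_z.
\end{align*}
So I would show that $p$ divides at least one of $\mathrm{Im}(\alpha\overline{\beta}) = bc - ad$ or $\mathrm{Re}(\alpha\beta) = ac - bd$; combining this with $p^2 \mid \N(\alpha\beta) = \N(\alpha\overline{\beta}) = -p^2$ and \Cref{lem_trinity_divisability}, the corresponding product is divisible by $p$ in $\mathbb{Z}[i_z]$, yielding $\gamma$ of norm $-1$ as required.

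The divisibility dichotomy will come from a mod-$p$ root analysis. First, $p \nmid b$ (else $p \mid a$, forcing $p^2 \mid p$) and likewise $p \nmid d$, so reducing $a^2 + zab + b^2 \equiv 0 \pmod{p}$ and dividing by $b^2$ shows that $a/b \in \mathbb{F}_p$ is a root of $X^2 + zX + 1$; analogously for $c/d$. Let $s_1, s_2$ denote the (possibly equal) roots of $X^2 + zX + 1$ in $\mathbb{F}_p$; by Vieta's formulas, $s_1 s_2 \equiv 1 \pmod{p}$. If $a/b \equiv c/d \pmod{p}$ then $p \mid ad - bc$, placing us in the first case. Otherwise $a/b$ and $c/d$ are the two distinct roots, so $(a/b)(c/d) \equiv s_1 s_2 \equiv 1 \pmod{p}$, giving $p \mid ac - bd$, the second case.

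The main obstacle I expect is handling the degenerate situations: the repeated-root case (when $p \mid z^2 - 4$, forcing $s_1 = s_2$ so only the first branch of the dichotomy applies) and the prime $p = 2$ (where the argument via division in $\mathbb{F}_p$ needs to be rephrased parity-wise, and where one should also check that no solutions exist for $z$ odd so the hypothesis is vacuous there). In each of these edge cases I would verify by direct parity/divisibility arithmetic that the relevant quantity $ad - bc$ or $ac - bd$ is still divisible by $p$, so the main argument goes through. Once $\gamma$ of norm $-1$ is in hand, \Cref{theorem_units} gives $z \in \{-3,3\}$ and completes the proof; crucially, neither \Cref{theorem_units} nor \Cref{lem_trinity_divisability} depends on the fact that irreducible factors of irregular elements are prime, so this stays within the permitted toolkit.
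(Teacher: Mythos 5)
Your proof is correct and follows the same overall architecture as the paper's: both reduce the ``only if'' direction to manufacturing an element of norm $-1$ by showing that one of the two products $\alpha\beta$, $\alpha\overline{\beta}$ (the paper uses $\alpha\widetilde{\beta}=i_z\alpha\overline{\beta}$, an associate, so this is the same decomposition) of norm $-p^2$ is divisible by $p$, invoking \Cref{lem_trinity_divisability} and then \Cref{coro_z=3_sol_-1}/\Cref{theorem_units}. The one genuinely different step is how you establish the dichotomy $p\mid ad-bc$ or $p\mid ac-bd$. The paper derives the explicit integer identity $\left( ad-bc \right)\left( ac-bd \right)=p\left( ab+cd \right)$ by eliminating $z$ from the two norm equations, which yields the dichotomy in one line, uniformly in $p$ and $z$, with no case analysis. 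You instead observe that $a/b$ and $c/d$ are both roots of $X^2+zX+1$ over $\mathbb{F}_p$ and apply Vieta, which is conceptually cleaner (it explains \emph{why} the dichotomy holds: two residues satisfying the same quadratic are either equal or multiply to the constant term) but obliges you to verify $p\nmid b$, $p\nmid d$ and to dispose of the edge cases you list. Those edge cases are in fact harmless --- $\mathbb{F}_2$ is a field so no separate parity argument is needed, the repeated-root case simply lands in the first branch, and when $X^2+zX+1$ has no root mod $p$ the hypothesis is vacuous --- but the paper's identity sidesteps them entirely. Your observation that the cited results do not rest on the primality of irreducible factors of irregular elements is accurate, so the proof respects the constraint of the attachment section.
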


\begin{proof}
	Assume that $ a,b,c,d \in \mathbb{Z} $ with $ a^2+zab+b^2 = p $ and $ c^2+zcd+d^2 = -p $. Therefore we get
	$$ \left( ab+cd \right) z = -\left(a^2+b^2+c^2+d^2\right) .$$
Inserting this in the first equation multiplied by $ \left( ab+cd \right) $ we have
	$$ a^2  \left( ab+cd\right)-ab\left(a^2+b^2+c^2+d^2\right) + b^2\left( ab+cd\right) = p\left( ab+cd\right) $$
which is equivalent to 
	$$\left( ad-bc \right) \left( ac-bd \right) = p \left( ab+cd \right) .$$
Hence, either $ p \mid ad-bc $ or $ p \mid ac-bd $.
Now we have
	$$ \left( a+bi_z \right)_p \left( c+di_z  \right)_{-p} = ac-bd + \left( ad+bc+zbd \right)i_z $$
and 	
	$$ \left( a+bi_z  \right)_p \left( d+ci_z  \right)_{-p} = ad-bc + \left( ac+bd+zbc\right)i_z	 .$$	
Observe that the norm of the left-hand side of both equations is equal to $ -p^2 $ and one of the real parts of them on the right-hand side must by divisible by $ p $. Hence, also the imaginary part has to be divisible by $ p $ by \Cref{lem_trinity_divisability}.

Thus, without loss of generality, we can assume that
	$$ \frac{ac-bd}{p} + \frac{ad+bc+zbd}{p}i_z \in \mathbb{Z}[i_z] $$
and its norm must be $ -1 $, so we conclude that $ z \in \left\{ -3,3 \right\} $ by \Cref{coro_z=3_sol_-1}.

In case $ z \in \left\{ -3,3 \right\} $ and $ a^2+zab+b^2 = p $ we can find a unit $ \varepsilon \in \mathbb{Z}[i_z] $ with $ \N \left( \varepsilon \right) = -1 $. For example, set $ \varepsilon := 1-i_z\in \mathbb{Z}[i_3] $ or $ \varepsilon := 1+i_z\in \mathbb{Z}[i_{-3}] $ depending whether $ z = 3 $ or $ z = -3 $. Then the element $ \varepsilon \left(a+bi_z \right) \in \mathbb{Z}[i_z] $ has norm $ -p $ and so its real and imaginary parts solve the Diophantine equation $ x^2+zxy + y^2 = -p $.
\end{proof}

\begin{proposition} \label{prop_mod_sol}
	Let $ z \in \mathbb{N} $, $ k \in \mathbb{N} \setminus \{ 0 \} $ and $ p \in \mathbb{Z}[i_z] $ irregular, but not special. Then there is at most one unique positive, primitive solution to 
	$$ x^2 + zxy + y^2 = p^k .$$
\end{proposition}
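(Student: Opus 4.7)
I would prove uniqueness by showing that every positive, primitive solution $\gamma \in \mathbb{Z}[i_z]$ of $x^2+zxy+y^2 = p^k$ is associated either to $\alpha^k$ or to $\overline{\alpha}^k$, and then using \Cref{coro_pos_sol} together with \Cref{lemma_mirror} to collapse the two cases to a single unordered pair $\{x,y\}$. The crucial point is to stay within the weaker tools available in this appendix, in particular to avoid any use of primality of $\alpha$ or $\overline{\alpha}$ and rely only on \Cref{prop_norm_prime_argument}, \Cref{lem_trinity_divisability}, and \Cref{coro_pos_sol}.

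First I would set up an induction on the power of $\alpha$ dividing $\gamma$. Let $\gamma$ be a positive, primitive solution, so $\N(\gamma) = p^k$, and apply \Cref{prop_norm_prime_argument}: either $\alpha \mid \gamma$ or $\overline{\alpha} \mid \gamma$. Fix the first case (the other is symmetric) and write $\gamma = \alpha\delta_1$ with $\N(\delta_1) = p^{k-1}$. Provided $k \geq 2$ I would apply \Cref{prop_norm_prime_argument} again, now to $\delta_1$. If $\overline{\alpha} \mid \delta_1$ then $p = \alpha\overline{\alpha} \mid \gamma$ in $\mathbb{Z}[i_z]$, and \Cref{lem_trinity_divisability} forces $p$ to divide both $\mathrm{Re}(\gamma)$ and $\mathrm{Im}(\gamma)$, contradicting primitivity. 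Hence $\alpha \mid \delta_1$, yielding $\alpha^2 \mid \gamma$. Iterating this step $k$ times gives $\gamma = \alpha^k \varepsilon$ for some $\varepsilon \in \mathbb{Z}[i_z]$, and comparing norms forces $\N(\varepsilon) = 1$, so $\varepsilon$ is a unit. The symmetric argument starting from $\overline{\alpha} \mid \gamma$ yields $\gamma = \overline{\alpha}^k \varepsilon$ for a unit $\varepsilon$.

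Next, let $\gamma_1, \gamma_2$ be two positive, primitive solutions. By the previous paragraph each is associated to $\alpha^k$ or to $\overline{\alpha}^k$. If both are associated to the same one, then $\gamma_1$ and $\gamma_2$ are associated and, being both positive, must coincide by \Cref{coro_pos_sol}. Otherwise, say $\gamma_1 = \alpha^k \varepsilon_1$ and $\gamma_2 = \overline{\alpha}^k \varepsilon_2$; then $\overline{\gamma_1} = \overline{\alpha}^k \overline{\varepsilon_1}$ shows $\gamma_2$ is associated to $\overline{\gamma_1}$, and \Cref{lemma_mirror} rewrites this as $\gamma_2$ being associated to $\widetilde{\gamma_1}$. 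Since $\widetilde{\gamma_1}$ has non-negative real and imaginary parts, \Cref{coro_pos_sol} forces $\gamma_2 = \widetilde{\gamma_1}$, so $\gamma_1$ and $\gamma_2$ represent the same unordered pair of coordinates.

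The main obstacle is keeping the induction step honest: one is tempted to argue ``$\overline{\alpha} \mid \alpha \delta_j$ implies $\overline{\alpha} \mid \delta_j$'', which silently invokes primality of $\overline{\alpha}$. The correct move is to apply \Cref{prop_norm_prime_argument} afresh to $\delta_j$ itself and eliminate the undesired alternative through primitivity and \Cref{lem_trinity_divisability}, as above. The non-special hypothesis is not essential for the logic but makes the ``$\alpha^k$-type'' versus ``$\overline{\alpha}^k$-type'' dichotomy non-redundant; if $p$ were special the two types would collapse and uniqueness would follow immediately from the first case of the concluding argument.
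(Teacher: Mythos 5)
Your argument is correct, but it is genuinely different from the proof the paper gives for \Cref{prop_mod_sol}. The paper's attachment proof never factors the solution in $ \mathbb{Z}[i_z] $ at all: starting from two positive solutions $ \{a,b\},\{c,d\} $ it derives the integer identity $ p^k\left( ab-cd \right) = \left( ac-bd \right)\left( bc-ad \right) $, rules out $ p $ dividing both factors on the right by a congruence computation that invokes primitivity and \Cref{lemma_associate} (this is where non-specialness is genuinely used), and then exploits the size bound $ \vert ac-bd \vert < p^k $ to force $ ac-bd = 0 $ and hence $ \{a,b\} = \{c,d\} $. Your route instead classifies every positive, primitive solution up to association as $ \varepsilon\alpha^k $ or $ \varepsilon\overline{\alpha}^k $ by iterating \Cref{prop_norm_prime_argument} and killing the unwanted alternative via primitivity and \Cref{lem_trinity_divisability}, then collapses the two classes with \Cref{lemma_mirror} and the uniqueness of the positivizing unit from \Cref{coro_pos_sol}. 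Your iteration is honest — you correctly avoid the tacit appeal to primality of $ \overline{\alpha} $ by reapplying \Cref{prop_norm_prime_argument} to the quotient at each stage — and you stay within the "weak" toolkit the attachment restricts itself to, since \Cref{prop_norm_prime_argument} is proved independently of \Cref{theo_prime_type_I}. What each approach buys: the paper's is elementary integer arithmetic and demonstrates that the count needs almost no ring theory, which is the attachment's stated point; yours additionally produces the explicit form of the solution (so existence comes essentially for free, recovering most of \Cref{prop_3_miniatur}), and, as you observe, it shows the non-special hypothesis is not actually needed for the "at most one" conclusion, whereas the paper's proof does use it. Two harmless imprecisions to fix if you write this up: since $ p $ may be negative one only gets $ \N\left( \delta_j \right) = \pm p^{k-j} $ and $ \N\left( \varepsilon \right) = \pm 1 $ (still a unit by \Cref{Nz_lemma}), and the conjugate of a unit should be noted to be a unit for the step $ \overline{\gamma_1} = \overline{\varepsilon_1}\,\overline{\alpha}^k $.
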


\begin{proof}
	 Assume that we have two positive solutions $ \{ a,b \} $ and $ \{ c,d \} $ to the Diophantine equation $ x^2 + zxy + y^2 = p^k $, i.e. we have 
	 $$ a^2 + zab + b^2 = p^k = c^2 + zcd + d^2 = p^k .$$
The aim is to show that $ \{ a,b \} = \{ c,d \} $. For this we will transform the equations. By the above equations we get
	$$ zab = p^k-a^2-b^2 $$
and
	$$ z \left( ab -cd \right) = c^2+ d^2-a^2-b^2 .$$
By multiplying $ \left( ab -cd \right) $ and $ ab $ to the above equations, respectively, we deduce
	$$ \left( p^k-a^2-b^2 \right) \left( ab-cd \right) = zab \left( ab-cd \right) = ab \left( c^2+d^2-a^2-b^2\right) .$$
The first and the last part of the equation is finally equivalent to the identity
	$$ p^k \left( ab-cd \right) = \left( ac-bd \right) \left( bc-ad \right) .$$

By the above identity we get that $ p \mid ac-bd $ or $ p \mid bc-ad $. For $ k > 1 $ it could also happen that $ p \mid ac-bd $ and $ p \mid bc-ad $. We will show that this is never the case. Assume $ p \mid ac-bd $ and $ p \mid bc-ad $, then we have that $ ac \equiv bd \pmod{p} $ and $ bc \equiv ad \pmod{p} $ and so we get
	$$ a^2d \equiv abc \equiv b^2d \pmod{p} .$$
Since our solutions are primitive, we have that $ p \nmid d $ and so $ a^2 \equiv b^2 \pmod{p} $ holds. Moreover, we have that $ p \mid a^2-b^2 = \left( a+ b \right) \left( a- b \right) $ and so either $ p \mid a + b $ or $ p \mid a - b $. Hence, either $ \left(a+b \right)^2 \equiv 0 \pmod{p} $ or $ \left(a-b \right)^2 \equiv 0 \pmod{p} $ what we will denote by 
	$$ \left( a \pm b \right)^2 \equiv 0 \pmod{p} $$ 
to consider both cases simultaneously. Thus,
	$$ a^2 \pm 2ab + b^2 \equiv 0 \equiv a^2 + zab + b^2 \pmod{p} $$
holds true which implies 
	$$ \left( z \mp 2 \right)ab \equiv 0 \pmod{p} .$$
However, since our solution is primitive, we have that $ p \nmid a $ and $ p \nmid b $. Moreover, by \Cref{lemma_associate}, $ p \nmid  z \mp 2 $ because $ p $ is not special by our assumption. Thus, we get a contradiction.

Therefore, without loss of generality (or by exchanging $ a $ and $ b $), we can assume that $ p^k \mid ac-bd $. Since $ 0 < a,b,c,d < \sqrt{p^k} $ we also have that $ \vert ac-bd \vert < p^k $ and hence 
	$$ ac-bd = 0 $$
which shows that $ ab-cd = 0 $ by the above identity.

Now we show that the solutions are equal if $ ab-cd = 0 $. Consider
	$$ a^2 + zab + b^2 = c^2 + zcd + d^2, $$
subtract $ zab = zcd $ and multiply on both sides by $ b^2 $. We get
	$$ \left(ab\right)^2 + b^4 = b^2 \left( c^2+ d^2 \right) .$$
If we replace $ ab $ by $ cd $ we obtain
	$$ b^4 - \left( c^2+ d^2 \right)b^2 + c^2d^2 = \left(b^2-c^2 \right) \left(b^2-d^2 \right) = 0 .$$
Hence, we conclude that either $ b = c $ or $ b = d $ because the solutions are positive which implies $ \{ a,b \} = \{ c,d \} $. 
\end{proof}

The next proposition is a generalization of \Cref{prop_mod_sol} and a weaker version of \Cref{theo_main}.

\begin{proposition}
	Let $ z \in \mathbb{N} $ and $ n,k_l \in \mathbb{N} \setminus \{ 0 \} $, $ p_l \in \mathbb{Z}[i_z] $ irregular and non-special for all $ l = 1, \dots, n $. Then there are at most $ 2^{n-1} $ positive, primitive solutions to 
	$$ x^2 + zxy + y^2 = \prod_{l=1}^{n} {p_l}^{k_l}. $$
\end{proposition}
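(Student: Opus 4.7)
The plan is to associate to each positive primitive solution $\left\{ c, d \right\}$ (having fixed a reference positive primitive solution $\left\{ a, b \right\}$ in advance) an unordered partition of $\left\{ 1, \ldots, n \right\}$ into two possibly empty blocks, and then to show that this correspondence is injective. Since the number of such unordered partitions is $2^{n-1}$ for $n \geq 1$ (the $2^n$ ordered partitions collapse in pairs under the fixed-point-free involution $\left( I, I' \right) \leftrightarrow \left( I', I \right)$), this immediately yields the claimed bound.

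First I would reproduce the identity $M \left( ab - cd \right) = \left( ac - bd \right) \left( bc - ad \right)$ from the proof of \Cref{prop_mod_sol} and apply the same non-special argument used there to each factor $p_l$: primitivity forces $p_l \nmid a, b, c, d$, and non-speciality of $p_l$ rules out $p_l$ dividing both $ac - bd$ and $bc - ad$ simultaneously. Hence for every $l$ exactly one of $p_l^{k_l} \mid ac - bd$ or $p_l^{k_l} \mid bc - ad$ holds, yielding a partition $\left( I, I' \right)$ of $\left\{ 1, \ldots, n \right\}$. Swapping $c$ and $d$ interchanges $I$ and $I'$, so $\Phi \left( \left\{ c, d \right\} \right) := \left\{ I, I' \right\}$ is a well-defined map from positive primitive solutions to unordered partitions.

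The main step is to show $\Phi$ is injective. Given two solutions $\left\{ c_1, d_1 \right\}$ and $\left\{ c_2, d_2 \right\}$ with $\Phi \left( \left\{ c_1, d_1 \right\} \right) = \Phi \left( \left\{ c_2, d_2 \right\} \right)$, after possibly swapping $c_2$ and $d_2$ I may assume the ordered partitions coincide. For $l \in I$ we then have $a c_j \equiv b d_j \pmod{p_l^{k_l}}$ for $j = 1, 2$; multiplying the two congruences by $d_2$ and $d_1$ respectively and subtracting gives $a \left( c_1 d_2 - c_2 d_1 \right) \equiv 0 \pmod{p_l^{k_l}}$, and since $\gcd \left( a, p_l^{k_l} \right) = 1$ we obtain $p_l^{k_l} \mid c_1 d_2 - c_2 d_1$. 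The identical manipulation starting from $b c_j \equiv a d_j \pmod{p_l^{k_l}}$ for $l \in I'$ yields the same divisibility, so by the Chinese Remainder Theorem $M \mid c_1 d_2 - c_2 d_1$. Combined with the strict inequality $\left\vert c_1 d_2 - c_2 d_1 \right\vert < M$ (which comes from $0 < c_j, d_j < \sqrt{M}$) this forces $c_1 d_2 = c_2 d_1$, so $\left( c_2, d_2 \right)$ is a positive scalar multiple of $\left( c_1, d_1 \right)$, and the shared norm $M$ forces the scalar to be $1$. The remaining subcase (where the ordered partitions are each other's swap) is handled symmetrically via the analogous identity $\left( c_1 c_2 - d_1 d_2 \right) \left( d_1 c_2 - c_1 d_2 \right) = M \left( c_1 d_1 - c_2 d_2 \right)$, leading this time to $M \mid c_1 c_2 - d_1 d_2$ and ultimately to $\left( c_2, d_2 \right) = \left( d_1, c_1 \right)$, so the unordered solutions still coincide.

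The main obstacle is careful bookkeeping: one must check that primitivity genuinely gives $p_l \nmid a, b, c_j, d_j$ (so that every cancellation modulo $p_l^{k_l}$ is legal), that the strict bound $\left\vert c_1 d_2 - c_2 d_1 \right\vert < M$ really holds (it does: both $c_1 d_2$ and $c_2 d_1$ are strictly positive and strictly less than $M$, so their difference has absolute value strictly less than $M$), and that the two subcases of ``same ordered partition'' versus ``swapped ordered partition'' really exhaust the ways two solutions can share an unordered partition. Once these points are in place, injectivity of $\Phi$ combined with the count of $2^{n-1}$ unordered partitions completes the proof.
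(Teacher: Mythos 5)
Your proposal is correct and follows essentially the same route as the paper: the identity $M\left(ab-cd\right)=\left(ac-bd\right)\left(bc-ad\right)$, the non-speciality argument excluding $p_l$ from dividing both factors, the resulting assignment of a partition of $\left\{1,\dots,n\right\}$ to each solution, and injectivity via $M\mid c_1d_2-c_2d_1$ together with the size bound. Your unordered-partition bookkeeping and the proportionality argument at the end are slightly cleaner packagings of the paper's normalization ``assume $p_1^{k_1}\mid ac-bd$'' and its factorization $\left(b^2-c^2\right)\left(b^2-d^2\right)=0$, but the substance is identical (and note your ``remaining subcase'' is already absorbed by the initial swap of $c_2$ and $d_2$).
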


\begin{proof}
	Let $ M \coloneqq \prod_{l=1}^{n} {p_l}^{k_l} $ and assume that we have two positive, primitive solutions $ \{ a,b\} $ and $ \{ c,d\} $ to the Diophantine equation $ x^2+zxy+y^2 = M $. As in the proof of \Cref{prop_mod_sol} the following identity must hold:
	$$ M \left( ab-cd \right) = \left( ac-bd \right) \left( bc-ad \right) $$
	As before we can show that the solutions have to be equal if there exists $ p_l $ such that $ p_l \mid ac-bd $ and $ p_l \mid bc-ad $. On the other hand, if $ M \mid ac-bd $ or $ M \mid bc-ad $, then the two solutions must also be equal (this follows by the same arguments used in the proof of \Cref{prop_mod_sol}).
	
By interchanging $ a,b,c,d $ if necessary, we can always assume that $ p_1^{k_1} \mid ac-bd $. However, for all the other primes we have $ 2^{n-1} $ choices whether $ {p_l}^{k_l} \mid ac-bd $ or $ {p_l}^{k_l} \mid bc-ad $ for each $ l \in \{ 2, \dots, n \} $. This means if we fix $ \{ a,b\} $ as solution to $ x^2+zxy+y^2 = M $, we can compare it with other solutions. The only thing we have to prove now is that two solutions $ \{ c_1,d_1\} $ and $ \{ c_2,d_2\} $ of $ x^2+zxy+y^2 = M $ are identical if for each $ l \in \{ 1,2, \dots, n\} $ either $ {p_l}^{k_l} \mid ac_j-bd_j $ or $ {p_l}^{k_l} \mid bc_j-cd_j $ for $ j=1,2 $.

Assume that $ {p_l}^{k_l} \mid ac_j-bd_j $ for some $ l $, then we have $ ac_j \equiv bd_j \pmod{p} $ and so we get
	$$ a \left( d_1c_2-c_1d_2 \right) \equiv b \left(d_1 d_2 - d_1d_2 \right) \equiv 0 \pmod{p} .$$
Since $ p \nmid a $ we get that $ {p_l}^{k_l} \mid d_1c_2-c_1d_2 $. 
On the other hand, if $ {p_l}^{k_l} \mid bc_j-ad_j $ for some $ l $, then we have $ bc_j \equiv ad_j \pmod{p} $ and so we get
	$$ a \left( d_1c_2-c_1d_2 \right) \equiv b \left(c_1 c_2 - c_1c_2 \right) \equiv 0 \pmod{p} $$
and we also get 	$ {p_l}^{k_l} \mid d_1c_2-c_1d_2 $ because $ p \nmid a $.

Therefore we have that $ M \mid d_1c_2-c_1d_2 $ because the above step holds for all $ l \in \{ 1,2, \dots, n\} $. By the same arguments as in the proof of \Cref{prop_mod_sol} we get that $ \{ c_1,d_1 \} = \{ c_2,d_2 \} $ which shows that the Diophantine equation $ x^2 + zxy + y^2 = \prod_{l=1}^{n} {p_l}^{k_l} $ cannot have more than $ 2^{n-1} $ positive, primitive solutions.
\end{proof}


\end{section}



\bibliography{Geometry_and_Diophantine_equations_Maximal_rat_circ_point_sets_Chris_Busenhart}
\bibliographystyle{plain}

\end{document}